\title{Combinatorial decomposition theorem for Hitchin systems via zonotopes}
\author{Mirko Mauri, Luca Migliorini and Roberto Pagaria}
\newcommand{\Z}{\mathbb{Z}}
\newcommand{\QQ}{\mathbb{Q}}
\newcommand{\RR}{\mathbb{R}}
\newcommand{\ZZ}{\mathbb{Z}}
\newcommand{\Gl}{\operatorname{GL}}
\newcommand{\Sl}{\operatorname{SL}}
\newcommand{\PGl}{\operatorname{PGL}}
\newcommand{\IC}{\operatorname{IC}}
\newcommand{\IH}{I\!H}
\newcommand{\nbar}{\underline{n}}
\newcommand{\mbar}{\underline{m}}
\newcommand{\lbar}{\underline{l}}
\newcommand{\kbar}{\underline{k}}
\newcommand{\Sbar}{\underline{S}}
\newcommand{\MDol}{{M}}
\newcommand{\Stab}{\operatorname{Stab}}
\newcommand{\codim}{\operatorname{codim}}
\newcommand{\id}{\operatorname{id}}
\newcommand{\rank}{\operatorname{rank}}
\newcommand{\Aut}{\operatorname{Aut}}
\newcommand{\Pic}{\operatorname{Pic}}
\newcommand{\coker}{\mathrm{coker}}
\newcommand{\Jacob}{\mathrm{Pic}^0}
\newcommand{\qb}{\underline{q}}
\newcommand{\Fl}{\mathcal{L}} 
\DeclareMathOperator{\rvol}{vol}
\DeclareMathOperator{\Half}{Half}
\DeclareMathOperator{\Ind}{Ind}
\DeclareMathOperator{\sgn}{sgn}
\DeclareMathOperator{\lcm}{lcm}
\theoremstyle:=definition,remark,plain\do{%
\expandafter\g@addto@macro\csname th@\theoremstyle\endcsname{%
\addtolength\thm@preskip\parskip
}%
}
\newtheorem{thm}{Theorem}[section]
\newtheorem{lem}[thm]{Lemma}
\newtheorem{cor}[thm]{Corollary}
\newtheorem{defn}[thm]{Definition}
\newtheorem{prop}[thm]{Proposition}
\newtheorem{conj}[thm]{Conjecture}
\newtheorem{quest}[thm]{Question}
\theoremstyle{definition}
\newtheorem{notation}[thm]{Notation}
\newtheorem{exa}[thm]{Example}
\newtheorem{rmk}[thm]{Remark}
\crefname{thm}{Theorem}{Theorems}
\Crefname{thm}{Theorem}{Theorems}
\Crefname{thm}{Theorem}{Theorems}
\Crefname{thm}{Theorem}{Theorems}
\crefname{lem}{Lemma}{Lemmas}
\Crefname{lem}{Lemma}{Lemmas}
\crefname{Conjecture}{Conjecture}{Conjectures}
\Crefname{Conjecture}{Conjecture}{Conjectures}
\crefname{Corollary}{Corollary}{Corollaries}
\Crefname{Corollary}{Corollary}{Corollaries}
\crefname{Claim}{Claim}{Claims}
\Crefname{Claim}{Claim}{Claims}
\crefname{Proposition}{Proposition}{Propositions}
\Crefname{Proposition}{Proposition}{Propositions}
\crefname{Remark}{Remark}{Remarks}
\Crefname{Remark}{Remark}{Remarks}
\crefname{Definition}{Definition}{Definitions}
\Crefname{Definition}{Definition}{Definitions}
\crefname{Example}{Example}{Examples}
\Crefname{Example}{Example}{Examples}
\crefname{Exercise}{Exercise}{Exercises}
\Crefname{Exercise}{Exercise}{Exercises}
\newtheoremstyle{plain2}    
   {}            
   {}            
   {\itshape}    
   {}            
   {\bfseries}   
   {.}           
   {5pt plus 1pt minus 1pt}  
   {{\thmnumber{#1} \thmname{#2}{\thmnote{ (#3)}}}}          
\newcommand{\TBC}[1]{\footnote{{\bf{\color{red} TBC $<<$}} #1 \bf{\color{red} $>>$ }}}
\begin{document}
\maketitle
\begin{abstract} 
    We determine the summands of the decomposition theorem for the Hitchin system for $\Gl_n$, in arbitrary degree, over the locus of reduced spectral curves. The key ingredient is an equivariant formula for lattice point counts in graphical zonotopes. 
\end{abstract}

\section{Introduction}
Intersection cohomology is a topological invariant well-adapted to investigate the symmetries of singular spaces. Nowadays, it is an established tool in Hodge theory, but also in representation theory and combinatorics. For instance, the symmetries of the $h$-polynomial of a polytope can be explained in terms of dualities of the intersection cohomology of the associated toric variety; see \cite{Fieseler91, Braden2006}. Via the geometric Satake correspondence, the irreducible representations of a reductive group $G$ can be realized as the intersection cohomology of affine Schubert varieties for the Langlands dual group ${^L}G$; see \cite{Ginz1995, MV2007}. 
Another example is the calculation of the Frobenius character of the Orlik--Terao algebra by using the intersection cohomology ring of an hypertoric variety; see \cite{BradenProudfoot2009,MoseleyProudfootYoung2017,Pagaria2022}.
Viceversa, the intersection cohomology of toric varieties is controlled by the combinatorics of their moment polytopes, as one may expect given the combinatorial nature of these varieties; see again \cite{Fieseler91}.

It may sound less obvious that the intersection cohomology of moduli spaces with no a priori combinatorial origin, like spaces of Higgs bundles, admits a combinatorial characterization. In this paper, we show how to obtain cohomological information for these spaces via lattice point count in rational polytopes called zonotopes. 

\subsection{Decomposition theorem for the Hitchin fibration}
We consider the \emph{Dolbeault moduli space} $M(n,d)$ parametrising semistable Higgs bundles $(\mathcal{E}, \phi)$ on a compact Riemann surface $C$ with $\rank(\mathcal{E})=n$, and $\deg(\mathcal{E})=d$; see \S\ref{sec:Hitchincompactified}. One can argue that the entire study of nonabelian Hodge theory may be thought of as the study of the
geometry of $M(n,d)$. Dolbeault moduli spaces play a central role in various other fields of mathematics
like mirror symmetry, Teichmüller theory, knot theory and in the geometric Langlands program. 

A key feature of the geometry of $M(n,d)$ is that it is endowed with a projective fibration called \emph{Hitchin fibration} 
\[
    \chi(n,d)\colon  \MDol(n,d) \to A_{n}\coloneqq \bigoplus^{n}_{i=1} H^0(C, \omega_C^{\otimes i})
\]
which assigns to $(\mathcal{E}, \phi)$ the characteristic polynomial $\mathrm{char}(\phi)$ of the Higgs field $\phi$; see again \S\ref{sec:Hitchincompactified}. Set $A^{\mathrm{red}}_{n} \subseteq A_{n}$ the locus of reduced characteristic polynomial, and $M(n,d)^{\mathrm{red}} \coloneqq \chi^{-1}(n,d)(A^{\mathrm{red}}_{n})$.

The intersection cohomology of $\MDol(n,d)$, denoted $\IH^*(\MDol(n,d), \QQ)$, is the global cohomology of the intersection complex $\IC(\MDol(n,d), \QQ)$ on $\MDol(n,d)$, or of 
the direct image complex $R \chi(n,d)_* \IC(\MDol(n,d), \QQ)$ on the base $A_n$. The advantage of the second complex is that we may decompose it in smaller complexes called \emph{Ng\^{o} strings}. On the reduced locus $A^{\mathrm{red}}_{n}$, the Ng\^{o} strings are indexed by a subset of partitions $\underline{n}$ of $n$, and they are denoted $\mathscr{S}(\mathscr{L}_{\underline{n}}(d))$. 
 Any string $\mathscr{S}(\mathscr{L}_{\underline{n}}(d))$ is a combination of the cohomology of an abelian group scheme independent of $d$, and a local system $\mathscr{L}_{\underline{n}}(d)$ depending on $d$; see \S \ref{sec:Ngo}. 
 
 These local systems $\mathscr{L}_{\underline{n}}(d)$ should be considered as the combinatorial part of the intersection cohomology of $\MDol(n,d)$: a partition $\underline{n}$ corresponds to a choice of a zonotope in $\RR^{|\underline{n}|} \simeq \ZZ^{|\underline{n}|} \otimes \RR$, and the degree $d$ determines a translation of the zonotope. The local system $\mathscr{L}_{\underline{n}}(d)$ is a subrepresentation of a group of automorphisms of the zonotope on the lattice points contained in its translate. 

The determination of these representations is a subtle task, and involves a delicate combination of combinatorics and algebraic geometry that we carried out in this paper. In \S \ref{sec:otherwork} and \cite[\S 1]{MM2022} we hint at the geometric significance of the result. Here we summarize the main output, and discuss it in detail in the next section.

\begin{thm}[Effective decomposition theorem for the Hitchin fibration]\label{thm:effectivedecomposition} For any partition $\underline{n}=\{n_1, \ldots, n_r\}$ of $n$, let $S_{\underline{n}} \subseteq A^{\mathrm{red}}_{n}$ be the closure of the locus the characteristic polynomials $\mathrm{char}(\phi)$ whose irreducible factors have degree $n_i$, and $g_{\underline{n}}: \mathscr{A}^{\times}_{\underline{n}} \to S^\times_{\underline{n}}$ be the relative Jacobian of the simultaneous normalization of the spectral curves of equation $\mathrm{char}(\phi)=0$, with $\mathrm{char}(\phi)$ in a dense open set $S^\times_{\underline{n}} \subseteq S^{\circ}_{\underline{n}}= S_{\underline{n}} \setminus \bigcup_{\underline{m} \leq \underline{n}} S_{\underline{m}}$. Denote by $\Stab(\underline{n})$ the subgroup of the symmetric group on $r$ elements stabilizing $\underline{n}$.

 Then there is an isomorphism in the bounded derived category of algebraic mixed Hodge modules $D^bMHM_{\text{alg}}(A^{\mathrm{red}}_{n})$, or in the bounded derived category $D^b(A^{\mathrm{red}}_n)$ of $\QQ$-constructible
complexes on $A^{\mathrm{red}}_n$ (ignoring the
Tate shifts):
\begin{equation}\label{eq:effectivedt}
    R\chi(n,d)_*\IC({M(n,d)}, \QQ)|_{A^{\mathrm{red}}_n}\simeq \bigoplus_{\substack{\underline{n}\vdash \, n \\ d\text{-integral}}} \IC(S_{\underline{n}}, Rg_{\underline{n}, *}\QQ_{\mathscr{A}^{\times}_{\underline{n}}} \otimes \mathscr{L}_{\underline{n}}(d))[-2  \mathrm{codim} S_{\underline{n}}](-\mathrm{codim} S_{\underline{n}}),
\end{equation} 
where $\mathscr{L}_{\underline{n}}(d)$ is a polarizable variation of pure Hodge structures of weight zero and of Hodge-Tate type on $S^\times_{\underline{n}}$. 

The monodromy of $\mathscr{L}_{\underline{n}}(d)$ actually factors through a representation of the stabilizer $\Stab(\underline{n})$, i.e.\ for any $a \in S^\times_{\underline{n}}$ we have
\[\pi_1(S^\times_{\underline{n}}) \twoheadrightarrow \pi_1(S^\circ_{\underline{n}}) \twoheadrightarrow \Stab(\underline{n}) \xrightarrow{} \Aut(\mathscr{L}_{\underline{n}}(d)_a);\]
see \cref{rmk:represent}. The $\Stab(\underline{n})$-representation admits a clear combinatorial characterization as described in \cref{thm:combchar}, and its rank is computed in \cref{thm:rank}.
\end{thm}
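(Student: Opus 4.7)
The plan is to prove the decomposition by combining the classical decomposition machinery, Ng\^{o}'s support theorem, the spectral correspondence, and a monodromy analysis; the heavy combinatorial content is then deferred to later sections. First I would apply the BBD decomposition theorem in the category of algebraic mixed Hodge modules to the proper morphism $\chi(n,d)$ restricted to $A^{\mathrm{red}}_n$, getting an abstract decomposition of $R\chi(n,d)_*\IC(M(n,d),\QQ)|_{A^{\mathrm{red}}_n}$ into shifted IC complexes of semisimple polarizable variations of pure Hodge structure, supported on a finite family of closed irreducible subvarieties. At this point neither the supports nor the local systems are pinned down.

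Second, to identify the supports I would invoke Ng\^{o}'s support theorem. On the reduced locus, the Hitchin fibration is a $\delta$-regular weak abelian fibration, which one verifies directly via the spectral correspondence. The support theorem then bounds the codimension of each support by its defect, and combining this bound with the natural stratification by the partition $\underline{n}$ recording the degrees of the irreducible factors of $\mathrm{char}(\phi)$ forces every support to be the closure $S_{\underline{n}}$ of one such stratum. That only $d$-integral partitions appear is read off from a non-vanishing criterion for the degree-$d$ piece of the component group of the relative compactified Jacobian.

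Third, I would compute the local system on the open stratum $S^\times_{\underline{n}}$ using the Beauville-Narasimhan-Ramanan spectral correspondence. Over $S^\times_{\underline{n}}$ the spectral curve is reduced with smooth irreducible components, Higgs bundles correspond to torsion-free rank-one sheaves, and the compactified Jacobian is a disjoint union of torsors under the relative Jacobian $\mathscr{A}^\times_{\underline{n}}$ of the simultaneous normalization. Taking direct image under $g_{\underline{n}}$ produces the factor $Rg_{\underline{n},*}\QQ_{\mathscr{A}^\times_{\underline{n}}}$, while the remaining tensor factor $\mathscr{L}_{\underline{n}}(d)$ records the finite combinatorial data of the multidegree lattice constrained by the total-degree condition $d$ together with the component group action. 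Purity of weight zero and Hodge-Tate type for $\mathscr{L}_{\underline{n}}(d)$ follow because, up to Tate shifts, it is a direct summand of the zeroth cohomology of an abelian scheme twisted by a finite local system.

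The final step is the monodromy analysis. Walking around the boundary of $S^\times_{\underline{n}}$ inside $S^\circ_{\underline{n}}$ one only exchanges irreducible components of the spectral curve of equal degree, yielding the factorization $\pi_1(S^\times_{\underline{n}}) \twoheadrightarrow \pi_1(S^\circ_{\underline{n}}) \twoheadrightarrow \Stab(\underline{n}) \to \Aut(\mathscr{L}_{\underline{n}}(d)_a)$. The main obstacle, and the heart of the paper, is to identify $\mathscr{L}_{\underline{n}}(d)$ explicitly as a $\Stab(\underline{n})$-representation and to compute its rank: this is precisely the content of \cref{thm:combchar} and \cref{thm:rank}, which are proved through the zonotopal lattice-point technology developed in the rest of the paper. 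The remainder of the proof of \cref{thm:effectivedecomposition} is then a book-keeping matter once the combinatorial part is in hand.
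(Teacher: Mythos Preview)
Your outline is broadly faithful to the paper's structure: the abstract decomposition into Ng\^{o} strings supported on the strata $S_{\underline{n}}$ is indeed imported from \cite{MM2022} (Theorem~\ref{thm:Ngostring} here), the BNR correspondence is used exactly as you say to identify fibres with compactified Jacobians (Propositions~\ref{prop:BNR} and~\ref{prop:compJacobiansI}, Theorem~\ref{thm:geomvscombin}), and the monodromy factorisation through $\Stab(\underline{n})$ is Remark~\ref{rmk:represent}. The deferral of the explicit identification of $\mathscr{L}_{\underline{n}}(d)$ to \cref{thm:combchar} and \cref{thm:rank} is also correct.

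There is, however, a genuine gap in your second step. You write that ``only $d$-integral partitions appear is read off from a non-vanishing criterion for the degree-$d$ piece of the component group of the relative compactified Jacobian.'' This is precisely what the paper \emph{cannot} do directly, and it is the main point of the whole combinatorial apparatus. Ng\^{o}'s support theorem (as used in \cite{MM2022}) only constrains the possible supports to lie among the $S_{\underline{n}}$; it does not determine which $\mathscr{L}_{\underline{n}}(d)$ are nonzero. The actual determination of the supports is Theorem~\ref{thm:Hodge-to-singular-for-supports}, and its proof requires the full chain \cref{thm:combchar} $\Rightarrow$ \cref{prop:countsphere} $\Rightarrow$ \cref{positivityofsum}: one first identifies $\mathscr{L}_{\underline{n}}(d)$ with the top homology of an order complex, then computes its rank via the M\"obius-function formula, and finally proves positivity of that alternating sum by exhibiting a mediocre chain. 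None of this is a ``read-off.''

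A second omission is the role of the degree-zero full support theorem (\cref{thm:fullsupport}). The paper's proof of \cref{thm:combchar} in \S\ref{sec:mainresults_proof} is an induction on the length of the partition, and the inductive step compares the stalks of the Ng\^{o} strings at a general point of $S_{\underline{m}}$ in degree $d$ against degree $0$, using \cref{thm:fullsupport} to control the latter and the combinatorial decomposition theorem for zonotopes (\cref{thm:repr}) to match the two. Your proposal does not mention this degree-zero anchor, without which the inductive identification of $\mathscr{L}_{\underline{n}}(d)$ does not get off the ground.
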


\subsection{Supports and combinatorial local systems of the Hitchin fibration}\label{sec:mainresults_intro} In order to explain and prove \cref{thm:effectivedecomposition}, we first introduce the notion of $d$-integral partition.
\begin{defn}
 A partition $\underline{n} = \{n_i\}$ of $n = \sum_i n_i$ is \emph{$d$-integral} if $d n_i/n \in \mathbb{Z}$. 
\end{defn}
\begin{exa}\label{ex:d-integral}
If $\gcd(n,e)=1$, the only $e$-integral partition of $n$ is the trivial one, i.e.\ $\{n\}$. On the contrary, any partitions of $n$ is $0$-integral.
\end{exa}
Such partitions arise naturally while studying the singularities of $M(n,d)^{\mathrm{red}}$. Indeed, a singular point of $M(n,d)^{\mathrm{red}}$ corresponds to a strictly polystable Higgs bundle	$(E_1, \phi_1)\oplus \ldots \oplus (E_r, \phi_r)$ with $r>1$, where $(\mathcal{E}_i, \phi_i)$ are distinct stable Higgs bundles of slope $\deg E_i/\rank E_i = d/n$. In fact, $\{\rank E_i\}$ is a $d$-integral partition of $n$. Let $M^{\circ}_{\underline{n}}(d)$ be the locus of polystable Higgs bundles of multirank $\underline{n}$. Then the space $M(n,d)^{\mathrm{red}}$ admits a Whitney stratification by partition type
	\[
	M(n,d)^{\mathrm{red}} = \bigsqcup_{\underline{n} 
	} M^{\circ}_{\underline{n}}(d),
	\]
	where the sum runs over the $d$-integral partitions of $n$; see also \cite[\S 1.1]{MM2022}.
	
For any partition $\underline{n}=\{n_i\}$ of $n$, let $S_{\underline{n}} \subseteq A_{n}$ be the closure of the polynomials whose irreducible factors have degree $n_i$; see also \S \ref{sec:Ngo}. In \cite[Cor. 3.8]{MM2022}, the first and second authors showed that the supports of the Ng\^{o} strings of $R \chi(n,d)_* \IC(\MDol(n,d), \QQ)|_{A^{\mathrm{red}}_n}$ ranges among the subvarieties $S_{\underline{n}}$; see \cref{thm:Ngostring}. Here we determine which of them actually supports a non-zero Ng\^{o} string.

\begin{thm}[Supports in arbitrary degree]\label{thm:Hodge-to-singular-for-supports}
The supports of $R \chi(n,d)_* \IC(\MDol(n,d), \QQ)|_{A^{\mathrm{red}}_n}$ are all and only the subvarieties $S_{\underline{n}}$ such that $\underline{n}$ is not a $d$-integral partition of $n$. 
\end{thm}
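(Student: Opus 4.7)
The plan is to combine the support theorem of \cite[Cor.~3.8]{MM2022}, recalled as \cref{thm:Ngostring}, with the equivariant lattice-point count for graphical zonotopes of \cref{thm:combchar,thm:rank}. By the former, every direct summand of $R\chi(n,d)_{*}\IC(\MDol(n,d),\QQ)|_{A^{\mathrm{red}}_n}$ is supported on some $S_{\underline{n}}$, and $S_{\underline{n}}$ actually appears as a support if and only if the associated combinatorial local system $\mathscr{L}_{\underline{n}}(d)$ is non-zero. The theorem thus reduces to the equivalence $\mathscr{L}_{\underline{n}}(d)\neq 0$ iff $\underline{n}$ is $d$-integral, read contrapositively: the $S_{\underline{n}}$ that are \emph{not} supports are precisely those for which $\underline{n}$ fails to be $d$-integral.

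For one direction I would argue geometrically. If $\underline{n}$ is not $d$-integral, the polystable stratum $M^{\circ}_{\underline{n}}(d)$ is empty, for any Higgs bundle in it would split as $\bigoplus_{i}(\mathcal{E}_{i},\phi_{i})$ with each $\mathcal{E}_{i}$ of common slope $d/n$, forcing $\deg\mathcal{E}_{i}=d n_{i}/n\in\ZZ$ and hence $\underline{n}$ $d$-integral. Consequently $\chi(n,d)^{-1}(S^{\times}_{\underline{n}})$ consists only of stable Higgs bundles, $\IC(\MDol(n,d),\QQ)$ restricts there to a shifted constant sheaf, and the translate of the graphical zonotope defining $\mathscr{L}_{\underline{n}}(d)$ contains no integer point, so $\mathscr{L}_{\underline{n}}(d)=0$ and $S_{\underline{n}}$ cannot be a support. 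For the opposite implication, when $\underline{n}$ is $d$-integral the translated zonotope does carry a lattice point, and the $\Stab(\underline{n})$-subrepresentation cutting out $\mathscr{L}_{\underline{n}}(d)$ has strictly positive rank by the equivariant lattice-point formula \cref{thm:rank}. In particular $S_{\underline{n}}$ does contribute a summand to \eqref{eq:effectivedt}, and therefore is not among the subvarieties excluded by the statement.

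The main obstacle is the positivity in the $d$-integral case. Producing one lattice point is immediate once $d n_{i}/n\in\ZZ$ for each $i$, but showing that its projection onto the prescribed $\Stab(\underline{n})$-isotypic component survives is subtle: the component in question is carved out by the sign/monodromy action permuting irreducible factors of the spectral curve, and a brute comparison is not sufficient. This is where the deletion-contraction induction on graphical zonotopes of \cref{thm:combchar} is required; it converts the geometric dichotomy ``empty vs.\ non-empty polystable stratum'' into the sharp cohomological classification of supports claimed in the theorem.
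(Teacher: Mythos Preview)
Your reduction to the criterion ``$S_{\underline{n}}$ is a support iff $\mathscr{L}_{\underline{n}}(d)\neq 0$'' is correct, but from that point on you have the dichotomy exactly reversed. The theorem asserts that the supports are the $S_{\underline{n}}$ with $\underline{n}$ \emph{not} $d$-integral, i.e.\ $\mathscr{L}_{\underline{n}}(d)\neq 0$ precisely when $\omega_{\underline{n}}(d)\notin\ZZ^{r}$; your proposal argues for the opposite equivalence. The geometric heuristic you invoke in fact points the other way: when $\underline{n}$ is not $d$-integral the polystable stratum $M^{\circ}_{\underline{n}}(d)$ is empty, so $M(n,d)$ is \emph{smooth} over $S^{\times}_{\underline{n}}$, the intersection complex is the constant sheaf there, and the fibres of $\chi(n,d)$ over $S^{\times}_{\underline{n}}$ acquire \emph{more} irreducible components than the full-support piece $\mathscr{S}_{\{n\}}$ accounts for --- this surplus is exactly what forces a new summand supported on $S_{\underline{n}}$. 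Conversely, when $\underline{n}$ is $d$-integral the singularities of $M(n,d)$ along $M^{\circ}_{\underline{n}}(d)$ are absorbed by $\IC$, and no extra support is needed; this is the ``conservation law'' described after the statement and made precise by \cref{thm:fullsupport}.

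Your specific claims about zonotopes are also wrong in both directions. If $\underline{n}$ is not $d$-integral, the translated zonotope $Z_{\Gamma[\underline{n}]}+\omega_{\underline{n}}(d)$ still lies in an integral hyperplane (since $\sum_i\omega_i=d\in\ZZ$) and does contain interior lattice points --- indeed strictly more than $Z_{\Gamma[\underline{n}]}$ itself, by \cref{claim}. If $\underline{n}$ is $d$-integral, then $\omega_{\underline{n}}(d)\in\ZZ^{r}$ and \cref{positivityofsum} (or Remark~\ref{rmk:commentformula}(ii)) shows the rank formula \eqref{eq:coefficient} vanishes, so $\mathscr{L}_{\underline{n}}(d)=0$. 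The paper's proof is precisely this: combine the identification of $\mathscr{L}_{\underline{n}}(d)$ with the top homology of $\Delta(\overline{\Pi}_{\omega_{\underline{n}}(d)})$ from \cref{thm:combchar}, compute its rank via \cref{prop:countsphere}, and use \cref{positivityofsum} to decide when that rank is positive. Your proposal cites these same ingredients but draws the opposite conclusion from each.
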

\begin{proof}
It follows from \cref{thm:combchar}, \cref{prop:countsphere}, and \cref{positivityofsum}.
\end{proof}
Note that the closure of the images of $M^{\circ}_{\underline{n}}(d)$ are all and only the subvarieties $S_{\underline{n}}$ such that $\underline{n}$ is indeed $d$-integral. Therefore, we have the following dichotomy: either the subvariety $S_{\underline{n}}$ has a Hodge-theoretic interpretation as summand of $R \chi(n,d)_* \IC(\MDol(n,d), \QQ)$, or it admits a description in terms of the singularities of $M(n,d)$. 

Therefore, \cref{ex:d-integral} implies that we have two extreme cases: the coprime and the zero case. If $\gcd(n,e)=1$, $M(n,e)$ is smooth and $R \chi(n,e)_* \IC(\MDol(n,e), \QQ)|_{A^{\mathrm{red}}_n}$ has the maximal number of supports as the degree varies; see also \cite[\S 1.1]{MM2022}. On the contrary, when $d=0$, $M(n,0)$ is singular with the maximal number of strata in the stratification by partition type, but no proper summand of $R \chi(n,0)_* \IC(\MDol(n,0), \QQ)|_{A^{\mathrm{red}}_n}$. We could informally say that, despite their different origin, there exists a \emph{conservation law} between summands of the decomposition theorem and singular strata as the degree $d$ varies. In particular, we obtain an alternative proof of \cite[Thm 1.4]{MM2022}, which is actually a key step in the proof of \cref{thm:Hodge-to-singular-for-supports} and \ref{thm:rank}.

\begin{thm}[Full support in degree zero]\label{thm:fullsupport}
The complex $R \chi(n,0)_* \IC(\MDol(n,0), \QQ)|_{A^{\mathrm{red}}_{n}}$ has full support, i.e.\
\[
R\chi(n,0)_*\IC(\MDol(n,0), \QQ)|_{A^{\mathrm{red}}_n} \simeq \mathscr{S}(\QQ_{A_{n}})|_{A_n^{\mathrm{red}}}.
\]
\end{thm}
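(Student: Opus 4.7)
The plan is to derive \cref{thm:fullsupport} as a direct consequence of the effective decomposition theorem (\cref{thm:effectivedecomposition}) by showing that every summand attached to a non-trivial partition vanishes. First I would specialize the decomposition \eqref{eq:effectivedt} to $d = 0$: by \cref{ex:d-integral}, every partition of $n$ is $0$-integral, so the sum in \eqref{eq:effectivedt} a priori runs over \emph{all} partitions of $n$, and the task reduces to identifying which of these summands are non-zero.

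I would then isolate the contribution of the trivial partition $\underline{n} = \{n\}$. Here the support stratum $S_{\{n\}}$ is the closure of the irreducible characteristic polynomials, hence all of $A_{n}$; the codimension vanishes; the stabilizer $\Stab(\{n\})$ is trivial; and the graphical zonotope associated with $\underline{n}$ collapses to a point, so that $\mathscr{L}_{\{n\}}(0)$ is the constant rank-one local system. The map $g_{\{n\}}$ is then the universal Jacobian over the generic locus of smooth spectral curves, and the corresponding summand of \eqref{eq:effectivedt} coincides by construction with the Ng\^o string $\mathscr{S}(\QQ_{A_{n}})|_{A_{n}^{\mathrm{red}}}$. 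It therefore suffices to show $\mathscr{L}_{\underline{n}}(0) = 0$ for every partition $\underline{n}$ with at least two parts.

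For this vanishing I would invoke the combinatorial description of $\mathscr{L}_{\underline{n}}(d)$ announced in the statement of \cref{thm:effectivedecomposition}: it realizes $\mathscr{L}_{\underline{n}}(d)$ as a (sign-twisted) $\Stab(\underline{n})$-subrepresentation of the $\QQ$-span of the lattice points in a $d$-translate of the graphical zonotope in $\RR^{|\underline{n}|}$. For $d = 0$ the translation vector lies at the $\Stab(\underline{n})$-fixed center of the $\Stab(\underline{n})$-symmetric zonotope, so the standard central involution pairs lattice points with their sign-twisted images; the resulting equivariant lattice-point count kills the relevant isotypic component for every non-trivial $\underline{n}$. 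Combined with the identification of the $\{n\}$-summand, this yields the asserted isomorphism. The main obstacle is precisely this last equivariant combinatorial computation: one must prove the vanishing from the combinatorics of centrally symmetric graphical zonotopes \emph{without} appealing to \cref{thm:fullsupport} itself, to avoid circularity with the proofs of \cref{thm:effectivedecomposition} and \cref{thm:Hodge-to-singular-for-supports}. This is exactly the role played by the equivariant zonotope formula at the technical heart of the paper.
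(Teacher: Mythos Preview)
Your argument is circular, and the closing paragraph acknowledges the danger without actually escaping it. The combinatorial description of $\mathscr{L}_{\underline{n}}(d)$ that you invoke is precisely \cref{thm:combchar}, and in the paper the proof of \cref{thm:combchar} uses \cref{thm:fullsupport} as an input: in \S\ref{sec:mainresults_proof} the identification $\mathcal{C}(Z_{\Gamma[\underline{m}]})\simeq (R^{2c(n,g)}\chi(n,0)_*\QQ_{\MDol(n,0)})_a$ is exactly the statement that the degree-zero complex has no extra summands at $a$. Likewise \cref{thm:effectivedecomposition} is a summary theorem whose content \emph{is} \cref{thm:combchar} and \cref{thm:rank}; you cannot cite it to deduce \cref{thm:fullsupport}. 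Your central-involution heuristic, even if made precise, would only establish the purely combinatorial vanishing (equivalent to \cref{positivityofsum} for $\omega\in\ZZ^r$, or \cref{rmk:commentformula}(ii)); it does not by itself identify $\mathscr{L}_{\underline{n}}(0)$ with that combinatorial object, which is the step that needs \cref{thm:fullsupport}.

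The paper therefore proves \cref{thm:fullsupport} \emph{first}, by an independent inductive argument that does not touch \cref{thm:combchar}. One picks an auxiliary degree $e$ coprime to $n$, compares the top-degree stalks of $R\chi(n,0)_*\QQ$ and $R\chi(n,e)_*\QQ$ at a general point $a\in S_{\underline{m}}$, and uses (i) the identification of these stalks with lattice points of $Z_{\Gamma[\underline{m}]}$ and $Z_{\Gamma[\underline{m}]}+\omega_{\underline{m}}(e)$ via \cref{thm:geomvscombin}, (ii) the reduction-to-lower-rank splitting of \cref{prop:reductiontolowerrank}, and (iii) the lattice-point identity of \cref{rmk:commentformula}(i) together with the inequality $r(\ell(\underline{S}),e)\geq(\ell(\underline{S})-1)!$ from the Hodge-to-singular correspondence of \cite{MM2022}. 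These force $\dim\mathcal{H}^{2c}_a(\mathscr{S}_{\{n\}})=C(Z_{\Gamma[\underline{m}]})=\dim(R^{2c}\chi(n,0)_*\QQ)_a$, leaving no room for a proper support. Only after this is established does the paper deduce \cref{thm:combchar} and hence \cref{thm:effectivedecomposition}.
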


If $\gcd(n,e)=1$, the rank of $\mathscr{L}_{\underline{n}}(e)$ has been interpreted as the cohomology of a cographic matroid in \cite[Thm 6.11]{deCataldoHeinlothMigliorini19}, and as the top cohomology of a symplectic resolution of a normal slice of $M^{\circ}_{\underline{n}}(0)$ in \cite[Thm 1.1]{MM2022}. Here we propose a new combinatorial description that holds in arbitrary degree. The monodromy of the local system $\mathscr{L}_{\underline{n}}(d)$ is a representation of $\Stab(\underline{n})$, i.e.\ the subgroup of the symmetric group on $\ell(\underline{n})=r$ elements stabilizing $\underline{n}$. If $\underline{n}=\{n_1, \ldots, n_r\} \vdash n$, set \begin{equation}\label{eq:vectoromega}\omega_{\underline{n}}(d) \coloneqq \bigg(d \frac{n_1}{n}, \ldots, d \frac{n_r}{n}\bigg) \in \QQ^{r}.\end{equation} Then the poset of non-$\omega_{\underline{n}}(d)$-integral partition of the set $\underline{n}$, denoted  $\overline{\Pi}_{\omega_{\underline{n}}(d)}$ (cf  \cref{defn:partitionsintegral} and \ref{defn:terminologyposet}), admits an action of $\Stab(\underline{n})$.

\begin{thm}[Combinatorial characterization of $ \mathscr{L}_{\underline{n}}(d)$]\label{thm:combchar}
Let $\underline{n}$ be a non-trivial partition of $n$.
The local system $\mathscr{L}_{\underline{n}}(d)$ can be identified as $\Stab(\underline{n})$-representation with the top reduced homology of the order complex of the poset $\overline{\Pi}_{\omega_{\underline{n}}(d)}$, twisted by the sign representation 
\[\mathscr{L}_{\underline{n}}(d) \simeq \sgn \otimes \widetilde{H}_{\ell(\underline{n})-3}(\Delta(\overline{\Pi}_{\omega_{\underline{n}}(d)})).\] In particular, the rank of $\mathscr{L}_{\underline{n}}(d)$ is the number of spheres of the order complex $\Delta(\overline{\Pi}_{\omega_{\underline{n}}(d)})$.
\end{thm}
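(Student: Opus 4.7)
The plan is to compute $\mathscr{L}_{\underline{n}}(d)$ as a $\Stab(\underline{n})$-representation by combining the equivariant lattice-point formula for the translated graphical zonotope $Z_{\underline{n}}+\omega_{\underline{n}}(d)$ (the main combinatorial tool of the paper, announced in the abstract) with a $\Stab(\underline{n})$-equivariant M\"obius inversion over the poset of set-partitions of $\underline{n}$.

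First, I would recall from the body of the paper that $\mathscr{L}_{\underline{n}}(d)$ is the summand of the Ng\^{o}-string decomposition whose support is exactly $S_{\underline{n}}$, and that over a generic point $a \in S_{\underline{n}}^\times$ its fiber is controlled by the finite set of lattice points of $Z_{\underline{n}}+\omega_{\underline{n}}(d)$. I would stratify this set by the finest face of $Z_{\underline{n}}$ containing each lattice point. Faces of a graphical zonotope are canonically indexed by set-partitions $\pi$ of the coordinate set underlying $\underline{n}$, and both the stratification and its indexing are manifestly $\Stab(\underline{n})$-equivariant. This yields a $\Stab(\underline{n})$-equivariant decomposition of the lattice-point count into contributions indexed by the partition lattice $\Pi_{\underline{n}}$.

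Next, in order to isolate $\mathscr{L}_{\underline{n}}(d)$ from the contributions of Ng\^{o} strings supported on proper substrata $S_{\underline{m}}$ with $\underline{m}<\underline{n}$, I would apply $\Stab(\underline{n})$-equivariant M\"obius inversion on $\Pi_{\underline{n}}$. A partition $\pi$ that is $\omega_{\underline{n}}(d)$-integral corresponds to a face of $Z_{\underline{n}}$ whose integral translates already appear on a smaller stratum, and is therefore cancelled by the inversion; only non-integral $\pi$, i.e.\ the elements of $\overline{\Pi}_{\omega_{\underline{n}}(d)}$, survive. The degree-zero case (\cref{thm:fullsupport}) anchors this step: every partition is $0$-integral, $\overline{\Pi}_{\omega_{\underline{n}}(0)}$ is empty, and the theorem correctly predicts $\mathscr{L}_{\underline{n}}(0)=0$ for every non-trivial $\underline{n}$, in agreement with full support.

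Finally, the surviving alternating sum is, by classical poset theory, the $\Stab(\underline{n})$-equivariant reduced Euler characteristic of the order complex $\Delta(\overline{\Pi}_{\omega_{\underline{n}}(d)})$. As an order filter in the Cohen--Macaulay partition lattice, this subposet is itself Cohen--Macaulay (Bj\"orner--Wachs), so the reduced homology is concentrated in top degree $\ell(\underline{n})-3$, and the sign that accompanies M\"obius inversion on the partition lattice yields precisely the twist by $\sgn$ in the statement. The main obstacle will be making the $\Stab(\underline{n})$-equivariance exact throughout: the numerical identities are classical, but promoting them to an isomorphism of representations requires careful bookkeeping of the action on lattice points, on faces of $Z_{\underline{n}}$, and on the generators of $\widetilde{H}_{\ell(\underline{n})-3}(\Delta(\overline{\Pi}_{\omega_{\underline{n}}(d)}))$, with special attention to where the sign twist enters.
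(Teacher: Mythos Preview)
Your overall strategy is aligned with the paper's: compare the geometric Ng\^{o}-string decomposition of the stalk of $R^{2c}\chi_*\QQ$ with the combinatorial decomposition of the permutation representation $\mathcal{C}(Z_{\Gamma[\underline{n}]}+\omega_{\underline{n}}(d))$, and match terms inductively. Two of your shortcuts, however, do not go through.

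First, the claim that $\overline{\Pi}_{\omega_{\underline{n}}(d)}$ is an \emph{order filter} in the partition lattice is false. Integrality is upward-closed: if $\underline{S}\leq\underline{T}$ and $\underline{S}$ is $\omega$-integral then each block of $\underline{T}$, being a union of blocks of $\underline{S}$, also has integral $\omega$-sum. Hence non-$\omega$-integral partitions form an order \emph{ideal}, not a filter, and the Bj\"orner--Wachs theorem you invoke does not apply. There is no general Cohen--Macaulayness result for order ideals of a CM poset that would give you concentration of homology in degree $\ell(\underline{n})-3$. This is precisely why the paper proves LEX-shellability of $\hat{\Pi}_\omega$ from scratch (\cref{prop:LEXshell}), via a hand-built edge labelling; that argument is not a formality.

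Second, you correctly flag the equivariant upgrade of M\"obius inversion as the main obstacle, but propose no mechanism for it. ``Careful bookkeeping'' is not enough here: the paper's method (\cref{thm:repr}) is to compare characters, which amounts to counting lattice points in the \emph{fixed locus} $Z_\Gamma^\sigma$ for each $\sigma\in\Aut(\Gamma)$. By \cref{lem:fixedlocus} these fixed loci are zonotopes but not graphical ones---their generating segments have endpoints $\tfrac{l_j}{\gcd(l_i,l_j)}e_i$ rather than $e_i$---and they are translated by genuinely half-integral vectors $t_\sigma$. Matching their Ehrhart counts with the right-hand side requires the delicate $2$-adic analysis of \S\ref{sec:permutation} (the sets $A_{\underline{B},\sigma,j}$, the orientation character $o_\Gamma$, and Lemmas~\ref{lemma:integral_in_quotient}--\ref{lem:integralHals}). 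This is also where the sign twist actually enters: it is the character $\alpha_{\underline{S}}=\sgn\otimes o_{\Gamma^{\underline{S}}}$, and one needs the even edge-multiplicities of $\Gamma[\underline{n}]$ to see that $o_{\Gamma^{\underline{S}}}$ is trivial and only $\sgn$ survives. Your sketch does not touch any of this, and the numerical M\"obius inversion alone cannot produce the representation-theoretic statement.

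A smaller point: stratifying interior lattice points by ``the finest face containing them'' is not well-posed, since interior points lie on no proper face. What is actually used is Shephard's half-open parallelepiped decomposition of the zonotope together with Ehrhart--Macdonald reciprocity (\cref{thm:Ehrhart_qp}, \cref{thm:countformula}).
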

Note that the order complex $\Delta(\overline{\Pi}_{\omega_{\underline{n}}(d)})$ is homotopic equivalent to a wedge of spheres. This follows from the LEX-shellability of the poset $\hat{\Pi}_{\omega_{\underline{n}}(d)}$; see \cref{defn:LEXshellability}, \cref{prop:LEXshell} and \cref{prop:spheres}. \begin{thm}[\cref{prop:LEXshell}; LEX-shellability]
The poset $\hat{\Pi}_{\omega_{\underline{n}}(d)}$ is LEX-shellable.
\end{thm}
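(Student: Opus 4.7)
The plan is to exhibit an explicit edge labeling of the Hasse diagram of $\hat\Pi_{\omega_{\underline{n}}(d)}$ adapted from Bj\"orner's classical EL-labeling of the partition lattice $\Pi_r$, and verify the LEX-shellability axioms interval-by-interval. Recall that a covering relation in the full partition lattice corresponds to merging two blocks; in $\hat\Pi_{\omega_{\underline{n}}(d)}$ a covering relation may instead merge several blocks at once, since some intermediate merges can produce $\omega_{\underline{n}}(d)$-integral partitions and thus fall outside the poset. The first step is therefore to describe precisely the covering relations, which reduces to a purely arithmetic question about subset sums of $\omega_{\underline{n}}(d) = (dn_1/n, \ldots, dn_r/n)$ modulo $1$.

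Given this description, I would label a cover $x \lessdot y$ by a pair $(\min B, \min B')$ (under a fixed linear order on $[r]$), where $B$ and $B'$ are the two \emph{outermost} blocks of $y$ refining into blocks of $x$, ordered so that the label belongs to a totally ordered set designed to be lex-compatible with the Bj\"orner labels of $\Pi_r$. I would then verify, on every closed interval $[u,v]$ of $\hat\Pi_{\omega_{\underline{n}}(d)}$, the two LEX-shellability axioms: (i) there exists a unique maximal chain whose edge labels are strictly increasing, the \emph{rising chain}; and (ii) this rising chain is lexicographically smallest among all maximal chains of $[u,v]$. The rising chain is produced greedily: at each stage, merge the family of blocks realizing the smallest admissible label, and iterate. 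Restricted to any interval $[u,v]$, the problem is isomorphic to the same problem for a smaller induced poset of partitions of a set obtained by collapsing blocks of $u$, with an induced weight vector, so an induction on rank handles the reduction to the top-level rising chain.

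The main obstacle is the following: the classical proof in $\Pi_r$ uses crucially that any two blocks can be merged to yield another partition, hence rising chains exist automatically. In $\hat\Pi_{\omega_{\underline{n}}(d)}$ only some merges are admissible, so it is not obvious that the greedy procedure terminates at $v$ rather than getting stuck, and one must show that the rising chain actually exists in every interval. I would resolve this by showing that, if the greedy procedure is blocked at some intermediate partition $w \ne v$, then a carefully chosen ``jump'' to the next admissible partition in $[w,v]$ is still a cover in $\hat\Pi_{\omega_{\underline{n}}(d)}$, and that the associated label is forced to exceed the preceding one by the way admissibility interacts with the arithmetic of $\omega_{\underline{n}}(d)$. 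Uniqueness then follows by comparing with an arbitrary competing rising chain: any disagreement at an earlier step would violate the greedy choice, and thus lex-minimality.

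Once existence and uniqueness of the rising chain in every interval are established, lex-minimality among \emph{all} maximal chains of $[u,v]$ follows by the standard exchange argument: swapping the first descent of a non-rising maximal chain with an appropriately chosen admissible merge strictly decreases the label sequence in lex order, and iterating recovers the rising chain. This, together with the labeling construction and the interval-by-interval verification, will prove that $\hat\Pi_{\omega_{\underline{n}}(d)}$ is LEX-shellable.
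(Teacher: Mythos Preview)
Your proposal rests on a mistaken premise. You assert that covering relations in $\hat\Pi_{\omega_{\underline{n}}(d)}$ may merge several blocks at once, but in fact (for $\omega \notin \ZZ^r$, the only non-trivial case) every cover merges exactly two blocks and the poset is graded of rank $r-1$. The argument is short: if $x$ is non-$\omega$-integral with at least three blocks and $\omega_{S_1}\notin\ZZ$, then any single merge of two blocks of $x$ lying in a common block of $y$, chosen so that $S_1$ survives as a block (or, when only $S_1,S_2$ have non-integer weight and they form a block of $y$, the merge $S_1\cup S_2$ itself), produces a non-$\omega$-integral partition strictly between $x$ and $y$; a brief case check rules out the residual possibilities. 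So there are no ``jumps'', and your diagnosis of the main obstacle is off target. The real obstacle is that Bj\"orner's standard rising chain in $\Pi_r$ can leave $\hat\Pi_\omega$: for instance with $\omega=(\tfrac12,\tfrac12,0,0)$ the chain $\hat 0 \to 12|3|4 \to 123|4 \to \hat 1$ passes through the $\omega$-integral partition $12|3|4$. A viable labeling must therefore depend on $\omega$, and ``$(\min B,\min B')$'' alone does not encode this. (Incidentally, the axioms you list are those of EL-shellability, which is strictly stronger than LEX-shellability; this would suffice but is not what LEX requires.)

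The paper takes a rather different route. It works for arbitrary graphs $\Gamma$ and, for each flat $\underline S$, defines a modified total order $\prec_{\underline S}$ on the edges of the contracted graph $\Gamma^{\underline S}$: edges of a fixed lexicographically minimal spanning forest $T(\underline S)$ lying outside its unique minimal $\omega$-integral subflat $\underline F(\underline S)$ come first, then the remaining forest edges, then the non-forest edges. The label of a cover $\underline S \lessdot \underline T$ is the $\prec_{\underline S}$-minimal edge joining the two merged blocks. The verification is local, via Kozlov's criterion: one only checks that for $x\lessdot y\lessdot z< t$, if $c|_{[x,z]}$ and $c|_{[y,t]}$ are lex-least then so is $c|_{[x,t]}$, using a contradiction argument based on the independence of three forest edges and the compatibility $\underline F(\underline S\vee a)=\underline F(\underline S)\vee a$. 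The paper does remark that for complete graphs the stronger EL-shellability holds, so an approach in your spirit can be made to work, but only after building the $\omega$-dependence into the labeling and abandoning the multi-block-cover picture.
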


If $\gcd(n,e)=1$, the poset $\Pi_{\omega_{\underline{n}}(e)}$ is simply the poset of all partitions of $[r]$. Such poset appears in \cite[Lemma 6.18]{deCataldoHeinlothMigliorini19} as the lattice of flats of a cographic matroid, and it was indeed used to compute the cohomology of the matroid. Note however that in arbitrary degree the poset $\Pi_{\omega_{\underline{n}}(d)}$ may fail to be semimodular, so in general there is no associated matroid.  

The Hodge-to-singular correspondence in \cite{MM2022} provides a recursive formula to determine the rank of $\mathcal{L}_{\underline{n}}(d)$ as explained in \cite[\S 10.1]{MM2022}.
Due to the new characterization \Cref{thm:combchar}, we can now provide a closed formula.

\begin{thm}[\cref{prop:countsphere}; Formula for $\operatorname{rk} \mathscr{L}_{\underline{n}}(d)$]\label{thm:rank}
The rank of the local system $\mathscr{L}_{\underline{n}}(d)$ is
\begin{equation}\label{eq:coefficient}
    \sum_{\substack{\lambda=\{\lambda_i\} \vdash [r] \\ \lambda \; \omega_{\underline{n}}(d)\textnormal{-integral}}} 
    (-1)^{\ell(\lambda) -1} \prod_{i=1}^{\ell(\lambda)} (\lvert \lambda_i \rvert-1)!,
\end{equation}
where the sum runs over the $\omega_{\underline{n}}(d)$-integral partitions of the set $\underline{n}$; see \cref{defn:partitionsintegral}.
\end{thm}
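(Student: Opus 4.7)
The plan is to combine \cref{thm:combchar} with a Möbius function computation on the partition lattice. Set $r=\ell(\underline{n})$. By \cref{thm:combchar} and the LEX-shellability of $\hat{\Pi}_{\omega_{\underline{n}}(d)}$ (\cref{prop:LEXshell}), the order complex $\Delta(\overline{\Pi}_{\omega_{\underline{n}}(d)})$ is homotopy equivalent to a wedge of $(r-3)$-spheres, so its top reduced Betti number equals $(-1)^{r-1}\mu_{\hat{\Pi}_{\omega_{\underline{n}}(d)}}(\hat 0,\hat 1)$ by Philip Hall's theorem. It therefore suffices to compute this Möbius function and recognise the right-hand side of \eqref{eq:coefficient}.

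The crucial combinatorial input is that the $\omega$-non-integral partitions form an order ideal in $\Pi_{r}$: if a block $B$ of $\lambda$ has non-integer $\omega$-sum and $\mu$ is a refinement of $\lambda$ splitting $B$ into subblocks $B_{1},\dots,B_{k}$, then the $\omega$-sums of the $B_{j}$ add up to a non-integer, so at least one $B_{j}$ is itself non-integral. Consequently, for every non-integral $\lambda\in\Pi_{r}$ strictly between $\hat 0_{\Pi_{r}}$ and $\hat 1_{\Pi_{r}}$, every $\mu\le\lambda$ is also non-integral, so the interval $[\hat 0,\lambda]$ inside $\hat{\Pi}_{\omega_{\underline{n}}(d)}$ coincides with the one inside $\Pi_{r}$, giving
\[
\mu_{\hat{\Pi}_{\omega_{\underline{n}}(d)}}(\hat 0,\lambda)=\mu_{\Pi_{r}}(\hat 0_{\Pi_{r}},\lambda).
\]

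Suppose first that $\underline{n}$ is non-$d$-integral, so that the singleton $\hat 0_{\Pi_{r}}$ is itself non-integral. Applying the defining recursion $\sum_{\lambda\le\hat 1}\mu_{\hat{\Pi}_{\omega_{\underline{n}}(d)}}(\hat 0,\lambda)=0$ together with the classical vanishing $\sum_{\lambda\in\Pi_{r}}\mu_{\Pi_{r}}(\hat 0,\lambda)=0$ (valid for $r\ge 2$), the identification of intervals above yields
\[
\mu_{\hat{\Pi}_{\omega_{\underline{n}}(d)}}(\hat 0,\hat 1)\;=\;\sum_{\substack{\lambda\vdash[r]\\\lambda\,\omega_{\underline{n}}(d)\textrm{-integral}}}\mu_{\Pi_{r}}(\hat 0_{\Pi_{r}},\lambda).
\]
Substituting the standard product formula $\mu_{\Pi_{r}}(\hat 0,\lambda)=(-1)^{r-\ell(\lambda)}\prod_{i}(|\lambda_{i}|-1)!$ and absorbing the $(-1)^{r-1}$ prefactor into the signs produces exactly the expression \eqref{eq:coefficient}. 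The complementary $d$-integral case, in which all partitions of $[r]$ are $\omega$-integral and $\overline{\Pi}_{\omega_{\underline{n}}(d)}$ is empty, is consistent because the right-hand side of \eqref{eq:coefficient} vanishes by the classical identity $\sum_{\lambda\vdash[r]}(-1)^{\ell(\lambda)-1}\prod_{i}(|\lambda_{i}|-1)!=0$ for $r\ge 2$, obtained by evaluating the rising factorial $t(t+1)\cdots(t+r-1)$ at $t=-1$.

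The main obstacle is the Möbius function identity isolated in the third paragraph; once the order-ideal structure of the non-integrals is exhibited, the remainder is routine sign bookkeeping using the well-known Möbius function on intervals of the partition lattice.
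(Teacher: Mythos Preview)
Your proposal is correct and follows essentially the same route as the paper. The paper also invokes \cref{thm:combchar} together with LEX-shellability and Philip Hall's theorem to reduce to a M\"obius computation, uses the same order-ideal property of non-$\omega$-integral partitions to identify $\mu_{\hat{\Pi}_\omega}(\hat{0},\lambda)=\mu_{\Pi_r}(\hat{0},\lambda)$ for non-integral $\lambda$, and then substitutes the classical formula $\mu_{\Pi_r}(\hat{0},\lambda)=(-1)^{r-\ell(\lambda)}\prod_i(|\lambda_i|-1)!$; the only cosmetic difference is that the paper packages these steps as \cref{prop:countsphere} and handles the $d$-integral vanishing via \cref{rmk:commentformula}(ii) rather than via the rising factorial identity you cite.
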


By direct inspection, we show that the sum in \eqref{eq:coefficient} vanishes when $\omega$ is an integral vector. Otherwise, we prove that $\Delta(\overline{\Pi}_{\omega_{\underline{n}}(d)})$ contains at least a sphere, which by \cref{thm:combchar} forces the sum in \eqref{eq:coefficient} to be positive; see \cref{positivityofsum}. These facts together gives \cref{thm:Hodge-to-singular-for-supports}.

\subsection{Strategy}\label{sec:strategy}
The key input for the proof of the previous statements is that any local system $\mathscr{L}_{\underline{n}}(d)$ arises from a graphical zonotope.

According to the decomposition theorem for the Hitchin fibration (\cref{thm:Ngostring}), the local system $\mathscr{L}_{\underline{n}}(d)$ are direct summands of $R^{2c(n,g)} \chi(n,d)_*\QQ_{\MDol(n,d)}$. The stalk of $R^{2c(n,g)} \chi(n,d)_*\QQ_{\MDol(n,d)}$ at the point $a \in A^{\mathrm{red}}_{n}$ has a basis indexed by the irreducible components of $\chi(n,d)^{-1}(a)$. Via the BNR-correspondence, $\chi(n,d)^{-1}(a)$ is a compactified Jacobian parametrising semistable rank 1 torsion free sheaves on the spectral curve $C_{a}$; see \S\ref{sec:Hitchincompactified}. In particular, the irreducible components of the compactified Jacobian of a reduced spectral curve are indexed by the degrees of the restrictions of stable line
bundles to the irreducible components of $C_{a}$; see \cref{prop:compJacobiansI}. The stability condition imposes  numerical inequalities on the multidegree of a line bundle on $C_{a}$, which are defining equations for the graphical zonotope $Z_{\Gamma}+\omega$ for a suitable graph $\Gamma$ and translation vector $\omega$; see \cref{thm:geomvscombin}.  
\begin{figure}[t]\begin{center}
\begin{tikzpicture}[scale=0.7]
\node (a) at (-4, -3) {\color{red} $M(n,0)$};
\node  at (-5.25, -4.5) {\color{red} $\chi(n,0)$};
\node (b) at (-4, -6) {$A_{n}$};
\node (c) at (10, -3) {\color{blue}  $M(n,1)$};
\node  at (11.25, -4.5) {\color{blue} $\chi(n,1)$};
\node (d) at (10, -6) {$A_{n}$};
\draw[->] (a)--(b);
\draw[->] (c)--(d);
  \draw (1,0)--(2,0)--(2,1)--(1,2)--(0,2)--(0,1)--(1,0);
  \filldraw[black] (0,0) circle (3pt);
  \filldraw[black] (1,0) circle (3pt);
 \filldraw[black] (2,0) circle (3pt);
  \filldraw[black] (0,1) circle (3pt);
  \filldraw[red] (1,1) circle (3pt);
 \filldraw[black] (2,1) circle (3pt);
   \filldraw[black] (0,2) circle (3pt);
   \filldraw[black] (1,2) circle (3pt);
  \filldraw[black] (2,2) circle (3pt);
  \draw[dashed] (5,0)--(6,0)--(6,1)--(5,2)--(4,2)--(4,1)--(5,0);
    \draw (5+0.3,0+0.3)--(6+0.3,0+0.3)--(6+0.3,1+0.3)--(5+0.3,2+0.3)--(4+0.3,2+0.3)--(4+0.3,1+0.3)--(5+0.3,0+0.3);
    \filldraw[black] (4,0) circle (3pt);
    \filldraw[black] (5,0) circle (3pt);
   \filldraw[black] (6,0) circle (3pt);
     \filldraw[black] (4,1) circle (3pt);
        \filldraw[blue] (5,1) circle (3pt);
       \filldraw[blue] (6,1) circle (3pt);
        \filldraw[black] (4,2) circle (3pt);
           \filldraw[blue] (5,2) circle (3pt);
          \filldraw[black] (6,2) circle (3pt);
\end{tikzpicture}\\
\vspace{-3.75 cm}
\includegraphics[width=180pt, trim={0 0 0 0}, clip]{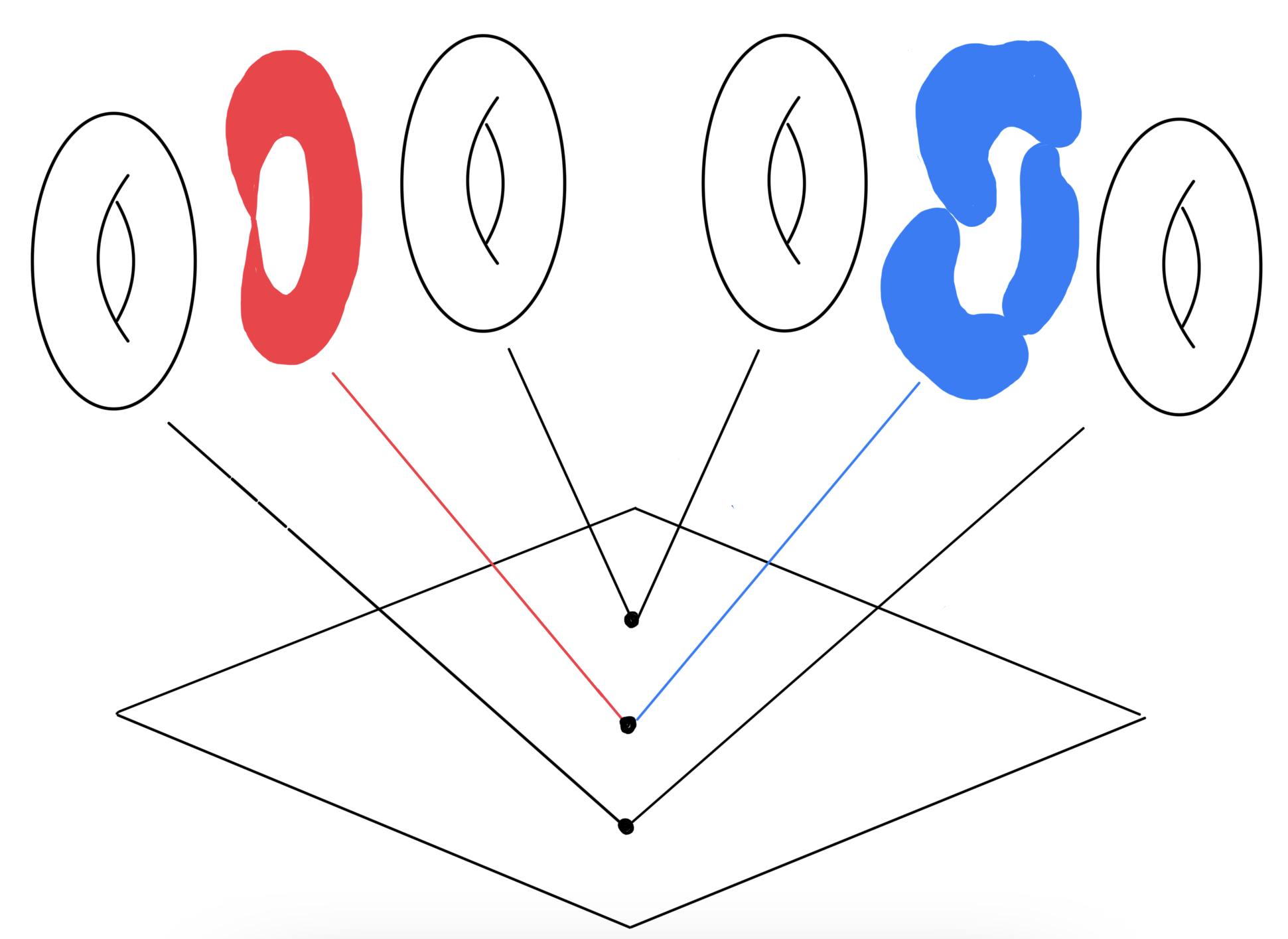}\\
\end{center}
\caption{The number of irreducible components of the Hitchin fibration $\chi(n,d) \colon \MDol(n,d) \to A_n$ is the number of lattice points in the interior of a graphical zonotope translated by a vector depending on the degree. The picture does not correspond to any particular Dolbeault moduli space: the fibers are usually higher dimensional, and the components much more numerous - we shave them off for illustrative purposes.}
\end{figure}
A crucial ingredient is the following combinatorial version of the decomposition theorem.

\begin{thm}[\cref{thm:repr}; Combinatorial decomposition theorem]
  For any graph $\Gamma$ and for any $\Aut(\Gamma)$-invariant vector $\omega \in \QQ^r$, $\mathcal{C}(Z_{\Gamma}+\omega)$ is the rational vector space with basis the set of integral points of the interior of the zonotope $Z_{\Gamma}+\omega$. The following $\Aut(\Gamma)$-representations are isomorphic
    \begin{equation}\label{eq:representationsII}
        \mathcal{C}(Z_{\Gamma}+\omega)= \mathcal{C}(Z_{\Gamma}) \oplus \bigoplus_{\underline{S} \in \Fl_\omega /\Aut(\Gamma)} \Ind_{\Stab (\underline{S})}^{\Aut (\Gamma)} \big( \alpha_{\underline{S}} \otimes \widetilde{H}_{\ell(\underline{S})-3}(\Delta(\overline{\Fl}_{\omega, \geq \underline{S}})) \otimes \mathcal{C}(Z_{\Gamma_{\underline{S}}})\big).
    \end{equation}
\end{thm}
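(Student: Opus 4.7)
The plan is to construct an $\Aut(\Gamma)$-equivariant stratification of the interior lattice points of $Z_\Gamma + \omega$ indexed by the poset $\Fl_\omega \cup \{\hat{0}\}$, and then to recover \eqref{eq:representationsII} via equivariant Möbius inversion along this stratification.

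First, for each interior lattice point $p \in (Z_\Gamma + \omega)^{\circ} \cap \Z^r$ I would associate a canonical element $\underline{S}(p)$, where the value $\hat{0}$ records the case in which $p$ lies in a ``generic'' stratum identified with the interior of the untranslated $Z_\Gamma$ (this accounts for the leading summand $\mathcal{C}(Z_\Gamma)$), while a nontrivial value $\underline{S}(p) \in \Fl_\omega$ records that $p$ lies on the translate of a sub-zonotope corresponding to the graph contraction $\Gamma_{\underline{S}}$. This stratification is $\Aut(\Gamma)$-equivariant by construction, and the closed stratum indexed by $\underline{S}$ should carry a natural identification of its lattice-point contribution with $\mathcal{C}(Z_{\Gamma_{\underline{S}}})$ as a $\Stab(\underline{S})$-module.

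Next, inclusion--exclusion on the upper set $\overline{\Fl}_{\omega, \geq \underline{S}}$ expresses the open stratum at $\underline{S}$ as $\mathcal{C}(Z_{\Gamma_{\underline{S}}})$ weighted by Möbius contributions coming from flats above $\underline{S}$. Since $\hat{\Pi}_{\omega_{\underline{n}}(d)}$ --- and hence each upper interval $\overline{\Fl}_{\omega, \geq \underline{S}}$ --- is LEX-shellable by \cref{prop:LEXshell}, the order complex $\Delta(\overline{\Fl}_{\omega, \geq \underline{S}})$ is a wedge of spheres of dimension $\ell(\underline{S})-3$, so only the top reduced homology $\widetilde{H}_{\ell(\underline{S})-3}(\Delta(\overline{\Fl}_{\omega, \geq \underline{S}}))$ survives, with a sign twist $\alpha_{\underline{S}}$ encoding the action of $\Stab(\underline{S})$ on the orientation of the top cells. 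Grouping the resulting sum over $\underline{S} \in \Fl_\omega$ according to its $\Aut(\Gamma)$-orbits and applying the elementary identity $\bigoplus_{\underline{S}' \in \Aut(\Gamma)\cdot \underline{S}} V_{\underline{S}'} \simeq \Ind_{\Stab(\underline{S})}^{\Aut(\Gamma)} V_{\underline{S}}$ then produces the right-hand side of \eqref{eq:representationsII}.

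The main obstacle is to upgrade the a priori character-level Möbius inversion to an honest isomorphism of $\Stab(\underline{S})$-modules with the correct sign twist $\alpha_{\underline{S}}$: a naive count of lattice points yields only the character identity, so one must exhibit an equivariant chain-level model --- most naturally through a basis of the top homology built from the decreasing chains of the LEX-shelling, or from a broken-circuit-type combinatorial description --- on which $\Stab(\underline{S})$ acts by sign-twisted permutations as predicted. A secondary subtlety is verifying the $\Stab(\underline{S})$-equivariance of the identification of the $\underline{S}$-stratum with $\mathcal{C}(Z_{\Gamma_{\underline{S}}})$, which hinges on the functoriality of the contraction $\Gamma \leadsto \Gamma_{\underline{S}}$ under the stabilizer action.
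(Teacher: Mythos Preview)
Your proposal has a genuine gap at the very first step. You posit an $\Aut(\Gamma)$-equivariant stratification of $\mathrm{Int}(Z_\Gamma+\omega)\cap\ZZ^r$ whose ``generic'' stratum is identified with $\mathrm{Int}(Z_\Gamma)\cap\ZZ^r$, but no such inclusion exists: for a non-integral $\omega$ the two sets of lattice points are simply different, and there is no canonical injection of one into the other. The identity \eqref{eq:claim} in \S\ref{sec:graphicalzonotopes} is not a stratification formula but a \emph{virtual} identity obtained by M\"obius inversion of the Ehrhart quasi-polynomial; the coefficients $\sum_{\underline{T}\geq\underline{S},\ \underline{T}\ \omega\text{-integral}}\mu_{\Fl}(\underline{S},\underline{T})$ can be zero or negative, so they cannot be read off from a disjoint decomposition of lattice points. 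Consequently your inclusion--exclusion on $\overline{\Fl}_{\omega,\geq\underline{S}}$ has no geometric carrier, and the obstacle you flag (upgrading the character identity to an honest isomorphism via a chain-level model) is not a secondary technicality but the entire content of the theorem.

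The paper takes a completely different route. It proves equality of characters directly: since $\QQ[\Aut(\Gamma)]$-modules are determined by their characters, it suffices to show that for every $\sigma\in\Aut(\Gamma)$ the traces of $\sigma$ on both sides of \eqref{eq:representations} agree. After a standard manipulation of induced characters and \cref{lemma:inv_order_complex}, this reduces to the numerical identity \eqref{eq:num_identity_invariant}, which compares lattice-point counts in the \emph{fixed locus} $Z_\Gamma^\sigma$. The crucial point---and the reason \S\ref{sec:zonotope} treats arbitrary (non-graphical) zonotopes---is that $Z_\Gamma^\sigma$ is described in \cref{lem:fixedlocus} as a zonotope in $\RR^{|\sigma|}$ that is in general not graphical and is shifted by a half-integral vector $t_\sigma$. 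The bulk of the argument (\S6.2, Lemmas~\ref{lem:all_or_nothing}--\ref{lem:integralHals}) tracks exactly when these half-integers interact with the $\omega$-integrality conditions, and \cref{lemma:alfa_S_sigma} shows that this parity data is precisely what the twist $\alpha_{\underline{S}}(\sigma)$ records. None of this structure is visible from your stratification picture, and the sign $\alpha_{\underline{S}}$ is not merely an ``orientation of top cells'' but encodes the orientation character $o_{\Gamma^{\underline{S}}}$ of the contracted graph together with the sign of the vertex permutation.
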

We refer to \S \ref{sec:permutation} for details about the notation. Here we emphasis the formal analogy between \eqref{eq:effectivedt} and \eqref{eq:representationsII}. Indeed, in the same way as $R \chi(n,d)_* \IC(\MDol(n,d), \QQ)|_{A^{\mathrm{red}}_{n}}$ is a direct sum of Ng\^{o} strings  $\mathscr{S}(\mathscr{L}_{\underline{n}}(d))$ indexed by $d$-integral partitions $\underline{n}$ of $n$, the number of lattice points in a translated zonotope is the sum of the lattice points contained in the faces of some non-translated faces of the zonotopes corresponding to $d$-integral partitions too. 

\subsection{Other work and broad context}\label{sec:otherwork}
\cref{thm:effectivedecomposition} is a refinement of the celebrated Ng\^{o} support theorem \cite[\S 7]{Ngo2010}. Let $A^{\mathrm{ell}}_n$ be the locus of irreducible characteristic polynomials. In our context, Ng\^{o} support theorem asserts that $R \chi(n,d)_* \IC(\MDol(n,d), \QQ)|_{A^{\mathrm{ell}}_{n}}$ has full support, and it is independent of the degree $d$. In hindsight, we can now say that $A^{\mathrm{ell}}_n$ is the bigger open set of $A_n$ avoiding proper summands of $R \chi(n,d)_* \IC(\MDol(n,d), \QQ)$ as the degree $d$ varies. If $\gcd(n,d)=1$, \cref{thm:effectivedecomposition} is then the main result of \cite{deCataldoHeinlothMigliorini19}. It is not clear however how the explicit computation of the Cattani--Kaplan--Schmidt complex in \cite[\S 5]{deCataldoHeinlothMigliorini19} can be adapted in the singular setting, so here we pursue a different strategy. 

As pointed out in \cite[\S 1.2]{MM2022},  \cref{thm:effectivedecomposition} implies that $R \chi(n,d)_* \IC(\MDol(n,d), \QQ)|_{A^{\mathrm{red}}_{n}}$ depends only on the $\gcd(n,d)$, and so $\IH^*(M(n,d)^{\mathrm{red}}, \QQ)$ does. The independence of $H^*(M(n,e), \QQ)$ for $\gcd(n,e)=1$ has already been proved using vanishing cycle techniques in \cite[Theorem 1.1]{KK2021} or positive characteristic methods in \cite[Theorem 0.1]{dCMSZ2021}. It is worth remarking that the authors of \cite{dCMSZ2021} actually prove the stronger statement that the cohomology ring itself is independent of the degree. 

In the singular case, the dependence of $\IH^*(M(n,d), \QQ)$ on the degree has been investigated in \cite[Corollary 1.9]{MM2022}, and settled unconditionally in \cite[Corollary 14.9]{DHSM2022}, by studying the geometry and representation theory of local singularities of $\MDol(n,d)$. The viewpoint of our paper is instead global and combinatorial. Observe also that \cite{DHSM2022} does not prescribe the shape of the summands of $R \chi(n,d)_* \IC(\MDol(n,d), \QQ)$. After \cref{thm:effectivedecomposition}, the main open conjecture is whether there are summands supported on the complement of $A^{\mathrm{red}}_{n}$. As a corollary of \cite{DHSM2022}, it suffices to check it in degree zero, and this is indeed equivalent to the following generalization of \cref{thm:fullsupport}.
\begin{conj}[Full support conjecture]
The complex $R \chi(n,0)_* \IC(\MDol(n,0), \QQ)$ has full support on the whole basis of the Hitchin fibration.
\end{conj}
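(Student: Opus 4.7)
The plan rests on an observation already noted in the excerpt: by a corollary of \cite{DHSM2022}, it suffices to prove the conjecture at $d=0$, and combined with \cref{thm:fullsupport} the task reduces to showing that no summand of $R \chi(n,0)_* \IC(\MDol(n,0), \QQ)$ is supported on the non-reduced locus $A_n \setminus A_n^{\mathrm{red}}$. The strategy I would pursue is to extend the whole combinatorial framework developed in the paper, namely zonotopes, compactified Jacobians, and partition posets, from reduced to non-reduced spectral curves, and then to rerun the positivity-at-zero vanishing of \cref{prop:countsphere} in that setting.

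Concretely, I would stratify the non-reduced locus according to the multiplicities in the factorisation $\mathrm{char}(\phi) = \prod_i P_i^{m_i}$: each stratum is labelled by a pair $(\underline{m}, \underline{n})$ with $\underline{m}=(m_1,\ldots,m_r)$ recording multiplicities and $\underline{n}=(\deg P_1,\ldots,\deg P_r)$. Over such a stratum, the spectral curve is non-reduced, and via an Esteves- or Simpson-type BNR correspondence, the Hitchin fiber is a compactified Jacobian of this curve, whose irreducible components should again be enumerated by lattice points in an explicit polytope built from the dual graph of the reduction weighted by $\underline{m}$. The goal is to identify the top direct image of $\chi(n,0)$ over this stratum with an equivariant combinatorial object of the same shape as $\mathcal{C}(Z_\Gamma+\omega)$, and then to mimic the M\"obius-type decomposition \eqref{eq:representationsII} so as to conclude that, at $d=0$, the associated local system reduces to the pullback from the deepest stratum and contributes no new summand.

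The main obstacle will be the combinatorial identification for non-reduced spectral curves. For reduced curves, the stability inequalities cut out a graphical zonotope; but for non-reduced curves one has to track both the multidegree on the reduction and the nilpotent thickenings, and the resulting polytope is genuinely new, not obviously of zonotopal type. A further delicate point is controlling the monodromy of these new local systems along the strata and showing that, at $d=0$ where the slope vector $\omega$ becomes integral, the analogue of \cref{prop:countsphere} still vanishes, equivalently, that the order complex of the relevant weighted partition poset remains contractible. An alternative route, which sidesteps the non-reduced combinatorics, would be to combine the degree-independence results of \cite{dCMSZ2021} with a specialisation of decomposition complexes from nearby reduced spectral curves, controlling the vanishing cycles at the non-reduced boundary; but in either strategy the essential difficulty is understanding the Hitchin fiber at the deepest non-reduced strata, where it degenerates toward a nilpotent cone.
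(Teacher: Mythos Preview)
The statement you are addressing is a \emph{conjecture} in the paper, not a theorem; the paper does not prove it and explicitly flags it as the main open problem remaining after \cref{thm:effectivedecomposition}. So there is no proof in the paper to compare your proposal against.

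What you have written is a research outline, not a proof. You correctly identify the missing locus (non-reduced spectral curves) and correctly name the obstacles, but you do not resolve any of them. In particular: (i) no BNR-type correspondence for non-reduced spectral curves with a moduli-theoretic identification of the Hitchin fibre as a compactified Jacobian is established (this is genuinely hard; torsion-free sheaves on non-reduced curves are not governed by multidegree data alone, and the ``polytope'' you allude to is not known to exist in any form that would make the zonotope machinery applicable); (ii) you give no argument that the relevant local systems at $d=0$ vanish on the non-reduced strata, only the hope that an analogue of \cref{prop:countsphere} might hold for some yet-undefined weighted partition poset; (iii) the alternative route via \cite{dCMSZ2021} and vanishing cycles is only gestured at, and controlling the decomposition theorem across the non-reduced boundary (especially near the nilpotent cone) is precisely the difficulty, not a step one can assume.

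A small correction: you write that ``it suffices to prove the conjecture at $d=0$'', but the conjecture is already stated for $d=0$; the reduction via \cite{DHSM2022} that the paper mentions is from arbitrary degree to degree zero, which is how one arrives at this formulation in the first place.
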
 In other words, we ask whether the complex $R\chi(n,0)_*\IC(\MDol(n,0), \QQ)$ is determined by its restriction to any open subsets of $A_{n}$, in particular the locus over which the Hitchin fibration is smooth. A positive answer to the full support conjecture would imply that $\IH^*(\MDol(n,0), \QQ)$ should be regarded as the primitive indecomposable building block of the cohomology of any other Dolbeault moduli space $\MDol(n,d)$.


The key geometric fact used in this paper is the characterization of the fibres of the Hitchin fibration as compactified Jacobian. In fact, the same proof of \cref{thm:effectivedecomposition} works if we replace the Hitchin system with other families of compactifid Jacobian, e.g.\ the Mukai systems on moduli space
of sheaves on K3 or abelian surfaces of Picard rank one; cf \cite[\S 10.3]{MM2022}. In particular, we obtain the following equivariant description of the irreducible components of compactified Jacobians, which is a result of independent interest, and generalizes a classical result by Raynaud \cite[Prop. 8.1.2]{Raynaud1970} and Melo--Rapagnetta--Viviani \cite[Cor. 5.14]{MRV2017} for fine compactified Jacobian. We refer to \S \ref{sec:Hitchincompactified} for details on the terminology.

\begin{defn}
 Let $X= \bigcup_{i \in I} X_i$ be a nodal curve with smooth irreducible components and dual graph $\Gamma(X)$, and $X' \to X$ be a partial normalization of $X$. 
 Note that the restriction $\Pic(X)\to \Pic(X')$ is always surjective, and the kernel is an algebraic torus of dimension $H^1(\Gamma(X), \Gamma(X'), \ZZ)$. We say that $X$ has no tails if the dual graph $\Gamma(X)$ does not collapse onto any proper subgraph, or equivalently $\Pic(X) \not \simeq \Pic(X')$ for any partial normalization $X'$ of $X$.
\end{defn}

\begin{thm} [Irreducible components of compactified Jacobians]
Let $X = \bigcup_{i \in I} X_i$ be a nodal curve with no tails and dual graph $\Gamma$. The top cohomology of the compactified Jacobian $\overline{J}_{\qb}(X)$ is the $\Aut(\Gamma)$-representation $\mathcal{C}(Z_{\Gamma}+\qb)$, i.e.\
\[H^{\mathrm{top}}(\overline{J}_{\qb}(X), \QQ) \simeq_{\Aut(\Gamma)} \mathcal{C}(Z_{\Gamma}+\qb).\]
\end{thm}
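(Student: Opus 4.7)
The strategy is to identify a basis of $H^{\mathrm{top}}(\overline{J}_{\qb}(X), \QQ)$ with the set of integral points in the interior of the translated zonotope $Z_{\Gamma}+\qb$, in an $\Aut(\Gamma)$-equivariant way. Since $\overline{J}_{\qb}(X)$ is a projective variety of pure dimension, its rational top cohomology has a canonical basis given by the fundamental classes of its top-dimensional irreducible components, and the $\Aut(\Gamma)$-action on $H^{\mathrm{top}}$ coincides with the permutation representation on this set of components. It therefore suffices to produce a natural $\Aut(\Gamma)$-equivariant bijection between these components and the interior integral points of $Z_{\Gamma}+\qb$.

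For the first step I would invoke the classical description of irreducible components of compactified Jacobians of reduced nodal curves due to Oda--Seshadri, and its subsequent refinements by Raynaud and by Melo--Rapagnetta--Viviani: the top-dimensional components of $\overline{J}_{\qb}(X)$ are indexed by the $\qb$-stable multidegrees $\underline{d}\in\ZZ^{I}$, i.e., by integral vectors satisfying the strict stability inequalities attached to every proper connected subcurve $W\subset X$. Semistable but not strictly stable multidegrees index components of strictly smaller dimension living on partial normalizations of $X$, so they do not contribute to $H^{\mathrm{top}}$. The "no tails" hypothesis on $\Gamma$ is exactly what ensures that this dichotomy is clean, namely that no partial normalization can supply a spurious top-dimensional component.

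For the second step I would invoke \cref{thm:geomvscombin} to translate the system of $\qb$-stability inequalities into the defining inequalities of the graphical zonotope $Z_{\Gamma}+\qb$, so that strict $\qb$-stability becomes exactly the condition of lying in the open interior of the zonotope. Combining this with the first step yields the desired set-theoretic bijection between a basis of $H^{\mathrm{top}}(\overline{J}_{\qb}(X),\QQ)$ and the basis of $\mathcal{C}(Z_{\Gamma}+\qb)$ by interior integral points.

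The last step, and in my view the main one, is to verify $\Aut(\Gamma)$-equivariance. Each $\sigma\in\Aut(\Gamma)$ admits a lift to an automorphism of $X$ permuting the components $X_i$ according to $\sigma$; any such lift induces an automorphism of $\overline{J}_{\qb}(X)$ that sends the component labeled by $\underline{d}$ to the one labeled by $\sigma\cdot\underline{d}$, which is precisely the action defining the $\Aut(\Gamma)$-structure on $\mathcal{C}(Z_{\Gamma}+\qb)$. The subtlety to address is independence of the choice of lift: two lifts differ by an automorphism of $X$ fixing each $X_i$ pointwise up to isomorphism, and such automorphisms form a connected algebraic group, which therefore acts trivially on the discrete set $\pi_0$ of irreducible components of $\overline{J}_{\qb}(X)$. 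This yields the desired well-defined $\Aut(\Gamma)$-equivariant isomorphism and concludes the proof.
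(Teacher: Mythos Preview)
Your first two steps are exactly the paper's argument: it uses the Oda--Seshadri description of rank~1 torsion-free sheaves on a nodal curve as pushforwards of line bundles from partial normalizations, invokes the ``no tails'' hypothesis to conclude that line bundles are dense in $\overline{J}_{\qb}(X)$ (so that the top-dimensional components are all labeled by stable multidegrees, as in \cref{prop:compJacobiansI}), and then runs the computation of \cref{thm:geomvscombin} to match the stability inequalities with the interior of $Z_{\Gamma}+\qb$.

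Your third step, however, contains a genuine error. The claim that every $\sigma\in\Aut(\Gamma)$ lifts to an automorphism of $X$ is false for a general nodal curve: if two components $X_i,X_j$ are non-isomorphic but have the same incidence pattern in $\Gamma$, the transposition $(i\,j)\in\Aut(\Gamma)$ admits no lift to $\Aut(X)$. (Your follow-up claim that two lifts differ by an element of a connected algebraic group is also not right, but this is moot once existence fails.) The $\Aut(\Gamma)$-structure on $H^{\mathrm{top}}(\overline{J}_{\qb}(X),\QQ)$ is not defined via such lifts. Rather, the multidegree map gives a \emph{canonical} bijection between top-dimensional components and $\qb$-stable multidegrees in $\ZZ^{I}$; the $\Aut(\Gamma)$-action on the latter set then transports to an action on components, and this is the representation appearing in the theorem. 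With this interpretation equivariance is automatic: the affine change of variables $x_i=\chi(\mathcal{I}_i)-(1-g)n_i$ in the proof of \cref{thm:geomvscombin} is visibly $\Aut(\Gamma)$-equivariant. So your step~3 should be replaced by this one-line observation, and the lift discussion dropped.
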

\begin{proof}
As $X$ is nodal, rank 1 torsion free sheaves are pushforward of line bundles from partial normalization of $X$; see for instance \cite[Proposition 10.1]{OS1979} or \cite[Theorem]{Cook93}. Since $X$ has no tails, the locus of pushforward of line bundles from non-trivial partial normalizations of $X$ is a proper subvariety of $\overline{J}_{\qb}(X)$, or equivalently line bundles are dense in $\overline{J}_{\qb}(X)$. Then the same proof of \cref{prop:compJacobiansI}.\eqref{item} and \eqref{item2} and \cref{thm:geomvscombin} gives the result.
\end{proof}

\subsection{Outline} The paper is divided into two parts: a combinatorial half (from \S\ref{sec:shellability} to \S\ref{sec:permutation}) and an algebro-geometric half (from \S\ref{sec:Hitchincompactified} to \S\ref{sec:mainresults_proof}). 
\begin{itemize}
    \item In \S \ref{sec:notation} we fix the notation of partitions, flats and posets used all over the text. 
    \item In \S \ref{sec:shellability} we prove the LEX-shellability of some posets of flats, see \cref{prop:LEXshell}, and compute the Betti numbers of the order complex $\Delta(\overline{\Pi}_{\omega})$; see \cref{prop:countsphere}. This are combinatorial results of independent interest.
    \item In \S \ref{sec:zonotope} we recall a formula due to Ardila, Beck and McWhirter to compute the number of lattice points in the interior of a translated zonotope; see \cite[Proposition 3.1]{ABM20}. We apply M\"{o}bius inversion to obtain a numerical version of the combinatorial decomposition theorem, namely \cref{thm:countformula}.
    \item In \S \ref{sec:graphicalzonotopes} we specialize the previous count to graphical zonotopes of a complete graph with multiple edges, and realize that the same combinatorial pattern govern both the coefficients of the lattice point count formula and the numbers of spheres of the order complex $\Delta(\overline{\Pi}_{\omega})$; see \cref{claim}.
    \item In \S \ref{sec:permutation} we promote the previous numerical identities to an isomorphism of representations. This is the most technical and delicate part of the paper. We achieve this by comparing characters, which in turns boils down to compute lattice points of fixed loci of automorphisms in graphical zonotope. In general, the fixed loci are no longer graphical zonotopes, and this is why in \S \ref{sec:zonotope} we work with arbitrary zonotopes.
    \item In \S \ref{sec:Hitchincompactified} we establish the relation between irreducible components of the fibers of the Hitchin fibrations and lattice point count in graphical zonotopes as described in the Strategy \ref{sec:strategy}.
    \item After an analysis of the stalks of Ng\^{o} strings in \S \ref{sec:Ngo}, we can finally prove \cref{thm:fullsupport} and \cref{thm:combchar} in \S \ref{sec:mainresults_proof}.
\end{itemize}

\subsection{Acknowledgement} We warmly thank Mark de Cataldo, Giulia Sacc\`{a}, Filippo Viviani, 
 and Giovanni Paolini. 
MM was supported by the Max Planck Institute for Mathematics, the University of Michigan and the Institute of Science and Technology Austria. This project has received funding from the European Union’s Horizon 2020 research and innovation
programme under the Marie Skłodowska-Curie grant agreement No 101034413. LM and RP were supported by PRIN Project 2017YRA3LK "Moduli and Lie theory". RP was partially supported by H2020 MCSA RISE project GHAIA (n.\ 777822).

\color{black}

\section{Notation}\label{sec:notation}

\subsection{Partitions of a set or of an integer}

\begin{defn}
 Let $I$ be a finite set. A \textbf{partition} of $I$ of length $\ell(\underline{S})=l$ is a set $\underline{S} \coloneqq \{S_1, \ldots, S_l\}$ of $l$ disjoint subsets $S_i \subseteq I$ such that $I = \bigsqcup^l_{i=1} S_i$. 
 
 Let $\underline{S}'$ be another partition of $I$. We say that $\underline{S} \leq \underline{S}'$ if any $S'_j$ is the union of some $S_i$.
 
 The \textbf{poset of partitions} of $I$ is denoted $(\Pi(I), <)$, or simply by $\Pi$. The minimum $\hat{0}$ of $\Pi$ is the collection of the singletons of $I$, while the maximum $\hat{1}$ is the trivial partition $\{I\}$ consisting of the single block $I$.
\end{defn}
\begin{defn}\label{defn:partition}
Let $n \in \ZZ_{> 0}$. A \textbf{partition} $\underline{n}$ of $n$ of length $\ell(\underline{n})=r$ is a set $\{n_1, \ldots, n_r\}$ of positive integers such that $\sum^{r}_{i=1} n_i =n$. 

Let $\underline{m}$ be another partition of $n$ of length $s$. We say that $\underline{n}\geq \underline{m}$, if there exists a partition $\underline{S}$ of length $r$ of the set $\underline{m}$ such that 
\[ \underline{n}=\{n_1, \ldots, n_r\}, \qquad \underline{m}=\{m_1, \ldots, m_s\}, \qquad n_i = \sum_{j \in S_i} m_j.\]
In particular, denote $\underline{n}_i$ the partition $\{m_j : j \in S_i\}$ of $n_i$.

\end{defn}

\subsection{Flats of a graph}\label{sec:flatsgraph}
Let $\Gamma$ be a graph on the vertex set $V(\Gamma)=[r]$. Denote $E(\Gamma)$ the set of edges of $\Gamma$. 

\begin{defn}\label{def:graph}
 A \textbf{flat} $\underline{S} = \{ S_1, \ldots, S_{s} \}$ of $\Gamma$ is a partition of $V(\Gamma)$ with the property that $S_i$ are the vertices of a complete subgraph $\Gamma(S_i)$ of $\Gamma$. By abuse of notation we continue to call flat the set of edges of the subgraphs $\Gamma_i$, i.e.\ $\bigcup^s_{i=1} E(\Gamma(S_i))$. 
 
 The \textbf{poset of flats} of $\Gamma$, denoted $\Fl=\Fl(\Gamma)$, is the subposet of flats in the partition poset $(\Pi(V(\Gamma)), <)$. 
 \end{defn}
 
 Given a flat $\underline{S}$ of $\Gamma$, we can construct two associated graphs:
 \begin{itemize}
     \item $\Gamma_{\underline{S}} \coloneqq \bigsqcup^s_{i=1} \Gamma(S_i)$ is the subgraph of $\Gamma$ obtained by removing all edges whose vertices do not lie in $\underline{S}$;
     \item $\Gamma^{\underline{S}}$ is the \textbf{contracted (loopless) graph}, i.e.\ the graph obtained from $\Gamma$ by contracting all the edges whose vertices lie in $\underline{S}$ and then deleting all loops.
 \end{itemize}
 
 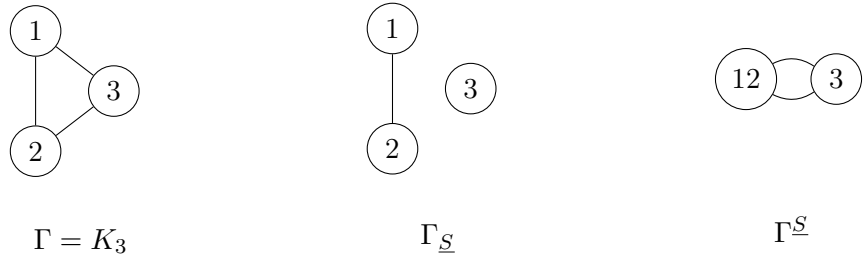
\begin{figure}
     \centering
     \begin{subfigure}[b]{0.3\textwidth}
     \centering
\begin{tikzpicture}[scale=0.8]
    \node[shape=circle,draw=black] (1) at (0,1) {1};
    \node[shape=circle,draw=black] (2) at (1.3,0) {3};
    \node[shape=circle,draw=black] (3) at (0,-1) {2};
    \path [-] (1) edge node[left] {} (2);
    \path [-](2) edge node[left] {} (3);
    \path [-](1) edge node[left] {} (3);
    \node[] () at (0.75,-2.5) {$\Gamma=K_3$};
\end{tikzpicture}
\end{subfigure}%
\begin{subfigure}[b]{0.3\textwidth}
\centering
\begin{tikzpicture}[scale=0.8]
    \node[shape=circle,draw=black] (1) at (0,1) {1};
    \node[shape=circle,draw=black] (2) at (1.3,0) {3};
    \node[shape=circle,draw=black] (3) at (0,-1) {2};
    \path [-](1) edge node[left] {} (3);
    \node[] () at (0.75,-2.5) {$\Gamma_{\underline{S}}$};
\end{tikzpicture}
\end{subfigure}%
\begin{subfigure}[b]{0.3\textwidth}
\centering
\begin{tikzpicture}[scale=0.8]
    \node[shape=circle,draw=black] (1) at (0,0) {12};
    \node[shape=circle,draw=black] (2) at (1.5,0) {3};
    \node[] at (0,-1.5) {};
    \path [-](1) edge[bend right] node[left] {} (2);
    \path [-](1) edge[bend left] node[left] {} (2);
    \node[] () at (0.75,-2.5) {$\Gamma^{\underline{S}}$};
    \node[] () at (0.75,-2.9) {};
\end{tikzpicture}
\end{subfigure}
\caption{Let $\Gamma$ be a complete graph with 3 vertices. The picture represents the graphs $\Gamma_{\underline{S}}$ and $\Gamma^{\underline{S}}$ associated to the flat $\underline{S}=12|3$.}
\end{figure}

We introduce an arithmetic condition on the flats of a graph that we call $\omega$-integrality.
\begin{defn}\label{defn:partitionsintegral} Let $\omega =(\omega_1, \ldots, \omega_r) \in \RR^r$.
A partition $\underline{S} = \{ S_1, \ldots, S_{l} \}$ of the set $[r]$ is \textbf{$\omega$-integral} if for all  $j=1, \dots, l$ we have 
\[\sum_{i \in S_j} \omega_i \in \Z.\] The poset $\Fl_{\omega}=\Fl_{\omega}(\Gamma) \subset \Fl(\Gamma)$ is the subposet of all non-$\omega$-integral flats of $[r]$.
\end{defn}
\begin{rmk}
Any partition $\underline{S}=\{S_1, \ldots, S_r\}$ of $\underline{m}$ is \textbf{$\omega_{\underline{m}}(d)$-integral} (cf eq.~\eqref{eq:vectoromega}) if and only if the corresponding partition $\underline{n} \coloneqq \{\sum_{j \in S_i} m_j \,|\, j=1, \ldots, r\}$ is $d$-integral.
 
 If the graph $\Gamma$ contains a complete graph on the vertices, then the poset of flats $\Fl(\Gamma)$ is the full poset of partitions of $V(\Gamma)$, denoted $(\Pi(V(\Gamma)), <)$. We simply write $\Fl=\Pi$. In particular, if $\gcd(d,n)=1$, then $\hat{\Fl}_{\omega_{\underline{n}}(d)} =\Fl=\Pi$; see also \cref{defn:terminologyposet}.
 On the other hand, if $d=0$, then $\Fl_{\omega}=\emptyset$.
\end{rmk}

\subsection{Posets}

\begin{defn}\label{defn:terminologyposet}
 A poset is \textbf{bounded} if it contains a least and greatest element $\hat{0}$ and $\hat{1}$. For a bounded poset $\mathcal{P}$ we denote by  $\overline{\mathcal{P}}=\mathcal{P} \setminus \{\hat{0}, \hat{1}\}$ and $\hat{\mathcal{P}}=\overline{\mathcal{P}} \cup \{\hat{0}, \hat{1}\}$. A poset is \textbf{pure} if all maximal chains have the same length. A poset is \textbf{graded} if it is finite, bounded and pure.
\end{defn}

\begin{defn}
 Given a poset $(\mathcal{P}, <)$ and $x,y \in \mathcal{P}$, the symbol $x \lessdot y$ denotes a \textbf{covering relation} and it means that $x<y$ and there is no $z \in \mathcal{P}$ such that $x<z<y$.
\end{defn}

Recall that the \textbf{M\"obius function} $\mu$ of a finite poset $\mathcal{P}$ assigns an integer to each interval $[x,y] \subseteq \mathcal{P}$ such that for any $x,y \in \mathcal{P}$ we have
\[\sum_{x\leq z\leq y}\mu(z,y)=\begin{cases} 1 \quad & \text{if }x=y,\\
0 & \text{otherwise}.
\end{cases}\]

\section{Shellability of certain posets of partitions}\label{sec:shellability}

\subsection{LEX-shellability}
Let $\omega =(\omega_1, \ldots, \omega_r) \in \RR^r$ such that $\sum_{i=1}^r \omega_i \in \ZZ$. The main result of this section is the LEX-shellability of the poset $\hat{\Fl}_\omega$ of non-$\omega$-integral partitions; see \cref{defn:partitionsintegral}. The notion of
LEX-shellability was first introduced in \cite{Kozlov1997}; see also \cite[\S 12.2.3]{KozlovBook}.
For the convenience of the reader, we recall here the definition.

\begin{defn}
Let $\mathcal{P}$ be a graded poset, and  $E(\mathcal{P}) = \{(x,y) \in \mathcal{P}\times\mathcal{P} | \, x \lessdot y\}$ be the set of covering relations of $\mathcal{P}$.

An \textbf{edge labelling} of $\mathcal{P}$ is a function $l \colon E(\mathcal{P}) \to \Lambda$, where $\Lambda$ is a poset. For any maximal chain $c =(x=x_0 \lessdot x_1 \lessdot \ldots \lessdot x_{k-1} \lessdot x_k = y)$ we consider the associated $k$-uple $l(c) \coloneqq (l(x_0 \lessdot x_1), l(x_1 \lessdot x_2), \ldots,  l(x_{k-1} \lessdot x_{k})) \in \Lambda^{k}$. 
A chain $c$ is less than a chain $d$ if $l(c)$ lexicographically precedes $l(d)$.
\end{defn}

\begin{defn}\label{defn:LEXshellability}
    An edge labelling is a \textbf{LEX-labelling} if the following condition holds: for any interval $[x,t]$, any maximal chain $c$ in $[x,t]$, and any $y,z \in c$, with $x<y<z<t$, if $c_{|[x,z]}$ and $c_{|[y,t]}$ are lexicographically least in $[x,z]$ and $[y,t]$ respectively, then $c$ is lexicographical least in $[x,t]$.

    A poset that admits a LEX-labelling is \textbf{LEX-shellable}.
\end{defn}


In \cref{defn:LEXlabel} we define an edge labelling of $\hat{\Fl}_\omega(\Gamma)$, and then prove that it is indeed a LEX-labelling.

\begin{lem}
    If the graph $\Gamma$ is a forest, 
    then there exists a unique minimal $\omega$-integral flat of $\Gamma$.
\end{lem}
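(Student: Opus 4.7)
The plan is to identify $\Fl(\Gamma)$ with a boolean lattice, characterize the $\omega$-integral flats inside it as a principal filter, and read off the unique minimal element. Since every connected subgraph of a forest is a subtree, any flat $\underline{S}$ is completely determined by its set of ``internal'' edges $E_I \subseteq E(\Gamma)$ — those edges with both endpoints in a common block; conversely any $E_I \subseteq E(\Gamma)$ determines a flat whose blocks are the connected components of $(V(\Gamma), E_I)$. This identifies $\Fl(\Gamma)$ with the boolean lattice $2^{E(\Gamma)}$ ordered by inclusion, and I may work one connected component at a time, reducing to the case where $\Gamma$ is a tree.

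The global hypothesis $\sum_i \omega_i \in \ZZ$ then has the following key consequence. For each edge $e \in E(\Gamma)$, removing $e$ splits the tree into two subtrees $\Gamma_1^e$ and $\Gamma_2^e$; since the two partial sums add to an integer, they are either both integral or both non-integral. Set
\[
E_{\mathrm{odd}} \coloneqq \Bigl\{e \in E(\Gamma) \colon \sum_{i \in V(\Gamma_1^e)} \omega_i \notin \ZZ\Bigr\}.
\]
I claim that a flat with internal edges $E_I$ is $\omega$-integral if and only if $E_I \supseteq E_{\mathrm{odd}}$.

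Necessity is immediate: for any external edge $e \notin E_I$, each $\Gamma_j^e$ is a disjoint union of blocks of the partition, so $\omega$-integrality of the blocks forces $\sum_{i \in V(\Gamma_j^e)} \omega_i \in \ZZ$, i.e.\ $e \notin E_{\mathrm{odd}}$. Sufficiency I would prove by induction on the number of blocks, exploiting the existence of a leaf vertex $B$ in the contracted tree $\Gamma / E_I$: such a block is incident to a unique external edge $e \notin E_{\mathrm{odd}}$, so $\sum_{i \in B} \omega_i \in \ZZ$; stripping $B$ off reduces to a smaller tree where $\sum \omega_i$ is still an integer and the induction hypothesis applies. The conclusion follows immediately: the $\omega$-integral flats form the principal filter generated by $E_{\mathrm{odd}}$ in $2^{E(\Gamma)}$, whose unique minimum is $E_{\mathrm{odd}}$ itself. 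The only slightly subtle point is that the induction requires stripping off leaves of $\Gamma / E_I$ rather than of $\Gamma$, but since $\Gamma / E_I$ is again a tree this is automatic.
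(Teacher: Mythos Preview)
Your proof is correct and follows essentially the same approach as the paper: both reduce to a tree, define the candidate minimal flat as the set $E_{\mathrm{odd}}$ of edges whose removal yields non-integral partial $\omega$-sums, and observe that every $\omega$-integral flat must contain these edges. Your version is simply more detailed, spelling out the boolean-lattice identification and the leaf-stripping induction that the paper leaves implicit.
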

\begin{proof}
    We may assume that $\Gamma$ is a tree.
    The removal of an edge $a \in E(\Gamma)$ disconnects the tree $\Gamma$ into two connected components $\Gamma_a \sqcup \Gamma^c_a$.
    The flat
    \[\underline{F} \coloneqq \big\{a \in E(\Gamma) \, | \, \sum_{i\in V(\Gamma_a)}\omega_i \not \in \ZZ\big\}\]
    is $\omega$-integral, minimal and unique with these properties, since any $\omega$-integral flat contains the edges $a \in E(\Gamma)$ such that
    $\sum_{i\in V(\Gamma_a)}\omega_i \not \in \ZZ$.
\end{proof}

Let us fix a total order $\prec$ on the edges of the graph $\Gamma$.
For any flat $\underline{S}$ we will define a total order $\prec_{\underline{S}}$ on the edges of $\Gamma^{\underline{S}}$.

Let $T\coloneqq T({\underline{S}})$ be the spanning forest of $\Gamma^{\underline{S}}$ which is lexicographically minimal with respect to the order $\prec$, and $\underline{F} \coloneqq \underline{F}({\underline{S}})$ be the unique minimal $\omega$-integral flat of $\Fl(T({\underline{S}})) \subseteq \Fl_{\geq \underline{S}}$.
We define the total order $\prec_{\underline{S}}$ on the edges of $\Gamma^{\underline{S}}$ such that
\begin{enumerate}
    \item for any $e_1 \in E(T)_{\not \leq \underline{F}}$, $e_2 \in E(T)_{ \leq \underline{F}}$ and $e_3 \in E(\Gamma^{\underline{S}}) \setminus E(T)$ 
    \[e_1 \prec_{\underline{S}} e_2 \prec_{\underline{S}} e_3;\]
    \item for any $e_4, e_5 \in E(T)_{\not \leq \underline{F}}$ (respectively in $E(T)_{ \leq \underline{F}}$ or $E(\Gamma^{\underline{S}}) \setminus E(T)$), 
    \[e_4 \prec_{\underline{S}} e_5 \quad \text{ if and only if } \quad e_4 \prec e_5.\]
\end{enumerate}
\begin{defn}\label{defn:LEXlabel}
The poset $\hat{\Fl}_\omega = \hat{\Fl}_\omega(\Gamma)$ is endowed with the labelling \[l \colon E(\hat{\Fl}_\omega) \to \bigsqcup_{\underline{S} \in \mathcal{\hat{\Fl}_\omega}}(E(\Gamma^{\underline{S}}),  \prec_{\underline{S}})
\]
which sends any covering relation $\underline{S}=\{S_1, S_2, S_3, \ldots, S_s\} \lessdot \underline{T}=\{S_1 \cup S_2, S_3, \ldots, S_s\}$ to the minimum with respect to the order $\prec_{\underline{S}}$ among the edges between the set of vertices  $S_1$ and $S_2$, i.e.\
\[l(\underline{S} \lessdot \underline{T})= \min_{\prec_{\underline{S}}} \{ e \in E(\Gamma^{\underline{S}}) \mid e \in E(\Gamma^{\underline{S}}_{\underline{T}}) \}.\]
\end{defn}

\begin{figure}
\centering
\begin{tikzpicture}[scale=0.8]
    \node[] () at (-0.6,2.6) {\small $\frac{1}{2}$};
    \node[] () at (-0.6,-0.6) {\small $\frac{1}{2}$};
    \node[] () at (2.6,-0.6) {\small $\frac{1}{2}$};
     \node[] () at (2.6,2.6) {\small $\frac{1}{2}$};
    \node[shape=circle,draw=black] (1) at (0,2) {1};
    \node[shape=circle,draw=black] (2) at (0,0) {2};
    \node[shape=circle,draw=black] (3) at (2,0) {3};
    \node[shape=circle,draw=black] (4) at (2,2) {4};
    \path [-] (1) edge node[left] {a} (2);
    \path [-](2) edge node[below] {b} (3);
    \path [-](3) edge node[right] {c} (4);
    \path [-](4) edge node[above] {d} (1);
    \node[] () at (1,-1.5) {$\Gamma=\Gamma^{\hat{0}}$};
    \node[] () at (1,-2.5) {$a \prec b \prec c \prec d$};
    \node[] () at (4.4,2.6) {\small $\frac{1}{2}$};
    \node[] () at (4.4,-0.6) {\small $\frac{1}{2}$};
    \node[] () at (7.6,-0.6) {\small $\frac{1}{2}$};
     \node[] () at (7.6,2.6) {\small $\frac{1}{2}$};
    \node[shape=circle,draw=black] (5) at (5,2) {1};
    \node[shape=circle,draw=black] (6) at (5,0) {2};
    \node[shape=circle,draw=black] (7) at (7,0) {3};
    \node[shape=circle,draw=black] (8) at (7,2) {4};
    \path [-, line width=2 pt] (5) edge node[left] {a} (6);
    \path [-, line width=2 pt](7) edge node[right] {c} (8);
  \path [-](6) edge node[below] {b} (7);
     \node[] () at (6,-1.5) {$\underline{F}(\hat{0})=ac \subset T( \hat{0})=abc$};
    \node[] () at (6,-2.5) {$b \prec_{\hat{0}} a \prec_{\hat{0}} c  \prec_{\hat{0}} d$};
    \node[shape=circle,draw=black, label=above:{\small $1$}] (10) at (11,2) {14};
    \node[shape=circle,draw=black] (12) at (10,0) {2};
    \node[shape=circle,draw=black] (11) at (12,0) {3};
    \path [-](10) edge node[right] {c} (11);
    \path [-](11) edge node[below] {b} (12);
    \path [-](12) edge node[left] {a} (10);
    \node[] () at (11,-1.5) {$\Gamma^{\underline{d}}$};
        \node[shape=circle,draw=black, label=above:{\small $1$}] (13) at (16,2) {14};
    \node[shape=circle,draw=black] (15) at (15,0) {2};
    \node[shape=circle,draw=black] (14) at (17,0) {3};
    \node[] () at (17.6,-0.6) {\small $\frac{1}{2}$};
    \node[] () at (14.4,-0.6) {\small $\frac{1}{2}$};
     \node[] () at (12.6,-0.6) {\small $\frac{1}{2}$};
      \node[] () at (9.4,-0.6) {\small $\frac{1}{2}$};
    \path [-, line width=2 pt](14) edge node[below] {b} (15);
    \path [-](13) edge node[left] {a} (15);
      \node[] () at (16,-1.5) {$F(\underline{d})=b \subset T(\underline{d})=ab$};
      \node[] () at (16,-2.5) {$a \prec_{\underline{d}} b \prec_{\underline{d}} c$};
\end{tikzpicture}
\caption{Let $\Gamma$ be a square whose edges are ordered by $a \prec b \prec c \prec d$ where $a$ and $c$ are opposite edges. Set $\omega = (\frac{1}{2}, \frac{1}{2}, \frac{1}{2}, \frac{1}{2})$. Consider the flat $\hat{0}=1|2|3|4$ and $\underline{d}=14|2|3$. Then $b \prec_{\hat{0}} a \prec_{\hat{0}} c  \prec_{\hat{0}} d$ and $a \prec_{\underline{d}} b \prec_{\underline{d}} c$.}
\end{figure}
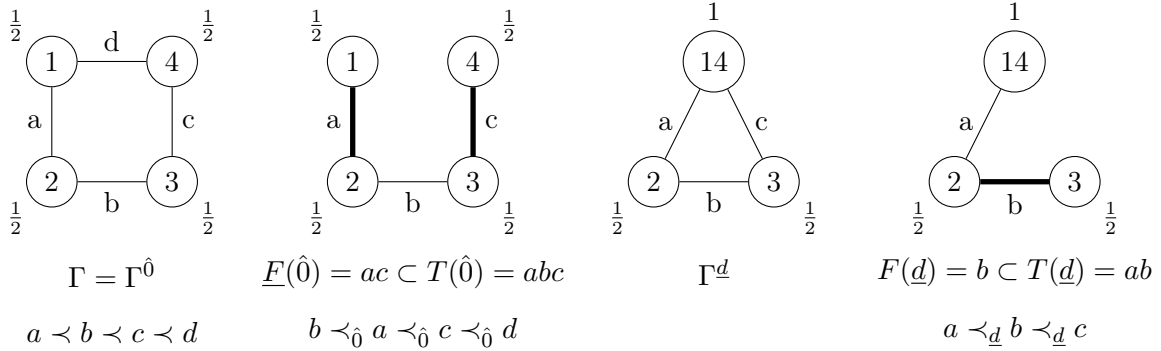

\begin{lem} \label{lemma:labelling_properties}
    Let $\Gamma$ be a graph with $r$ vertices, 
    and $\underline{S} < \underline{U}$ be two flats.
    Then $E(T({\underline{U}})) \subset E(T({\underline{S}}))$. 
    Moreover, if $a \in E(T({\underline{S}}))$ then $E(T({\underline{S} \vee a})) = E(T({\underline{S}})) \setminus \{a\}$ and 
    $\underline{F}({\underline{S} \vee a}) = \underline{F}({\underline{S}}) \vee a$.
\end{lem}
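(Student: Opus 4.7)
The three statements describe how the greedy spanning forest $T(\underline{S})$ and the minimal $\omega$-integral flat $\underline{F}(\underline{S})$ transform under the refinement $\underline{S}<\underline{U}$. My plan is to reduce the first two assertions to Kruskal's characterisation of the lex-min spanning forest, and then to deduce the third from the uniqueness and minimality of $\underline{F}$.

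For the inclusion $E(T(\underline{U}))\subseteq E(T(\underline{S}))$, I would use that $\Gamma^{\underline{U}}$ is obtained from $\Gamma^{\underline{S}}$ by contracting the set $E'\subseteq E(\Gamma^{\underline{S}})$ of edges whose endpoints lie in the same block of $\underline{U}$ and then deleting the resulting loops. By Kruskal, an edge $a$ belongs to $T(\underline{S})$ if and only if it does not close any cycle with $\{b\in E(\Gamma^{\underline{S}}):b\prec a\}$. If $a\in E(\Gamma^{\underline{U}})$ closes such a cycle $C$ in $\Gamma^{\underline{S}}$, then the projection of $C$ to $\Gamma^{\underline{U}}$ is a closed walk still passing through $a$ (its two endpoints remain distinct because $a$ survives in $\Gamma^{\underline{U}}$); extracting a simple cycle from this walk exhibits $a$ as closing a cycle in $\Gamma^{\underline{U}}$ using only edges of $E(\Gamma^{\underline{U}})$ strictly below $a$, so $a\notin T(\underline{U})$. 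Specialising to $\underline{U}=\underline{S}\vee a$ with $a\in T(\underline{S})$ this already gives $T(\underline{S}\vee a)\subseteq T(\underline{S})\setminus\{a\}$, since $a$ itself becomes a loop and so $a\notin E(\Gamma^{\underline{S}\vee a})$. Equality then follows by counting: $\Gamma^{\underline{S}\vee a}=\Gamma^{\underline{S}}/a$ has one vertex fewer than $\Gamma^{\underline{S}}$ and the same number of connected components (because $a$ already sits inside a tree component of $T(\underline{S})$), so every spanning forest of $\Gamma^{\underline{S}\vee a}$ has exactly $|T(\underline{S})|-1$ edges.

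For $\underline{F}(\underline{S}\vee a)=\underline{F}(\underline{S})\vee a$ I plan to argue by two matching inequalities. The direct image of $\underline{F}(\underline{S})\vee a$ along the contraction $V(\Gamma^{\underline{S}})\twoheadrightarrow V(\Gamma^{\underline{S}\vee a})$ is $\omega$-integral---joining with $a$ either does nothing or merges two blocks of $\underline{F}(\underline{S})$ whose $\omega$-sums are already integral---and each of its blocks is the vertex set of a subforest of $T(\underline{S}\vee a)=T(\underline{S})/a$, so it lies in $\Fl(T(\underline{S}\vee a))$; the minimality of $\underline{F}(\underline{S}\vee a)$ therefore forces $\underline{F}(\underline{S}\vee a)\leq \underline{F}(\underline{S})\vee a$. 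Conversely, pulling $\underline{F}(\underline{S}\vee a)$ back to a partition $\underline{G}$ of $V(\Gamma^{\underline{S}})$ produces an $\omega$-integral flat of $T(\underline{S})$ whose edge set contains $a$; the minimality of $\underline{F}(\underline{S})$ then gives $\underline{F}(\underline{S})\leq \underline{G}$, and combining with the atomic relation $a\leq \underline{G}$ yields $\underline{F}(\underline{S})\vee a\leq \underline{G}$. Projecting back to $V(\Gamma^{\underline{S}\vee a})$ produces the reverse inequality and completes the argument.

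The main bookkeeping will be to translate consistently between flats viewed as partitions and as edge sets, and between the three ambient posets $\Fl(T(\underline{S}))$, $\Fl(T(\underline{S}\vee a))$ and $\Fl(\Gamma)$; once this dictionary is in place the lemma follows from a short chain of greedy and minimality applications.
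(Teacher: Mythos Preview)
Your proposal is correct and follows essentially the same route as the paper. The paper's argument is terser---it simply notes that the image of $T(\underline{S})$ under the contraction $\Gamma^{\underline{S}}\to\Gamma^{\underline{U}}$ is spanning, hence contains the lex-min spanning forest of $\Gamma^{\underline{U}}$, and then for the third part observes directly that $\Fl(T(\underline{S}\vee a))=\Fl(T(\underline{S}))_{\geq a}$ and checks the two inequalities from minimality---whereas you make the Kruskal cycle-closing criterion and the pullback/pushforward bookkeeping explicit; but the underlying ideas are identical.
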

\begin{proof}
    The image of $E(T({\underline{S}}))$ under the contraction $\Gamma^{\underline{S}} \to \Gamma^{\underline{U}}$ is a spanning subgraph. In particular, the minimal spanning forest of $\Gamma^{\underline{U}}$ can be identified with a subset of $E(T({\underline{S}}))$.
    If $a \in E(T({\underline{S}}))$, then $E(T({\underline{S}})) \setminus \{a\}$ is a spanning forest in $\Gamma^{\underline{S} \vee a}$, and the equality $E(T({\underline{S} \vee a})) = E(T({\underline{S}})) \setminus \{a\}$ holds.
    Notice that $\Fl(T({\underline{S} \vee a}))= \Fl(T({\underline{S}}))_{\geq a}$, hence $\underline{F}({\underline{S} \vee a}) \geq  \underline{F}({\underline{S}}) \vee a$.
    On the other hand, $\underline{F}({\underline{S}}) \vee a$ is a $\omega$-integral flat in $\Fl(T({\underline{S} \vee a}))$, therefore by definition $\underline{F}({\underline{S} \vee a}) \leq  \underline{F}({\underline{S}}) \vee a$.
\end{proof}

\begin{thm} \label{prop:LEXshell}
    The labelling $l$ is a LEX-labelling for $\hat{\Fl}_\omega$. In particular, $\hat{\Fl}_\omega$ is LEX-shellable.
\end{thm}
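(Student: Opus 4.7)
The plan is to establish the LEX condition for $l$ by combining the uniqueness of the lexicographically least maximal chain in every interval of $\hat{\Fl}_\omega$ with the fact that restrictions of lex-least chains are again lex-least. Together these ingredients will force the LEX condition to hold.

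Concretely, I would first verify two elementary properties of the labelling: (i) each ordering $\prec_{\underline{S}}$ is a total order on $E(\Gamma^{\underline{S}})$, clear from \cref{defn:LEXlabel} since $\prec_{\underline{S}}$ concatenates restrictions of the global order $\prec$ to three disjoint priority classes; and (ii) distinct covers $\underline{S} \lessdot \underline{T}_1$ and $\underline{S} \lessdot \underline{T}_2$ carry distinct labels, as they correspond to merging different pairs of blocks of $\underline{S}$ and so the associated edge sets $E(\Gamma^{\underline{S}}_{\underline{T}_i})$ are pairwise disjoint, forcing their $\prec_{\underline{S}}$-minima to differ. From (i) and (ii), any two distinct maximal chains in a fixed interval $[x,t]$ first diverge at some step, where their labels lie in the same totally ordered set and are distinct, making the full label sequences lexicographically comparable. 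This gives a strict total order on maximal chains in $[x,t]$, hence a unique lex-least chain $c^*$.

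Next I would prove the restriction property: for any $z \in c^*$, the chain $c^*|_{[x,z]}$ is lex-least in $[x,z]$, because a strictly lex-smaller chain $d \in [x,z]$, extended by an arbitrary chain from $z$ to $t$, would produce a chain in $[x,t]$ strictly lex-smaller than $c^*$ (the first discrepancy with $c^*$ occurs already inside $[x,z]$), contradicting minimality of $c^*$; the symmetric argument on prefixes handles $c^*|_{[y,t]}$. With these in hand, the LEX condition is immediate: if $c \in [x,t]$ passes through $y,z$ with both $c|_{[x,z]}$ and $c|_{[y,t]}$ lex-least in their subintervals, uniqueness forces $c|_{[x,z]} = c^*|_{[x,z]}$ and $c|_{[y,t]} = c^*|_{[y,t]}$, and since $y < z$ implies $[x,t] = [x,z] \cup [y,t]$, these patch together to give $c = c^*$.

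The main obstacle lies in the prerequisite that $\hat{\Fl}_\omega$ be graded so that maximal chains have well-defined lengths. This requires the combinatorial observation, using the assumption $\sum_i \omega_i \in \ZZ$, that any non-integral flat $\underline{S}$ with $\ell(\underline{S}) \geq 3$ admits a non-integral one-merge above it (merge two blocks distinct from any fixed non-integral block of $\underline{S}$). Consequently the covers in $\hat{\Fl}_\omega$ between non-integral flats are covers in $\Pi$, while the covers of $\hat{1}$ in $\hat{\Fl}_\omega$ originate only from two-block non-integral flats, so every maximal chain from $\hat{0}$ to $\hat{1}$ has the uniform length $r - 1$. Lemma \ref{lemma:labelling_properties} is not logically needed for the abstract LEX conclusion above, but it remains the key structural tool underlying the greedy description of $c^*$ that will be important in subsequent sections.
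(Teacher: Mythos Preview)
Your argument has a genuine gap in the final step. You write ``uniqueness forces $c|_{[x,z]} = c^*|_{[x,z]}$ and $c|_{[y,t]} = c^*|_{[y,t]}$,'' where $c^*$ is the lex-least chain in $[x,t]$. But to even form $c^*|_{[x,z]}$ you need $z \in c^*$, and nothing in your argument guarantees this. The element $z$ lies on $c$, not a priori on $c^*$. Your properties (i), (ii), and the restriction property are all correct, but they do \emph{not} imply the LEX condition abstractly.

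Here is a small counterexample to your abstract claim. Take the graded poset on $\{\hat 0,a,b,c,d,\hat 1\}$ with covers $\hat 0\lessdot a,b$; $a\lessdot c,d$; $b\lessdot c$; $c,d\lessdot\hat 1$, and integer labels $l(\hat 0,a)=2$, $l(\hat 0,b)=1$, $l(a,c)=2$, $l(a,d)=1$, $l(b,c)=1$, $l(c,\hat 1)=2$, $l(d,\hat 1)=1$. Distinct covers from a common source carry distinct labels, so your (i) and (ii) hold and every interval has a unique lex-least chain. Yet for the chain $c=(\hat 0,a,d,\hat 1)$ with $y=a$, $z=d$, the restriction $c|_{[\hat 0,d]}$ is the unique (hence lex-least) chain in $[\hat 0,d]$, and $c|_{[a,\hat 1]}=(a,d,\hat 1)$ with label $(1,1)$ is lex-least in $[a,\hat 1]$; but $c$ has label $(2,1,1)$, while the lex-least chain $(\hat 0,b,c,\hat 1)$ has label $(1,1,2)$. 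So the LEX condition fails. The point is that $d\notin c^*$, which is exactly the configuration your argument cannot handle.

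The paper's proof is not abstract: it uses the specific structure of the labelling $l$. After reducing via \cite[Prop.~12.13]{KozlovBook} to the case $x\lessdot y\lessdot z<t$, it exploits \cref{lemma:labelling_properties} (compatibility of the spanning forests $T(\underline{S})$ and minimal integral flats $\underline{F}(\underline{S})$ under contraction) to compare the orders $\prec_x$ and $\prec_y$ and derive a contradiction from an assumed lex-smaller edge $d\prec_x a$. The argument genuinely uses that labels come from edges of a forest and how $\omega$-integrality interacts with joins. This combinatorial work is the content of the theorem; your outline bypasses it entirely. (Your gradedness sketch also only covers the complete-graph case $\hat\Pi_\omega$; for general $\hat{\Fl}_\omega$ one needs an edge between the blocks being merged.)
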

\begin{proof}
    By \cite[Proposition 12.13]{KozlovBook}, it is enough to check the LEX-labelling condition for any $x \lessdot y \lessdot z < t$.
    Let $c$ be a maximal chain in $[x,t]$ such that $c_{|[x,z]}$ and $c_{|[y,t]}$ are lexicographically least.
    Replacing $\Gamma$ with $\Gamma_{t}^x$ we may assume $t=\hat{1}$ and $x=\hat{0}$.
    
    Let $a=l(x \lessdot y)$ and $b = l(y \lessdot z)$, so that $l(c)=(a,b,\dots)$.
    Since $c_{|[y,t]}$ is lexicographically least then $b \in E(T(y)) \subset E(T(x))$.
    We have $x \lessdot (x \vee b) \lessdot z$ and since $c_{|[x,z]}$ is lexicographically least then  $a \prec_x l(x \lessdot x \vee b) \preceq_x b$. In particular $a \in E(T(x))$.

    By contradiction, we assume that the chain $c$ is not lexicographically least in $[x,t]$.
    Hence there exists $d \prec_x a$ and $d \in E(T(x))$.
    Notice that $\{a,b,d\}$ is an independent set of distinct edges in $\Gamma^x$ because contained in $T(x)$, i.e.\ the poset $[x, x \vee a \vee b \vee d]$ is Boolean of rank $3$.
    
    Note that $x \vee a \vee d$ cannot be $\omega$-integral. Otherwise, $\underline{F}(x) \leq x \vee a \vee d$, and $d \leq \underline{F}(x)$ since $y=x \vee a$ is not $\omega$-integral.  
    The conditions $d \in E(T(x))_{\leq \underline{F}}$, $b \in E(T(x))$ and $d \prec_x b$ would imply $b \in E(T(x))_{\geq \underline{F}}$, so $x \vee b\leq \underline{F}(x) \leq x \vee a \vee d$, which is a contradiction since $[x, x \vee a \vee b \vee d]$ is Boolean.
    
    Therefore, $y\vee d= x \vee a \vee d \in \Fl_\omega$ and we are assuming that it is different from $z= x \vee a \vee b$.
    Since $c_{|[y,t]}$ is lexicographically least, we have $b \prec_y l(y \lessdot y \vee d) \preceq_y d$. 
    Moreover, $b \leq \underline{F}(y)= \underline{F}({x \vee a})=\underline{F}(x) \vee a$ if and only if $b \leq \underline{F}(x)$, and the same holds for $d$.
    This fact, together the inequalities $d \prec_x b$ and $b \prec_y d$, gives a contradiction.
\end{proof}

Suppose that $\Gamma$ contains a complete graph on the vertices. Then one can show that the poset $\hat{\Fl}_\omega=\hat{\Pi}_\omega$ associated to the complete graph satisfied the stronger condition of EL-shellability for any $\omega \in \RR^r$; see for instance \cite[\S 2.1.2]{KozlovBook} for a definition of EL-shellability.
\begin{quest}
Is the poset $\hat{\Fl}_\omega$ EL-shellable for any graph $\Gamma$ and any vector $\omega$?
\end{quest}

\subsection{Homotopy type of the order complex of non-\texorpdfstring{$\omega$}{ω}-integral partitions}

Recall that the \emph{order complex} $\Delta(\mathcal{P})$ associated to a poset $\mathcal{P}$ is a simplicial complex whose vertices are in correspondence with the set underlying $\mathcal{P}$, and whose faces corresponds to finite chains in $\mathcal{P}$. 

Recall that $\overline{\Fl}_\omega= {\Fl}_\omega \setminus \{ \hat{0}\}$. We denote by $\widetilde{H}_k(T)$ the reduced $k$-homology group of the topological space $T$ and by $\widetilde{\chi}(T)$ the reduced Euler characteristic.

\begin{defn}
 Given an edge labelling of a poset, a saturated
chain $c$ is \textbf{mediocre} if for any $x,y,z \in c$ such that $x \lessdot y \lessdot z$, the chain
$c|_{[x,z]}$ is not lexicographically least in $[x, z]$.
\end{defn}

\begin{prop}\label{prop:spheres}
Let $r>2$. The order complex $\Delta(\overline{\Fl}_\omega)$ has the homotopy type of the wedge of $(r-3)$-spheres.
The number of mediocre maximal chains in $\Pi_\omega$ equals the number of $(r-3)$-spheres in $\Delta(\overline{\Fl}_\omega)$, i.e.\ the rank of the reduced homology group $ \widetilde{H}_{r-3}(\Delta(\overline{\Fl}_\omega))$.
\end{prop}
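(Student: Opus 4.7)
The proof will apply the general theory of LEX-shellable posets from \cite{KozlovBook} to the poset $\hat{\Fl}_\omega$, whose LEX-shellability was established in \cref{prop:LEXshell}. The standard fact to invoke is: for any bounded graded LEX-shellable poset $\mathcal{P}$ of rank $n$, the order complex $\Delta(\overline{\mathcal{P}})$ is homotopy equivalent to a wedge of $(n-2)$-spheres, and these spheres admit an explicit combinatorial enumeration via a suitable class of maximal chains. Given this, the proof reduces to (i) identifying the rank of $\hat{\Fl}_\omega$ and (ii) identifying the counted chains with the mediocre ones.

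For step (i), I would first verify that $\hat{\Fl}_\omega$ is graded of rank $r-1$. Since the hypothesis $\sum_i \omega_i \in \ZZ$ forces $\hat{1}$ to be $\omega$-integral (so that it is an artificially adjoined top), and since LEX-shellability is defined only for graded posets, this purity is already implicit in the construction of the LEX-labelling $l$ in \cref{defn:LEXlabel}. Alternatively, one checks directly that every covering relation $\underline{S} \lessdot \underline{T}$ in $\overline{\Fl}_\omega$ corresponds to merging two blocks of $\underline{S}$ into a non-$\omega$-integral block of $\underline{T}$, so any maximal chain in $\hat{\Fl}_\omega$ consists of $r-1$ merge steps. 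Consequently $\Delta(\overline{\Fl}_\omega)$ has pure dimension $r-3$, and applying the general LEX-shellability theorem yields the desired wedge-of-$(r-3)$-spheres homotopy type.

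For step (ii), the LEX-shellability machinery pins down the number of top-dimensional spheres as the number of maximal chains in $\hat{\Fl}_\omega$ which, restricted to every length-two subinterval $[x,z]$, fail to be lexicographically minimal. This is exactly the notion of \emph{mediocre} chain recalled just before the proposition statement. Since homotopy equivalence preserves homology and a wedge of $k$ spheres $S^{r-3}$ has $\widetilde{H}_{r-3} \cong \QQ^k$, this count equals $\dim \widetilde{H}_{r-3}(\Delta(\overline{\Fl}_\omega))$, which is the stated identification.

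The main subtlety lies in correctly invoking the LEX-analogue of the familiar EL-shelling enumeration: in the EL-setting spheres are indexed by strictly \emph{decreasing} chains, whereas for LEX-labellings the relevant criterion is weaker and produces precisely mediocre chains. Once one matches the definition of mediocre used here with the class of maximal chains surviving the shelling of $\Delta(\hat{\Fl}_\omega)$ (as set up in \cite[Chapter 12]{KozlovBook}), the conclusion is immediate. A sanity check is the case in which $\hat{\Fl}_\omega = \hat{\Pi}([r])$, where all maximal chains are automatically mediocre since any length-two interval $[x,z]$ in $\Pi([r])$ contains multiple maximal chains — recovering the classical computation of $\widetilde{H}_{r-3}(\Delta(\overline{\Pi}([r])))$.
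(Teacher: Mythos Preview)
Your approach matches the paper's exactly: the paper's proof is the single sentence ``It follows from \Cref{prop:LEXshell} and the general properties of LEX-shellability \cite[Theorem 12.15]{KozlovBook},'' and you have correctly unpacked what that citation delivers (rank computation, wedge-of-spheres homotopy type, enumeration by mediocre chains).

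One correction to your closing sanity check: it is \emph{not} true that all maximal chains in $\hat{\Pi}([r])$ are mediocre. Having multiple maximal chains in a length-two interval $[x,z]$ does not make each of them non-least---one of them \emph{is} the lexicographically least, and any maximal chain passing through it fails the mediocrity condition at that interval. For $r=3$ already, $\hat{\Pi}([3])$ has three maximal chains but only $(r-1)!=2$ are mediocre; in general the mediocre count $(r-1)!$ is much smaller than the total number $\prod_{j=2}^{r}\binom{j}{2}$ of maximal chains. This does not affect your main argument, which is sound.
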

\begin{proof}
It follows from \Cref{prop:LEXshell} and the general properties of LEX-shellability \cite[Theorem 12.15]{KozlovBook}.
\end{proof}

\begin{rmk}\label{rmk:baseinduction}
If $r=2$, $\Delta(\overline{\Fl}_\omega)$ is the empty set, but since $\tilde{H}_{-1}(\emptyset)=\mathbb{Z}$ we still have $-\mu(\hat{0},\hat{1})=1=\operatorname{rk} \tilde{H}_{-1}(\emptyset)$.
\end{rmk}

    \begin{prop}\label{prop:countsphere}
   The following equalities hold for $r>1$:
\[\operatorname{rk} \widetilde{H}_{r-3}(\Delta(\overline{\Fl}_\omega))=  |\widetilde{\chi}(\Delta(\overline{\Fl}_\omega))| = (-1)^{r-1}\sum_{\lambda \in \Fl \setminus \Fl_\omega } \mu_\Fl (\hat{0},\lambda)\]
Moreover, if $\Gamma$ contains a complete graph on the vertices, then
\[\operatorname{rk} \widetilde{H}_{r-3}(\Delta(\overline{\Pi}_\omega))=\sum_{\substack{\lambda \vdash [r] \\ \lambda \; \omega\textnormal{-integral}}} (-1)^{\lvert \lambda \rvert-1} \prod_{i=1}^{\lvert \lambda \rvert} (\lvert \lambda_i \rvert-1)!.\]
\end{prop}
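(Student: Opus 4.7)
My plan is to reduce both equalities to Philip Hall's theorem, using as the key structural input the observation that $\Fl_\omega$ is a down-set in $\Fl$. The first equality $\operatorname{rk}\widetilde{H}_{r-3}(\Delta(\overline{\Fl}_\omega)) = |\widetilde{\chi}(\Delta(\overline{\Fl}_\omega))|$ is immediate from the preceding \Cref{prop:spheres} (with \Cref{rmk:baseinduction} covering the degenerate base case $r=2$): since $\Delta(\overline{\Fl}_\omega)$ has the homotopy type of a wedge of $(r-3)$-spheres, its reduced homology is concentrated in degree $r-3$, hence $\widetilde{\chi} = (-1)^{r-3}\operatorname{rk}\widetilde{H}_{r-3} = (-1)^{r-1}\operatorname{rk}\widetilde{H}_{r-3}$, and taking absolute values yields the claim.

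For the second equality, I would invoke Philip Hall's formula $\widetilde{\chi}(\Delta(\overline{\Fl}_\omega)) = \mu_{\hat{\Fl}_\omega}(\hat{0},\hat{1})$, combined with the fact that $\Fl_\omega \subseteq \Fl$ is a \emph{down-set}: indeed, if $\mu \leq \lambda$ in $\Fl$, each block of $\lambda$ is a union of blocks of $\mu$, so $\omega$-integrality propagates upward and, contrapositively, non-$\omega$-integrality propagates downward. From this down-set property, for any $x \in \Fl_\omega$ the interval $[\hat{0}, x]$ computed in $\hat{\Fl}_\omega$ agrees with the one computed in $\Fl$, so $\mu_{\hat{\Fl}_\omega}(\hat{0}, x) = \mu_\Fl(\hat{0}, x)$. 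The defining recursion of the Möbius function then yields
\[
\mu_{\hat{\Fl}_\omega}(\hat{0},\hat{1}) = -\sum_{x\in\Fl_\omega}\mu_{\hat{\Fl}_\omega}(\hat{0},x) = -\sum_{x\in\Fl_\omega}\mu_\Fl(\hat{0},x),
\]
and combining with $\sum_{\lambda \in \Fl}\mu_\Fl(\hat{0},\lambda) = 0$ (valid since $\hat{0} \neq \hat{1}$ in $\Fl$) produces
\[
\mu_{\hat{\Fl}_\omega}(\hat{0},\hat{1}) = \sum_{\lambda\in\Fl\setminus\Fl_\omega}\mu_\Fl(\hat{0},\lambda).
\]
Since $\widetilde{\chi}$ has definite sign $(-1)^{r-1}$ (the number of spheres is non-negative), multiplication by $(-1)^{r-1}$ converts this into the required equality with $|\widetilde{\chi}|$.

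For the closed formula when $\Gamma$ contains a complete graph on $V(\Gamma)$, we have $\Fl = \Pi$ the full partition lattice, and the classical Möbius function $\mu_\Pi(\hat{0},\lambda) = \prod_{i=1}^{|\lambda|}(-1)^{|\lambda_i|-1}(|\lambda_i|-1)!$ substitutes directly. Using $\sum_i(|\lambda_i|-1) = r - |\lambda|$, a short sign computation gives $(-1)^{r-1}\mu_\Pi(\hat{0},\lambda) = (-1)^{|\lambda|-1}\prod_i(|\lambda_i|-1)!$, and summing over $\omega$-integral partitions yields the stated formula.

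I do not anticipate any serious obstacle: the argument is a concatenation of standard facts, and the substantive conceptual input is purely the down-set property of $\Fl_\omega$. The only delicate points are keeping the signs straight and handling the base case $r=2$, where $\Delta(\overline{\Fl}_\omega)$ may be empty and one must appeal to the convention $\widetilde{H}_{-1}(\emptyset) = \ZZ$ already recorded in \Cref{rmk:baseinduction}.
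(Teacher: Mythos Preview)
Your proof is correct and follows essentially the same route as the paper's: both use that $\Fl_\omega$ is a down-set in $\Fl$ to identify $\mu_{\hat{\Fl}_\omega}(\hat{0},x)=\mu_{\Fl}(\hat{0},x)$ for $x\in\Fl_\omega$, then combine the M\"obius recursion in $\hat{\Fl}_\omega$ with $\sum_{\lambda\in\Fl}\mu_\Fl(\hat{0},\lambda)=0$ and Philip Hall's theorem. You are slightly more explicit than the paper in naming the down-set property as the reason for the equality of M\"obius values, but the argument is otherwise identical.
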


 \begin{proof}
The first equality follows from \cref{prop:spheres} and \cref{rmk:baseinduction}.
For the second equality,
we use that $\mu_{\Fl} (\hat{0}, \lambda)=\mu_{\Fl_\omega} (\hat{0}, \lambda)$ for all flats $\lambda \in \Fl_\omega$.
Therefore, we obtain
\begin{align*}
 (-1)^{r-1}\sum_{\lambda \in \Fl \setminus \Fl_\omega } \mu_\Fl (\hat{0},\lambda) &=  (-1)^{r}  \sum_{\lambda \in \Fl_\omega} \mu_{\Fl} (\hat{0},\lambda) \\
&= (-1)^{r-1} \mu_{\hat{\Fl}_\omega}(\hat{0},\hat{1}) \\
&= (-1)^{r-1} \widetilde{\chi}(\Delta(\overline{\Fl}_\omega)).
\end{align*}  
The last equality is the Philip Hall’s theorem \cite[Proposition 3.8.6]{Stanley2012}; the other inequalities follow directly from the definition of M\"obius function. 

In the case of the complete graph, the M\"obius function of the partition poset $\Pi$ is 
\[\mu_{\Pi} (\hat{0},\lambda)= (-1)^{r-\lvert \lambda \rvert} \prod_{i=1}^{\lvert \lambda \rvert} (\lvert \lambda_i \rvert-1)!\]
for $\lambda \vdash [r]$; see \cite[Example 2.9]{Bjorner1980} or \cite[Example 3.10.4]{Stanley2012}.
\end{proof}

\begin{prop}\label{positivityofsum}
The sum in \eqref{eq:coefficient} is positive if and only if $\omega \not \in \mathbb{Z}^r$ and $r>1$.
\end{prop}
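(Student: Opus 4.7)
I would split the argument according to whether $\omega \in \mathbb{Z}^r$ or not.

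\textbf{Easy direction.} If $\omega \in \mathbb{Z}^r$, then every block sum is automatically an integer, so every partition of $[r]$ is $\omega$-integral and the sum in \eqref{eq:coefficient} runs over all of $\Pi$. Using the identity $\mu_{\Pi}(\hat 0, \lambda) = (-1)^{r-\ell(\lambda)}\prod_i(|\lambda_i|-1)!$ together with $(-1)^{\ell(\lambda)-1}=(-1)^{r-1}(-1)^{r-\ell(\lambda)}$, the sum becomes $(-1)^{r-1}\sum_{\lambda\vdash[r]}\mu_\Pi(\hat 0,\lambda)$, which vanishes for $r\geq 2$ by the defining property of the M\"obius function (since $\hat 0 \neq \hat 1$). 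This handles the forward direction: under the standing assumption $\sum\omega_i\in\ZZ$, the sum cannot be positive when $\omega\in\ZZ^r$ and $r>1$.

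\textbf{Hard direction: $\omega\notin\ZZ^r$.} The standing assumption $\sum\omega_i\in\ZZ$ immediately forces $r\geq 2$. By \cref{prop:countsphere}, the sum equals $\operatorname{rk}\widetilde{H}_{r-3}(\Delta(\overline{\Pi}_\omega))$, which is non-negative, so the question reduces to strict positivity. The case $r=2$ is settled directly: since $\omega_1\notin\ZZ$ (and hence $\omega_2\notin\ZZ$) makes $\hat 0$ non-$\omega$-integral, one has $\Pi_\omega=\{\hat 0\}$, hence $\overline{\Pi}_\omega=\emptyset$, and $\widetilde{H}_{-1}(\emptyset)=\ZZ$ gives sum $=1$, consistent with \cref{rmk:baseinduction}.

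\textbf{Inductive step $r\geq 3$.} I would fix an index $j$ with $\omega_j\notin\ZZ$ and group $\omega$-integral partitions $\lambda\vdash[r]$ by the block $B$ containing $j$. Writing $\omega'_B$ for the restriction of $\omega$ to $[r]\setminus B$ (which still satisfies $\sum\omega'_B\in\ZZ$ since $\omega(B)\in\ZZ$), the sum satisfies
\[
S(\omega)=(r-1)!-\!\!\!\sum_{\substack{B\ni j,\,B\subsetneq[r]\\ \omega(B)\in\ZZ}}\!\!(|B|-1)!\,S(\omega'_B),
\]
with the convention $S$ of the empty vector equals $-1$ (isolating the $B=[r]$ contribution of $(r-1)!$). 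By the inductive hypothesis together with the first part of the proof, each $S(\omega'_B)$ is non-negative (equal to $0$ when $\omega'_B\in\ZZ^{r-|B|}$ with $r-|B|\geq 2$, equal to $1$ when $r-|B|=1$, and strictly positive otherwise). Since we already know $S(\omega)\geq 0$, the only remaining task is to rule out $S(\omega)=0$.

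\textbf{Main obstacle and resolution.} The nontrivial step is the strict inequality that the total negative contribution does not cancel the $(r-1)!$ term. The cleanest route I see is to invoke \cref{prop:spheres}: under the LEX-shelling of \cref{prop:LEXshell}, $S(\omega)$ equals the number of mediocre maximal chains in $\hat{\Pi}_\omega$, so it suffices to exhibit a single such chain. The construction I would attempt: keep $\{j\}$ as a singleton block until the very last merge (so that every intermediate partition contains the non-integral singleton $\{j\}$ and hence lies in $\Pi_\omega$), and at each merge choose blocks whose combining edge is not minimal in the order $\prec_{\underline S}$ of \cref{defn:LEXlabel}. Verifying that this chain is mediocre — i.e., that no two-step window is lexicographically minimum — amounts to a careful local check using the definition of $\prec_{\underline S}$, the structure of the minimal spanning tree $T(\underline S)$, and the minimal $\omega$-integral flat $\underline F(\underline S)$; this is where I expect the subtlety to lie. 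Combining the two parts gives the desired equivalence.
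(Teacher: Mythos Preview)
Your easy direction is correct and matches the paper. The detour through the recursion $S(\omega)=(r-1)!-\sum_B(|B|-1)!\,S(\omega'_B)$ is not needed and does not by itself resolve strict positivity, as you note.

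For the hard direction you land on the right strategy --- exhibit a single mediocre maximal chain in $\hat{\Pi}_\omega$, which by \cref{prop:spheres} and \cref{prop:countsphere} forces the sum to be positive --- and this is exactly what the paper does. However, your proposed chain has a genuine defect: the two constraints ``keep $\{j\}$ as a singleton'' and ``choose a non-minimal edge'' can conflict. Concretely, once only three blocks $A,B,\{j\}$ remain, your rule forces the merge $A\cup B$, but the edge between $A$ and $B$ may well be the $\prec_x$-minimal edge in the triangle on $\{A,B,\{j\}\}$; in that case your two-step window $x\lessdot\{A\cup B,\{j\}\}\lessdot\hat 1$ \emph{is} lexicographically least, and the chain fails to be mediocre. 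So the verification you flag as ``where the subtlety lies'' is not just delicate --- it actually breaks for this construction.

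The paper's chain is different and avoids this: at each $x$ take the cover $y\gtrdot x$ in $\hat{\Pi}_\omega$ whose label $l(x\lessdot y)$ is \emph{maximal} for $\prec_x$. The key verification you are missing is a short arithmetic observation: for any rank-$2$ interval of $\hat{\Pi}_\omega$ arising from merging three blocks into one (so that $\Gamma^x_z$ contains a triangle with edges $a,b,d$), among the three atoms $x\vee a$, $x\vee b$, $x\vee d$ at most one can be $\omega$-integral, because if two of them are $\omega$-integral then so is the third. Hence at least one alternative atom besides $y$ lies in $\Pi_\omega$, and since $y$ was chosen with maximal label, swapping it in produces a lexicographically smaller chain in $[x,t]$. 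This is the missing idea that makes the mediocrity check go through cleanly.
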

\begin{proof}
If $\omega \in \mathbb{Z}^r$, then the vanishing of the sum in \eqref{eq:coefficient} follows from \Cref{rmk:commentformula} (ii).
Alternatively, by definition of M\"obius function and \Cref{prop:countsphere} we have
\[     \sum_{\substack{\lambda\vdash [r] \\ \lambda \; \omega_{\underline{n}}(d)\textnormal{-integral}}} 
    (-1)^{\ell( \lambda )-1} \prod_{i=1}^{\ell( \lambda )} (\lvert \lambda_i \rvert-1)!= (-1)^{r-1}\sum_{\lambda \vdash [r]} \mu_\Pi(\hat{0},\lambda)=0 \qquad \text{ when }r>1. \]
If $\omega \not \in \mathbb{Z}^r$, we must verify the positivity of \eqref{eq:coefficient}. By \cref{prop:spheres} and \cref{prop:countsphere}, it suffices to exhibit a mediocre maximal chain $c$ in $\hat{\Pi}_\omega$.
We construct $c$ inductively by using a greedy algorithm: $\hat{0}$ is in $c$, and for any $x \in c$ we add the unique $y \gtrdot x$ such that $l(y \gtrdot x)$ is maximal with respect to $\prec_x$.

Given $x,t \in c$ with $x<t$ and $x \not \!\!\lessdot t$, we want to show that $c_{|[x,t]}$ is not lexicographically least in $[x,t]$. 
To this end, choose $y,z \in c$ such that $x \lessdot y \lessdot z$. As $\Gamma$ contains a complete graph on the vertices, $\Gamma^x_{z}$ contains a complete subgraph over 3 vertices with edges $\{a,b,d\}$.
In particular, the poset $\mathcal{L}(\Gamma^x_{z})$ contains the 3 atoms $y = x \vee a$, $x \vee b$, $x \vee d$.
If two of them are $\omega$-integral, then the third must be so too, but since $y$ is non-$\omega$ integral, then at least one of $x \vee b$ and $x \vee d$ is non $\omega$-integral too, say $x \vee b$. Then the chain $(c_{|[x,t]} \setminus y) \cup (x \vee b)$ obtained substituting $y$ with $x \vee b$ lexicographically precedes $c_{|[x,t]}$ by construction of the chain $c$.
Therefore the chain $c$ is mediocre, and the sum in \eqref{eq:coefficient} is positive.
\end{proof}

\section{Lattice points in zonotopes}\label{sec:zonotope} In this section, we recall the definition of zonotope, and we provide the key formula for the number of lattice points in the interior of its translates; see \cref{thm:countformula}. 

Recall that the Minkowski sum of several subsets $A, \ldots, B\in \RR^r$ is the locus of sums of vectors that belong to these subsets \[A+\ldots+ B := \{a +\ldots+ b \, |\,
a \in A, \ldots , b \in B\}.\] If $A, \ldots , B$ are convex and/or lattice polytopes, then so is their Minkowski sum. 

\begin{defn}
 Let $U$ be a finite set of points in $\RR^r \setminus \{0\}$. The \textbf{zonotope} generated by $U$ is the Minkowski sum of line segments $[0,u]$ with $u \in U$, i.e.\
 \[Z(U) \coloneqq \sum_{u \in U}[0,u].\]
\end{defn} 

The zonotope $Z(U)$ decomposes in the disjoint union of translates of
the half-open parallelepipeds
$\sum_{u \in W} [0,u)$ 
spanned by the linearly independent subsets $W \subseteq U$; see \cite[Theorem 54]{Shephard1974}. This observation allows to compute the number of lattice points in the interior of a translate of $Z(U)$ in \cref{thm:Ehrhart_qp}, as proved in \cite[Proposition 3.1]{ABM20}. 

To state the result, we first adapt the definitions of \S \ref{sec:flatsgraph} to the present context, and recall the definition of Ehrhart quasi-polynomial.
\begin{defn}\label{defn:newflat}
 A \textbf{flat} of $U$ of rank $k$ is a maximal subset of $U$ spanning a vector subspace of rank $k$. 
 The poset of flats of $U$ is denoted $\Fl=\Fl(U)$.
 
 Let $\ZZ^r \subset \RR^r$ be the standard lattice, and $\omega$ be a vector in $\RR^r$. Then a flat $S \subseteq U$ is \textbf{$\omega$-integral} if the the affine subspace obtained by translating the vector space spanned by $S$ by the translation vector $\omega$ contains lattice points, i.e.\ $(\omega + \langle S \rangle) \cap \mathbb{Z}^{r} \neq \emptyset$. The poset of $\omega$-integral flats is denoted $\Fl_{\omega}=\Fl_{\omega}(U)$.
 
 Given a set $W$ of independent vectors in $\ZZ^{r} \subset \RR^r$, the \textbf{relative volume} of the parallelepiped generated by $W$, denoted $\rvol (W)$, is the volume of the parallelepiped in $\RR^r$ normalized by the volume of a basis of the lattice $\langle W \rangle \cap \ZZ^{r}$, where $\langle W \rangle$ is the linear span of $W$ in $\mathbb{Q}^r$. Alternatively, it is the index of the sublattice generated by $W$ in $\langle W \rangle \cap \ZZ^{r}$.\end{defn}
 
\begin{defn}
The \textbf{Ehrhart quasi-polynomial}
\[ L_P(t) \coloneqq \left| tP \cap \ZZ^{r} \right|\] counts the number of lattice points in the dilated polytope $tP$.
\end{defn}  

\begin{thm}
\label{thm:Ehrhart_qp} \emph{\cite[Proposition 3.1]{ABM20}}
    Let $U \subset \mathbb{Z}^r$ be a finite set of integer vectors, $\omega \in \mathbb{Q}^r$ a rational vector, and $Z(U)$ be the zonotope generated by $U$.
    Then the Ehrhart quasi-polynomial of $Z(U)+\omega$ is
    \[ L_{Z(U)+\omega}(t)= \sum_{\substack{W \subseteq U \\ W \textnormal{ independent}}} \delta_{W,\omega}(t) \rvol (W) t^{\lvert W \rvert}, \]
    where
    \[ \delta_{W,\omega}(t)=\begin{cases}
        1 & \textnormal{if } (t\omega + \langle W \rangle) \cap \mathbb{Z}^{r} \neq \emptyset, \\
        0 & \textnormal{otherwise.}
    \end{cases}\]
\end{thm}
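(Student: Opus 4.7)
My plan is to use Shephard's decomposition of the zonotope $Z(U)$ into half-open parallelepipeds (the result cited immediately before the statement), and reduce the lattice point count of the dilate $t(Z(U)+\omega)$ to counting lattice points in each piece separately. After choosing a linear functional on $\RR^r$ generic with respect to $U$, Shephard's theorem provides a disjoint decomposition
\[
    Z(U) \;=\; \bigsqcup_{\substack{W \subseteq U \\ W \text{ independent}}} \big( v_W + \Pi^\circ(W) \big),
    \qquad
    \Pi^\circ(W) \;\coloneqq\; \sum_{u \in W}[0,u),
\]
where each $v_W$ is a subset sum of vectors in $U$, and is therefore an integer vector. Scaling by a positive integer $t$ and translating by $t\omega$ then yields
\[
    t(Z(U) + \omega) \;=\; \bigsqcup_{W \text{ indep}} \big( t v_W + t\omega + \Pi^\circ(tW) \big),
\]
still a disjoint union.

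The next step is to count lattice points cell by cell. Since each $t v_W$ lies in $\ZZ^r$, I would translate each piece by $-t v_W$ without affecting the count, obtaining
\[
    L_{Z(U)+\omega}(t) \;=\; \sum_{W \text{ indep}} \big| \, (t\omega + \Pi^\circ(tW)) \cap \ZZ^r \, \big|.
\]
The half-open parallelepiped $t\omega + \Pi^\circ(tW)$ sits inside the affine subspace $t\omega + \langle W \rangle$. If this affine subspace misses $\ZZ^r$ entirely, i.e.\ $\delta_{W,\omega}(t)=0$, the summand contributes zero. Otherwise I would pick any lattice point $p \in (t\omega + \langle W \rangle) \cap \ZZ^r$ and translate by $-p$, reducing the problem to counting points of the lattice $\Lambda_W \coloneqq \langle W \rangle \cap \ZZ^r$ inside a translate of $\Pi^\circ(tW)$ lying entirely in the linear subspace $\langle W \rangle$.

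The final step will be to invoke the standard fact that a half-open parallelepiped spanned by an independent set is a fundamental domain for the sublattice generated by that set, so that any translate of $\Pi^\circ(tW)$ inside $\langle W \rangle$ contains exactly $[\Lambda_W : \ZZ\langle tW \rangle] = t^{|W|} \rvol(W)$ points of $\Lambda_W$. Summing over $W$ would then give the claimed identity. The main technical care needed is bookkeeping around the half-open cells: one must verify that the dilated decomposition remains genuinely disjoint (so no lattice point of $t(Z(U)+\omega)$ is counted twice across boundaries), and that each half-open parallelepiped contains exactly one representative of every coset of $\ZZ\langle tW \rangle$ in $\Lambda_W$. Beyond this, the argument is a purely linear-algebraic counting exercise, with the quasi-polynomial nature of $L_{Z(U)+\omega}(t)$ entering only through the periodic indicator $\delta_{W,\omega}(t)$.
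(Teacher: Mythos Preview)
Your proposal is correct and follows exactly the approach the paper indicates: the paper does not give its own proof of this statement but cites \cite[Proposition 3.1]{ABM20}, noting immediately beforehand that Shephard's half-open decomposition \cite[Theorem 54]{Shephard1974} is the key input. Your argument via that decomposition, dilation, and the fundamental-domain count $[\Lambda_W : \ZZ\langle tW\rangle] = t^{|W|}\rvol(W)$ is precisely the intended one; the bookkeeping concerns you flag (disjointness after dilation, one coset representative per half-open box) are routine.
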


Given any polytope $P$ we denote by $\mathrm{Int}(P)$ the interior of $P$ in its linear span. The number of lattice points contained in the interior of the polytope $P$ is denoted
\begin{align*}
    C(P) \coloneqq \left|\mathrm{Int}(P) \cap \ZZ^{r} \right|.
\end{align*} By Ehrhart-Macdonald reciprocity \cite{Macdonald1971}, $C(P)$ is a value of the Ehrhart quasi-polynomial
$L_P(t)$ up to sign, namely 
\[ C(P) = (-1)^{\dim(P)}L_{P}(-1).\]
If $P$ is a translated zonotope $Z(U)+\omega$, then M\"obius inversion applied to \cref{thm:Ehrhart_qp} gives the combinatorial identity \eqref{eq:Z(U)} and \eqref{eq:claim}. 

\begin{thm}\label{thm:countformula}
    Let $U \subset \mathbb{Z}^r$ be a finite set of integer vectors, $\omega \in \mathbb{Q}^r$ a rational vector, and $Z(U)$ be the zonotope generated by $U$. Then we have
    \begin{equation}\label{eq:Z(U)}
        C(Z(U)+\omega) = \sum_{{S} \in \Fl} \left( \sum_{{T} \in \Fl_{ \geq {S}} \setminus \Fl_\omega} \mu_\Fl({S}, {T}) \right) C(Z({S})).
    \end{equation} 
\end{thm}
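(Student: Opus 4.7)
The plan is to combine Ehrhart–Macdonald reciprocity with \cref{thm:Ehrhart_qp} and Möbius inversion on the poset of flats $\Fl$. By reciprocity, $C(Z(U)+\omega) = (-1)^{\dim Z(U)} L_{Z(U)+\omega}(-1)$, and \cref{thm:Ehrhart_qp} evaluated at $t=-1$ gives
\[
C(Z(U)+\omega) = \sum_{W \subseteq U \text{ independent}} (-1)^{\dim Z(U) - |W|}\,\delta_{W,\omega}(-1)\,\rvol(W).
\]
The first key observation is that $\delta_{W,\omega}(-1)=1$ iff $(\omega + \langle W \rangle)\cap\mathbb{Z}^r\neq\emptyset$ (using $-\mathbb{Z}^r=\mathbb{Z}^r$). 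Since each independent subset $W$ is contained in a unique flat $S_W:=\{u\in U : u\in\langle W\rangle\}$ of the same rank, this condition depends on $W$ only through $\langle W\rangle = \langle S_W\rangle$, and so reduces to $S_W\in\Fl_\omega$.

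Next I would group independent subsets by their associated flat and introduce
\[
\beta(S) := \sum_{\substack{W \subseteq S \\ \langle W\rangle = \langle S \rangle}} \rvol(W),
\]
so that $C(Z(U)+\omega)=\sum_{S\in\Fl_\omega}(-1)^{\dim Z(U)-\rank(S)}\beta(S)$. Applying the same combinatorial identity to each sub-zonotope $Z(S)$ (with $\omega=0$, where every $\delta$ is $1$) yields the triangular relation
\[
(-1)^{\rank(S)}C(Z(S)) = \sum_{T \in \Fl_{\leq S}} (-1)^{\rank(T)}\,\beta(T),
\]
which inverts via Möbius on $\Fl$ to
\[
(-1)^{\rank(S)}\beta(S) = \sum_{T\in\Fl_{\leq S}} \mu_\Fl(T,S)\,(-1)^{\rank(T)}\,C(Z(T)).
\]

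Substituting this back into the expression for $C(Z(U)+\omega)$ and interchanging the order of summation produces
\[
C(Z(U)+\omega)=\sum_{T\in\Fl} C(Z(T)) \left( (-1)^{\dim Z(U)-\rank(T)}\sum_{S\in\Fl_{\geq T}\cap\Fl_\omega}\mu_\Fl(T,S) \right).
\]
To reach the stated form I would then invoke the fundamental identity $\sum_{S\in\Fl_{\geq T}}\mu_\Fl(T,S)=\delta_{T,U}$ together with the observation that the top flat $U$ lies in $\Fl_\omega$ whenever $\omega$ is chosen compatibly with $\langle U\rangle$, allowing one to swap the intersection $\cap\,\Fl_\omega$ for the complement $\setminus\,\Fl_\omega$ and absorb the parity $(-1)^{\dim Z(U)-\rank(T)}$ into the Möbius coefficients, after relabeling $T \leftrightarrow S$.

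The main obstacle I anticipate is precisely this last piece of bookkeeping: tracking the sign $(-1)^{\dim Z(U)-\rank(T)}$ through the Möbius complementation and checking that all boundary terms from the $\delta_{T,U}$ contribution cancel correctly, so that the coefficient indeed becomes $\sum_{S\in\Fl_{\geq T}\setminus\Fl_\omega}\mu_\Fl(T,S)$ with no residue. Everything else reduces either to citing \cref{thm:Ehrhart_qp} and Ehrhart–Macdonald reciprocity, or to standard Möbius manipulation on the lattice of flats.
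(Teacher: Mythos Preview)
Your approach is the paper's: group the independent sets in \cref{thm:Ehrhart_qp} by the flat they span, M\"obius-invert the resulting triangular system over $\Fl$, and apply Ehrhart--Macdonald reciprocity. The only difference is cosmetic---the paper performs the M\"obius inversion on the full Ehrhart quasi-polynomials in $t$ and only specializes to $t=-1$ at the very end, whereas you specialize first; both land on the same expression.

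The ``obstacle'' you anticipate in your last paragraph is not real. First, no complementation trick is needed: in the paper's standing convention (\cref{defn:partitionsintegral}) $\Fl_\omega$ denotes the \emph{non}-$\omega$-integral flats, so $\Fl_{\geq S}\setminus\Fl_\omega$ already means the $\omega$-integral $T\geq S$, i.e.\ precisely your ``$\cap\,\Fl_\omega$'' (you were reading the inverted wording in \cref{defn:newflat}). Second, do not try to make the sign $(-1)^{\dim Z(U)-\rank(T)}$ disappear. Your derivation is the honest one: that sign is genuinely present, and the paper's final displayed line simply drops it. It is exactly this sign which, combined with the partition-lattice values $\mu_\Pi(\underline{S},\lambda)=(-1)^{\ell(\underline{S})-\ell(\lambda)}\prod_i(|\lambda_i|-1)!$, produces the factor $(-1)^{\ell(\lambda)-1}$ in \cref{claim}. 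The manoeuvre you sketch with $\sum_{S\geq T}\mu(T,S)=\delta_{T,U}$ would not absorb it and is unnecessary.
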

\begin{proof}
From \Cref{thm:Ehrhart_qp} we write
\[ L_{Z(U) +\omega}(t) = \sum_{S \in \Fl} \delta_{S,\omega} (t) \left(
\sum_{\substack{W\subseteq S \textnormal{ independent}\\ \langle W \rangle=\langle S \rangle
}} \rvol(W)\right) t^{\operatorname{rk}(S)}\]
where 
    \[ \delta_{{S},\omega}(t)=\begin{cases}
        1 & \textnormal{if } (t\omega + \langle S \rangle) \cap \mathbb{Z}^{r} \neq \emptyset, \\
        0 & \textnormal{otherwise,}
    \end{cases}\]
because the function $\delta_{W,\omega}(t)$ does not depend on $W$ but only on the flat $\langle W \rangle=\langle S \rangle$.

Let us define \[Q_{{S}}(t) \coloneqq t^{\operatorname{rk}(S)} \sum_{\substack{W\subseteq S \textnormal{ independent}\\ \langle W \rangle=\langle S \rangle
}} \rvol(W)\] and notice that it is independent on $\omega$.
For any flat ${T} \in \Fl$ we write
\[ L_{Z(T) +\omega}(t) = \sum_{{S} \leq {T}} \delta_{{S},\omega} (t) Q_{{S}}(t).\]
We specialize to the case $\omega=0$ so that $\delta_{{S},0}=1$, and we apply the M\"obius inversion formula to obtain
\[ Q_{{T}}(t) = \sum_{{S} \leq {T}} \mu_\Fl({S}, {T}) L_{Z(S)}(t).\]
Therefore, for any $\omega \in \RR^r$ we have 
\begin{align*}
    L_{Z(U) +\omega}(t) & = \sum_{{T} \in \Fl} \delta_{{T},\omega} (t) \left(\sum_{{S} \leq {T}} \mu_\Fl({S}, {T}) L_{Z(S)}(t) \right)  = \sum_{{S} \in \Fl} \left( \sum_{{T} \geq {S}} \mu_\Fl({S}, {T}) \delta_{{T},\omega} (t) \right) L_{Z(S)}(t).
\end{align*}
Specializing at $t=-1$ we have $\delta_{{T},\omega}(-1)=1$ if and only if ${T}$ is $\omega$-integral, and we obtain
\begin{align*}
    C(Z(U)+\omega) & = \sum_{{S} \in \Fl} \left( \sum_{ \substack{{T} \geq {S}\\ {T} \, \omega\textnormal{-integral}}} \mu_{\Fl}({S}, {T})\right) C(Z(S)). 
\end{align*}
This completes the proof.
\end{proof}

\section{Graphical zonotopes}\label{sec:graphicalzonotopes}
In this section, we specialize the lattice point count formula in \cref{thm:countformula} to the case of graphical zonotopes of a complete graph with multiple edges; see \cref{claim}.

\subsection{Definition and equations of graphical zonotope}
Denote by $x_1, \ldots, x_r$ the real coordinates of the affine space $\RR^{r}$ with respect to an integer basis $v_1, \ldots, v_r$ of the standard lattice $\ZZ^{r} \subset \RR^{r}$. 

Let $\Gamma$ be a graph on the vertex set  $I \subset [r]$.
Denote $E(\Gamma)$ the set of edges of $\Gamma$.
To every $e \in E(\Gamma)$ one can associate the pair of endpoints $\{s(e),t(e)\}$. For any pair of vertices $J \subseteq I$, set \[y_J \coloneqq |\{e \in E(\Gamma)\,|\, \{s(e),t(e)\} = J\}|.\] 

\begin{defn}\label{def:graphical}
Denote $\Delta_K$ the convex hull of the points $v_k$ for all $k \in K \subseteq [r]$. The \textbf{graphical zonotope} $Z_{\Gamma}$ of $\Gamma$ in $\RR^r$ is the Minkowski sum of the line segments $[s(e), t(e)]$, with $e \in E(\Gamma)$, i.e.\
\[ Z_{\Gamma} \coloneqq \sum_{e \in E(\Gamma)} [s(e), t(e)]=\sum_{J \subseteq I, |J|=2} y_J \Delta_{J}.\]
 In case  $I = \{i\}$, we set $Z_{\Gamma}$ to be $v_i$.
\end{defn}

Equivalently, for $|I|>1$, the zonotope $Z_{\Gamma}$ is the lattice polytope in $\RR^{I} \coloneqq \mathrm{Span}\{v_i : i \in I\} \subseteq \RR^r$ defined by the system of equality and inequalities  
\begin{equation}\label{eq:inequequ}
  \sum_{i \in I} x_i = z_{I}, \quad \sum_{i \in K} x_i \geq z_K \quad \forall K \subset I,
\end{equation} 
where $z_K \coloneqq \sum_{J \subseteq K} y_J$; see \cite[Proposition 6.3]{Postnikov09}.


\begin{figure}[b]
\centering
\vspace{0.1 cm}
\begin{tikzpicture}[join=round, scale=0.5]
    \tikzstyle{conefill} = [fill=blue!20,fill opacity=0.4,text opacity=1]
    \filldraw[conefill](-12-4,0,-4-4-4)--node[midway] {$12|3$}(-10-4,-2,-4-4-4)--node[midway] {$1|23$}(-10-4,-4,-2-4-4)--node[midway] {$13|2$}(-12-4,-4,0-4-4)--node[midway] {$12|3$}(-14-4,-2,0-4-4)--node[midway] {$1|23$}(-14-4,0,-2-4-4)--node[midway] {$13|2$} cycle;
    \filldraw[conefill](2,0,4)--(0,2,4)--(0,4,2)--(2,4,0)--(4,2,0)--(4,0,2)--cycle;
    \filldraw[conefill](4,0,2)--(2,0,4)--(2,0,6)--(4,0,6)--(6,0,4)--(6,0,2)--cycle;
    \filldraw[conefill](0,4,2)--(0,2,4)--(0,2,6)--(0,4,6)--(0,6,4)--(0,6,2)--cycle;
    \filldraw[conefill](6,2,0)--(6,4,0)--(4,6,0)--(2,6,0)--(2,4,0)--(4,2,0)--cycle;
    \filldraw[conefill](2,0,6)--(0,2,6)--(0,2,4)--(2,0,4)--cycle;
    \filldraw[conefill](6,0,2)--(6,2,0)--(4,2,0)--(4,0,2)--cycle;
    \filldraw[conefill](0,6,2)--(2,6,0)--(2,4,0)--(0,4,2)--cycle;
    \filldraw[conefill](4,2,6)--(2,4,6)--(2,6,4)--(4,6,2)--(6,4,2)--(6,2,4)--cycle;
    \filldraw[conefill](6,0,4)--(6,2,4)--(6,4,2)--(6,4,0)--(6,2,0)--(6,0,2)--cycle;
     \filldraw[conefill](0,6,4)--(2,6,4)--(4,6,2)--(4,6,0)--(2,6,0)--(0,6,2)--cycle;
     \filldraw[conefill](2,0,6)--(0,2,6)--(0,4,6)--(2,4,6)--(4,2,6)--(4,0,6)--cycle;
     \filldraw[conefill](6,0,4)--(4,0,6)--(4,2,6)--(6,2,4)--cycle;
     \filldraw[conefill](0,6,4)--(0,4,6)--(2,4,6)--(2,6,4)--cycle;
     \filldraw[conefill](6,4,2)--(4,6,2)--(4,6,0)--(6,4,0)--cycle;
     \node () at (-12,2){$123$};
\end{tikzpicture}
\caption{Graphical zonotopes of the complete graph $\Gamma$ with $r=3$ or $4$ vertices respectively. Each face of $Z_{\Gamma}$ is a translate of $Z_{\Gamma_{\underline{S}}}$ for some flat $\underline{S} \in \Fl(\Gamma)$.}
\end{figure}
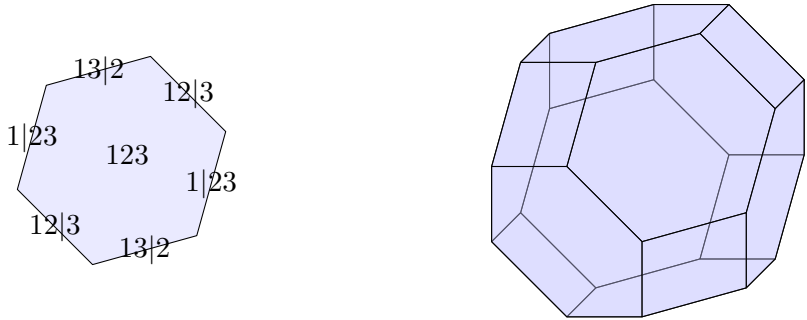
\subsection{Flats and faces}\label{rmk:2zonoto} Fix an orientation for the graph $\Gamma$. The finite set of vectors $U_{\Gamma} \coloneqq \{t(e) - s(e) \colon e \subseteq E(\Gamma)\} \subset \RR^r$ is the image of the basis $E(\Gamma)$ of $\ZZ[E(\Gamma)]$ via the boundary map
\begin{align*}
  \delta \colon \ZZ[E(\Gamma)] \to \ZZ[V(\Gamma)],\\
  e \mapsto t(e)-s(e).
\end{align*}
By construction, the graphical zonotope $Z_{\Gamma}$ is isomorphic to the zonotope $Z(U_{\Gamma}) \subseteq \RR[V(\Gamma)]$ via a translation by an integral vector (depending on the orientation). 

Since $\ker \delta = H^1(\Gamma, \ZZ)$ and $\coker \delta = H^0(\Gamma, \ZZ)$, there exist  bijective correspondences between
\begin{align*}
    \text{independent subsets of }U_{\Gamma} & \qquad \Longleftrightarrow \qquad \text{forests of } \Gamma, \text{ i.e. graphs } F \text{ with }H^1(F, \ZZ)=0,\\
    \Fl(U_{\Gamma}) \coloneqq \text{flats of }U_{\Gamma} & \qquad \Longleftrightarrow \qquad \Fl(\Gamma) \coloneqq \text{flats of }\Gamma;
\end{align*}
see Definitions \ref{def:graph} and  \ref{defn:newflat}. In particular, the latter correspondence assigns  the partition $\underline{S} \in \Fl(\Gamma)$ to the flat in $\Fl(U_{\Gamma})$ cut by the linear subspace 
\begin{equation}\label{eq:flatincoord}
    W_{\underline{S}} \coloneqq \big\{\underline{x} \in \mathbb{R}^r \mid \sum_{j \in S_i} x_j =0 \textnormal{ for all } S_i \in \underline{S} \big \}.
\end{equation} 
Note that the affine span of any face of the zonotope $Z(U)$ is an integral translation of the linear space spanned by a flat $S \in \Fl(U)$, and the given face is isomorphic to $Z(S)$, up to an integral translation. In particular, any face of a graphical zonotope $Z_{\Gamma}$ is isomorphic to 
\begin{equation} \label{eq:faceexpl}
    Z_{\Gamma_{\underline{S}}} = Z_{\Gamma(S_1)} \times \ldots \times Z_{\Gamma(S_{l})}
\end{equation}
for some flat $\underline{S}=\{S_1, \ldots, S_l\} \in \Fl(\Gamma)$ via an integral translation. 
Viceversa, a flat $\underline{S} \in \Fl(\Gamma)$ with an acylic orientation of the contracted graph $\Gamma^{\underline{S}}$ determines uniquely a face of $Z_{\Gamma}$; see \cite[Proposition 1.3]{PPP2021}.

\begin{rmk} The factorization \eqref{eq:faceexpl} of the faces of $Z_{\Gamma}$ explains the occurrence of the factor $\prod^{\ell(\underline{S})}_{j=1}C(Z_{\Gamma(S_j)})$ in \eqref{eq:claim}, and it is a combinatorial analogue of the splitting of the normal slice in \eqref{eq:slice}.

\end{rmk}

\subsection{Lattice point counts in graphical zonotopes} We now specialize \cref{thm:countformula} to the case of the graphical zonotopes of a complete graph with multiple edges.
\begin{cor} \label{claim}
If $\Gamma$ contains a complete graph on the vertices, then for any $\omega \in \QQ^r$ we have
\begin{equation}
\label{eq:claim} 
    C(Z_{\Gamma}+\omega) = \sum_{\underline{S} \vdash [r]}
    \left( \sum_{\substack{\lambda \vdash \underline{S} \\ \lambda \; \omega_{\underline{S}}\textnormal{-integral}}} (-1)^{\ell(\lambda)-1} \prod_{i=1}^{\ell(\lambda)} (\lvert \lambda_i \rvert-1)! \right) 
    \prod^{\ell(\underline{S})}_{j=1}C(Z_{\Gamma(S_j)})
\end{equation}
where $\omega_{\underline{S}}=(\omega_{S_1}, \dots, \omega_{\ell(\underline{S})}) \in \RR^{\ell(\underline{S})}$, and $\omega_{S_i}= \sum_{j \in S_i} \omega_j$.
Moreover, if $\sum_{i=1}^r \omega_i \in \ZZ$, then
\begin{equation}
C(Z_{\Gamma}+\omega) = C(Z_{\Gamma}) + \sum_{\underline{S} \in \Pi_\omega} \operatorname{rk} \widetilde{H}_{\ell(\underline{S})-3}(\Delta(\Pi_{\omega_{\underline{S}}})) \prod^{\ell(\underline{S})}_{j=1} C(Z_{\Gamma(S_j)}).
\end{equation}
\end{cor}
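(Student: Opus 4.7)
The plan is to derive both identities by specialising \cref{thm:countformula} to the graphical zonotope $Z_\Gamma$ and translating everything from the language of vector flats to that of set partitions.

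Because $\Gamma$ contains a complete graph on its vertex set, \S\ref{rmk:2zonoto} identifies the poset of flats $\Fl(U_\Gamma)$ with the full partition poset $\Pi([r])$. Under this bijection, a flat $\underline{S} = \{S_1,\dots,S_{\ell(\underline{S})}\}$ corresponds to a face of $Z_\Gamma$ which, up to an integral translation, equals the product $Z_{\Gamma_{\underline{S}}} = Z_{\Gamma(S_1)} \times \cdots \times Z_{\Gamma(S_{\ell(\underline{S})})}$ by \eqref{eq:faceexpl}, so
\[
C(Z(\underline{S})) = \prod_{j=1}^{\ell(\underline{S})} C(Z_{\Gamma(S_j)}).
\]
The interval $[\underline{S},\hat{1}] \subset \Pi([r])$ is canonically isomorphic to $\Pi(\ell(\underline{S}))$; under this isomorphism the condition that a refinement $\underline{T} \geq \underline{S}$ be $\omega$-integral translates into the condition that the corresponding partition $\lambda \vdash \underline{S}$ be $\omega_{\underline{S}}$-integral.

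The classical formula for the Möbius function of the partition lattice (see \cite[Example 3.10.4]{Stanley2012}) then reads
\[
\mu_\Pi(\underline{S},\underline{T}) = (-1)^{\ell(\underline{S})-\ell(\lambda)} \prod_{i=1}^{\ell(\lambda)}(|\lambda_i|-1)!.
\]
Substituting these pieces into \eqref{eq:Z(U)} and combining the Möbius sign with the face-by-face sign $(-1)^{\dim Z(U_\Gamma)-\dim Z(\underline{S})} = (-1)^{\ell(\underline{S})-1}$ arising from Ehrhart--Macdonald reciprocity, the total exponent becomes $(-1)^{(\ell(\underline{S})-\ell(\lambda))+(\ell(\underline{S})-1)} = (-1)^{\ell(\lambda)-1}$, matching \eqref{eq:claim}. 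This proves the first identity.

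For the second identity I assume $\sum_i \omega_i \in \ZZ$ and split the outer sum over $\underline{S} \vdash [r]$ into three regimes. If $\underline{S} = \hat{1}$, the unique $\lambda$ is $\omega_{\hat{1}}$-integral by hypothesis, the inner factor equals $1$, and the contribution is $C(Z_\Gamma)$. If $\underline{S} \neq \hat{1}$ is itself $\omega$-integral, then $\omega_{\underline{S}} \in \ZZ^{\ell(\underline{S})}$, every partition of $\underline{S}$ is $\omega_{\underline{S}}$-integral, and the inner sum reduces to the unrestricted expression $\sum_{\lambda \vdash \underline{S}}(-1)^{\ell(\lambda)-1}\prod_i(|\lambda_i|-1)!$, which vanishes by the standard Möbius identity on $\Pi(\ell(\underline{S}))$ (this is the integral case in the proof of \cref{positivityofsum}). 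If instead $\underline{S} \in \Pi_\omega$, then $\omega_{\underline{S}} \notin \ZZ^{\ell(\underline{S})}$ while still $\sum_j (\omega_{\underline{S}})_j = \sum_i \omega_i \in \ZZ$, so \cref{prop:countsphere} identifies the inner sum with $\operatorname{rk}\widetilde{H}_{\ell(\underline{S})-3}(\Delta(\overline{\Pi}_{\omega_{\underline{S}}}))$. Assembling the three regimes yields the second identity.

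The main difficulty lies not in the structure of the argument, which is a substitution, but in the sign bookkeeping: one must verify that combining the Möbius sign $(-1)^{\ell(\underline{S})-\ell(\lambda)}$ on $\Pi([r])$ with the Ehrhart--Macdonald sign $(-1)^{\ell(\underline{S})-1}$ per face collapses to the uniform exponent $(-1)^{\ell(\lambda)-1}$ independent of $\ell(\underline{S})$. The trichotomy used for the second identity (trivial partition, non-trivial $\omega$-integral, non-$\omega$-integral) mirrors the dichotomy already exploited in \cref{prop:countsphere} and \cref{positivityofsum}.
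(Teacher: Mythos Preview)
Your proof is correct and follows essentially the same route as the paper, which simply cites \cref{thm:countformula}, \S\ref{rmk:2zonoto}, and \cref{prop:countsphere}; you have spelled out the substitution and the sign bookkeeping that the paper leaves implicit, and your observation that the Ehrhart--Macdonald sign $(-1)^{\ell(\underline{S})-1}$ must be combined with the M\"obius sign $(-1)^{\ell(\underline{S})-\ell(\lambda)}$ to produce $(-1)^{\ell(\lambda)-1}$ is exactly the step needed to reconcile \eqref{eq:Z(U)} with \eqref{eq:claim}. The trichotomy you use for the second identity is precisely how \cref{prop:countsphere} and the vanishing in \cref{positivityofsum} unpack in this context.
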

\begin{proof}
It follows from \cref{thm:countformula}, \cref{rmk:2zonoto} and \cref{prop:countsphere}.
\end{proof}

\begin{rmk}\label{rmk:commentformula} $ $
\begin{enumerate}[label=(\roman*)]
\item \label{item:formulageneral} If $\omega\in \Delta_{[r]}$, then the only $\omega$-integral partition is the trivial one. Hence
\[C(Z_{\Gamma}+ \omega)=\sum_{\underline{S} \vdash [r]} (\ell(\underline{S})-1)! \prod_{j=1}^{ \ell(\underline{S})} C(Z_{\Gamma(S_j)}).\]
\item If $\omega\in \Z^r$, then any partition is $\omega$-integral. Therefore, for any partition $\underline{S} \vdash [r]$, the coefficient of $\prod^{\ell(\underline{S}) }_{j=1}C(\Gamma(S_j))$ in \eqref{eq:claim} is
\[
\sum_{\lambda \vdash \underline{S}} (-1)^{\ell(\lambda)-1} \prod_{i=1}^{\ell(\lambda)} (\lvert \lambda_i \rvert-1)! = \sum_{\sigma \in \mathfrak{S}_{k}} (-1)^{k-1-\ell(\sigma)},
\]
where $\ell(\sigma)$ is the length of the permutation $\sigma \in \mathfrak{S}_{k}$, i.e.\ the smaller number of transpositions whose product gives $\sigma$.
This coefficient vanishes for $\ell(\underline{S})>1$, so we obtain the trivial identity
\[ C(Z_{\Gamma}+\omega)=C(Z_{\Gamma([r])})=C(Z_{\Gamma}).\]
\item Let $\omega=d (\frac{n_1}{n}, \frac{n_2}{n}, \dots, \frac{n_r}{n})$ 
and $n=\sum_{i=1}^r n_i$.
Assuming the claim, then 
\[ C(Z_{\Gamma}+\omega)= C\bigg(Z_{\Gamma}+  \gcd(d,n)\bigg(\frac{n_1}{n}, \frac{n_2}{n}, \dots, \frac{n_r}{n}\bigg)\bigg).\]
In other words, $C(Z_{\Gamma}+\omega)$ depends only on $\gcd(d,n)$ and $n_i$, for $i =1,\dots,r$.
\end{enumerate}
\end{rmk}
\begin{exa}\label{exa:4}
Let $K({n,e})$ be the graph on $n$-vertices with $e$ edges between any two distinct vertices. 
The graph $K({n,1})$ is the complete graph $K_n$. For any partition $\underline{n}\vdash n$, we denoted by $b(\underline{n})$ the number of partitions $\underline{S}=\{S_{1}, \ldots, S_{k}\}$ of the set $[n]$ whose partition type is $\underline{n}$, i.e.\ $\underline{n}=\{|S_{1}|, \ldots, |S_{k}|\}$.\footnote{The integer $b(\underline{n})$ has a clear geometric meaning: the stratum $S_{\underline{n}}$ defined in \S \ref{sec:Ngo} is not normal  in general, and $b(\underline{n})$ is the number of branches of $S_{\underline{n}}$ passing through the general point of $S_{1|\ldots|1}$.}  Set 
\[l(\underline{n}, \omega)\coloneqq \sum_{\substack{\lambda \vdash \underline{n} \\ \lambda \; \omega_{\underline{n}}\textnormal{-integral}}} (-1)^{\ell(\lambda)-1} \prod_{i=1}^{\ell(\lambda)} (\lvert \lambda_i \rvert-1)!.\]
Hence, we can rewrite \eqref{eq:claim} as follows \begin{equation}\label{eq:completegraphformula}
    C(Z_{K(n,e)}+\omega) = \sum_{\underline{n}=\{n_1, \ldots, n_k\} \vdash n}
    b(\underline{n}) \cdot l(\underline{n}, \omega) \cdot 
    \prod^{k}_{j=1}C(Z_{K(n_j, e)}).
\end{equation}
    
Fix for instance $n=4$, and let $\omega(d)=d\big(\frac{1}{4}, \frac{1}{4}, \frac{1}{4}, \frac{1}{4}\big) \in \RR^r$. We collect the values of $b(\underline{n})$ and $l(\underline{n}, \omega(d))$ in the Table \ref{table2}. \begin{table}[H]
\centering
\renewcommand{\arraystretch}{1}
   \begin{tabular}{c|ccccc}
   Type of $\underline{n} \vdash 4$ & \{4\} & \{3,1\} & \{2,2\} & \{2,1,1\} & \{1,1,1,1\}\\ \hline
 $b(\underline{n})$ & 1 & 4 & 3 & 6 & 1 \\
  $l(\underline{n}, \omega(0))$ & 1 & 0 & 0 & 0 & 0 \\
  $l(\underline{n}, \omega(1))$ & 1 & 1 & 1 & 2 & 6 \\
  $l(\underline{n}, \omega(2))$ & 1 & 1 & 0 & 1 & 3
    \end{tabular}
    \vspace{0.4 cm}
    \caption{Values of $b(\underline{n})$ and $l(\underline{n}, \omega(d))$ for $n=4$. The third row equals $(\ell(\underline{n})-1)!$, as explained in 
\cref{rmk:commentformula}.\ref{item:formulageneral}. To fill the firth row, note that the only $\omega(2)$-integral partitions are $12|34$, $13|24$, $14|23$, $1234$. The coefficients $l(\underline{n}, \omega(d))$ have been computed also in \cite[Table 1]{MM2022} using a different algebro-geometric approach.} \vspace{-1 cm}  \label{table2}
    \end{table}
    To determine $C(Z_{K(n,e)}+\omega)$, it is then sufficient to count the number of lattice points in the interior of the non-translated zonotope $Z_{K(m,e)}$ for $m \leq  4$. To this end, observe that the Tutte polynomial of the complete graph $K_{m+1}$ can be obtained recursively from the following formula \cite{Gessel1995,Pak}
\[T_m(x,y)= \sum_{k=1}^m \binom{m-1}{k-1} (x+y+y^2+\dots +y^{k-1})T_{k-1}(1,y) T_{m-k}(x,y) \] 

\begin{table}[p]
\vspace{1.5 cm}
    \centering
    \begin{tabular}{c|c|c|c}
        $m$ & $T_{m-1}(x,y)$ & $E_{K_{m}}$ & $C(K({m,e}))$ \\ \hline
        $1$ & $1$ & $1$ & $1$ \\
        $2$ & $x$ & $q+1$ & $e-1$ \\
        $3$ & $x^2+x+y$ & $3q^2+3q+1$ & $3e^2-3e+1$ \\
        $4$ & $x^3+3x^2+2x+4xy+2y+3y^2+y^3$ & $1+6q+15q^2+16q^3$ & $16e^3-15e^2+6e-1$
    \end{tabular}
    \vspace{0,5 cm}
    \caption{First values of Tutte polynomials, Ehrhart polynomials for the complete graphs, and $C(Z_{K({m,e})})$.}
    \label{tab:points_count}
\end{table}
\begin{figure}[p]
\begin{subfigure}[b]{0.3\textwidth}
\centering
\begin{tikzpicture}[join=round, scale=0.5]
    \tikzstyle{conefill} = [fill=blue!20,fill opacity=0.4]
    \filldraw[conefill](2,0,4)--(0,2,4)--(0,4,2)--(2,4,0)--(4,2,0)--(4,0,2)--cycle;
    \filldraw[conefill](4,0,2)--(2,0,4)--(2,0,6)--(4,0,6)--(6,0,4)--(6,0,2)--cycle;
    \filldraw[conefill](0,4,2)--(0,2,4)--(0,2,6)--(0,4,6)--(0,6,4)--(0,6,2)--cycle;
    \filldraw[conefill](6,2,0)--(6,4,0)--(4,6,0)--(2,6,0)--(2,4,0)--(4,2,0)--cycle;
    \filldraw[conefill](2,0,6)--(0,2,6)--(0,2,4)--(2,0,4)--cycle;
    \filldraw[conefill](6,0,2)--(6,2,0)--(4,2,0)--(4,0,2)--cycle;
    \filldraw[conefill](0,6,2)--(2,6,0)--(2,4,0)--(0,4,2)--cycle;
    \filldraw[conefill](4,2,6)--(2,4,6)--(2,6,4)--(4,6,2)--(6,4,2)--(6,2,4)--cycle;
    \filldraw[conefill](6,0,4)--(6,2,4)--(6,4,2)--(6,4,0)--(6,2,0)--(6,0,2)--cycle;
     \filldraw[conefill](0,6,4)--(2,6,4)--(4,6,2)--(4,6,0)--(2,6,0)--(0,6,2)--cycle;
     \filldraw[conefill](2,0,6)--(0,2,6)--(0,4,6)--(2,4,6)--(4,2,6)--(4,0,6)--cycle;
     \filldraw[conefill](6,0,4)--(4,0,6)--(4,2,6)--(6,2,4)--cycle;
     \filldraw[conefill](0,6,4)--(0,4,6)--(2,4,6)--(2,6,4)--cycle;
     \filldraw[conefill](6,4,2)--(4,6,2)--(4,6,0)--(6,4,0)--cycle;
     
     \node at (2,2,4) [circle,fill=black, scale=0.7] {};
     \node at (2,4,4) [circle,fill=black, scale=0.7] {};
     \node at (4,2,4) [circle,fill=black, scale=0.7] {};
     \node at (4,4,2) [circle,fill=black, scale=0.7] {};
     \node at (2,4,2) [circle,fill=black, scale=0.7] {};
     \node at (4,2,2) [circle,fill=black, scale=0.7] {};
     \node at (4,4,4) [circle, draw, scale=0.7] {};
     \node at (2,2,6) [circle, draw, scale=0.7] {};
     \node at (2,6,2) [circle, draw, scale=0.7] {};
     \node at (6,2,2) [circle, draw, scale=0.7] {};
     
    \node at (4,2,6) [circle, draw, scale=0.7] {};
     \node at (2,4,6) [circle, draw, scale=0.7] {};
     \node at (2,6,4) [circle, draw, scale=0.7] {};
     \node at (4,6,2) [circle, draw, scale=0.7] {};
     \node at (6,4,2) [circle, draw, scale=0.7] {};
     \node at (6,2,4) [circle, draw, scale=0.7] {};
     
\end{tikzpicture}
\vspace{0.3 cm}
\begin{gather*}
    \omega(0)=(0, 0, 0,0)\\
    C(Z_{K_4}+\omega(0))  =6
\end{gather*}
\end{subfigure}%
\begin{subfigure}[b]{0.35\textwidth}
\centering
\begin{tikzpicture}[join=round, scale=0.5]
    \tikzstyle{conefill} = [fill=blue!20,fill opacity=0.4]
    \tikzstyle{conefilla} = [opacity=0.4, fill opacity=0]
    \filldraw[conefilla](2,0,4)--(0,2,4)--(0,4,2)--(2,4,0)--(4,2,0)--(4,0,2)--cycle;
    \filldraw[conefilla](4,0,2)--(2,0,4)--(2,0,6)--(4,0,6)--(6,0,4)--(6,0,2)--cycle;
    \filldraw[conefilla](0,4,2)--(0,2,4)--(0,2,6)--(0,4,6)--(0,6,4)--(0,6,2)--cycle;
    \filldraw[conefilla](6,2,0)--(6,4,0)--(4,6,0)--(2,6,0)--(2,4,0)--(4,2,0)--cycle;
    \filldraw[conefilla](2,0,6)--(0,2,6)--(0,2,4)--(2,0,4)--cycle;
    \filldraw[conefilla](6,0,2)--(6,2,0)--(4,2,0)--(4,0,2)--cycle;
    \filldraw[conefilla](0,6,2)--(2,6,0)--(2,4,0)--(0,4,2)--cycle;
    \filldraw[conefilla](4,2,6)--(2,4,6)--(2,6,4)--(4,6,2)--(6,4,2)--(6,2,4)--cycle;
    \filldraw[conefilla](6,0,4)--(6,2,4)--(6,4,2)--(6,4,0)--(6,2,0)--(6,0,2)--cycle;
     \filldraw[conefilla](0,6,4)--(2,6,4)--(4,6,2)--(4,6,0)--(2,6,0)--(0,6,2)--cycle;
     \filldraw[conefilla](2,0,6)--(0,2,6)--(0,4,6)--(2,4,6)--(4,2,6)--(4,0,6)--cycle;
     \filldraw[conefilla](6,0,4)--(4,0,6)--(4,2,6)--(6,2,4)--cycle;
     \filldraw[conefilla](0,6,4)--(0,4,6)--(2,4,6)--(2,6,4)--cycle;
     \filldraw[conefilla](6,4,2)--(4,6,2)--(4,6,0)--(6,4,0)--cycle;

    \filldraw[conefill](2.5,0.5,4.5)--(0.5,2.5,4.5)--(0.5,4.5,2.5)--(2.5,4.5,0.5)--(4.5,2.5,0.5)--(4.5,0.5,2.5)--cycle;
    \filldraw[conefill](4.5,0.5,2.5)--(2.5,0.5,4.5)--(2.5,0.5,6.5)--(4.5,0.5,6.5)--(6.5,0.5,4.5)--(6.5,0.5,2.5)--cycle;
    \filldraw[conefill](0.5,4.5,2.5)--(0.5,2.5,4.5)--(0.5,2.5,6.5)--(0.5,4.5,6.5)--(0.5,6.5,4.5)--(0.5,6.5,2.5)--cycle;
    \filldraw[conefill](6.5,2.5,0.5)--(6.5,4.5,0.5)--(4.5,6.5,0.5)--(2.5,6.5,0.5)--(2.5,4.5,0.5)--(4.5,2.5,0.5)--cycle;
    \filldraw[conefill](2.5,0.5,6.5)--(0.5,2.5,6.5)--(0.5,2.5,4.5)--(2.5,0.5,4.5)--cycle;
    \filldraw[conefill](6.5,0.5,2.5)--(6.5,2.5,0.5)--(4.5,2.5,0.5)--(4.5,0.5,2.5)--cycle;
    \filldraw[conefill](0.5,6.5,2.5)--(2.5,6.5,0.5)--(2.5,4.5,0.5)--(0.5,4.5,2.5)--cycle;
    \filldraw[conefill](4.5,2.5,6.5)--(2.5,4.5,6.5)--(2.5,6.5,4.5)--(4.5,6.5,2.5)--(6.5,4.5,2.5)--(6.5,2.5,4.5)--cycle;
    \filldraw[conefill](6.5,0.5,4.5)--(6.5,2.5,4.5)--(6.5,4.5,2.5)--(6.5,4.5,0.5)--(6.5,2.5,0.5)--(6.5,0.5,2.5)--cycle;
     \filldraw[conefill](0.5,6.5,4.5)--(2.5,6.5,4.5)--(4.5,6.5,2.5)--(4.5,6.5,0.5)--(2.5,6.5,0.5)--(0.5,6.5,2.5)--cycle;
     \filldraw[conefill](2.5,0.5,6.5)--(0.5,2.5,6.5)--(0.5,4.5,6.5)--(2.5,4.5,6.5)--(4.5,2.5,6.5)--(4.5,0.5,6.5)--cycle;
     \filldraw[conefill](6.5,0.5,4.5)--(4.5,0.5,6.5)--(4.5,2.5,6.5)--(6.5,2.5,4.5)--cycle;
     \filldraw[conefill](0.5,6.5,4.5)--(0.5,4.5,6.5)--(2.5,4.5,6.5)--(2.5,6.5,4.5)--cycle;
     \filldraw[conefill](6.5,4.5,2.5)--(4.5,6.5,2.5)--(4.5,6.5,0.5)--(6.5,4.5,0.5)--cycle;
     
     \node at (2,2,4) [circle,fill=black, scale=0.7] {};
     \node at (2,4,4) [circle,fill=black, scale=0.7] {};
     \node at (4,2,4) [circle,fill=black, scale=0.7] {};
     \node at (4,4,2) [circle,fill=black, scale=0.7] {};
     \node at (2,4,2) [circle,fill=black, scale=0.7] {};
     \node at (4,2,2) [circle,fill=black, scale=0.7] {};
     \node at (4,4,4) [circle, fill=black, scale=0.7] {};
     \node at (2,2,6) [circle, fill=black, scale=0.7] {};
     \node at (2,6,2) [circle, fill=black, scale=0.7] {};
     \node at (6,2,2) [circle, fill=black, scale=0.7] {};
     
    \node at (4,2,6) [circle, fill=black, scale=0.7] {};
     \node at (2,4,6) [circle, fill=black, scale=0.7] {};
     \node at (2,6,4) [circle, fill=black, scale=0.7] {};
     \node at (4,6,2) [circle, fill=black, scale=0.7] {};
     \node at (6,4,2) [circle, fill=black, scale=0.7] {};
     \node at (6,2,4) [circle, fill=black, scale=0.7] {};
     
\end{tikzpicture}
\vspace{0.3 cm}
\begin{gather*}
    \omega(1)=\bigg(\frac{1}{4},  \frac{1}{4},\frac{1}{4},\frac{1}{4} \bigg)\\
    C(Z_{K_4}+ \omega(1)) = 16
\end{gather*}
\end{subfigure}%
\begin{subfigure}[b]{0.3\textwidth}
\centering
\begin{tikzpicture}[join=round, scale=0.5]
       \tikzstyle{conefill} = [fill=blue!20,fill opacity=0.4]
    \tikzstyle{conefilla} = [opacity=0.4, fill opacity=0]
    \filldraw[conefilla](2,0,4)--(0,2,4)--(0,4,2)--(2,4,0)--(4,2,0)--(4,0,2)--cycle;
    \filldraw[conefilla](4,0,2)--(2,0,4)--(2,0,6)--(4,0,6)--(6,0,4)--(6,0,2)--cycle;
    \filldraw[conefilla](0,4,2)--(0,2,4)--(0,2,6)--(0,4,6)--(0,6,4)--(0,6,2)--cycle;
    \filldraw[conefilla](6,2,0)--(6,4,0)--(4,6,0)--(2,6,0)--(2,4,0)--(4,2,0)--cycle;
    \filldraw[conefilla](2,0,6)--(0,2,6)--(0,2,4)--(2,0,4)--cycle;
    \filldraw[conefilla](6,0,2)--(6,2,0)--(4,2,0)--(4,0,2)--cycle;
    \filldraw[conefilla](0,6,2)--(2,6,0)--(2,4,0)--(0,4,2)--cycle;
    \filldraw[conefilla](4,2,6)--(2,4,6)--(2,6,4)--(4,6,2)--(6,4,2)--(6,2,4)--cycle;
    \filldraw[conefilla](6,0,4)--(6,2,4)--(6,4,2)--(6,4,0)--(6,2,0)--(6,0,2)--cycle;
     \filldraw[conefilla](0,6,4)--(2,6,4)--(4,6,2)--(4,6,0)--(2,6,0)--(0,6,2)--cycle;
     \filldraw[conefilla](2,0,6)--(0,2,6)--(0,4,6)--(2,4,6)--(4,2,6)--(4,0,6)--cycle;
     \filldraw[conefilla](6,0,4)--(4,0,6)--(4,2,6)--(6,2,4)--cycle;
     \filldraw[conefilla](0,6,4)--(0,4,6)--(2,4,6)--(2,6,4)--cycle;
     \filldraw[conefilla](6,4,2)--(4,6,2)--(4,6,0)--(6,4,0)--cycle;

    \filldraw[conefill](2+1,0+1,4+1)--(0+1,2+1,4+1)--(0+1,4+1,2+1)--(2+1,4+1,0+1)--(4+1,2+1,0+1)--(4+1,0+1,2+1)--cycle;
    \filldraw[conefill](4+1,0+1,2+1)--(2+1,0+1,4+1)--(2+1,0+1,6+1)--(4+1,0+1,6+1)--(6+1,0+1,4+1)--(6+1,0+1,2+1)--cycle;
    \filldraw[conefill](0+1,4+1,2+1)--(0+1,2+1,4+1)--(0+1,2+1,6+1)--(0+1,4+1,6+1)--(0+1,6+1,4+1)--(0+1,6+1,2+1)--cycle;
    \filldraw[conefill](6+1,2+1,0+1)--(6+1,4+1,0+1)--(4+1,6+1,0+1)--(2+1,6+1,0+1)--(2+1,4+1,0+1)--(4+1,2+1,0+1)--cycle;
    \filldraw[conefill](2+1,0+1,6+1)--(0+1,2+1,6+1)--(0+1,2+1,4+1)--(2+1,0+1,4+1)--cycle;
    \filldraw[conefill](6+1,0+1,2+1)--(6+1,2+1,0+1)--(4+1,2+1,0+1)--(4+1,0+1,2+1)--cycle;
    \filldraw[conefill](0+1,6+1,2+1)--(2+1,6+1,0+1)--(2+1,4+1,0+1)--(0+1,4+1,2+1)--cycle;
    \filldraw[conefill](4+1,2+1,6+1)--(2+1,4+1,6+1)--(2+1,6+1,4+1)--(4+1,6+1,2+1)--(6+1,4+1,2+1)--(6+1,2+1,4+1)--cycle;
    \filldraw[conefill](6+1,0+1,4+1)--(6+1,2+1,4+1)--(6+1,4+1,2+1)--(6+1,4+1,0+1)--(6+1,2+1,0+1)--(6+1,0+1,2+1)--cycle;
     \filldraw[conefill](0+1,6+1,4+1)--(2+1,6+1,4+1)--(4+1,6+1,2+1)--(4+1,6+1,0+1)--(2+1,6+1,0+1)--(0+1,6+1,2+1)--cycle;
     \filldraw[conefill](2+1,0+1,6+1)--(0+1,2+1,6+1)--(0+1,4+1,6+1)--(2+1,4+1,6+1)--(4+1,2+1,6+1)--(4+1,0+1,6+1)--cycle;
     \filldraw[conefill](6+1,0+1,4+1)--(4+1,0+1,6+1)--(4+1,2+1,6+1)--(6+1,2+1,4+1)--cycle;
     \filldraw[conefill](0+1,6+1,4+1)--(0+1,4+1,6+1)--(2+1,4+1,6+1)--(2+1,6+1,4+1)--cycle;
     \filldraw[conefill](6+1,4+1,2+1)--(4+1,6+1,2+1)--(4+1,6+1,0+1)--(6+1,4+1,0+1)--cycle;
     
     \node at (2,2,4) [circle,fill=black, scale=0.7] {};
     \node at (2,4,4) [circle,fill=black, scale=0.7] {};
     \node at (4,2,4) [circle,fill=black, scale=0.7] {};
     \node at (4,4,2) [circle,fill=black, scale=0.7] {};
     \node at (2,4,2) [circle,fill=black, scale=0.7] {};
     \node at (4,2,2) [circle,fill=black, scale=0.7] {};
     \node at (4,4,4) [circle,fill=black, scale=0.7] {};
     
     \node at (2,2,6) [circle, draw, scale=0.7] {};
     \node at (2,6,2) [circle, draw, scale=0.7] {};
     \node at (6,2,2) [circle, draw, scale=0.7] {};
     
    \node at (4,2,6) [circle, fill=black, scale=0.7] {};
     \node at (2,4,6) [circle, fill=black, scale=0.7] {};
     \node at (2,6,4) [circle, fill=black, scale=0.7] {};
     \node at (4,6,2) [circle, fill=black, scale=0.7] {};
     \node at (6,4,2) [circle, fill=black, scale=0.7] {};
     \node at (6,2,4) [circle, fill=black, scale=0.7] {};
     
\end{tikzpicture}
\vspace{0.3 cm}
\begin{gather*}
    \omega(2)=\bigg(\frac{1}{2},  \frac{1}{2},\frac{1}{2},\frac{1}{2} \bigg)\\
    C(Z_{K_4}+ \omega(2)) =13
\end{gather*}
\end{subfigure}
\label{fig}
\caption{Let $K_4$ be the complete graph with $4$ vertices. The pictures represent the 3-dimensional polytope $Z_{K_4} + \omega(d) \subset \RR^4$, i.e.\ the graphical zonotope of $Z_{K_4}$ translated by the vector $\omega(d)$. The lattice points in the interior of $Z_{K_4} + \omega(d)$ are the circles filled in black, and their number is $C(Z_{K_4} + \omega(d))$. The \emph{visible} faces of the zonotope, i.e.\ those that are not contained in the closure of faces whose outward normal unit vector $\mathfrak{n}$ in $\RR^3$ pairs negatively with $\mathfrak{v} \coloneqq (1,1,1)$, in symbols $\mathfrak{n} \cdot \mathfrak{v} < 0$, can be enumerated as follows: $4$ hexagons, $3$ rectangles, $12$ edges and $6$ points. The same multiplicities appear in \eqref{eq:completegraphformula}, namely
\begin{align*}
   \hspace{-2.5 cm} C(K_4+\omega(1)) =  C(K_4) + 4C(K_3) C(K_1)  + 3 C(K_2)^2  + 12C(K_2)C(K_1)^2 + 6C(K_1)^4.
\end{align*} }
\end{figure}
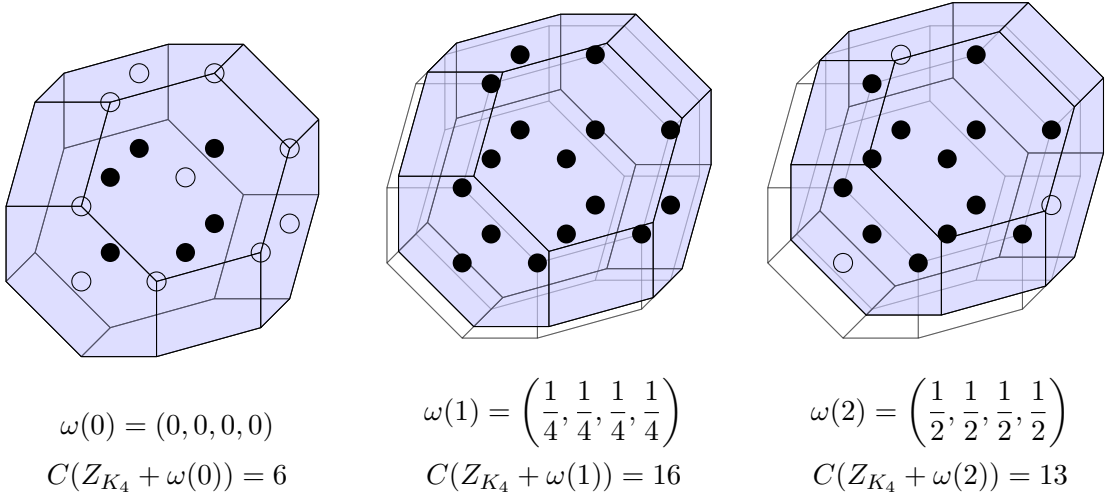 

By \cite{Moci2012,DAdderioMoci2012}, the Tutte polynomial determines the Ehrhart polynomial for the zonotope $Z_{K_{m}}$ as follows 
\[E_{K_{m+1}}(q)= \lvert qZ_{K_{m+1}} \cap \mathbb{Z}^m \rvert = q^m T_m\Bigl(1+\frac{1}{q},1\Bigr). \]
Now by Ehrhart–Macdonald reciprocity we have
\[C(K(m,e))=(-1)^m E_{K_{m}}(-e).\] 
We report these  polynomial for $m \leq 4$ in \Cref{tab:points_count}. Finally, we obtain the desired lattice point count \begin{align*}
    C(K({4,e}), \omega(0)) & = 16 e^3-15e^2+6e-13,\\
    C(K({4,e}), \omega(1)) & = 16 e^3,\\
    C(K({4,e}), \omega(2)) & = 16 e^3-3e^2.
\end{align*} \end{exa}
\section{Permutation representations on lattice points in graphical zonotopes}\label{sec:permutation}
Let $\Aut (\Gamma) < \mathfrak{S}_r$ be the group of automorphisms of the graph $\Gamma$ with $r$ vertices, i.e.\ the permutations of the vertices of $\Gamma$ mapping edges of $\Gamma$ in edges of $\Gamma$.
In this section, we promote the numerical identity \eqref{eq:claim} to an isomorphism of $\Aut (\Gamma)$-representations; see \cref{thm:repr}. 

From now on assume that $\omega \in \QQ^r$ is an $\Aut(\Gamma)$-invariant vector.
The group $\Aut (\Gamma)$ acts by permutations on the graphical zonotope $Z_{\Gamma}\subset \RR^{r}$, and so on the vector space with basis the set of integral points in $\mathrm{Int}(Z_{\Gamma} + \omega)$.
We denote the latter $\Aut (\Gamma)$-representation by  $\mathcal{C}(\Gamma,\omega)$.

Furthermore, the group $\Aut (\Gamma) < \mathfrak{S}_r$ acts by permutations on the posets $\Fl$ and $\Fl_\omega$ as $\omega$ is $\Aut(\Gamma)$-invariant.
The stabilizer of $\underline{S} \in \Fl$, denoted $\Stab (\underline{S}) < \Aut (\Gamma)$, acts on $\Fl_{\geq \underline{S}}$ preserving $\Fl_{\omega, \geq \underline{S}}$.
Therefore, $\widetilde{H}_{\ell(\underline{S})-3}(\Delta(\Fl_{\omega, \geq \underline{S}}))$ admits a right action of $\Stab (\underline{S})$.

\begin{defn}
 Let $\Gamma$ be an oriented graph. Define the \textbf{orientation character} \[o_{\Gamma}\colon \Aut (\Gamma) \to \ZZ/2\ZZ \qquad \text{ by } \qquad o_{\Gamma}(\sigma) \coloneqq \prod_{e \in E(\Gamma)} o(e, \sigma),\]
where $o(e,\sigma)$ is $1$ if the map $\sigma|_{e} \colon e \to \sigma(e)$ is orientation-preserving  
and $-1$ otherwise. 
\end{defn}
Note that the definition of $o_\Gamma$ does not depend on the chosen orientation and it is indeed multiplicative, i.e.\ 
\begin{equation}\label{eq:multiplicativity}
    o_{\Gamma}(\sigma)o_{\Gamma}(\tau)=o_{\Gamma}(\sigma \tau).
\end{equation}
Indeed, let $\Gamma^d$ be the double of $\Gamma$, i.e.\ the oriented graph obtained from $\Gamma$ by replacing each
edge $e$ of $\Gamma$ with a pair of edges $e^+$ and $e^-$ having the same endpoints as $e$ but opposite orientations. 
Any $\sigma \in \Aut(\Gamma)$ define an automorphism $\sigma^d \in \Aut(\Gamma^d)$ given by \[\sigma^d(e^{\pm})=\begin{cases}
\sigma(e)^{\pm} & \text{ if } o(e, \sigma)=1,\\
\sigma(e)^{\mp} & \text{ if } o(e, \sigma)=-1.
\end{cases}\] Thus we can identify $\Aut(\Gamma)$ and $\Aut(\Gamma^d)$ as subgroups of $\mathfrak{S}_r$. Now, $o_{\Gamma}$ is the sign representation of the action by permutation of $\Aut(\Gamma)$ on the set of edges $E({\Gamma^{d}})$ of $\Gamma^d$
\[\Aut(\Gamma) \simeq \Aut(\Gamma^d) \to \mathfrak{S}(E({\Gamma^{d}})).\]
In the following, we also consider the sign representation $\sgn ( \sigma \colon V(\Gamma) \to V(\Gamma))$ with respect to the action on the vertex set.

\begin{defn}
Let $\underline{S} \in \Fl$ be a flat of $\Gamma$, $\Stab(\underline{S})< \Aut(\Gamma)$ be its stabilizer and $\Gamma^{\underline{S}}$ be the contracted (loopless) graph defined in \cref{def:graph}.
Define $\alpha_{\underline{S}}$ the character of $\Stab(\underline{S})$ given by
\[ \alpha_{\underline{S}} = \sgn(\sigma\colon V(\Gamma^{\underline{S}}) \to V(\Gamma^{\underline{S}})) \otimes o_{\Gamma^{\underline{S}}}(\sigma).\]
\end{defn}

\begin{exa}
Consider the graph $\Gamma$
\begin{center}
\begin{tikzpicture}[thick,scale=0.6]
{
\tikzstyle{every node}=[circle, draw,
                        inner sep=1pt, minimum width=4pt]
        \node (1) at (0,1){1};
        \node (2) at (0,-1){2};
        \node (3) at (1.73,0){3};}
        \draw (1) -- (2) node[midway,left]{$e$};
        \draw (1) -- (3) node[midway, above]{$f$};
        \draw (2) -- (3) node[midway, below]{$f$};
\end{tikzpicture}
\end{center}
where $e$ and $f$ are the number of edges between two vertices. If  $e \neq f$, then $\Aut(\Gamma)= \Z/2\Z = \langle (12) \rangle$ and $a_\Gamma(\sigma)=(-1)^{e+1}$.
\end{exa}


\begin{thm} \label{thm:repr}
    For any graph $\Gamma$ and for any $\Aut(\Gamma)$-invariant vector $\omega \in \QQ^r$, we have
    \begin{equation}\label{eq:representations}
        \mathcal{C}(Z_{\Gamma}+\omega)= \mathcal{C}(Z_{\Gamma}) \oplus \bigoplus_{\underline{S} \in \Fl_\omega /\Aut(\Gamma)} \Ind_{\Stab (\underline{S})}^{\Aut (\Gamma)} \big( \alpha_{\underline{S}} \otimes \widetilde{H}_{\ell(\underline{S})-3}(\Delta(\overline{\Fl}_{\omega, \geq \underline{S}})) \otimes \mathcal{C}(Z_{\Gamma_{\underline{S}}})\big),
    \end{equation}
    with $\omega_{\underline{S}}=(\omega_{S_1}, \dots, \omega_{S_{\ell(\underline{S})}}) \in \RR^{\ell(\underline{S})}$ and $\omega_{S_i}= \sum_{j \in S_i} \omega_j$.
\end{thm}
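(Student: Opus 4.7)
The plan is to establish the isomorphism \eqref{eq:representations} by comparing the characters of both sides at each $\sigma \in \Aut(\Gamma)$; the dimensions already match by Corollary~\ref{claim} (the case $\sigma=\id$).

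For the left-hand side, the fact that $\sigma$ permutes the lattice basis of $\mathcal{C}(Z_\Gamma+\omega)$ gives
\begin{equation*}
\chi_{\mathrm{LHS}}(\sigma) = \lvert \mathrm{Int}(Z_\Gamma+\omega)^\sigma \cap \ZZ^r \rvert = \lvert \mathrm{Int}(P_\sigma) \cap (\ZZ^r \cap V^\sigma) \rvert,
\end{equation*}
where $V^\sigma\subset\RR^r$ is the fixed subspace (which contains $\omega$ by $\Aut(\Gamma)$-invariance) and $P_\sigma=(Z_\Gamma+\omega)\cap V^\sigma$. I would first check that $P_\sigma$ is itself a translated zonotope: grouping the generating segments $[0,u_e]$, with $u_e=t(e)-s(e)$, by $\langle\sigma\rangle$-orbits on the set of oriented edges, each orbit contributes a single generator $\sum_{e\in O}u_e$ to $P_\sigma$, and this generator vanishes precisely when some power of $\sigma$ reverses the orientation of an edge in $O$. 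Then Theorem~\ref{thm:countformula} applied to $P_\sigma$ expresses $\chi_{\mathrm{LHS}}(\sigma)$ as a double sum over flats of $P_\sigma$ with Möbius coefficients restricted to $\omega$-integral subflats.

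For the right-hand side, Frobenius reciprocity rewrites the sum over $\Aut(\Gamma)$-orbits as a sum over $\sigma$-fixed $\omega$-non-integral flats:
\begin{equation*}
\chi_{\mathrm{RHS}}(\sigma)=\chi_{\mathcal{C}(Z_\Gamma)}(\sigma)+\sum_{\underline{T}\in\Fl_\omega^\sigma}\alpha_{\underline{T}}(\sigma)\cdot\chi_{\widetilde{H}_{\ell(\underline{T})-3}(\Delta(\overline{\Fl}_{\omega,\geq\underline{T}}))}(\sigma)\cdot\chi_{\mathcal{C}(Z_{\Gamma_{\underline{T}}})}(\sigma).
\end{equation*}
By the Hopf--Lefschetz trace formula applied to the simplicial action of $\sigma$ on $\Delta(\overline{\Fl}_{\omega,\geq\underline{T}})$, combined with \Cref{prop:LEXshell} so that only top homology survives, the middle factor equals an equivariant Möbius-type sum $\sum_{\underline{U}}\mu(\underline{T},\underline{U})$ over $\sigma$-fixed $\omega$-integral flats $\underline{U}\geq\underline{T}$, exactly of the shape appearing as coefficient in Theorem~\ref{thm:countformula}.

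The remaining and most delicate step is to match, for each $\sigma$-fixed flat $\underline{T}$, the corresponding contribution to $\chi_{\mathrm{LHS}}(\sigma)$ coming from $P_\sigma$ with the product $\alpha_{\underline{T}}(\sigma)\cdot\chi_{\mathcal{C}(Z_{\Gamma_{\underline{T}}})}(\sigma)$. The factor $\sgn(\sigma|_{V(\Gamma^{\underline{T}})})$ should arise from the change of orientation when passing from the ambient parallelepipeds measuring the relative volume $\rvol(W)$ in Theorem~\ref{thm:Ehrhart_qp} to the parallelepipeds inside $V^\sigma$, while $o_{\Gamma^{\underline{T}}}(\sigma)$ keeps track of the sign with which a $\langle\sigma\rangle$-orbit of edges contributes (or fails to contribute) to the $\sigma$-fixed zonotope. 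The main obstacle is precisely to track these two signs cleanly and show that they combine into $\alpha_{\underline{T}}(\sigma)$; once this is done, a term-by-term comparison via the Möbius inversion in \eqref{eq:Z(U)} against the Frobenius expansion above yields the desired character equality, and hence the isomorphism of $\Aut(\Gamma)$-representations.
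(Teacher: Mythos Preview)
Your overall strategy coincides with the paper's: compare characters at each $\sigma$, use Frobenius reciprocity to reduce the right-hand side to a sum over $\underline{T}\in\Fl_\omega^\sigma$, apply Lefschetz fixed-point to convert the homology character into a M\"obius sum, and recognise that $\chi_{\mathrm{LHS}}(\sigma)$ is a lattice-point count in the fixed zonotope $Z_\Gamma^\sigma+\omega$ to which \cref{thm:Ehrhart_qp} applies. Up to this point your outline is correct and essentially what the paper does.

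The genuine gap is in your account of the ``remaining and most delicate step''. Your heuristic that $\sgn(\sigma|_{V(\Gamma^{\underline{T}})})$ comes from an orientation change in the relative volumes, and that $o_{\Gamma^{\underline{T}}}(\sigma)$ records whether an edge-orbit contributes a nonzero generator, does not survive contact with the actual computation. The point you miss is that the $\sigma$-fixed zonotope is not merely $Z(U_\Gamma^\sigma)$: by \cref{lem:fixedlocus} it carries an extra translation vector $t_\sigma$ whose entries are \emph{half}-integers exactly when a cycle $\sigma_i$ has even length $l_i$ and the edge multiplicity $y_{a,\sigma^{l_i/2}(a)}$ is odd (\cref{rmk:half_int}). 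Whether a face of $Z_\Gamma^\sigma$ (respectively $Z_\Gamma^\sigma+\omega$) meets the lattice is therefore governed not by $\omega$-integrality of the corresponding flat in $\Fl(\Gamma)^\sigma$ alone, but by a $2$-adic condition comparing $\gcd_{i\in B_j}(l_i)$ with $(\omega_{\underline{B}})_j$ and with the parity of certain edge counts. The paper encodes this in the auxiliary sets $A_{\underline{B},\sigma,j}$ and $\Half_\omega$ and proves (Lemmas~\ref{lem:all_or_nothing}--\ref{lem:integralHals}) that $\alpha_{\underline{S}}(\sigma)=(-1)^{\ell(\sigma:\underline{S}\to\underline{S})+|A_{\pi(\underline{S}),\sigma}|}$; the orientation character $o_{\Gamma^{\underline{S}}}(\sigma)$ is then shown (\cref{lem:orientation}, \cref{lemma:alfa_S_sigma}) to equal $(-1)^{|A_{\hat{1},\sigma,1}|}$ via a case analysis on $v_2(\ell(\underline{S}))$ versus $m=\min_i v_2(l_i)$. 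None of this is visible from your orientation/volume picture, and without it the term-by-term comparison you propose does not close: the coefficients on the two sides of \eqref{eq:coef_repr_formula} differ by these half-integer corrections, and one needs \cref{lemma:integral_in_quotient}, \cref{lemma:B_to_pi_S}, \cref{lem:RHintegral}, and \cref{lem:integralHals} to see that they cancel correctly.
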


\begin{rmk}
The isomorphism type of the induced representation 
\[\Ind_{\Stab (\underline{S})}^{\Aut (\Gamma)} \big( \alpha_{\underline{S}} \otimes \widetilde{H}_{\ell(\underline{S})-3} (\Delta(\overline{\Fl}_{\omega, \geq \underline{S}}))\] is independent of the choice of a representative $\underline{S}$ in its $\Aut (\Gamma)$-coset of $\Fl$.
\end{rmk}

\begin{exa}
Fix  $\omega = (\frac{1}{2},\frac{1}{2},0)$. Consider the graph $\Gamma$ below and its poset of non $\omega$-integral flats $\Fl_{\omega}$ 
\begin{center}
\begin{tikzpicture}[thick,scale=0.6]
{
\tikzstyle{every node}=[circle, draw,
                        inner sep=1pt, minimum width=4pt]
        \node (1) at (0,1){1};
        \node (2) at (0,-1){2};
        \node (3) at (1.73,0){3};}
        \draw (1) -- (2) node[midway,left]{2};
        \draw (1) -- (3) node[midway, above]{4};
        \draw (2) -- (3) node[midway, below]{4};
        \node (G) at (0.6,-2){$\Gamma$};
\end{tikzpicture} \hspace{1 cm}
\begin{tikzpicture}[thick]
        \node (1) at (0,0){$1|2|3$};
        \node (3) at (0,1){$13|2$};
        \node (4) at (1,1){$23|1$};
        \node (G) at (0.5,-0.75){$\Fl_{\omega}$};
        \draw  (3) -- (1) -- (4);
\end{tikzpicture}
\end{center}
Then there exists an isomorphism of $\Aut(\Gamma)$-representations 
\begin{equation*}
\mathcal{C}(Z_{\Gamma} + \omega) = \mathcal{C}(Z_{\Gamma}) \oplus \operatorname{Reg}^{\oplus 3} \oplus (\sgn \otimes \sgn \otimes 1).
\end{equation*}
where $\operatorname{Reg}$ and $\sgn$ denote respectively the regular and sign representation of $\Aut(\Gamma) = \langle (12) \rangle =\ZZ/2\ZZ$.
\end{exa}

\subsection{Fixed loci of zonotopes}

The cycle type of a partition $\sigma \in \Aut(\Gamma)< \mathfrak{S}_r$ is the partition of $r$ consisting of the lengths $l_1, l_2, \dots, l_{|\sigma|}$ of the cycles $\sigma_1 \cdot\sigma_2 \cdot \ldots \cdot \sigma_{|\sigma|}$ of $\sigma$.
Define the quotient graph $\Gamma/\sigma$ as the loopless graph whose vertices are orbits of vertices of $\Gamma$ under the action of $\langle \sigma \rangle$ and whose edges are orbits of edges of $\Gamma$ with endpoints in different orbits of vertices.
Explicitly, the set of vertices is $[\ell(\sigma)]$ and the number of edges between $i$ and $j$ is
\[ x_{ij} \coloneqq \frac{1}{\lcm(l_i,l_j)}\sum_{ \substack{a \in \sigma_i, \, b \in \sigma_j}} y_{ab} \in \ZZ.\]
Define the translation vectors $\omega_{\sigma}$ and $t_{\sigma}=t_\sigma(\Gamma)$ in coordinates by
\begin{align*}
     \omega_{\sigma, i} \coloneqq \frac{1}{l_i} \sum_{a \in \sigma_i} \omega_a, \qquad
     t_{\sigma, i}  \coloneqq \frac{1}{l_i}\sum_{\{a,b\} \in \sigma_i} y_{ab}.
\end{align*} For brevity, we often write simply $t$ for $t_{\sigma}=t_\sigma(\Gamma)$ when this does not cause any confusion.

\begin{rmk}\label{rmk:half_int}
    Notice that $\omega_{\sigma, i}=\omega_j$ for any $j \in \sigma_i$, and \begin{align}\label{eq:tj}
        t_{\sigma, i} & =\frac{1}{l_i}\sum_{\{a,b\} \subseteq \sigma_i} y_{ab} = \sum_{\{a,b\} \subseteq \sigma_i} \bigg( \frac{1}{l_i} \sum_{b \neq \sigma^{l_i/2}(a)} y_{ab} + \frac{1}{l_i}\sum_{b = \sigma^{l_i/2}(a)} y_{ab}\bigg) \\
        & =  \begin{cases}
    \sum_{b \in \sigma_i} y_{ab}  & \text{ if }l_i \text{ is odd,}\\
    \bigg(\sum_{b \neq \sigma^{l_i/2}(a)} y_{ab}\bigg) + \frac{1}{2} y_{a\sigma^{l_i/2}(a)}  & \text{ if }l_i \text{ is even.}
    \end{cases} \nonumber
    \end{align} is an half-integer.
    Moreover, $t_{\sigma, i}$ is an integer if and only if $l_i$ is odd, or if $l_i$ and $ y_{a \sigma^{l_i/2}(a)}$ are both even. 
\end{rmk}


A permutahedron is endowed with a standard action of the symmetric group. The fixed locus of any such permutation is described in \cite[Proposition 2.11]{ASVMother}.\footnote{Observe that the description of the permutahedron chosen in \cite[Proposition 2.1]{ASVMother} differs from \cref{def:graphical} by a translation by an integral vector.} In the next lemma, we extend it to arbitrary graphical zonotopes. 

\begin{lem}\label{lem:fixedlocus}
    Let $\sigma \in \Aut(\Gamma)$ be a permutation with cycle lengths $l_1,l_2, \dots l_{|\sigma|}$.
    Then the fixed locus $Z_\Gamma^\sigma$ of the graphical zonotope $Z_\Gamma$ by the action of $\sigma$ is the zonotope
    \[ \pi(Z_\Gamma^\sigma) = \sum_{i < j \in [|\sigma|]} x_{ij} \left[ \frac{l_j}{\gcd(l_i,l_j)}e_i, \frac{l_i}{\gcd(l_i,l_j)}e_j \right] + \sum^{|\sigma|}_{i=1} t_{i} e_i  \]
    where $\{e_i\}_i$ is the standard basis of $\RR^{|\sigma|}$.
\end{lem}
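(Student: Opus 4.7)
The plan is to realize the fixed locus $Z_\Gamma^\sigma$ as the image of $Z_\Gamma$ under the averaging projection $\pi_{\mathrm{avg}}\colon \RR^r \to (\RR^r)^\sigma$, $\pi_{\mathrm{avg}}(x) = \tfrac{1}{l}\sum_{k=0}^{l-1} \sigma^k(x)$ with $l$ the order of $\sigma$, and then to decompose that image as a Minkowski sum by grouping the edges of $\Gamma$ according to the cycles containing their endpoints.

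First I would verify that $Z_\Gamma^\sigma = \pi_{\mathrm{avg}}(Z_\Gamma)$: the inclusion $Z_\Gamma^\sigma \subseteq \pi_{\mathrm{avg}}(Z_\Gamma)$ holds because $p = \pi_{\mathrm{avg}}(p)$ for $p \in Z_\Gamma^\sigma$, and the reverse inclusion follows from convexity and $\sigma$-invariance of $Z_\Gamma$, which make $\pi_{\mathrm{avg}}(p)$ both a point of $Z_\Gamma$ and $\sigma$-fixed. Composing with the canonical isomorphism $(\RR^r)^\sigma \xrightarrow{\sim} \RR^{|\sigma|}$ that records the common coordinate of a $\sigma$-invariant vector on each cycle $\sigma_i$, the resulting map $\pi$ sends $v_a \mapsto \tfrac{1}{l_i} e_i$ for every $a \in \sigma_i$. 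Since $\pi$ is linear and Minkowski sum commutes with linear maps on convex sets, I would expand
\[
\pi(Z_\Gamma^\sigma) \;=\; \sum_{e \in E(\Gamma)} \bigl[\pi(v_{s(e)}),\, \pi(v_{t(e)})\bigr]
\]
and regroup the summands according to which cycles contain the endpoints of each edge.

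For an edge with endpoints in distinct cycles $\sigma_i, \sigma_j$ the segment is $\bigl[\tfrac{1}{l_i}e_i,\tfrac{1}{l_j}e_j\bigr]$; counting with multiplicities gives $\sum_{a \in \sigma_i,\, b \in \sigma_j} y_{ab} = \lcm(l_i,l_j)\cdot x_{ij}$ such Minkowski summands. Since $N$ copies of a segment $[u,w]$ Minkowski-sum to $[Nu,Nw]$, the total cross-cycle contribution is precisely $x_{ij}\bigl[\tfrac{l_j}{\gcd(l_i,l_j)}e_i, \tfrac{l_i}{\gcd(l_i,l_j)}e_j\bigr]$. For an edge inside a single cycle $\sigma_i$ both endpoints project to $\tfrac{1}{l_i}e_i$, so the segment degenerates to a point; summing the $\sum_{\{a,b\}\subseteq \sigma_i} y_{ab}$ such points yields the translation $t_{\sigma,i}\, e_i$. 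Summing the two contributions reproduces the claimed formula.

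The argument is essentially bookkeeping; the two linear-algebraic facts used (the equality of the fixed locus with the averaging image, and the commutation of Minkowski sum with linear maps on convex sets) are elementary, and I do not foresee any serious obstacle.
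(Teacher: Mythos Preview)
Your proposal is correct and follows essentially the same route as the paper: identify $Z_\Gamma^\sigma$ with the image of $Z_\Gamma$ under the averaging projection (using convexity and $\sigma$-invariance), then push the Minkowski decomposition of $Z_\Gamma$ through this linear map, treating intra-cycle edges (which collapse to the translation term $t_{\sigma,i}e_i$) and cross-cycle edges (which give the rescaled segments via $\lcm(l_i,l_j)x_{ij}=\sum_{a\in\sigma_i,b\in\sigma_j}y_{ab}$) separately. The only cosmetic difference is that you average over the full order of $\sigma$ while the paper averages cycle-by-cycle, but these agree after the identification $(\RR^r)^\sigma\simeq\RR^{|\sigma|}$.
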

\begin{proof}
    Consider the averaging map $\pi \colon \RR^r \to \RR^{|\sigma|}$ defined by 
    \[ \pi(\underline{x})= \left( \frac{1}{l_i} \sum_{a \in \sigma_i}x_a\right)_{i=1,\dots, |\sigma|}\]
    and the map $\iota \colon \RR^{|\sigma|} \to \RR^r $ defined by $\underline{y} \mapsto \underline{x}$ where $x_a = y_i$ if $a \in \sigma_i$.
    The set of $\sigma$-fixed points is $\operatorname{Im} \iota$ and $\pi \circ \iota = \id_{\RR^{|\sigma|}}$. 
    Hence $Z_\Gamma^\sigma \subseteq \iota(\pi(Z_\Gamma)))$.
    The other inclusion $Z_\Gamma^\sigma \supseteq \iota(\pi(Z_\Gamma)))$ follows from the convexity of $Z_\Gamma$ and the $\sigma$-invariance $\sigma(Z_\Gamma)=Z_\Gamma$.

    Finally, by linearity it is enough to compute $\pi([e_a,e_b])$. If $a,b \in \sigma_i$ then $\pi([e_a,e_b])= \frac{e_i}{l_i}$, otherwise $a \in \sigma_i, \, b \in \sigma_j$ and $\pi([e_a,e_b])= [\frac{e_i}{l_i},\frac{e_j}{l_j}]$.
    Notice that 
    \[ x_{ij} \left[ \frac{l_j}{\gcd(l_i,l_j)}e_i, \frac{l_i}{\gcd(l_i,l_j)}e_j \right] = \sum_{ \substack{a \in \sigma_i, \, b \in \sigma_j}} y_{ab} \left[ \frac{e_i}{l_i}, \frac{e_j}{l_j} \right].\]
\end{proof}

\begin{figure}
\begin{center}
\begin{tikzpicture}[join=round, scale=0.5]
    \tikzstyle{conefill} = [fill=blue,fill opacity=0.5]
    \filldraw[conefill](3,0,3)--(1,4,1)--(1,6,1)--(3,6,3)--(5,2,5)--(5,0,5)--cycle;
    \tikzstyle{conefill} = [fill=blue!0,fill opacity=0.4]
    \filldraw[conefill](2,0,4)--(0,2,4)--(0,4,2)--(2,4,0)--(4,2,0)--(4,0,2)--cycle;
    \filldraw[conefill](4,0,2)--(2,0,4)--(2,0,6)--(4,0,6)--(6,0,4)--(6,0,2)--cycle;
    \filldraw[conefill](0,4,2)--(0,2,4)--(0,2,6)--(0,4,6)--(0,6,4)--(0,6,2)--cycle;
    \filldraw[conefill](6,2,0)--(6,4,0)--(4,6,0)--(2,6,0)--(2,4,0)--(4,2,0)--cycle;
    \filldraw[conefill](2,0,6)--(0,2,6)--(0,2,4)--(2,0,4)--cycle;
    \filldraw[conefill](6,0,2)--(6,2,0)--(4,2,0)--(4,0,2)--cycle;
    \filldraw[conefill](0,6,2)--(2,6,0)--(2,4,0)--(0,4,2)--cycle;
    \filldraw[conefill](4,2,6)--(2,4,6)--(2,6,4)--(4,6,2)--(6,4,2)--(6,2,4)--cycle;
    \filldraw[conefill](6,0,4)--(6,2,4)--(6,4,2)--(6,4,0)--(6,2,0)--(6,0,2)--cycle;
     \filldraw[conefill](0,6,4)--(2,6,4)--(4,6,2)--(4,6,0)--(2,6,0)--(0,6,2)--cycle;
     \filldraw[conefill](2,0,6)--(0,2,6)--(0,4,6)--(2,4,6)--(4,2,6)--(4,0,6)--cycle;
     \filldraw[conefill](6,0,4)--(4,0,6)--(4,2,6)--(6,2,4)--cycle;
     \filldraw[conefill](0,6,4)--(0,4,6)--(2,4,6)--(2,6,4)--cycle;
     \filldraw[conefill](6,4,2)--(4,6,2)--(4,6,0)--(6,4,0)--cycle;
     
     \node at (2,2,4) [circle, draw, scale=0.7] {};
     \node at (2,4,4) [circle, draw, scale=0.7] {};
     \node at (4,2,4) [circle,fill=black, scale=0.7] {};
     \node at (4,4,2) [circle, draw, scale=0.7] {};
     \node at (2,4,2) [circle,fill=black, scale=0.7] {};
     \node at (4,2,2) [circle, draw, scale=0.7] {};
     
     
\end{tikzpicture}
\caption{Let $K_4$ be the complete graph with $4$ vertices, and $\sigma = (12) \in \mathfrak{S}_4$. The fixed locus $Z_{K_4}^\sigma$ of the graphical zonotope $Z_{K_4}$ by the action of $\sigma$ is the zonotope 
    \[\hspace{-1.5cm} \pi(Z_{K_4}^\sigma) = [e_1, 2e_2]+[e_1, 2e_3]+ [e_2,e_3]+\frac{1}{2}e_1. \]
Note that $C(Z_{K_4})=6$ and $C(Z^{\sigma}_{K_4})=2$.}
\end{center}
\end{figure}
\begin{lem}
\label{lemma:inv_order_complex}
For any vector $\omega \in \QQ^r$ and any $\sigma \in \mathfrak{S}_r$ such that $\sigma(\omega)=\omega$, we have
    \begin{equation}\label{eq:tophom}
        \chi_{\widetilde{H}_{r-3}(\Delta(\Fl_{\omega}))}(\sigma)= (-1)^{r-3}\tilde{\chi} (\Delta(\Fl_{\omega}^\sigma)) = (-1)^{r-3} \mu_{\Fl_{\omega}^\sigma} (\hat{0},\hat{1}),
    \end{equation}
where $\chi_V(\sigma)$ is the character of the representation $V$ evaluated at $\sigma$, and $\tilde{\chi}$ is the reduced Euler characteristic of a topological space.
\end{lem}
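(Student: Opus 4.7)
My plan is to recognize the first identity as an instance of the Hopf trace formula (the Lefschetz fixed-point theorem for finite simplicial complexes) and the second as Philip Hall's theorem. First I would invoke the LEX-shellability established in \cref{prop:LEXshell}, together with \cref{prop:spheres}, to conclude that $\Delta(\overline{\Fl}_\omega)$ is homotopy equivalent to a wedge of $(r-3)$-spheres, so its reduced rational homology is concentrated in degree $r-3$. Since $\sigma(\omega)=\omega$, the permutation $\sigma$ acts on $\hat{\Fl}_\omega$ preserving its partial order, and hence induces a simplicial automorphism of $\Delta(\overline{\Fl}_\omega)$. The Hopf trace formula then degenerates to a single nonzero term:
\[\tilde{\chi}\bigl(\Delta(\overline{\Fl}_\omega)^\sigma\bigr) \;=\; \sum_p (-1)^p\,\operatorname{tr}\bigl(\sigma_* \mid \widetilde{H}_p(\Delta(\overline{\Fl}_\omega);\QQ)\bigr) \;=\; (-1)^{r-3}\,\chi_{\widetilde{H}_{r-3}(\Delta(\overline{\Fl}_\omega))}(\sigma).\]

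Next I would identify the topological fixed set $\Delta(\overline{\Fl}_\omega)^\sigma$ with the order complex of the fixed subposet $\overline{\Fl_\omega^\sigma}$. The essential point is that, because $\sigma$ preserves the strict order on $\hat{\Fl}_\omega$, any chain $x_0 < x_1 < \cdots < x_p$ that is setwise fixed by $\sigma$ is in fact fixed pointwise: $\sigma$ must send the unique minimum to itself, then the next element, and so on. Consequently $\sigma$ never reverses the orientation of a fixed simplex, and the simplicial fixed set coincides with $\Delta\bigl((\overline{\Fl}_\omega)^\sigma\bigr) = \Delta(\overline{\Fl_\omega^\sigma})$. Combined with the previous display, this rearranges to the first equality of the lemma.

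For the second equality I would apply Philip Hall's theorem \cite[Proposition 3.8.6]{Stanley2012}: for any bounded finite poset $P$, $\tilde{\chi}(\Delta(\overline{P})) = \mu_P(\hat{0},\hat{1})$. Specialising to $P = \hat{\Fl_\omega^\sigma}$ (adjoining $\hat{0}$ and $\hat{1}$ if they do not already belong to $\Fl_\omega^\sigma$) gives $\tilde{\chi}(\Delta(\overline{\Fl_\omega^\sigma})) = \mu_{\hat{\Fl_\omega^\sigma}}(\hat{0},\hat{1})$, as required. None of the three steps is genuinely difficult; the main subtlety, and the point I would present most carefully, is the commutation of taking fixed points with forming the order complex, which holds here only because the action is by order-preserving maps and therefore cannot introduce the signed contributions one would otherwise encounter from orientation-reversing permutations of the vertices of a fixed simplex.
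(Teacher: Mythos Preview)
Your proposal is correct and follows essentially the same route as the paper: invoke \cref{prop:spheres} to concentrate the reduced homology in degree $r-3$, apply the Lefschetz/Hopf trace formula, identify $\Delta(\overline{\Fl}_\omega)^\sigma$ with $\Delta(\overline{\Fl_\omega^\sigma})$, and conclude with Hall's theorem. The only cosmetic difference is in justifying that a setwise-fixed chain is pointwise fixed: you argue via order-preservation (the unique minimum goes to itself, etc.), while the paper phrases this as ``$\sigma$ is rank-preserving and the vertices of a fixed face have distinct ranks''; these are the same observation.
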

\begin{proof}
    Since the order complex $\Delta(\Fl_\omega)$ has the homotopy type of a wedge of $(r-3)$-spheres by \Cref{prop:spheres}, we apply the Lefschetz fixed-point theorem for simplicial complexes:
    \[\chi_{\widetilde{H}_{r-3}(\Delta(\Fl_{\omega}))}(\sigma)=(-1)^{r-3} \sum_{i} (-1)^i \chi_{\widetilde{H}_{i}(\Delta(\Fl_{\omega}))}(\sigma) = (-1)^{r-3}\tilde{\chi} (\Delta(\Fl_{\omega})^\sigma). \]
    
    We are left to prove that $\Delta(\Fl_{\omega})^\sigma= \Delta(\Fl_{\omega}^\sigma)$.
    Consider a $\sigma$-fixed point $p\in \Delta(\Fl_{\omega})$ and let $F$ be the unique face that contains $p$ in its relative interior.
    Since the $\sigma$-action is simplicial then $\sigma(F)=F$. Moreover, $\sigma$ acts on $\Fl_\omega$ by a rank-preserving automorphism and the vertices of $F$ have different rank as elements in $\Fl_\omega$.
    Hence, $F \in \Delta(\Fl_{\omega})^\sigma$, and this proves that the set of fixed points is the full subcomplex generated by $\sigma$-fixed vertices.
    The third equation in \eqref{eq:tophom} follows from the Hall's theorem \cite[Proposition 3.8.6]{Stanley2012}.
\end{proof}

\subsection{Combinatorial lemmas}
Fix a graph $\Gamma$,  an automorphism $\sigma \in \Aut(\Gamma)$ and a forest $F$ of $\Gamma/\sigma$.
The quotient map $q \colon \Gamma \to \Gamma/\sigma$ induces a morphism between the posets of flats 
\begin{align*}
   \pi \colon \Fl({\Gamma})^{\sigma} \to \Fl({\Gamma/\sigma}), \quad
   \underline{S}=\{S_i\} \mapsto \underline{T}=\{T_j\}, 
\end{align*}
where $T_j = q(S_i)$ for some $i$. Define 
\[\underline{B}= \underline{B}(F) \in \Fl({\Gamma/\sigma})\]
as the finest flat refining $F$, or equivalently as the partition of $V(\Gamma/\sigma)$ whose blocks are the sets of vertices of the connected components of $F$.
For each block $B_j \in \underline{B}$ define 
\[m_j \coloneqq \min_{i \in B_j} v_2(l_i)\]
where $v_2$ is the $2$-adic valuation of the length $l_i$ of the cycle $\sigma_i$.
\color{black}
%
%
%
If $m_j>0$, then define 
the sets
\[A_{\underline{B},\sigma,j}=\{ \text{cycle type of the permutation } \sigma_i^{l_i/2} \mid  i \in B_j, \, v_2(l_i)=m_j, \, y_{a \sigma^{l_i/2}(a) } \text{ odd}\} \subseteq \Fl^{\sigma}\]
for any $a \in \sigma_i$. The set $A_{\underline{B},\sigma,j}$ does not depend on the choice of $a$.
Define also 
\[A_{\underline{B},\sigma}= \{ \vee A_{\underline{B},\sigma,j} \mid m_j>0, \, \lvert A_{\underline{B},\sigma,j}\rvert \text{ is odd} \} \subseteq \Fl^{\sigma}.\]
We write $a \sim_{\underline{S}} b$ if the vertices $a, b \in V(\Gamma)$ lies in the same block of the flat $\underline{S}$. 

\begin{lem}\label{lem:all_or_nothing}
Let $\underline{S} \in \Fl^\sigma$ be a flat such that $\pi(\underline{S}) \geq \underline{B}$.
Then the set of flats in $A_{\underline{B},\sigma,j}$ smaller or equal than $\underline{S}$, denoted $A_{\underline{B},\sigma,j}^{\leq \underline{S}}$, is empty or equal to $A_{\underline{B},\sigma,j}$.
\end{lem}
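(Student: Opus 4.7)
The plan is to translate the condition ``$\lambda \leq \underline{S}$'' for a flat $\lambda$ attached to an index $i \in A_{\underline{B},\sigma,j}$ into a numerical invariant $k_i \mid l_i$ of the restriction $\underline{S}|_{\sigma_i}$, and then exploit $\pi(\underline{S}) \geq \underline{B}$ to argue that this invariant takes a single value as $i$ ranges over $B_j$.

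First, I will observe that since $\underline{S}$ is $\sigma$-invariant, the induced partition $\underline{S}|_{\sigma_i}$ of the cyclic set $\sigma_i \simeq \ZZ/l_i\ZZ$ is $\langle \sigma|_{\sigma_i} \rangle$-invariant. A standard transitivity argument shows that such a partition consists of the cosets of a unique subgroup $k_i\ZZ/l_i\ZZ$, where $k_i \mid l_i$ is the number of blocks. The flat in $A_{\underline{B},\sigma,j}$ attached to $i$ has the pairs $\{a, \sigma^{l_i/2}(a)\}$ as its only non-singleton blocks, so it is $\leq \underline{S}$ if and only if $\sigma^{l_i/2}$ preserves every block of $\underline{S}|_{\sigma_i}$, that is, $k_i \mid l_i/2$. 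Because $v_2(l_i) = m_j$ by the definition of $A_{\underline{B},\sigma,j}$, this is equivalent to $v_2(k_i) < m_j$.

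The key step is the following structural claim: \emph{for any block $S \in \underline{S}$, the integer $k_i$ is the same for every $i$ with $S \cap \sigma_i \neq \emptyset$, and equals the size of the $\sigma$-orbit of $S$.} The stabilizer in $\langle\sigma\rangle$ of the coset $S \cap \sigma_i$ is the subgroup $\langle \sigma^{k_i}\rangle$ of index $k_i$, and it contains $\Stab_{\langle\sigma\rangle}(S)$ since preserving $S$ preserves $S \cap \sigma_i$. For the reverse inclusion, suppose for contradiction that $\sigma^{k_i}(S) \neq S$: then $\sigma^{k_i}(S)$ is a distinct, hence disjoint, block of $\underline{S}$, yet $\sigma^{k_i}(S) \cap \sigma_i = \sigma^{k_i}(S \cap \sigma_i) = S \cap \sigma_i \neq \emptyset$ supplies a common point. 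Hence $\Stab_{\langle\sigma\rangle}(S) = \langle \sigma^{k_i}\rangle$ and $k_i = [\langle\sigma\rangle : \Stab_{\langle\sigma\rangle}(S)]$ manifestly does not depend on $i$.

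To conclude, the hypothesis $\pi(\underline{S}) \geq \underline{B}$ supplies a block $S \in \underline{S}$ with $q(S) \supseteq B_j$, i.e., $S \cap \sigma_i \neq \emptyset$ for every $i \in B_j$. The structural claim then forces $k_i$ to take a common value $k$ across $B_j$, and hence across the subset $A_{\underline{B},\sigma,j}$. Therefore the condition $v_2(k) < m_j$ governs membership in $A_{\underline{B},\sigma,j}^{\leq \underline{S}}$ uniformly, so the set is either empty or all of $A_{\underline{B},\sigma,j}$. I expect the main obstacle to be the disjointness argument in the structural claim; once that is in hand, neither the arithmetic condition $v_2(l_i) = m_j$ nor the parity of $y_{a\sigma^{l_i/2}(a)}$ in the definition of $A_{\underline{B},\sigma,j}$ enters further into this lemma.
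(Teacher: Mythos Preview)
Your argument is correct, and it takes a genuinely different route from the paper's. The paper argues pairwise: given $i,k$ contributing to $A_{\underline{B},\sigma,j}$, it picks $a\in\sigma_i$, $b\in\sigma_k$ with $a\sim_{\underline{S}}b$ (available since $\pi(\underline{S})\geq\underline{B}$), uses that $v_2(l_i)=v_2(l_k)=m_j$ to get $\sigma^{l_i/2}|_{\sigma_i}=\sigma^{\lcm(l_i,l_k)/2}|_{\sigma_i}$ and likewise on $\sigma_k$, and then chains $b\sim_{\underline{S}}a\sim_{\underline{S}}\sigma^{l_i/2}(a)\sim_{\underline{S}}\sigma^{l_k/2}(b)$ to transport the condition from $i$ to $k$. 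You instead extract the invariant $k_i=\bigl[\langle\sigma\rangle:\Stab_{\langle\sigma\rangle}(S)\bigr]$ for any block $S$ of $\underline{S}$ meeting $\sigma_i$, show it is intrinsic to $S$ and hence constant across all $i\in B_j$ once a single $S$ with $q(S)\supseteq B_j$ is chosen, and recast ``type of $\sigma_i^{l_i/2}\leq\underline{S}$'' as $v_2(k_i)<m_j$. The paper's proof is shorter and needs no auxiliary structure; your approach is more conceptual, proves the slightly stronger fact that $k_i$ is constant on all of $B_j$ (not just on the indices appearing in $A_{\underline{B},\sigma,j}$), and makes transparent that the parity condition on $y_{a\sigma^{l_i/2}(a)}$ plays no role here.
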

\begin{proof}
    Consider two vertices $i,k$ in $F$ which belongs to the same connected component $B_j$, and such that the type of $\sigma_i^{l_i/2}$ and of $\sigma_k^{l_k/2}$ are in $A_{\underline{B},\sigma,j}$. 
Note that:
    \begin{enumerate}
    \item for any $b \in \sigma_k$, there exists $a\in \sigma_i$ such that $a\sim_{\underline{S}} b$, since $\pi(\underline{S}) \geq \underline{B}$;
        \item $\sigma^{l_i/2}(a)=\sigma^{\lcm(l_i,l_k)/2}(a)$ for any $a \in \sigma_i$, since $v_2(l_i)=m_j=v_2(l_k)$;
        \item $\sigma^{l_k/2}(b)=\sigma^{\lcm(l_i,l_k)/2}(b)$ for any $b \in \sigma_k$, since $v_2(l_i)=m_j=v_2(l_k)$;
        \item $\sigma^{\lcm(l_i,l_k)/2}(a) \sim_{\underline{S}} \sigma^{\lcm(l_i,l_k)/2}(b)$, since $\underline{S}$ is $\sigma$-invariant.
    \end{enumerate}
    If the type of $\sigma_i^{l_i/2}$ is smaller or equal than $\underline{S}$, equivalently $a \sim_{\underline{S}} \sigma^{l_i/2}(a)$ for any $a \in \sigma_i$, then \[b \sim_{\underline{S}} a \sim_{\underline{S}} \sigma^{l_i/2}(a)=\sigma^{\lcm(l_i,l_k)/2}(a) \sim_{\underline{S}} \sigma^{\lcm(l_i,l_k)/2}(b)=\sigma^{l_k/2}(b)\]
    for any $b \in \sigma_k$, i.e.\ the type of $\sigma_k^{l_k/2}$ is smaller or equal than $\underline{S}$. We conclude that either all types of $\sigma_i^{l_i/2}$ in $A_{\underline{B},\sigma,j}$ are smaller or equal than $\underline{S}$, or none of them is.
\end{proof}

 By \cref{lem:fixedlocus}, the affine transformation $\RR^{|\sigma|}\to\RR^{|\sigma|}$, given by $x_i \mapsto l_i (x_i - t_{i})$, sends the zonotope $\pi(Z_{\Gamma}^\sigma) \simeq Z_{\Gamma}^\sigma$ to the graphical zonotope $Z_{\Gamma^{\underline{S}(\sigma)}}$, where $\underline{S}(\sigma)$ is the cycle type of the permutation $\sigma$ seen as partition of $V(\Gamma)$. In particular, the affine transformation exchanges the affine spaces supporting the faces of the two zonotopes. 
 Equivalently, any flat $\underline{B} \in \Fl(\Gamma/\sigma)$ determines a linear subspace $W_{\underline{B}}$ as in \eqref{eq:flatincoord}, which via the affine transformation above corresponds to the affine subspace
\begin{equation*}
    W^t_{\underline{B}} \coloneqq \big\{\underline{x} \in \mathbb{R}^{|\sigma|} \mid \sum_{i \in B_j} l_i x_i = \sum_{i \in B_j} l_i t_i \textnormal{ for all } B_j \in \underline{B} \big \}.  
\end{equation*}
Up to an integral translation, $W^t_{\underline{B}}$ supports faces of $Z_{\Gamma}^\sigma$, and so the affine span of any such face contains a lattice point if and only if $W^t_{\underline{B}}$ does.
The following lemma generalizes \cite[Lemma 4.5]{ArdilaSV2020} and \cite[Proposition 3.12]{EKM2022}.
\begin{lem} \label{lemma:integral_in_quotient}
Let $\underline{S} \in \Fl^\sigma$ be a flat such that $\pi(\underline{S}) \geq \underline{B}$.
The affine subspace supporting a face of the zonotope $Z_{\Gamma_{\underline{S}}}^\sigma$ and corresponding to the flat $\underline{B}$ has a lattice point if and only if $A_{\underline{B}, \sigma}^{\leq \underline{S}} = \emptyset$.
\end{lem}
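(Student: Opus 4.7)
The plan is first to translate the question about lattice points in the affine subspace $W^t_{\underline{B}}$ into a family of one-variable congruences, and then to evaluate each congruence by a $2$-adic analysis using the formula for $t_{\sigma,i}$ from \Cref{rmk:half_int}. Concretely, since the defining equations of $W^t_{\underline{B}}$ are $\sum_{i \in B_j} l_i y_i = \sum_{i \in B_j} l_i t_{\sigma,i}$, one per block $B_j \in \underline{B}$, and involve disjoint sets of coordinates, the Bezout criterion for a single linear Diophantine equation reduces solvability in $\ZZ^{|\sigma|}$ to the family of conditions $\sum_{i \in B_j} l_i t_{\sigma,i} \in g_j \ZZ$, where $g_j \coloneqq \gcd_{i \in B_j} l_i$.

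The bulk of the argument will be to show that, modulo $g_j$, the left-hand side collapses to $(g_j/2)\,|A_{\underline{B},\sigma,j}|$ when $m_j > 0$, and vanishes when $m_j = 0$. The key point is that the contribution of $l_i t_{\sigma,i}$ is zero modulo $g_j$ unless $v_2(l_i) = m_j$: if $l_i$ is odd (which forces $m_j = 0$) or $v_2(l_i) > m_j$ then $g_j$ divides $l_i/2$, making the term vanish. If instead $v_2(l_i) = m_j > 0$, then $l_i/g_j$ is odd, and from the identity $l_i/2 = (g_j/2)(l_i/g_j)$ together with the even-$l_i$ case of \Cref{rmk:half_int} I would derive
\[
l_i t_{\sigma,i} \equiv (g_j/2)\, y_{a\sigma^{l_i/2}(a)} \pmod{g_j},
\]
using the elementary observation that odd multiples of $g_j/2$ reduce to $g_j/2$ modulo $g_j$. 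Summing over $i \in B_j$ then gives $\sum_{i \in B_j} l_i t_{\sigma,i} \equiv (g_j/2)\,|A_{\underline{B},\sigma,j}| \pmod{g_j}$, which vanishes precisely when $|A_{\underline{B},\sigma,j}|$ is even. Hence $W^t_{\underline{B}} \cap \ZZ^{|\sigma|} \neq \emptyset$ if and only if $A_{\underline{B},\sigma} = \emptyset$.

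To match the statement of the lemma, it remains to observe that $A^{\leq \underline{S}}_{\underline{B},\sigma} = A_{\underline{B},\sigma}$ for any $\underline{S} \in \Fl^\sigma$ with $\pi(\underline{S}) \geq \underline{B}$. Indeed, every $\tau \in A_{\underline{B},\sigma,j}$ is the cycle-type partition of an involution $\sigma_i^{l_i/2}$, whose non-singleton blocks are the pairs $\{a,\sigma^{l_i/2}(a)\}$ inside the single $\sigma$-orbit $\sigma_i$; and the $\sigma$-invariance of $\underline{S}$ forces each $\sigma$-orbit to lie in a single block of $\underline{S}$, which gives $\tau \leq \underline{S}$ and hence $\vee A_{\underline{B},\sigma,j} \leq \underline{S}$. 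This verification is an ``always full'' instance of the dichotomy in \Cref{lem:all_or_nothing}.

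I expect the main obstacle to be the $2$-adic bookkeeping in the second step: carefully separating cycles by the value of $v_2(l_i)$, checking that only those with $v_2(l_i) = m_j$ contribute the nonzero residue $g_j/2$ modulo $g_j$, and keeping track of the odd factor $l_i/g_j$ so that the total obstruction is governed precisely by the parity of $|A_{\underline{B},\sigma,j}|$.
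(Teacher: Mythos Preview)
There is a genuine gap. The lemma concerns the fixed zonotope $Z_{\Gamma_{\underline{S}}}^\sigma$ of the \emph{subgraph} $\Gamma_{\underline{S}}$, not $Z_\Gamma^\sigma$. Consequently the translation vector in the defining equations is $t_\sigma(\Gamma_{\underline{S}})$, whose $i$-th entry is $\tfrac{1}{l_i}\sum_{\{a,b\}\subseteq\sigma_i,\ a\sim_{\underline{S}} b} y_{ab}$: only edges lying inside a block of $\underline{S}$ contribute. Your equations $\sum_{i\in B_j} l_i y_i = \sum_{i\in B_j} l_i t_{\sigma,i}$ use the unrestricted $t_{\sigma,i}$ from \Cref{rmk:half_int}, so the $2$-adic analysis you carry out (which is otherwise correct) proves only the special case $\underline{S}=\hat{1}$, where indeed $\Gamma_{\underline{S}}=\Gamma$ and $A_{\underline{B},\sigma}^{\leq\hat{1}}=A_{\underline{B},\sigma}$.

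Your attempt to recover the general statement by asserting $A_{\underline{B},\sigma}^{\leq\underline{S}}=A_{\underline{B},\sigma}$ fails because the justification is false: $\sigma$-invariance of $\underline{S}$ means $\sigma$ \emph{permutes} the blocks of $\underline{S}$, not that each $\sigma$-orbit lies in a single block. For instance $\underline{S}=\hat{0}$ is $\sigma$-invariant for every $\sigma$, and $\pi(\hat{0})=\hat{0}\geq\underline{B}$ always, yet the cycle type of $\sigma_i^{l_i/2}$ has nontrivial blocks $\{a,\sigma^{l_i/2}(a)\}$ and is therefore \emph{not} $\leq\hat{0}$; so $A_{\underline{B},\sigma,j}^{\leq\hat{0}}=\emptyset$ while $A_{\underline{B},\sigma,j}$ need not be. The correct dependence on $\underline{S}$ enters earlier: once you use $t_\sigma(\Gamma_{\underline{S}})$, the only potentially non-integral contribution of cycle $\sigma_i$ is $\tfrac{l_i}{2}\,y_{a\sigma^{l_i/2}(a)}$ \emph{provided} $a\sim_{\underline{S}}\sigma^{l_i/2}(a)$, i.e.\ provided the cycle type of $\sigma_i^{l_i/2}$ is $\leq\underline{S}$. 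Rerunning your $2$-adic argument with this restriction yields $\sum_{i\in B_j} l_i\, t_{\sigma,i}(\Gamma_{\underline{S}})\equiv \tfrac{g_j}{2}\,\lvert A_{\underline{B},\sigma,j}^{\leq\underline{S}}\rvert \pmod{g_j}$, and then \Cref{lem:all_or_nothing} converts the parity condition on each $\lvert A_{\underline{B},\sigma,j}^{\leq\underline{S}}\rvert$ into the desired criterion $A_{\underline{B},\sigma}^{\leq\underline{S}}=\emptyset$.
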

\begin{proof}
    A face of the zonotope $\pi(Z_{\Gamma}^\sigma) \simeq Z_{\Gamma}^\sigma$ is determined by a flat $\underline{B} \in \Fl(\Gamma_{\underline{S}}/\sigma)$ and an acyclic orientation of $\Gamma_{\underline{S}}^{\underline{B}}$. Up to an integral translation, its supporting affine space depends only on the flat $\underline{B}$, and it  
    satisfies the equations
    \[ \sum_{i \in B_j} l_i x_i = \sum_{i \in B_j} \sum_{ \substack{\{a,b\} \in \sigma_i \\ a\sim_{\underline{S}} b}} y_{ab} \]
    for all $B_j \in \underline{B}$.
    This linear system has integer solution if and only if 
    \[ \gcd_{i \in B_j}(l_i) \Bigg\vert \sum_{i \in B_j} \sum_{ \substack{\{a,b\} \in \sigma_i \\ a\sim_{\underline{S}} b}} y_{ab}  \]
    for all $j$.
    By \eqref{eq:tj}, this is equivalent to
    \[ \gcd_{i \in B_j}(l_i) \Bigg\vert \sum_{\substack{i \in B_j \\ l_i \text{ even} \\ a \sim_{\underline{S}} \sigma^{l_i/2}(a)}} \frac{l_i}{2} 
    y_{a \sigma^{l_i/2}(a)}\]
    for all $j$ and any $a \in \sigma_i$.
    This happens if and only if
    \[ m_j = v_2(\gcd_{i \in B_j}(l_i)) < v_2 \Bigl( \sum_{\substack{i \in B_j \\ l_i \text{ even} \\ a \sim_{\underline{S}} \sigma^{l_i/2}(a)}} l_i y_{a \sigma^{l_i/2}(a)} \Bigr), \]
    which is equivalent to $\lvert A_{\underline{B},\sigma,j}^{\leq \underline{S}} \rvert$ being even.
    Using \Cref{lem:all_or_nothing} and the definition of $A_{\underline{B}, \sigma}$, we obtain the thesis.
\end{proof}

\begin{lem} \label{lemma:B_to_pi_S}
 Let $\underline{S} \in \Fl^\sigma$ be a flat such that $\pi(\underline{S}) \geq \underline{B}$.
 If $ A_{\underline{B},\sigma}^{\leq \underline{S}} = \emptyset $, then:
 \begin{itemize}
    \item  $A_{\pi(\underline{S}),\sigma}^{\leq \underline{S}} = \emptyset$;
    \item $\lvert A_{\pi(\underline{S}),\sigma} \rvert \equiv \lvert A_{\underline{B},\sigma} \rvert \mod 2$.
\end{itemize} 
\end{lem}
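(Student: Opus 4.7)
My plan is to prove both assertions by a blockwise analysis of $A_{\pi(\underline{S}), \sigma, k}$. Set $m'_k \coloneqq \min_{i \in C_k} v_2(l_i) = \min_{j \in J_k} m_j$, where $J_k = \{j : B_j \subseteq C_k\}$. Unraveling the definitions one gets the disjoint union decomposition
\[
A_{\pi(\underline{S}), \sigma, k} = \bigsqcup_{j \in J_k^{*}} A_{\underline{B}, \sigma, j}, \qquad J_k^{*} \coloneqq \{j \in J_k : m_j = m'_k\},
\]
valid whenever $m'_k > 0$. Moreover, since the property of being $\leq \underline{S}$ depends only on the underlying cycle type, this decomposition is compatible with the operation $A \mapsto A^{\leq \underline{S}}$. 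Together with \Cref{lem:all_or_nothing}, which asserts that each $A_{\underline{B}, \sigma, j}^{\leq \underline{S}}$ is either empty or all of $A_{\underline{B}, \sigma, j}$, this is the engine driving both bullets.

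For the first bullet I would argue by contradiction: if some $k$ with $m'_k > 0$ and $|A_{\pi(\underline{S}), \sigma, k}|$ odd satisfied $\vee A_{\pi(\underline{S}), \sigma, k} \leq \underline{S}$, then $A_{\underline{B}, \sigma, j}^{\leq \underline{S}} = A_{\underline{B}, \sigma, j}$ for every $j \in J_k^{*}$, and the hypothesis $A_{\underline{B}, \sigma}^{\leq \underline{S}} = \emptyset$ would force each such $|A_{\underline{B}, \sigma, j}|$ to be even (otherwise $\vee A_{\underline{B}, \sigma, j}$ would populate $A_{\underline{B}, \sigma}^{\leq \underline{S}}$). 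Their sum $|A_{\pi(\underline{S}), \sigma, k}|$ would then be even, contradicting oddness.

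The second bullet hinges on the following key claim, which is where the assumption $\pi(\underline{S}) \geq \underline{B}$ becomes essential: \emph{if $j, j' \in J_k$ satisfy $m_{j'} < m_j$, then every cycle type in $A_{\underline{B}, \sigma, j}$ is $\leq \underline{S}$.} To prove it, pick $i \in B_j$ with $v_2(l_i) = m_j$, take $a \in \sigma_i$, and let $S$ be the block of $\underline{S}$ containing $a$, of $\sigma$-period $p$. The hypothesis $\pi(\underline{S}) \geq \underline{B}$ forces both $\sigma_i$ and $\sigma_{i'}$ (for any $i' \in B_{j'}$) to meet $S$ after a $\sigma$-translation that may be absorbed by the $\sigma$-invariance of $\sigma_i, \sigma_{i'}$; this yields $p \mid l_i$ and $p \mid l_{i'}$. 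Choosing $i'$ with $v_2(l_{i'}) = m_{j'}$, the inequality $m_{j'} < m_j$ gives $v_2(p) \leq m_{j'} \leq v_2(l_i/2)$, and combined with $p \mid l_i$ this forces $p \mid l_i/2$; hence $\sigma^{l_i/2}(S) = S$ and $a \sim_{\underline{S}} \sigma^{l_i/2}(a)$. \Cref{lem:all_or_nothing} then upgrades this single cycle type being $\leq \underline{S}$ to the full statement $A_{\underline{B}, \sigma, j}^{\leq \underline{S}} = A_{\underline{B}, \sigma, j}$.

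Granted the key claim, the second bullet becomes a mod-$2$ count: any $j$ with $m_j > 0$ and $|A_{\underline{B}, \sigma, j}|$ odd must satisfy $m_j = m'_{k(j)}$ with $m'_{k(j)} > 0$ (otherwise the claim together with the hypothesis would give a contradiction), so $j \in J_{k(j)}^{*}$. Summing cardinalities modulo $2$ then collapses into
\[
|A_{\underline{B}, \sigma}| \equiv \sum_{j : m_j > 0} |A_{\underline{B}, \sigma, j}| \equiv \sum_{k : m'_k > 0} \sum_{j \in J_k^{*}} |A_{\underline{B}, \sigma, j}| \equiv \sum_{k : m'_k > 0} |A_{\pi(\underline{S}), \sigma, k}| \equiv |A_{\pi(\underline{S}), \sigma}| \pmod 2.
\]
The main obstacle is the key claim: its $2$-adic valuation computation genuinely requires both $\pi(\underline{S}) \geq \underline{B}$, to constrain the period $p$ via the fact that every cycle in $C_k$ meets $S$, and the strict inequality $m_{j'} < m_j$, to promote the divisibility $p \mid l_i$ to $p \mid l_i/2$.
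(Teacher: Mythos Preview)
Your proof is correct and follows essentially the same approach as the paper. The paper first reduces, by additivity on the blocks of $\pi(\underline{S})$, to the case $\pi(\underline{S})=\hat{1}$, whereas you carry the block index $k$ of $\pi(\underline{S})$ throughout; this is only a cosmetic difference. The key technical step---showing that for any $j$ whose $2$-adic valuation $m_j$ strictly exceeds the minimum over its $\pi(\underline{S})$-block, all cycle types in $A_{\underline{B},\sigma,j}$ are $\leq \underline{S}$---is argued in the paper via an explicit $\lcm(l_i,l_k)/2$ computation, while you phrase it in terms of the $\sigma$-period $p$ of a block of $\underline{S}$; these arguments are equivalent, and the contradiction for the first bullet and the mod-$2$ count for the second are the same in both.
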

\begin{proof}
    Both statements are additive on the blocks of $\pi(\underline{S})$. Therefore we may assume $\pi(\underline{S})= \hat{1}$ and $\Gamma/\sigma$ is connected.
    Let $m \coloneqq v_2(\gcd(l_i \colon i=1, \dots, |\sigma|))= \min(m_i \colon i =1, \dots, \ell(\underline{B}))$. By definition, we have
    \begin{itemize}
        \item $A_{\hat{1},\sigma,1}= \bigsqcup_{m_j=m} A_{\underline{B},\sigma,j}$;
        \item $A_{\pi(\underline{S}), \sigma}=A_{\hat{1}, \sigma}=\begin{cases} \{\vee A_{\hat{1},\sigma,1}\} \quad & \text{ if }| A_{\hat{1},\sigma,1}|\text{ is odd},\\
        \emptyset \quad & \text{ if }| A_{\hat{1},\sigma,1}|\text{ is even}.\\
        \end{cases}$
    \end{itemize}
 In particular, we have
    \begin{equation}\label{eq:mod1}
        \lvert A_{\pi(\underline{S}),\sigma} \rvert \equiv \lvert A_{\hat{1},\sigma,1} \rvert = \sum_{m_j=m} \lvert A_{\underline{B},\sigma,j} \vert \quad \mod 2.
    \end{equation}
    If there exists $j'$ such that $m_{j'}>m$ and $A_{\underline{B},\sigma,j'} \neq \emptyset$, then choose:
    \begin{itemize}
        \item $\sigma_k$ such that the type of $\sigma_k^{l_k/2}$ is in $A_{\underline{B},\sigma,j'}$,
        \item $\sigma_i$ such that $v_2(l_i)=m$,
        \item $a\in \sigma_i$ and $b\in \sigma_k$ such that $a\sim_{\underline{S}}b$.
    \end{itemize} 
    We have $\sigma^{\lcm(l_i,l_k)/2}(b) \neq b$ and $\sigma^{\lcm(l_i,l_k)/2}(a) = a$, hence \[b\sim_{\underline{S}} a = \sigma^{\lcm(l_i,l_k)/2}(a) \sim_{\underline{S}} \sigma^{\lcm(l_i,l_k)/2}(b).\]
 As a result, the type of $\sigma_k^{l_k/2}$ is smaller than $\underline{S}$, and by the arbitrariness of $\sigma_k$ we have $\vee A_{\underline{B},\sigma,j'} \leq \underline{S}$ for all $j'$ with $m_{j'}>m$.
    The hypothesis $ A_{\underline{B},\sigma}^{\leq \underline{S}} = \emptyset $ implies that $\lvert A_{\underline{B},\sigma,j'} \rvert$ is even for $m_{j'} >m$.
    We obtained that 
     \begin{equation}\label{eq:mod2}  \sum_{m_j=m} \lvert A_{\underline{B},\sigma,j} \rvert 
    \equiv \sum_{j=1}^{\ell(\underline{B})} \lvert A_{\underline{B},\sigma,j} \rvert \equiv \lvert A_{\underline{B},\sigma} \vert \quad \mod 2. \end{equation}
    Then \eqref{eq:mod1} and \eqref{eq:mod2} gives $\lvert A_{\pi(\underline{S}),\sigma} \rvert \equiv \lvert A_{\underline{B},\sigma} \rvert \mod 2$.

    To prove the first statement, assume by contradiction that $A_{\pi(\underline{S}),\sigma}^{\leq \underline{S}} \neq \emptyset$, i.e.\  $\lvert A_{\hat{1},\sigma,1} \rvert$ is odd and $\vee A_{\hat{1},\sigma,1} \leq \underline{S}$.
    Then there exists $j_\circ$ such that $m_{j_\circ}=m$ and $\lvert A_{\underline{B},\sigma,j_\circ} \rvert$ is odd.
    Notice that $\vee A_{\underline{B},\sigma,j_\circ} \leq \vee A_{\hat{1},\sigma,1} \leq \underline{S}$, which contradicts the hypothesis $ A_{\underline{B},\sigma}^{\leq \underline{S}} = \emptyset $. 
\end{proof}

\begin{lem}\label{lem:orientation} For any $\sigma \in \Aut(\Gamma)$, we write 
\begin{equation}\label{eq:oreintformula} o_{\Gamma}(\sigma)=(-1)^{\sum_i \frac{1}{l_i}\sum_{a \in \sigma_i} y_{a \sigma^{l_i/2}(a)}} = \begin{cases} -1 & \quad  \text{ if }\sum_{i} t_i \in \big(\frac{1}{2} \ZZ \big)\setminus \ZZ,\\
1 & \quad  \text{ if } \sum_{i} t_i \in \ZZ.\end{cases}
\end{equation}
In particular, $o_{\Gamma}(\sigma)=1$ if $\gcd_i(l_i)$ is odd.
\end{lem}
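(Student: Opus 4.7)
The plan is to establish the first equality in~\eqref{eq:oreintformula} by viewing $o_\Gamma(\sigma)$ as the sign of the permutation induced by $\sigma$ on the directed edge set $E(\Gamma^d)$; the equivalence with the $\sum_i t_i$ formulation will then be immediate from~\eqref{eq:tj}. A preliminary observation I would use is that $o_\Gamma$ is insensitive to how $\sigma$ is lifted from a vertex permutation to an edge permutation in the presence of multiple edges: any two such lifts differ by a permutation $\pi$ fixing every vertex, and $o(e,\pi)=1$ for every $e$, so $o_\Gamma(\pi)=1$, and the claim then follows from the multiplicativity~\eqref{eq:multiplicativity}.

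Using a \emph{canonical lift} (which fixes a labelling of the parallel edges between every pair of vertices and permutes labels via the identity), I would partition $E(\Gamma)$ into $\sigma$-orbits and use that
\[ \prod_{e\in\mathrm{orbit}} o(e,\sigma) \;=\; o(e_0,\sigma^k), \]
where $k$ is the orbit size and $e_0$ is any representative. This factor is $+1$ unless $\sigma^k$ swaps the endpoints of $e_0$. A direct cycle-analysis of $\sigma$ shows that the only orbits exhibiting such an endpoint-swap consist of edges inside a single cycle $\sigma_i$ of even length $l_i$ joining antipodal pairs $\{a,\sigma^{l_i/2}(a)\}$: for edges crossing two distinct cycles $\sigma_i\neq\sigma_j$ the orbit has size $\mathrm{lcm}(l_i,l_j)$ and $\sigma^{\mathrm{lcm}(l_i,l_j)}$ fixes both endpoints, while inside a single cycle the swap equation $2m\equiv 0\pmod{l_i}$ admits a nontrivial solution only when $l_i$ is even and $m=l_i/2$. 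Under the canonical lift each antipodal edge is individually $\sigma^{l_i/2}$-fixed, so its $\sigma$-orbit has size $l_i/2$; since $\sigma$-equivariance forces $y_{a\sigma^{l_i/2}(a)}$ to be independent of $a\in\sigma_i$, the number of such orbits inside $\sigma_i$ is precisely $y_{a\sigma^{l_i/2}(a)}$.

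Summing over $i$ with $l_i$ even will yield
\[ o_\Gamma(\sigma) \;=\; (-1)^{\sum_{i:\,l_i\text{ even}} y_{a\sigma^{l_i/2}(a)}} \;=\; (-1)^{\sum_i \frac{1}{l_i}\sum_{a\in\sigma_i} y_{a\sigma^{l_i/2}(a)}}, \]
with the convention that the term for odd $l_i$ vanishes (there is no antipodal vertex). The second equality in~\eqref{eq:oreintformula} is then immediate from~\eqref{eq:tj}: one has $t_{\sigma,i}\in\ZZ$ for $l_i$ odd, and $t_{\sigma,i}\equiv \tfrac12 y_{a\sigma^{l_i/2}(a)}\pmod{\ZZ}$ for $l_i$ even, so $2\sum_i t_{\sigma,i}$ has the same parity as the exponent above. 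The main technical obstacle will lie in the cycle bookkeeping of the second step, in particular justifying the identity $\prod o(e,\sigma)=o(e_0,\sigma^k)$ for an orbit and ruling out hidden endpoint-swaps for inter-cycle edges; the ``in particular'' remark is then immediate, since when all $l_i$ are odd the exponent is empty.
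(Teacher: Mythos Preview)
Your argument is correct and follows the same overall mechanism as the paper: both proofs isolate the \emph{antipodal} edge orbits inside even cycles $\sigma_i$ as the only sources of a factor $-1$, and then count them to obtain the exponent $\sum_{i:\,l_i\text{ even}} y_{a\sigma^{l_i/2}(a)}$. The technical execution differs slightly. The paper works by choosing explicit orientations---first on inter-cycle edges (so that $\sigma$ preserves them automatically and $o_\Gamma(\sigma)=\prod_i o_{\sigma_i}(\sigma)$), then on non-antipodal orbits within each cycle by $\sigma$-equivariance, and finally by an explicit labelling on $\ZZ/l\ZZ$ for the antipodal edges. You instead use the orientation-free telescoping identity $\prod_{e\in\text{orbit}} o(e,\sigma)=o(e_0,\sigma^{k})$, reducing directly to the question of whether $\sigma^k$ swaps the endpoints of a representative; this is a clean alternative that avoids any orientation choices.

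One caveat on the ``in particular'': your argument only covers the case when \emph{every} $l_i$ is odd, whereas the lemma as written assumes merely that $\gcd_i(l_i)$ is odd. In fact the stated claim fails under that weaker hypothesis (e.g.\ $\sigma=(12)(345)$ acting on a graph whose sole edge is $\{1,2\}$ has $\gcd(2,3)=1$ odd but $o_\Gamma(\sigma)=-1$). This does no harm downstream: the paper invokes the ``in particular'' only in the proof of \cref{lemma:alfa_S_sigma} after reducing to the situation where $\sigma$ acts by a \emph{single} cycle on $V(\Gamma^{\underline{S}})$, so that $\gcd_i(l_i)=l_1$ and the two conditions coincide. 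Your version of the final remark is thus the correct one.
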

\begin{proof}
Orient the edges between $\sigma_i$ and $\sigma_j$ (with $i<j$) from $\sigma_i$ to $\sigma_j$. Since $\sigma$ preserves the orientation of all edges between different cycles, we have
\begin{equation}\label{eq:orientationsplit}
    o_{\Gamma}(\sigma)= \prod_i o_{\sigma _i}(\sigma).
\end{equation}

By the multiplicativity \eqref{eq:multiplicativity} of $o_{\Gamma}$, we can further suppose that $\sigma$ acts transitively on $V(\Gamma)$. The stabilizers of a pair of vertices $V(\Gamma)$ for the action $\sigma \cdot \{a,b\} = \{\sigma(a), \sigma(b)\}$  is either trivial, or $\ZZ/2\ZZ \simeq \langle \sigma^{l/2}\rangle$, in which case $l$ is even and $b=\sigma^{l/2}(a)$. Edges whose pairs of endpoints have trivial stabilizer can be oriented in such a way that $\sigma$ is orientation-preserving: fix the orientation of a representative of the orbit, and induce the orientation on the other elements in the orbit by imposing that the map $\sigma|_{\sigma^k(e)} \colon \sigma^{k}(e) \to \sigma^{k+1}(e)$ is orientation-preserving. Orient the remaining edges as follows: identify $V(\Gamma)$ and $\ZZ/l\ZZ$ as $\langle \sigma\rangle=\ZZ/l\ZZ$-modules, and orient the edges between $j$ and $j+l/2$, with $0 \leq j<l/2$, from $j$ to $j+l/2$. In this way, the map $\sigma|_e \colon e \to \sigma(e)$ is orientation-preserving for all edges $e \in E(\Gamma)$ except for the edges from $l/2-1$ to $l-1$. Therefore, we obtain \[o_{\sigma_i}(\sigma_i)=(-1)^{y_{l/2-1, l-1}}=(-1)^{\frac{1}{l_i}\sum_{a \in \sigma_i} y_{a \sigma^{l_i/2}(a)}}.\] The second equality \eqref{eq:oreintformula} follows from \cref{rmk:half_int}.
\end{proof}
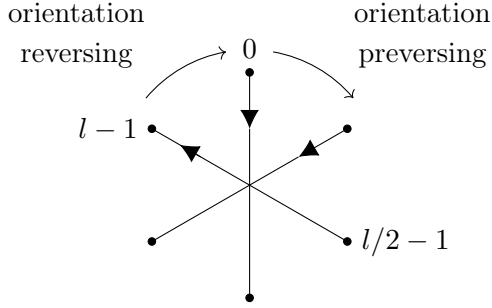
\begin{figure}
    \begin{center}
    \begin{tikzpicture}
   \newdimen\R
   \R=1.5cm
   \draw[-{Latex[length=2.5mm, width=2.5mm]}] (90:\R) -- (90:\R/2);
   \draw (90:\R/2)--(270:\R);
   \draw[-{Latex[length=2.5mm, width=2.5mm]}] (30:\R) -- (30:\R/2);
   \draw (30:\R/2)--(30+180:\R);
   \draw[-{Latex[length=2.5mm, width=2.5mm]}] (330:\R) -- (150:\R/1.4);
   \draw (150:\R/2)--(150:\R);
   \node[inner sep=1pt,circle,draw,fill] at (30:\R) {};
   \node[inner sep=1pt,circle,draw,fill] at (210:\R) {};
   \node[inner sep=1pt,circle,draw,fill] at (270:\R) {};
   \node[inner sep=1pt,circle,draw,fill,label={above : $0$}] at (90:\R) {};
   \node[inner sep=1pt,circle,draw,fill,label={left : $l-1$}] at (150:\R) {};
   \node[inner sep=1pt,circle,draw,fill,label={right : $l/2-1$}] at (330:\R) {};
   \draw[->] (80:{1.2*\R}) arc (80:40:{1.2*\R});
   \draw[->] (140:{1.2*\R}) arc (140:100:{1.2*\R});
   \node[label={left:}, align=center] at (-2.3,2) {orientation\\reversing};
   \node[label={right:}, align=center] at (2.3,2) {orientation\\preversing};
\end{tikzpicture}
    \caption{Illustration of the proof of \cref{lem:orientation} for $l=6$.}
    \label{fig:my_label}\end{center}
\end{figure}

\begin{lem} \label{lemma:alfa_S_sigma}
    Let $\underline{S} \in \Fl^\sigma$ be a flat such that $\pi(\underline{S}) \geq \underline{B}$. If $A_{\underline{B}, \sigma}^{\leq \underline{S}} = \emptyset$, then
    \[ \alpha_{\underline{S}}(\sigma)= (-1)^{\ell(\sigma\colon \underline{S} \to \underline{S})+ \lvert A_{\pi(\underline{S}), \sigma}\rvert}. \] 
\end{lem}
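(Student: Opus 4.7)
The character $\alpha_{\underline{S}}(\sigma)$ factors as
\[
\alpha_{\underline{S}}(\sigma) = \sgn\bigl(\sigma\colon V(\Gamma^{\underline{S}})\to V(\Gamma^{\underline{S}})\bigr)\cdot o_{\Gamma^{\underline{S}}}(\sigma).
\]
Since the sign of a permutation of a finite set equals $(-1)$ to the power of its length, and $V(\Gamma^{\underline{S}})=\underline{S}$, the first factor is precisely $(-1)^{\ell(\sigma\colon\underline{S}\to\underline{S})}$. The lemma is therefore equivalent to the identity
\[
o_{\Gamma^{\underline{S}}}(\sigma)=(-1)^{|A_{\pi(\underline{S}),\sigma}|},
\]
and my plan is to establish this reduced identity.

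To compute $o_{\Gamma^{\underline{S}}}(\sigma)$, I would apply \Cref{lem:orientation} to the pair $(\Gamma^{\underline{S}},\sigma)$. The $\sigma$-cycles on $V(\Gamma^{\underline{S}})=\underline{S}$ are in bijection with the blocks $B_k$ of $\pi(\underline{S})$, each contributing a cycle of length $L_k$ equal to the $\sigma$-orbit length of the associated blocks of $\underline{S}$. By \Cref{rmk:half_int} applied to $\Gamma^{\underline{S}}$, the fractional part of $t_k(\Gamma^{\underline{S}})$ is nonzero precisely when $L_k$ is even and the edge count $Y_{X,\sigma^{L_k/2}(X)}$ (in $\Gamma^{\underline{S}}$, between a representative block $X$ of the orbit and its image under $\sigma^{L_k/2}$) is odd. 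Thus
\[
o_{\Gamma^{\underline{S}}}(\sigma)=\prod_{k\,:\,L_k\text{ even}}(-1)^{Y_{X,\sigma^{L_k/2}(X)}},
\]
and it remains to match the parity of this exponent with $|A_{\pi(\underline{S}),\sigma}|$ block-by-block.

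For the block-by-block analysis, I would first upgrade the hypothesis $A_{\underline{B},\sigma}^{\leq\underline{S}}=\emptyset$ to $A_{\pi(\underline{S}),\sigma}^{\leq\underline{S}}=\emptyset$ and to the congruence $|A_{\pi(\underline{S}),\sigma}|\equiv|A_{\underline{B},\sigma}|\pmod 2$ using \Cref{lemma:B_to_pi_S}. Then, fixing a block $B_k$ of $\pi(\underline{S})$ with $L_k$ even, I would decompose $Y_{X,\sigma^{L_k/2}(X)}$ as a double sum over pairs of $\sigma$-cycles $(\sigma_i,\sigma_{i'})$ with $i,i'\in B_k$: the off-diagonal terms $i\ne i'$ are paired by the $\sigma^{L_k/2}$-action on the edge set (interchanging the roles of the two cycles and the two blocks) and contribute evenly, while for the diagonal terms $i=i'$ the edges of $\Gamma$ inside $\sigma_i$ joining $\sigma_i\cap X$ to $\sigma_i\cap\sigma^{L_k/2}(X)$ survive in $\Gamma^{\underline{S}}$ only if these sets lie in distinct blocks of $\underline{S}$. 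By \Cref{lem:all_or_nothing} together with $A_{\pi(\underline{S}),\sigma,k}^{\leq\underline{S}}=\emptyset$, this happens precisely for those $i\in B_k$ with $v_2(l_i)=m_k$, and each such $i$ contributes $y_{a\sigma^{l_i/2}(a)}$ modulo $2$. Summing yields $Y_{X,\sigma^{L_k/2}(X)}\equiv|A_{\pi(\underline{S}),\sigma,k}|\pmod 2$ for those $k$ with $m_k>0$ (and no contribution otherwise), and aggregating over $k$ gives $|A_{\pi(\underline{S}),\sigma}|\bmod 2$, as required.

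The main obstacle is the careful parity book-keeping in the last step: verifying that the off-diagonal $(i,i')$-contributions truly pair up without double-counting under the cyclic action of $\sigma^{L_k/2}$, and that the diagonal contributions reduce to the expected edge multiplicities $y_{a\sigma^{l_i/2}(a)}$ in the original graph. Both hinge delicately on \Cref{lem:all_or_nothing}, which ensures the all-or-nothing behaviour of the subsets $A_{\pi(\underline{S}),\sigma,k}$, and on the hypothesis $A_{\underline{B},\sigma}^{\leq\underline{S}}=\emptyset$, which prevents the pair $\{a,\sigma^{l_i/2}(a)\}$ from being contracted inside the same block of $\underline{S}$ and disappearing from $\Gamma^{\underline{S}}$.
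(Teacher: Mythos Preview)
Your strategy—reducing to $o_{\Gamma^{\underline{S}}}(\sigma)=(-1)^{|A_{\pi(\underline{S}),\sigma}|}$, invoking \Cref{lemma:B_to_pi_S} to upgrade the hypothesis, and then analysing the edge count in $\Gamma^{\underline{S}}$ block by block via \Cref{lem:orientation}—is exactly the paper's. The paper additionally reduces to a single block $\pi(\underline{S})=\hat{1}$ and then replaces your direct parity decomposition of $Y_{X,\sigma^{L_k/2}(X)}$ by a four-way case split on the pair $\bigl(m,v_2(\ell(\underline{S}))\bigr)$, which sidesteps several pitfalls in your sketch.

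There is a genuine gap in your outline. You implicitly equate ``$L_k$ even'' with ``$m_k>0$'', but only $v_2(L_k)\le m_k$ holds in general. When $L_k$ is odd yet $m_k>0$ (the paper's case $m>v_2(\ell(\underline{S}))=0$), the orientation side is $+1$ by \Cref{lem:orientation}, so one must show $A_{\pi(\underline{S}),\sigma,k}=\emptyset$; this is precisely where the hypothesis is used: $L_k$ odd and $L_k\mid l_i$ force $L_k\mid l_i/2$, hence $\sigma^{l_i/2}$ fixes every block of $\underline{S}$ over $B_k$, so the type of $\sigma_i^{l_i/2}$ is $\le\underline{S}$ and cannot lie in $A_{\pi(\underline{S}),\sigma,k}$. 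A parallel argument is needed when $0<v_2(L_k)<m_k$. Finally, your sentence about diagonal terms ``surviving only if these sets lie in distinct blocks of $\underline{S}$'' is not the right criterion—$X$ and $\sigma^{L_k/2}(X)$ are always distinct blocks—and your $\sigma^{L_k/2}$-pairing on the diagonal has fixed points only when $l_i=L_k$, not for all $i$ with $v_2(l_i)=m_k$; the honest reduction to the multiplicities $y_{a\sigma^{l_i/2}(a)}$ requires the orbit-length computation the paper carries out in its third and fourth cases.
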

\begin{proof}
    By \Cref{lemma:B_to_pi_S}, $A_{\pi(\underline{S}), \sigma}^{\leq \underline{S}} = \emptyset$.
    We first reduce to the case $\pi(\underline{S})=\hat{1}$ and $\Gamma/\sigma$ connected. This follows from the definition of $\alpha_{\underline{S}}$, and the fact that \begin{equation}\label{eq:or2}
        o_{\Gamma^{\underline{S}}}(\sigma)= \prod_{i} o_{q^{-1}(\Gamma_{T_i})^{\underline{S}_{T_i}}} (\sigma),
    \end{equation}
    where $\pi(\underline{S})=(T_1, \ldots, T_k)$, $q\colon \Gamma \to \Gamma/\sigma$ is the quotient map, and $\underline{S}_{T_i} \coloneqq q^{-1}(\Gamma_{T_i}) \cap \underline{S}$. The proof of \eqref{eq:or2} is identical to that of \eqref{eq:orientationsplit}. 

Assume then that $\pi(\underline{S})=\hat{1}$ consists of a single connected block. We 
   want to prove the equality \[o_{\Gamma^{\underline{S}}}(\sigma)= (-1)^{\lvert A_{\hat{1}, \sigma}\rvert}=(-1)^{|A_{\hat{1}, \sigma, 1}|}.\]
    Note that $\sigma$ acts transitively on the blocks $S_i$ of $\underline{S}$ which intersect all cycles $\sigma_i$ (equivalently the fibers of $q$) in subsets of the same cardinality, since $\underline{S}$ is $\sigma$-invariant, and $q(\underline{S})=q(S_i)=\hat{1}$. 
    We obtain that $\ell (\underline{S}) \mid l_i$ for all $i=1, \dots, |\sigma|$, and so $v_2(\ell(\underline{S})) \leq m \coloneqq \min(m_i)$. 
    
    We distinguish the following cases.
    \begin{description}
        \item[$m=v_2(\ell(\underline{S}))=0$] then $A_{\hat{1}, \sigma} = \emptyset$, and $o_{\Gamma^{\underline{S}}}(\sigma)=1$ by \cref{lem:orientation}.
        \item[$m>v_2(\ell(\underline{S}))=0$] then $o_{\Gamma^{\underline{S}}}(\sigma)=1$ by \cref{lem:orientation}, since $\ell(\underline{S})$ is odd. 
        Moreover, the pairs of vertices in $V(\Gamma)$ of the form $\{a, \sigma^{l_i/2}(a)\}$, with $a\in \sigma_i$, are all contained in the fibers of the contraction map $\Gamma \to \Gamma^{\underline{S}}$, since they are $\sigma^{l_i/2}$-orbits and $\ell(\underline{S})$ is odd. Therefore, the type of $\sigma_i^{l_i/2}$ is smaller or equal than $\underline{S}$ for all $i=1, \dots, |\sigma|$, and so $A_{\hat{1}, \sigma} = A_{\hat{1}, \sigma}^{\leq \underline{S}}$, which is empty by assumption. Hence, $o_{\Gamma^{\underline{S}}}(\sigma)= 1=(-1)^{\lvert A_{\hat{1}, \sigma}\rvert}$.
        \item[$m>v_2(\ell(\underline{S}))>0$] the number of edges in $\Gamma^{\underline{S}}$ between $S_0$ and $S_{\ell(\underline{S})/2}:=\sigma^{\ell(\underline{S})/2}(S_0)$ is even, which implies $o_{\Gamma^{\underline{S}}}(\sigma)=1$ by \cref{lem:orientation}. Indeed, let $e_{ab}$ be an edge whose endpoints $a, b$ lie respectively in $\sigma_i$ and $\sigma_j$.        Recall that the length of the orbit of $e_{ab}$ is
        \[
        |\langle \sigma \rangle \cdot e_{ab}|= \begin{cases}
        \lcm(l_i,l_j) \quad & \text{ if }i\neq j,\\
        l_i &  \text{ if }i=j, b \neq \sigma^{l_i/2}(a),\\
        l_i/2 &  \text{ if }i=j, b = \sigma^{l_i/2}(a),
        \end{cases}
        \]
        while the length of the orbit of a edge between $S_0$ and $S_{\ell(\underline{S})/2}$ in $\Gamma^{\underline{S}}$ is $\ell(\underline{S})/2$. Thus we write
        \[\frac{\ell(\underline{S})}{2}y_{S_0 S_{\ell(\underline{S})/2}}= \sum_{0\leq i < \ell(\underline{S})/2} y_{S_i \sigma^{\ell(\underline{S})/2}(S_i)}= \sum_{\substack{\{a,b\}\subseteq V(\Gamma) \\ q(b)=\sigma^{\ell(\underline{S})/2}(q(a))}}y_{ab}=\sum_{[\{a,b\}]\in \mathcal{O}}|\langle \sigma \rangle \cdot e_{ab}| y_{ab},\]
        where $\mathcal{O}$ is the set of orbits of pairs $\{a,b\}\subseteq V(\Gamma)$ such that $q(b)=\sigma^{\ell(\underline{S})/2}(q(a))$.
        In all cases, the ratio $|\langle \sigma \rangle \cdot e_{ab}|/(\ell(\underline{S})/2)$ is even, and $y_{S_0 S_{\ell(\underline{S})/2}}$ is even too, as we wanted. 
        
        Further, since $2\ell(\underline{S})| l_i$, $\sigma^{l_i/2}$ acts trivially on $\underline{S}$. As above, the type of $\sigma_i^{l_i/2}$ is smaller or equal than $\underline{S}$ for all $i$, and $A_{\hat{1}, \sigma} = A_{\hat{1}, \sigma}^{\leq \underline{S}} = \emptyset$. Hence, $o_{\Gamma^{\underline{S}}}(\sigma)= 1=(-1)^{\lvert A_{\hat{1}, \sigma}\rvert}$.
        
        \item[$m=v_2(\ell(\underline{S}))>0$] the parity of the number of edges between $S_0$ and $S_{\ell(\underline{S})/2}$ coincides with the parity of  
        \[ \sum_{v_2(l_i)= v_2(\ell(\underline{S}))} y_{a \sigma^{l_i/2}(a)} \]
        for any $a \in \sigma_i$. Hence
        \[ o_{\Gamma^{\underline{S}}}(\sigma)= (-1)^{\sum_{v_2(l_i)= v_2(\ell(\underline{S}))} y_{a \sigma^{l_i/2}(a)}} = (-1)^{\lvert A_{\hat{1}, \sigma,1}\rvert}. \]
    \end{description}
    This completes the proof.
\end{proof}

Let us define 
\[\omega_{\underline{B}}= \Bigl( \sum_{i \in B_j} l_i \omega_i \Bigr)_j = \Bigl( \sum_{k \in \cup_{i \in B_j} \sigma_i} \omega_k \Bigr) _j \in \RR^{\ell(\underline{B})} \]
and
\[ \Half_{\omega}= \Bigl\{ \vee A_{\underline{B},\sigma,j} \mid \frac{(\omega_{\underline{B}})_j}{\gcd_{i \in B_j} (l_i)} \in \frac{1}{2}\ZZ \setminus \ZZ  \Bigr\} \subseteq \Fl^{\sigma}.\]

Define $\underline{R}= \underline{R}(F) \in \Fl^{\sigma}(\Gamma)$ as the finest flat containing all the lifts of the edges of the forest $F$ via the quotient map $q\colon \Gamma \to \Gamma/\sigma$.

\begin{lem}
    \label{lemma:number_blocks_B}
    The inverse image $\pi^{-1}(B_j)$ of any block $\underline{B}_j $ of $\underline{B}(F)$ under the map $\pi \colon \Fl^{\sigma}(\Gamma) \to \Fl(\Gamma/\sigma)$ is the disjoint union of $\gcd_{i \in B_j} (l_i)$ blocks of $\underline{R}$ over which $\sigma$ acts transitively.
\end{lem}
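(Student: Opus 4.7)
The plan is to reduce the statement to a purely graph-theoretic claim about the connected components of an explicit subgraph. For each block $B_j$ of $\underline{B}$, let $H_j\subseteq \Gamma$ be the $\sigma$-invariant subgraph with vertex set $q^{-1}(B_j)=\bigcup_{i\in B_j}\sigma_i$ and edges equal to the preimages under $q$ of the edges of $F$ lying in $B_j$. By definition of $\underline{R}$, its blocks contained in $q^{-1}(B_j)$ are exactly the connected components of $H_j$, so it suffices to check that $\pi(\underline{R})=\underline{B}$ (so that $\pi^{-1}(B_j)$ is the union of the components of $H_j$), that the number of these components is $d\coloneqq\gcd_{i\in B_j}(l_i)$, and that $\sigma$ acts transitively on them. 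The identification $\pi(\underline{R})=\underline{B}$ follows from a short walk argument: the $q$-image of any component of $H_j$ is a connected subgraph of $B_j$, and since every vertex of $\sigma_i$ is an endpoint of at least one lift of every edge of $F$ incident to $i$, propagating along $B_j$ shows that this image is all of $B_j$.

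Set $L\coloneqq\lcm_{i\in B_j}(l_i)$, so that $\sigma$ restricts to a $\ZZ/L\ZZ$-action on $q^{-1}(B_j)$. Transitivity of the induced action on the set of components is immediate from the connectedness of $B_j$: given two components $C,C'$ with vertices $a\in C\cap\sigma_i$ and $a'\in C'\cap\sigma_{i'}$, lifting a path from $i$ to $i'$ in $B_j$ to a path in $H_j$ starting at $a$ produces an $a''\in C\cap\sigma_{i'}$, and then $a'=\sigma^k(a'')$ for some $k$, so $\sigma^k(C)=C'$. To bound the stabilizer $\Stab(C)\leq \ZZ/L\ZZ$ of a component containing $a_0\in\sigma_{i_0}$, I would use the following trick: for each $i\in B_j$, choose a path in $H_j$ from $a_0$ to some vertex $a_i\in C\cap\sigma_i$ and translate it by $\sigma^{l_i}$; since $\sigma^{l_i}$ fixes $a_i$, this shows $\sigma^{l_i}(a_0)\in C$. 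Hence $l_i\in\Stab(C)$ for every $i\in B_j$, so $d\cdot\ZZ/L\ZZ\subseteq \Stab(C)$, which by orbit-stabiliser gives the upper bound of $d$ on the number of components.

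The main obstacle will be the matching lower bound, and the plan is to produce a surjective $\sigma$-equivariant "coloring" $f\colon V(H_j)\to \ZZ/d\ZZ$ that is constant on edges of $H_j$. Parameterising each cycle as $\sigma_i=\{a^i_k\}_{k\in\ZZ/l_i\ZZ}$ compatibly with $\sigma$, i.e.\ $\sigma(a^i_k)=a^i_{k+1}$, each edge $e=\{i,i'\}$ of $F$ in $B_j$ admits a chosen lift $(a^i_{c_e},a^{i'}_{c'_e})$, and all other lifts are its $\sigma$-translates $(a^i_{c_e+m},a^{i'}_{c'_e+m})$. Because $B_j$ is a tree, I can solve the system $d_{i'}-d_i\equiv c'_e-c_e\pmod{d}$ iteratively from a root, without any cycle-consistency condition; the function $f(a^i_k)\coloneqq k-d_i\pmod{d}$ is then constant on edges of $H_j$ by construction, surjective onto $\ZZ/d\ZZ$ (since $d\mid l_i$), and it satisfies $f\circ\sigma=f+1$. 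Combined with the preceding upper bound, the fibres of $f$ must coincide with the connected components, and $\sigma$ permutes them cyclically, yielding exactly $d$ components on which $\sigma$ acts transitively, as claimed.
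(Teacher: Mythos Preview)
Your proof is correct, and the transitivity and upper-bound steps essentially match the paper's (the paper also uses that every block of $\underline{R}$ meets each cycle $\sigma_i$, so $\sigma^{l_i}$ fixes every block and hence $\ell(\underline{R})\mid\gcd_i l_i$). The lower-bound arguments, however, are genuinely different. You build an explicit $\sigma$-equivariant invariant $f\colon V(H_j)\to\ZZ/d\ZZ$ that is constant on edges by solving the congruences $d_{i'}-d_i\equiv c'_e-c_e\pmod d$ along the tree $B_j$; this pins down the components as the fibres of $f$ and makes the cyclic $\sigma$-action on them completely explicit. The paper instead argues by contradiction via a minimal-path trick: assuming $\ell(\underline{R})<d$, it takes a shortest path in $\pi^{-1}(F)$ from some $a$ to $\sigma^{k\ell(\underline{R})}(a)$ with $k$ coprime to $d/\ell(\underline{R})$, notes that the first and last edges project to the same edge of the tree $B_j$ (hence lie in a common $\sigma$-orbit), and strips them off to obtain a strictly shorter path of the same kind. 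Your approach is more constructive and yields an explicit labelling of the components; the paper's avoids choosing parametrisations of the cycles at the cost of the somewhat more delicate minimality and coprimality bookkeeping.
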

\begin{proof}
    Working block-wise, we can assume $\underline{B}=\{ B_1\} =\hat{1}$ and the statement becomes $\ell(\underline{R})=\gcd_{i \in B_1} (l_i)$.
    Since $\sigma$ acts transitively on the blocks of $\underline{R}$, then $\ell(\underline{R}) \mid \gcd_i(l_i)$.
    
    By contradiction suppose that $\ell(\underline{R})< \gcd_i(l_i)$. 
    For each $i$, each vertex $a\in \sigma_i$ and each integer $k$ coprime with $\gcd_i(l_i)/\ell(\underline{R})$ there is a path in $\pi^{-1}(F)$ from $a$ to $\sigma^{k \ell(\underline{R})}(a)$.
    Choose $i$, $a\in \sigma_i$ and $k$ such that the path has minimal length among all considered paths.
    The assumption $\ell(\underline{R})< \gcd_i(l_i)$ implies that the path is non-constant.
    Since $F$ is a forest, the path starts with an edge with endpoints $\{a,b\}$, and it terminates with an edge with endpoints $\{\sigma^h(a), \sigma^h(b)\}$ in the same orbit of the pair $\{a,b\}$.
    Notice that $h \equiv k\ell(\underline{R}) \mod l_i$, and so $h=k' \ell(\underline{R})$ with $k'$ coprime to $\gcd_i(l_i)/\ell(\underline{R})$. Eliminating the first and last edges of the path, we obtain a shorter path from $b$ to $\sigma^{k'\ell(\underline{R})}(b)$ contrary to the minimality assumption on $a$.
\end{proof}

\begin{lem}\label{lem:RHintegral}
Let $H \subseteq A_{\underline{B},\sigma}$.
The flat $\underline{R} \vee \bigvee H$ is $\omega$-integral if and only if $\gcd_{i \in B_j} (l_i) \mid 2 (\omega_{\underline{B}})_j$ for all $j=1, \dots, \ell(\underline{B})$ and $H \supseteq \Half_\omega$.
\end{lem}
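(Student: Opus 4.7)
My approach is to reduce the $\omega$-integrality of $\underline{S}(H) := \underline{R} \vee \bigvee H$ to a block-by-block analysis over the fibers $\pi^{-1}(B_j)$ of the quotient map. Since $\underline{S}(H)$ and $\omega$ are both $\sigma$-invariant, $\omega$ has a constant sum along each $\sigma$-orbit of blocks, and by \cref{lemma:number_blocks_B} the restriction $\underline{R}|_{\pi^{-1}(B_j)}$ forms a single $\sigma$-orbit consisting of $G_j := \gcd_{i \in B_j}(l_i)$ blocks. Identifying these blocks with $\ZZ/G_j\ZZ$ so that $\sigma$ acts as $+1$, the common $\omega$-sum per block is $(\omega_{\underline{B}})_j/G_j$, so $\omega$-integrality of $\underline{R}$ on this fiber is equivalent to $G_j \mid (\omega_{\underline{B}})_j$.

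The key computational step is to describe what happens when we join $\underline{R}$ with an element $\vee A_{\underline{B},\sigma,j} \in H$. For any $i \in B_j$ with $v_2(l_i) = m_j$, the identity $v_2(G_j)=m_j$ (the valuation of a gcd is the minimum of the valuations) forces $l_i/G_j$ to be odd, whence $l_i/2 \equiv G_j/2 \pmod{G_j}$. Thus $\sigma^{l_i/2}$ acts on the block set $\ZZ/G_j\ZZ$ as the translation $x \mapsto x + G_j/2$, and the cycle type of $\sigma_i^{l_i/2}$ therefore identifies $x$ with $x+G_j/2$ uniformly in $x$ (since every block meets $\sigma_i$). This identification depends neither on the specific $i$ chosen in the support of $A_{\underline{B},\sigma,j}$ nor on how many such $i$ are joined, so the net effect is to halve the block count to $G_j/2$ and double the $\omega$-sum per block to $2(\omega_{\underline{B}})_j/G_j$. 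Elements $\vee A_{\underline{B},\sigma,j'}$ with $j' \neq j$ fix all of $\pi^{-1}(B_j)$ pointwise, so the analysis decouples across $j$.

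With this dictionary, $\omega$-integrality of $\underline{S}(H)$ becomes: for each $j$, the divisibility $G_j \mid 2(\omega_{\underline{B}})_j$ when $\vee A_{\underline{B},\sigma,j} \in H$, and $G_j \mid (\omega_{\underline{B}})_j$ otherwise. The forward direction is then clear: in either subcase $G_j \mid 2(\omega_{\underline{B}})_j$, giving condition (i); and any $\vee A_{\underline{B},\sigma,j} \in \Half_\omega$ must have been joined, because otherwise the block sum $(\omega_{\underline{B}})_j/G_j$ would lie in $\tfrac{1}{2}\ZZ \setminus \ZZ$, giving condition (ii). Conversely, assume (i) and (ii). If $\vee A_{\underline{B},\sigma,j} \in H$, then (i) directly gives integrality. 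If $\vee A_{\underline{B},\sigma,j}\notin H$, then either it belongs to $A_{\underline{B},\sigma}\setminus H$ or it is not in $A_{\underline{B},\sigma}$ at all; in either case $\vee A_{\underline{B},\sigma,j} \notin \Half_\omega$ (in the second subcase because $\Half_\omega \subseteq H \subseteq A_{\underline{B},\sigma}$ would be violated), so $(\omega_{\underline{B}})_j/G_j \notin \tfrac{1}{2}\ZZ\setminus\ZZ$, and combined with (i) we conclude $(\omega_{\underline{B}})_j/G_j \in \ZZ$.

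The principal obstacle is the $2$-adic step pinning down that $\sigma^{l_i/2}$ acts as the unique order-$2$ translation on $\ZZ/G_j\ZZ$; this is what produces the factor of $2$ in the hypothesis $G_j \mid 2(\omega_{\underline{B}})_j$ and is the only place where the hypothesis $v_2(l_i)=m_j$ really enters. A secondary subtlety that deserves care is the asymmetric status of $\Half_\omega$: it is defined as a subset of $\Fl^\sigma$ rather than of $A_{\underline{B},\sigma}$, and the clause $H \supseteq \Half_\omega$, combined with $H \subseteq A_{\underline{B},\sigma}$, automatically excludes those degenerate situations where some $\vee A_{\underline{B},\sigma,j}$ lies in $\Half_\omega$ without lying in $A_{\underline{B},\sigma}$; the lemma correctly predicts that in such situations $\underline{S}(H)$ is never $\omega$-integral.
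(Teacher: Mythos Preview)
Your proof is correct and follows essentially the same route as the paper's: reduce block-by-block over $\underline{B}$, invoke \cref{lemma:number_blocks_B} to identify the fibers with $\ZZ/G_j\ZZ$, and split according to whether $\vee A_{\underline{B},\sigma,j}$ has been joined. Your write-up is in fact more explicit than the paper's terse two-case argument, particularly in the $2$-adic justification that $\sigma^{l_i/2}$ acts as the unique order-two translation on $\ZZ/G_j\ZZ$ and in the treatment of the converse direction and the degenerate situations with $\Half_\omega \not\subseteq A_{\underline{B},\sigma}$.
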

\begin{proof}
    We can reduce to a single block of $\underline{B}=\pi(\underline{R})$, hence we assume $\underline{B}=\hat{1}$.    
    By \Cref{lemma:number_blocks_B} we have $\ell(\underline{R}) = \gcd_i(l_i)$ and  $\omega_{\underline{B}}= \ell(\underline{R}) (\omega_{\underline{R}})_1= \gcd_i(l_i) (\omega_{\underline{R}})_1$.
    There are two cases.
    \begin{description}
        \item[$H=\emptyset$] then $(\omega_{\underline{R}})_1 \in \ZZ$, $\gcd_i(l_i) \mid \omega_{\underline{B}}$ and $\Half_{\omega}= \emptyset$.
        \item[$H=\vee A_{\hat{1},\sigma,1}$] then $\underline{R} \vee H$ has $\gcd_i(l_i)/2$ blocks and $(\omega_{\underline{R} \vee H})_1= 2(\omega_{\underline{R}})_1$.
        Therefore, $2(\omega_{\underline{R}})_1 \in \ZZ$, $\gcd_i(l_i) \mid 2\omega_{\underline{B}}$ and $H \supseteq \Half_{\omega}$.
    \end{description}
\end{proof}

\begin{lem}\label{lem:integralHals} 
The affine subspace supporting a face of the zonotope  $Z_\Gamma^\sigma + \omega$ corresponding to the flat $\underline{B}$ has a lattice point  if and only if $\gcd_{i \in B_j} (l_i) \mid 2 (\omega_{\underline{B}})_j$ for all $j=1, \dots, \ell(\underline{B})$ and $A_{\underline{B},\sigma} = \Half_\omega$.
\end{lem}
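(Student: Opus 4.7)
The plan is to adapt the $2$-adic congruence analysis from the proof of \cref{lemma:integral_in_quotient}, this time incorporating the translation by $\omega$ rather than restricting to a sub-flat. By \cref{lem:fixedlocus} and the $\Aut(\Gamma)$-invariance of $\omega$, the face of $Z_\Gamma^\sigma + \omega$ corresponding to $\underline{B}$ is supported on the affine subspace cut out by
\[
\sum_{i \in B_j} l_i x_i \;=\; \sum_{i \in B_j} l_i t_{\sigma,i} \,+\, (\omega_{\underline{B}})_j, \qquad j = 1, \dots, \ell(\underline{B}).
\]
Setting $g_j := \gcd_{i \in B_j}(l_i)$, this system has an integer solution iff $g_j$ divides the right hand side for every $j$. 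The first task is therefore to compute $\sum_{i \in B_j} l_i t_{\sigma,i} \bmod g_j$.

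For this I would repeat the $2$-adic reduction performed in \cref{lemma:integral_in_quotient}. Since $t_{\sigma,i} \in \ZZ$ when $l_i$ is odd (by \cref{rmk:half_int}), those terms vanish mod $g_j$; setting $m_j = v_2(g_j)$, the contributions from indices with $v_2(l_i) > m_j$ also vanish (as then $l_i/2$ is already a multiple of $g_j$); and the remaining indices, those with $v_2(l_i) = m_j \geq 1$, each contribute $\tfrac{g_j}{2}\,y_{a\sigma^{l_i/2}(a)} \bmod g_j$, equal to $g_j/2$ or $0$ according to the parity of $y_{a\sigma^{l_i/2}(a)}$. Summing,
\[
\sum_{i \in B_j} l_i t_{\sigma,i} \;\equiv\; \tfrac{g_j}{2}\,\lvert A_{\underline{B},\sigma,j} \rvert \pmod{g_j},
\]
and the right hand side is $0$ when $m_j = 0$.

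The integrality for block $j$ thus reads $\tfrac{1}{2}\lvert A_{\underline{B},\sigma,j}\rvert + (\omega_{\underline{B}})_j/g_j \in \ZZ$; this is solvable iff (i) $(\omega_{\underline{B}})_j/g_j \in \tfrac{1}{2}\ZZ$, equivalently $g_j \mid 2(\omega_{\underline{B}})_j$, and (ii) the parities match, i.e.\ $\lvert A_{\underline{B},\sigma,j}\rvert$ is odd iff $(\omega_{\underline{B}})_j/g_j \in \tfrac{1}{2}\ZZ \setminus \ZZ$. Imposing (i) for every $j$ yields the first condition of the lemma, while by the definitions of $A_{\underline{B},\sigma}$ and $\Half_\omega$, condition (ii) says exactly that $\vee A_{\underline{B},\sigma,j}$ lies in $A_{\underline{B},\sigma}$ iff it lies in $\Half_\omega$.

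The last step is to promote this per-block biconditional to the set equality $A_{\underline{B},\sigma} = \Half_\omega$ in $\Fl^\sigma$. For distinct $j$ with $A_{\underline{B},\sigma,j} \neq \emptyset$, the joins $\vee A_{\underline{B},\sigma,j}$ have pairwise disjoint supports in $V(\Gamma)$ and are thus distinct elements of $\Fl^\sigma$; the edge case $v = \hat{0}$ is handled by observing that $\hat{0} \notin A_{\underline{B},\sigma}$ by definition, so $\hat{0} \notin \Half_\omega$ must be imposed separately, which amounts precisely to the integrality condition at the blocks with $A_{\underline{B},\sigma,j} = \emptyset$ (including all $m_j = 0$, where $g_j$ is odd so that (i) collapses into $g_j \mid (\omega_{\underline{B}})_j$). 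The main obstacle will be managing these degenerate cases cleanly so that the set-theoretic equality matches the block-wise divisibility without loss.
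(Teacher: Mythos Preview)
Your proposal is correct and follows essentially the same route as the paper: both arguments write the supporting affine subspace as the system $\sum_{i \in B_j} l_i x_i = \sum_{i \in B_j} l_i t_{\sigma,i} + (\omega_{\underline{B}})_j$, reduce integrality to the congruence $g_j \mid \sum_{i \in B_j} l_i t_{\sigma,i} + (\omega_{\underline{B}})_j$, and then use the $2$-adic computation from \cref{lemma:integral_in_quotient} to show that $\sum_{i \in B_j} l_i t_{\sigma,i} \equiv \tfrac{g_j}{2}\lvert A_{\underline{B},\sigma,j}\rvert \pmod{g_j}$, so that the block condition splits into $g_j \mid 2(\omega_{\underline{B}})_j$ together with the parity match identifying membership of $\vee A_{\underline{B},\sigma,j}$ in $A_{\underline{B},\sigma}$ and in $\Half_\omega$. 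Your final paragraph about the degenerate case $\vee A_{\underline{B},\sigma,j}=\hat 0$ is more explicit than the paper (which leaves this implicit), but it is not an obstacle: when $m_j=0$ the block contributes to neither set and $g_j$ odd makes (i) equivalent to $g_j\mid(\omega_{\underline{B}})_j$, while when $m_j>0$ and $A_{\underline{B},\sigma,j}=\emptyset$ the half-integer condition on $(\omega_{\underline{B}})_j/g_j$ forces $\hat 0\in\Half_\omega\setminus A_{\underline{B},\sigma}$, matching the failure of (ii).
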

\begin{proof}
    The proof is similar to the one of \Cref{lemma:integral_in_quotient}.
    The affine subspace corresponding to $\underline{B}$ has a lattice point if and only if the linear system
    \[ \sum_{i \in B_j} l_i x_i = \sum_{i \in B_j} \Bigl( l_i \omega_i + \sum_{a,b \in \sigma_i } y_{ab} \Bigr) \]
    for $j=1,\dots, \ell(\underline{B})$ has integral solutions.
    This is equivalent to
    \[\gcd_{i \in B_j}(l_i) \Bigg\vert \sum_{i \in B_j} l_i \omega_i + \sum_{\substack{i \in B_j \\ l_i \text{ even}}} \frac{l_i}{2} 
    y_{a \sigma^{l_i/2}(a)}  \]
    for all $j=1,\dots, \ell(\underline{B})$.
    In particular $\gcd_{i \in B_j} (l_i) \mid 2 (\omega_{\underline{B}})_j$.

    In addition, $\gcd_{i \in B_j} (l_i) \nmid  (\omega_{\underline{B}})_j$ if and only if $\gcd_{i \in B_j} (l_i) \nmid \sum_{\substack{i \in B_j \\ l_i \text{ even}}} \frac{l_i}{2} y_{a \sigma^{l_i/2}(a)}$.
    The former is equivalent to $\vee A_{\underline{B},\sigma,j} \in \Half_{\omega}$ and the latter to $\vee A_{\underline{B},\sigma,j} \in A_{\underline{B},\sigma}$.
\end{proof}

\subsection{Proof of Theorem \ref{thm:repr}}

\begin{proof}[Proof of \Cref{thm:repr}]
Denote the $\Stab(\underline{S})$-representation 
$\alpha_{\underline{S}} \otimes \widetilde{H}_{\ell(\underline{S})-3}(\Delta(\overline{\Fl}_{\omega, \geq \underline{S}})) \otimes \mathcal{C}(Z_{\Gamma_{\underline{S}}})$ simply by $V_{\underline{S}}$.
Then the character of the right hand side of \eqref{eq:representations} is given by:
    \begin{align*}
        \chi_{\mathrm{RHS}}(\sigma) 
        &= \sum_{\underline{S} \in \Fl_{\omega}/\Aut(\Gamma)} \chi_{\Ind_{\Stab (\underline{S})}^{\Aut (\Gamma)} V_{\underline{S}}}(\sigma) \\
        &=  \sum_{\underline{S}  \in \Fl_{\omega}} \frac{1}{\lvert \mathcal{O}_{\underline{S}} \rvert} \chi_{\Ind_{\Stab (\underline{S})}^{\Aut (\Gamma)} V_{\underline{S}}}(\sigma) \\
        &= \sum_{\underline{S} \in \Fl_{\omega}}  \frac{1}{\lvert \mathcal{O}_{\underline{S}} \rvert}  \frac{1}{\lvert \Stab(\underline{S}) \rvert} \sum_{ \substack{g \in \Aut(\Gamma) \\ g^{-1}\sigma g \in \Stab(\underline{S})}} \chi_{V_{\underline{S}}}(g^{-1}\sigma g) \\
        &= \frac{1}{\lvert \Aut(\Gamma) \rvert} \sum_{\underline{S} \in \Fl_{\omega}} \sum_{ \substack{g \in \Aut(\Gamma) \\ g^{-1}\sigma g \in \Stab(\underline{S})}} \chi_{V_{\underline{S}}}(g^{-1}\sigma g) \\
        &= \frac{1}{\lvert \Aut(\Gamma) \rvert}\sum_{g \in \Aut(\Gamma)} \sum_{ \substack{\underline{S} \in \Fl_{\omega}\\ g^{-1}\sigma g \in \Stab(\underline{S})}} \chi_{V_{\underline{S}}}(g^{-1}\sigma g) \\
        &= \frac{1}{\lvert \Aut(\Gamma) \rvert}\sum_{g \in \Aut(\Gamma)} \sum_{ \substack{\underline{S} \in \Fl_{\omega}\\ \sigma \in \Stab(g\underline{S})}} \chi_{V_{g\underline{S}}}(\sigma) \\
        &= \frac{1}{\lvert \Aut(\Gamma) \rvert}\sum_{g \in \Aut(\Gamma)} \sum_{ \substack{\underline{T} \in \Fl_{\omega}\\ \sigma \in \Stab(\underline{T})}} \chi_{V_{\underline{T}}}(\sigma) \\
        &= \sum_{\underline{T} \in \Fl_{\omega}^\sigma} \chi_{V_{\underline{T}}}(\sigma)
    \end{align*}
    
Applying \Cref{lemma:inv_order_complex}, the identity \eqref{eq:representations} is equivalent to
\begin{equation}
    \label{eq:num_identity_invariant}
     C(Z_{\Gamma}^{\sigma}+\omega)- C(Z_{\Gamma}^{\sigma})= \sum_{\underline{S} \in \Fl_\omega^\sigma} \alpha_{\underline{S}}(\sigma) (-1)^{\ell(\underline{S})-3} \mu_{\Fl_{\omega}^\sigma} (\underline{S},\hat{1}) C(Z_{\Gamma_{\underline{S}}}^{\sigma})
\end{equation}

Following \Cref{thm:Ehrhart_qp}, we rewrite the LHS of \eqref{eq:num_identity_invariant} as follows
\[C(Z^{\sigma}_\Gamma+\omega)-C(Z_\Gamma^\sigma)= \sum_{F \textnormal{ forest of } \Gamma/\sigma} (-1)^{\dim (Z_\Gamma^\sigma)+\lvert F \rvert} ( \delta_{\underline{B}(F) \, (\omega_{\sigma}+t_{\sigma})\textnormal{-integral}}- \delta_{\underline{B}(F) \, t_{\sigma}\textnormal{-integral}}) Q_F \]
where \[Q_F \coloneqq \sum_{\substack{W \textnormal{ spanning}\\ \text{forest of }F
}} \mathrm{vol}(W);\] cf \S \ref{rmk:2zonoto} and \cref{defn:newflat}.

Note that if $\underline{R}(F)$ is $\omega$-integral, then $\omega_{\underline{B}} \in \Z^{\ell(\underline{B})}$ and $\gcd_{i \in B_j}(l_i) \mid (\omega_{\underline{B}})_j$ by \cref{lemma:number_blocks_B}.
In this case, $\delta_{\underline{B}(F) \, (\omega_{\sigma}+t_{\sigma})\textnormal{-integral}}= \delta_{\underline{B}(F) \, t_{\sigma}\textnormal{-integral}}$.
Therefore, we obtain
\begin{align*}
    \mathrm{LHS} \eqref{eq:num_identity_invariant} &= \sum_{F \textnormal{ forest of } \Gamma/\sigma } (-1)^{\dim (Z_\Gamma^\sigma)+\lvert F \rvert} ( \delta_{\underline{B}(F) \, (\omega_{\sigma}+t_{\sigma})\textnormal{-integral}}- \delta_{\underline{B}(F) \, t_{\sigma}\textnormal{-integral}}) Q_F \\
    &= \sum_{\substack{F \textnormal{ forest of } \Gamma/\sigma \\ \underline{R}(F) \in \Fl_\omega^\sigma}} (-1)^{\dim (Z_\Gamma^\sigma)+\lvert F \rvert} ( \delta_{\underline{B}(F) \, (\omega_{\sigma}+t_{\sigma})\textnormal{-integral}}- \delta_{\underline{B}(F) \, t_{\sigma}\textnormal{-integral}}) Q_F
\end{align*}

Observe that $\ell(\underline{S})= \ell(\pi(\underline{S})) + \ell(\sigma \colon \underline{S} \to \underline{S})$, and $\dim(Z^\sigma_{\Gamma_{\underline{S}}})= \dim(Z^\sigma_{\Gamma})-(\ell(\pi(\underline{S}))-1)$.

On the other hand, \Cref{lemma:alfa_S_sigma} implies that 
\begin{align*}
    \mathrm{RHS}\eqref{eq:num_identity_invariant}
    &= \sum_{\underline{S} \in \Fl_\omega^\sigma} (-1)^{\ell(\underline{S})-3+ \ell(\sigma \colon \underline{S} \to \underline{S})+  \lvert A_{\pi(\underline{S}), \sigma}\rvert } \mu_{\Fl_{\omega}^\sigma}(\underline{S}, \hat{1}) C(Z_{\Gamma_{\underline{S}}}^\sigma) \\
    &= \sum_{\underline{S} \in \Fl_\omega^\sigma} (-1)^{\ell(\pi(\underline{S}))-3 +  \lvert A_{\pi(\underline{S}), \sigma}\rvert} \mu_{\Fl_{\omega}^\sigma}(\underline{S}, \hat{1}) C(Z_{\Gamma_{\underline{S}}}^\sigma)\\
    &= \sum_{\underline{S} \in \Fl_\omega^\sigma} (-1)^{\ell(\pi(\underline{S}))-3+  \lvert A_{\pi(\underline{S}), \sigma}\rvert} \mu_{\Fl_{\omega}^\sigma}(\underline{S}, \hat{1}) \sum_{F \textnormal{ forest of } \Gamma_{\underline{S}}/\sigma} (-1)^{\dim(Z_{\Gamma_{\underline{S}}}^{\sigma})+ \lvert F \rvert} Q_F \delta_{\underline{B}(F) \, t_{\sigma}(\Gamma_{\underline{S}})\textnormal{-integral}} \\
    &= \sum_{F \textnormal{ forest of } \Gamma/\sigma} \left( \sum_{\substack{ \underline{S} \in \Fl_\omega^\sigma \\ \underline{S} \geq \underline{R}(F)}} (-1)^{ \lvert A_{\pi(\underline{S}), \sigma}\rvert} \mu_{\Fl_{\omega}^\sigma}(\underline{S}, \hat{1}) \delta_{\underline{B}(F) \, t_{\sigma}(\Gamma_{\underline{S}})\textnormal{-integral}} \right)  (-1)^{ \dim (Z_{\Gamma}^\sigma) + \lvert F \rvert}  Q_F.
\end{align*}
So it is enough to prove that
\begin{equation}
\label{eq:coef_repr_formula}
    \delta_{\underline{B}(F) \, (\omega_{\sigma}+t_{\sigma})\textnormal{-integral}}- \delta_{\underline{B}(F) \, t_{\sigma}\textnormal{-integral}} = 
    \sum_{\substack{ \underline{S} \in \Fl_\omega^\sigma \\ \underline{S} \geq \underline{R}(F)}} (-1)^{ \lvert A_{\pi(\underline{S}), \sigma}\rvert} \mu_{\Fl_{\omega}^\sigma}(\underline{S}, \hat{1}) \delta_{\underline{B}(F) \,\, t_{\sigma}(\Gamma_{\underline{S}})\textnormal{-integral}}
\end{equation}
for all $F$ such that $\underline{R}(F) \in \Fl_{\omega}^{\sigma}$.

Applying lemmas from the previous section we have:
\begin{alignat*}{2}
    \mathrm{RHS}\eqref{eq:coef_repr_formula}& =\sum_{ \underline{S} \in \Fl^\sigma_{\omega,\geq \underline{R}}} (-1)^{ \lvert A_{\pi(\underline{S}), \sigma}\rvert} \mu_{\Fl_{\omega}^\sigma}(\underline{S}, \hat{1}) \delta_{A_{\underline{B}, \sigma}^{\leq \underline{S}} = \emptyset} && \text{\cref{lemma:integral_in_quotient}}\\
    & = \sum_{ \underline{S} \in \Fl^\sigma_{\omega,\geq \underline{R}}} (-1)^{ \lvert A_{\underline{B}, \sigma}\rvert} \mu_{\Fl_{\omega}^\sigma}(\underline{S}, \hat{1}) \delta_{A_{\underline{B}, \sigma}^{\leq \underline{S}} = \emptyset} && \text{\cref{lemma:B_to_pi_S}}\\
    &= (-1)^{ \lvert A_{\underline{B}, \sigma}\rvert} \sum_{ \underline{S} \in \Fl^\sigma_{\omega,\geq \underline{R}}} \mu_{\Fl_{\omega}^\sigma}(\underline{S}, \hat{1}) \sum_{H\subseteq A_{\underline{B}, \sigma}^{\leq \underline{S}}} (-1)^{\lvert H \rvert} && (\dagger)\\
    &= (-1)^{ \lvert A_{\underline{B}, \sigma}\rvert} \sum_{H\subseteq A_{\underline{B}, \sigma}} (-1)^{\lvert H \rvert} \sum_{ \substack{ \underline{S} \in \Fl_\omega^\sigma \\ \underline{S} \geq \underline{R} \vee \bigvee H}} \mu_{\Fl_{\omega}^\sigma}(\underline{S}, \hat{1}) \\
    &= -\delta_{A_{\underline{B}, \sigma}= \emptyset} + (-1)^{\lvert A_{\underline{B}, \sigma}\rvert} \sum_{H\subseteq A_{\underline{B}, \sigma}} (-1)^{\lvert H \rvert} \left(1+\sum_{ \substack{ \underline{S} \in \Fl_\omega^\sigma \\ \underline{S} \geq \underline{R} \vee \bigvee H}} \mu_{\Fl_{\omega}^\sigma}(\underline{S}, \hat{1}) \right) \qquad && (\dagger)\\
    &= -\delta_{A_{\underline{B}, \sigma}= \emptyset} + (-1)^{\lvert A_{\underline{B}, \sigma}\rvert} \sum_{H\subseteq A_{\underline{B}, \sigma}} (-1)^{\lvert H \rvert} \delta_{\underline{R} \vee \bigvee H \, \omega\textnormal{-integral}} && \text{\cref{lem:RHintegral}}\\
    &= -\delta_{A_{\underline{B}, \sigma}= \emptyset} + (-1)^{\lvert A_{\underline{B}, \sigma}\rvert} \prod_{j =1 }^{\ell(\underline{B})} \delta_{\gcd_{i \in B_j} (l_i) \mid 2 (\omega_{\underline{B}})_j} \sum_{\Half_{\omega} \subseteq H\subseteq A_{\underline{B}, \sigma}} (-1)^{\lvert H \rvert} && (\dagger)\\
    &= -\delta_{A_{\underline{B}, \sigma}= \emptyset} + (-1)^{\lvert A_{\underline{B}, \sigma}\rvert}(-1)^{\lvert \Half_{\omega}\rvert} \delta_{\Half_{\omega} = A_{\underline{B}, \sigma}} \prod_{j =1 }^{\ell(\underline{B})} \delta_{\gcd_{i \in B_j} (l_i) \mid 2 (\omega_{\underline{B}})_j} && \text{\cref{lem:integralHals} and \ref{lemma:integral_in_quotient}} \\
    &= \delta_{\underline{B} \, \Gamma+\omega/\sigma \text{-integral}} - \delta_{\underline{B} \, \Gamma/\sigma \text{-integral}}=\mathrm{LHS}\eqref{eq:coef_repr_formula},
\end{alignat*}
where $(\dagger)$ accounts for the elementary property that $\sum_{C \subseteq B \subseteq A} (-1)^{\lvert B \rvert} =(-1)^{\lvert C \rvert}\delta_{A=C}$ for any finite sets $A,B,C$. This concludes the proof.
\end{proof}

\section{Hitchin systems and compactified Jacobians}\label{sec:Hitchincompactified}
The goal of this section is to determine a bijection between the irreducible components of the fibers of the Hitchin fibration and the lattice points of certain graphical zonotopes; see \cref{thm:geomvscombin}.

Let $C$ be a compact Riemann surface of genus $g$ with canonical bundle $\omega_C$.
\begin{defn}
The \textbf{Dolbeault moduli space} $\MDol(C, n,d)$ is the coarse moduli space which parametrises {$S$-equivalence classes of} semistable Higgs bundles on $C$ of rank $n$ and degree $d$, i.e.\ polystable pairs $(E, \phi)$ consisting of a vector bundle $E$ of rank $n$ and degree $d$ and a section $\phi \in H^0(C, \mathcal{E}nd(\mathcal{E})\otimes \omega_C)$; see \cite{Simpson1994}.
\end{defn} 

Let $2c(n,g) \coloneqq 2(g-1)n^2+2$ be the dimension of $\MDol(C,n,d)$. In the following we drop the dependence from $C,n,d$, or some of them when it does not cause any confusion. The Dolbeault moduli space is endowed with a proper map called Hitchin fibration. 

\begin{defn}
The \textbf{Hitchin fibration} 
\begin{align}\label{Hitchinfibration}
    \chi(n,d)\colon  \MDol(n,d) & \to A_{n}\coloneqq \bigoplus^{n}_{i=1} H^0(C, \omega_C^{\otimes i}), \\ (\mathcal{E}, \phi) & \mapsto \operatorname{char}(\phi) = (\mathrm{tr}(\phi), \mathrm{tr}(\Lambda^2 \phi), \ldots, \mathrm{tr}(\Lambda^i \phi), \ldots, \det(\phi)), \nonumber
\end{align}
assigns to $(E, \phi)$ the characteristic polynomial $\operatorname{char}(\phi)$ of the Higgs field $\phi$.
\end{defn}

The fibers of the Hitchin fibration have a classical modular interpretation that we recall here. Any point $a \in A_n$ is the characteristic polynomial of some Higgs field which can be seen as the defining equation of a curve $C_{a} \subset \operatorname{Tot}(\omega_C)$ of degree $n$ over $C$, called \textbf{spectral curve}. The fiber $\chi(n,d)^{-1}(a)$ is a compactified Jacobian of the spectral curve $C_{a}$ with respect to the canonical polarization $\qb^a$, defined as follows.

\begin{defn}
 A \textbf{polarization} on a connected curve $X = \bigcup_{i \in I} X_i$ is a tuple of rational numbers $\qb =(q_{X_i})$, one for each irreducible component $X_i$ of $X$, such that $\sum_{i}q_{X_i} \in \ZZ$. 
 
 Given any proper subscheme $Y \subseteq X$ of pure dimension 1, we set $\qb_Y \coloneqq \sum_j q_{X_j} \in \QQ$, where the sum runs over all the irreducible components $X_j$ of $Y$.
 
 A polarization is \textbf{general} if $\qb_Y \not\in \ZZ$ for any proper subscheme $Y \subseteq X$ of pure dimension 1 such that $Y$ and the complementary subcurve $Y^{c} \coloneqq \overline{X \setminus Y}$ are connected.
\end{defn}

\begin{defn}
 Let $\qb$ be a polarization of $X$. A rank 1 torsion free sheaves $\mathcal{I}$ on a curve $X$ with $\chi(I)=\qb_X$ is \textbf{$\qb$-semistable} (resp. $\qb$-stable) if for any proper subscheme $Y \subset X$ of pure dimension 1 we have that
 \begin{equation*} 
\chi(Y, \mathcal{I}_Y) \geq \qb_{Y} \qquad \text{(resp. }\chi(Y, \mathcal{I}_Y) > \qb_{Y})
 \end{equation*}
 where $\mathcal{I}_Y$ is the biggest torsion free quotient of the restriction $\mathcal{I}|_Y$ of $\mathcal{I}$ to $Y$. 

 Then the \textbf{compactified Jacobian}  $\overline{J}_{\qb}(X)$ is 
the (projective) coarse moduli space of $\qb$-semistable rank 1 torsion free sheaves; see for instance \cite{Seshadri82}. 
\end{defn}

\begin{prop}[BNR correspondence]\label{prop:BNR} \cite{Hitchin1987a, BNR89, Schaub1998, MRV2019}
Let $a \in A_n$ and consider the associated spectral curve $\pi_{a} \colon C_{a}=\bigcup_{i} C_i \to C$. The \textbf{canonical polarization} assigns to any proper subscheme $Y \subseteq C_{b}$ of pure dimension 1 the rational number
\[
\qb^{a}_{Y} \coloneqq \chi(I) \frac{\deg(\pi_a|_{Y})}{n},
\]
where $\deg(\pi_a|_{Y})$ is the degree of the restriction $\pi_{a}|_{Y} \colon Y \to C$. 

There exists an algebraic isomorphism
\begin{align*}
    \overline{J}_{\qb^a}(C_a) \to \chi^{-1}(n,d)(a), \qquad \text{given by }
    (C_{a}, \mathcal{I}) \mapsto (\mathcal{E} \coloneqq \pi_{a,*}\mathcal{I}, \phi \coloneqq t \, \cdot),
\end{align*}
where the Higgs field $\phi$ is induced by  multiplication by the tautological section $t$ of the line bundle $\pi^*_{a}\omega_C$ on $C_{a} \subset \operatorname{Tot}(\omega_C)$.
\end{prop}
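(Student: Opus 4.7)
The plan is to realize the isomorphism as an instance of the equivalence between coherent sheaves on the spectral cover $C_a$ and coherent $\pi_{a,*}\mathcal{O}_{C_a}$-modules on $C$. First, starting from a rank $1$ torsion free sheaf $\mathcal{I}$ on $C_a$ with $\chi(\mathcal{I}) = d + n(1-g)$, I set $\mathcal{E} \coloneqq \pi_{a,*}\mathcal{I}$. Because $\pi_a$ is finite and flat at the generic point, $\mathcal{E}$ is torsion free of rank $n = \deg \pi_a$ on the smooth curve $C$, hence locally free of degree $d$. Multiplication by the tautological section $t \in H^0(C_a, \pi_a^{*}\omega_C)$ gives an $\mathcal{O}_{C_a}$-linear map $\mathcal{I} \to \mathcal{I} \otimes \pi_a^{*}\omega_C$, which via the projection formula pushes forward to a Higgs field $\phi \colon \mathcal{E} \to \mathcal{E} \otimes \omega_C$. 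Since $t$ satisfies $t^n + a_1 t^{n-1} + \cdots + a_n = 0$ on $C_a \subset \operatorname{Tot}(\omega_C)$, Cayley--Hamilton shows $\operatorname{char}(\phi) = a$.

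For the inverse direction, given $(\mathcal{E}, \phi)$ with $\operatorname{char}(\phi) = a$, the assignment $t \mapsto \phi$ turns $\mathcal{E}$ into a coherent $\pi_{a,*}\mathcal{O}_{C_a}$-module, equivalently into a coherent sheaf $\mathcal{I}$ on $C_a$; since $\mathcal{E}$ is locally free of rank $n$ on $C$, $\mathcal{I}$ is rank $1$ torsion free on $C_a$. The two constructions are manifestly functorial in flat families, so they descend to mutually inverse morphisms between the coarse moduli spaces. The remaining point is to match stability: I would show that Higgs subsheaves $\mathcal{F} \subseteq \mathcal{E}$ preserved by $\phi$ are precisely the images $\pi_{a,*}\mathcal{I}_Y$ for $\pi_{a,*}\mathcal{O}_{C_a}$-quotients $\mathcal{I} \twoheadrightarrow \mathcal{I}_Y$ supported on subcurves $Y \subseteq C_a$ of pure dimension one. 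Granting this, $\rank(\mathcal{F}) = \deg(\pi_a|_Y)$ and $\chi(\mathcal{F}) = \chi(\mathcal{I}_Y)$, so the Higgs bundle slope inequality $\mu(\mathcal{F}) \leq \mu(\mathcal{E})$ rearranges via Riemann--Roch to $\chi(\mathcal{I}_Y) \leq \chi(\mathcal{I}) \deg(\pi_a|_Y)/n = \qb^a_Y$, which is exactly $\qb^a$-semistability, and similarly for strict inequalities.

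The main obstacle is the careful bookkeeping of stability for reducible or non-reduced spectral curves: the correspondence between $\phi$-invariant subsheaves of $\mathcal{E}$ and $\pi_{a,*}\mathcal{O}_{C_a}$-submodules of $\mathcal{I}$ is transparent at the generic point, but controlling torsion at the singularities of $C_a$ and identifying the maximal torsion free quotient $\mathcal{I}_Y$ with the correct image of $\mathcal{E}$ in $\pi_{a,*}(\mathcal{I}|_Y)$ requires the extension of the classical BNR correspondence developed in \cite{Schaub1998, MRV2019}. When $C_a$ is integral, all rank $1$ torsion free sheaves are automatically stable and the statement collapses to the original Hitchin--Beauville--Narasimhan--Ramanan correspondence of \cite{Hitchin1987a, BNR89}.
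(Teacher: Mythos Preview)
The paper does not prove this proposition at all: it is stated with the citations \cite{Hitchin1987a, BNR89, Schaub1998, MRV2019} and no proof is given, since the result is classical. Your sketch is the standard argument underlying those references, so in that sense it agrees with what the paper relies on.

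One small correction in your stability matching: you identify $\phi$-invariant \emph{subsheaves} $\mathcal{F}\subseteq\mathcal{E}$ with pushforwards of \emph{quotients} $\mathcal{I}\twoheadrightarrow\mathcal{I}_Y$, and then obtain $\chi(\mathcal{I}_Y)\leq \qb^a_Y$. Both the variance and the inequality are reversed relative to the paper's convention. A $\phi$-invariant quotient of $\mathcal{E}$ corresponds to a torsion-free quotient $\mathcal{I}_Y$ of $\mathcal{I}$ supported on a subcurve $Y$; the slope inequality $\mu(\mathcal{E})\leq\mu(\mathcal{E}/\mathcal{F})$ for quotients then translates via Riemann--Roch to $\chi(\mathcal{I}_Y)\geq \chi(\mathcal{I})\deg(\pi_a|_Y)/n=\qb^a_Y$, matching the definition of $\qb^a$-semistability in the paper. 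You correctly flag this bookkeeping as the delicate point and defer to \cite{Schaub1998, MRV2019}, which is exactly what the paper does.
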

From now on, we restrict our attention to reduced spectral curve. We denote by  $A^{\mathrm{red}}_n \subset A_n$ the locus of reduced spectral curves. For any partition $\underline{n}=\{n_i\}\vdash n$, $S^{\circ}_{\underline{n}}$ is the locus of characteristic polynomials whose irreducible factors have degree $n_i$, and they gives a stratification of $A^{\mathrm{red}}_n$; see also \S \ref{sec:mainresults_proof}.
\begin{prop}\label{prop:compJacobiansI}
Let $a \in A^{\mathrm{red}}_n$. The following facts hold.
\begin{enumerate}
    \item\label{item} The subvariety parametrising $\qb^a$-stable line bundles on the spectral curve $C_{a}$ is dense in $\chi(n,d)^{-1}(a)$;
    \item \label{item2}The irreducible components of $\chi(n,d)^{-1}(a)$ are indexed by the degrees of the restrictions of the $\qb^a$-stable line bundles to the irreducible components of $C_a$. 
    \item \label{item3}The number of irreducible components of $\chi(n,d)^{-1}(a)$ is constant for $a \in S^{\circ}_{\underline{n}}$.
\end{enumerate}
\end{prop}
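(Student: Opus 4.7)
The plan is to prove the three parts in sequence, with part (3) being the main obstacle. For (1), I would combine three facts: the locus of line bundles $\Pic(C_a) \subseteq \overline{J}_{\qb^a}(C_a)$ is open and dense in the compactified Jacobian of any reduced curve (the non-locally-free locus has positive codimension, provable by a local deformation argument extending a torsion-free sheaf to a line bundle in a versal family); $\qb^a$-stability is an open condition; and the stability inequalities depend only on the multidegree of a line bundle, hence are constant on each connected component $\Pic^{\underline{d}}(C_a)$. Consequently, the $\qb^a$-stable line bundles form an open dense subscheme of $\overline{J}_{\qb^a}(C_a)$, provided the stable locus is non-empty, which holds for at least one multidegree by the existence of semistable Higgs bundles via the BNR correspondence (\cref{prop:BNR}).

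For (2), I would use the decomposition
\[
\Pic(C_a) = \bigsqcup_{\underline{d} \in \ZZ^r} \Pic^{\underline{d}}(C_a),
\]
where each piece is a torsor under $\Pic^{\underline{0}}(C_a)$ and thus irreducible of the same dimension $g(C_a)$. By part (1), the stable locus in $\Pic(C_a)$ is the disjoint union of those irreducible strata $\Pic^{\underline{d}}(C_a)$ for which $\underline{d}$ is $\qb^a$-stable. Taking closures in $\overline{J}_{\qb^a}(C_a)$ and invoking density, the irreducible components of the compactified Jacobian are in bijection with the $\qb^a$-stable multidegrees, as asserted.

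For (3), the set of $\qb^a$-stable multidegrees depends only on the pair $(\Gamma(C_a), \qb^a)$. The canonical polarization is determined by the ratios $\deg(\pi_a|_{C_{a,i}})/n = n_i/n$ together with $d$, hence depends only on $\underline{n}$ and $d$. For generic $a \in S^\circ_{\underline{n}}$, the spectral curve $C_a$ is nodal, with $n_i n_j (2g-2)$ edges between the vertices corresponding to factors of degree $n_i$ and $n_j$---computable as the degree of the resultant in $H^0(C, \omega_C^{\otimes n_i n_j})$---so the count is constant on the generic locus. The main technical difficulty is to propagate constancy across the whole stratum, where $C_a$ may acquire worse-than-nodal singularities; here I would appeal to the flatness of the family of compactified Jacobians over $A^{\mathrm{red}}_n$ together with the observation that the stability inequalities involve only Euler characteristics of subcurves $\chi(Y, \mathcal{I}_Y)$ and the canonical polarization, quantities that are topologically invariant along $S^\circ_{\underline{n}}$. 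In particular, the count given by (2) is a combinatorial invariant of the stratum, yielding constancy.
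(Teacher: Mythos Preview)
Your arguments for (2) and (3) match the paper's. In fact, your worry about worse-than-nodal singularities in (3) dissolves on closer inspection: for any subcurve $C_K \subset C_a$, the quantity $\chi(\mathcal{O}_{C_K})$ is computed by pushing forward along $\pi_a|_{C_K}$ and depends only on $\sum_{i\in K} n_i$ and $g$, since $C_K$ is itself a spectral curve. Hence the stability inequalities for line bundles involve only $\underline{n}$, $d$, $g$, and the set of stable multidegrees is literally the same throughout $S^\circ_{\underline{n}}$.

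Your argument for (1), however, has a genuine gap. You claim that stable line bundles form an open dense subscheme ``provided the stable locus is non-empty,'' but density does not follow: the compactified Jacobian $\overline{J}_{\qb^a}(C_a)$ is typically reducible, so an open non-empty subset need not meet every component. When $\qb^a$ is not general (for instance whenever $\underline{n}$ is $d$-integral, e.g.\ $d=0$), entire multidegree strata consist of strictly semistable line bundles; under S-equivalence these are identified with polystable non-invertible sheaves, and nothing you wrote rules out such classes filling an irreducible component. Your appeal to BNR only furnishes \emph{semistable} sheaves, not stable ones.

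The paper closes this gap with input from the symplectic geometry of $M(n,d)$. The strictly polystable locus in the fibre is exactly $\chi(n,d)^{-1}(a) \cap \mathrm{Sing}(M(n,d))$; by Matsushita's theorem it is Lagrangian in $\mathrm{Sing}(M(n,d))$, hence of dimension $\dim\mathrm{Sing}(M(n,d))/2 < \dim M(n,d)/2$. Since every irreducible component of the fibre is Lagrangian in $M(n,d)$, the strictly polystable locus cannot contain one, so the $\qb^a$-stable locus is dense. Density of line bundles among stable sheaves is then obtained by perturbing to a nearby general polarization $\qb'$, invoking \cite[Cor.~2.20]{MRV2017} for $\overline{J}_{\qb'}(C_a)$, and using the GIT specialization map $\overline{J}_{\qb'}(C_a) \to \overline{J}_{\qb^a}(C_a)$, which is an isomorphism over the $\qb^a$-stable locus.
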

\begin{proof}
The statement \eqref{item} holds for reduced curves with respect to a general polarization by \cite[Cor. 2.20]{MRV2017}. 
We show \eqref{item} for the (not necessarily general) canonical polarization $\qb^{a}$, via a small perturbation of the polarization $\qb^{a}$ to a general one, denoted $\underline{q}'$. 
Indeed, $\underline{q}'$-stable sheaves are $\qb^{a}$-semistable, and by the formation of GIT quotients there exists a specialization map
\[\overline{J}_{\underline{q}'}(C_a) \to \overline{J}_{\qb^a}(C_a)\]
 which is an isomorphism over the locus of $\qb^{a}$-stable sheaves. Since $\underline{q}'$ is general, the locus of $\underline{q}'$-stable line bundles is open and dense in $\overline{J}_{\underline{q}'}(C_a)$ by \cite[Cor. 2.20]{MRV2017}. It is then sufficient to prove that the $\qb^{a}$-stable locus is dense in $\overline{J}_{\qb^a}(C_a)$. To this end, note that its complement $\overline{J}_{\qb^a}(C_a)^{\mathrm{sing}}$ is the intersection of $\overline{J}_{\qb^a}(C_a) \simeq \chi^{-1}(n,d)(a)$ with the singular locus of $M(n,d)$. Any irreducible component of $\overline{J}_{\qb^a}(C_a)$ is Lagrangian in $M(n,d)$ and so of dimension $\dim M(n,d)/2$. On the other hand, $\overline{J}_{\qb^a}(C_a)^{\mathrm{sing}}$ is a Lagrangian subvariety of the singular locus of $M(n,d)$ by \cite[Thm 3.1]{Matsushita} and \cite{Matsushita2000}, hence 
\[\dim \overline{J}_{\qb^a}(C_a)^{\mathrm{sing}}
=\dim \mathrm{Sing}(M(n,d))/2< \dim M(n,d)/2.\] We obtain that $\overline{J}_{\qb^a}(C_a)^{\mathrm{sing}}$ does not contain any irreducible component of $\overline{J}_{\qb^a}(C_a)$, which concludes the proof of \eqref{item}.

Now let $\mathcal{L}$ be a $\qb^a$-stable line bundle on $C_{a}$, and $J(C_{a}, \deg_{\bullet}(\mathcal{L}))$ be the Jacobian of line bundles $\mathcal{L}'$ on $C_{a}$ with $\deg_{Y} (\mathcal{L}') = \deg_{Y}(\mathcal{L})$ for all $Y \subseteq C_{a}$, which is an irreducible variety. Since any line bundle $\mathcal{L}'$ deformation equivalent to $\mathcal{L}$ is $\qb^a$-stable, we have $J(C_{a}, \deg_{\bullet}(\mathcal{L})) \subset \overline{J}_{\qb^a}(C_a)$. Because the two spaces have the same dimension, the closure of $J(C_{a}, \deg_{\bullet}(\mathcal{L}))$ is an irreducible component of $\overline{J}_{\qb^a}(C_a)$. This implies \eqref{item2}.

Statement \eqref{item3} now follows from \eqref{item2} and the fact that the stability of a line bundle is determined by numerical inequalities involving the degree of the irreducible components of $C_{a}$, and the sheaf of these irreducible components is locally constant on $S^{\circ}_{\underline{n}}$.
\end{proof}

\begin{notation}
Recall that the \textbf{dual graph} of a nodal curve $X = \bigcup_{i \in I} X_i$ with smooth irreducible components is a graph with $|I|$ vertices corresponding to the irreducible components $X_i$ of $X$, and $|X_i \cap X_j|$ edges between the vertices $i$ and $j$. In particular, for any partition $\underline{n}$ of $n$ we denote by $\Gamma[{\underline{n}}]$ the dual graph of the general spectral
curve in $S^{\circ}_{\underline{n}}$, i.e.\ $\Gamma[{\underline{n}}]$ is the graph with $|\underline{n}|$ vertices corresponding to
the irreducible components of the curve and $n_in_j (2g-2)$ edges between the vertices $i$, $j$,
corresponding to the intersection points of the components.
\end{notation}
Recall that $\omega_{\underline{n}}(d)$ is the rational vector in $\QQ^{|\underline{n}|}$ whose $i$-th components is $dn_i/n$. As a corollary of \cref{prop:compJacobiansI}.\eqref{item2}, the automorphism group $\Aut(\Gamma[{\underline{n}}])$ of the graph $\Gamma[{\underline{n}}]$ exchanges the irreducible components of the fibre $\chi(n,d)^{-1}(a)$, and acts by permutation on the translated graphical zonotope $Z_{\Gamma[{\underline{n}}]} + \omega_{\underline{n}}(d)$.

\begin{thm}[Hitchin system and zonotopes]\label{thm:geomvscombin} Let $a \in S^{\circ}_{\underline{n}}$. The following $\Aut(\underline{n})$-representations are isomorphic \begin{align*}
    (R^{2c(n,g)} \chi(n,d)_* \QQ_{\MDol(n,d)})_a & \simeq \mathcal{C}(Z_{\Gamma[\underline{n}]} + \omega_{\underline{n}}(d)).
\end{align*}
In particular, the number of irreducible components of the fibre $\chi(n,d)^{-1}(a)$ equals the number of lattice points contained in the zonotope $Z_{\Gamma[{\underline{n}}]} + \omega_{\underline{n}}(d)$, in symbols \begin{align*}
    \dim (R^{2c(n,g)} \chi(n,d)_* \QQ_{\MDol(n,d)})_a & = C(Z_{\Gamma[\underline{n}]} + \omega_{\underline{n}}(d)).
\end{align*}
\end{thm}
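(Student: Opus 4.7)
The strategy is to identify both sides of the claimed isomorphism with the free $\QQ$-vector space on the multidegrees of $\qb^a$-stable line bundles on $C_a$, and to verify that the permutation actions of $\Aut(\Gamma[\underline{n}])=\Stab(\underline{n})$ on the two realisations coincide. First, since the Hitchin fibration is Lagrangian, $\chi(n,d)^{-1}(a)$ has pure dimension $c(n,g)$, so proper base change yields
\[
(R^{2c(n,g)}\chi(n,d)_*\QQ_{\MDol(n,d)})_a \simeq H^{2c(n,g)}(\chi(n,d)^{-1}(a),\QQ),
\]
whose canonical basis is indexed by the top-dimensional irreducible components of the fiber. By the BNR correspondence (\cref{prop:BNR}) the fiber is the compactified Jacobian $\overline{J}_{\qb^a}(C_a)$, and \cref{prop:compJacobiansI}\eqref{item}--\eqref{item2} identifies its irreducible components with the multidegrees $\underline{d}=(d_1,\dots,d_r)\in\ZZ^r$ attained by $\qb^a$-stable line bundles on $C_a$; labelling the components of $C_a$ by vertices of $\Gamma[\underline{n}]$ makes this identification $\Stab(\underline{n})$-equivariant, with the group acting on multidegrees by permutation of coordinates.

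Second, I would write out the stability conditions explicitly. Finiteness of $\pi_a$ gives $\chi(\mathcal{L})=\chi(\pi_{a,*}\mathcal{L})=d+n(1-g)$, and for any proper subcurve $Y=\bigcup_{i\in S}C_i$ the canonical polarization reads $\qb^a_Y=(d+n(1-g))\sum_{i\in S}n_i/n$. Riemann--Roch for nodal curves turns the stability condition $\chi(Y,\mathcal{L}|_Y)>\qb^a_Y$ into
\[
\sum_{i\in S}d_i \;>\; d\sum_{i\in S}n_i/n \;+\; (1-g)\sum_{i\in S}n_i \;-\; \chi(\mathcal{O}_Y).
\]
The fractional part of the right-hand side is exactly $d\sum_{i\in S}n_i/n=\sum_{i\in S}\omega_{\underline{n}}(d)_i$, matching the fractional part in the defining inequalities of the interior of $Z_{\Gamma[\underline{n}]}+\omega_{\underline{n}}(d)$; see \eqref{eq:inequequ}.

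Third, I would match the integer parts. From the sheaf sequence $0\to\mathcal{O}_Y\to\bigoplus_{i\in S}\mathcal{O}_{C_i}\to\bigoplus_{\mathrm{nodes}(Y)}k\to 0$ and the intersection count $C_i\cdot C_j=n_in_j(2g-2)$ in $\mathrm{Tot}(\omega_C)$, one computes $\chi(\mathcal{O}_Y)=|S|-\sum_{i\in S}g(C_i)-\sum_{\{i,j\}\subseteq S}n_in_j(2g-2)$. Since $y_{\{i,j\}}=n_in_j(2g-2)$ and $z_S=\sum_{\{i,j\}\subseteq S}y_{\{i,j\}}$ in the description of the graphical zonotope, a direct substitution shows that the integer part of the stability inequality above equals $z_S+\sum_{i\in S}c_i$, where $c_i:=g(C_i)+n_i(1-g)-1$ depends only on $i$. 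Translating every multidegree by the integer vector $(c_1,\dots,c_r)\in\ZZ^r$ identifies the set of $\qb^a$-stable multidegrees with the set of interior lattice points of $Z_{\Gamma[\underline{n}]}+\omega_{\underline{n}}(d)$; this bijection is $\Stab(\underline{n})$-equivariant by construction, giving the isomorphism of representations. The dimension count follows by taking cardinalities.

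The main technical obstacle is the bookkeeping in the third step: one must keep track of the arithmetic genera of every subcurve, the Riemann--Hurwitz contributions to $g(C_i)$, and the global total-degree constraint $\sum_i d_i=d+n(1-g)-\chi(\mathcal{O}_{C_a})$, and verify that they assemble into a single per-component translation $\underline{c}\in\ZZ^r$. This computation is elementary but requires care to confirm that all the contributions not carrying the distinguished fractional part $\omega_{\underline{n}}(d)$ are indeed absorbed by an integer translation of the lattice $\ZZ^r$.
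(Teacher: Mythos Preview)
Your proposal is correct and follows essentially the same approach as the paper. The only cosmetic difference is that the paper works directly with Euler characteristics rather than degrees: it sets $x_i \coloneqq \chi(\mathcal{I}_i) - (1-g)n_i$ and uses the normalization sequence $0 \to \mathcal{I}_K \to \bigoplus_{k\in K}\mathcal{I}_k \to \bigoplus \mathcal{O}_{C_i\cap C_j} \to 0$ to compute $\chi(\mathcal{I}_K)$, which slightly streamlines the bookkeeping you flag as the main obstacle; unwinding via $\chi(\mathcal{I}_i)=d_i+1-g(C_i)$ shows this is exactly your translation $x_i=d_i-c_i$ with $c_i=g(C_i)+n_i(1-g)-1$.
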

\begin{proof}
By \cref{prop:compJacobiansI} we can suppose that the spectral curve $C_{a}=\bigcup_{i \in I}C_i$ is nodal with smooth irreducible components. Denote by $C_K = \bigcup_{i \in K} C_i \subseteq C_a$ the subcurve corresponding to $K \subseteq I$. Pick a $\qb^a$-stable line bundle $\mathcal{I}$ in $\overline{J}_{\qb^a}(C_a)$, and let $\mathcal{I}_{K}$ be its restriction to $C_K$. By pushing forward from the normalization of $C_a$, we have the standard short exact sequence 
\[0 \to \mathcal{I}_K \to \bigoplus_{k \in K} \mathcal{I}_k \to \bigoplus_{\substack{J=\{i,j\} \subseteq K, \\|J|=2}} \mathcal{O}_{C_i \cap C_j} \to 0.\]
The stability condition reads
\begin{align*}
    & \sum_{i \in I} \chi(\mathcal{I}_i) - \sum_{J=\{i,j\} \subseteq I, |J|=2}|C_i \cap C_j| = \chi(\mathcal{I}) = d+n(1-g),\\
    & \sum_{i \in K} \chi(\mathcal{I}_i) - \sum_{J=\{i,j\} \subseteq K, |J|=2}|C_i \cap C_j|  = \chi(\mathcal{I}_K) > [d+n(1-g)]\sum_{k \in K}\frac{n_k}{n}.\\
\end{align*}
 For any pair $J=\{i,j\} \subseteq I$, the number of edges of $\Gamma(C_a)$ between the vertices corresponding to $C_i$ and $C_j$, denoted $y_J$, equals $|C_i \cap C_j|$. Set $x_i \coloneqq \chi(\mathcal{I}_i) - (1-g)n_i$, and $z_K = \sum_{J \subseteq K} y_J$. We can rewrite the previous system of equality and inequalities as
\begin{equation*}
  \sum_{i \in I} x_i = z_{I} + d, \quad \sum_{i \in K} x_i > z_K + \sum_{k \in K}\frac{d n_k}{n}\quad \forall K \subset I.
\end{equation*} 
As a result, the possible multidegrees of $\mathcal{I}$ are in correspondence with the lattice points of the zonotope $Z_{\Gamma(C_{a})}+\omega$ by \eqref{eq:inequequ}, and the bijection is $\Aut(\Gamma[{\underline{n}}])$-equivariant. 
\end{proof} 

\section{Notation: intersection cohomology}

Given a $\QQ$-local system $L$ on a Zariski-open subset of a complex variety $X$, we denote the intersection cohomology of $X$ with coefficient in $L$ by $\IH^*(X, L) \coloneqq H^*(\IC(X, L))$, where $\IC(X, L)$ is the perverse intersection cohomology complex of $X$ with coefficient in $L$ shifted by $-\dim X$. See \cite{GM80, deCataldoMigliorini09, KirwanWoolf06} for an account. We will consider $\IC(X, L)$ as an element of the derived category $D^b(X, \QQ)$ of sheaves of $\QQ$-vector spaces on $X$ with bounded algebraically
constructible cohomology, or as an element of the bounded derived category of algebraic mixed Hodge modules $D^bMHM_{\text{alg}}(X)$.


\section{Ng\^{o} strings for the Hitchin system}\label{sec:Ngo}

In this section we recall how the cohomology of the Dolbeault moduli space $M(n,d)$ decomposes in Ng\^{o} strings, and we carefully study the stalk of these strings at the general point of their support. 

For any partition $\underline{n}$ of $n$ of length $r$, we define $S_{\underline{n}}$ as the image of the multiplication map
\begin{equation}\label{eq:S_n}
    \mathrm{mult}_{\underline{n}}: A_{\underline{n}} \coloneqq \prod^r_{i=1} A_{n_i} \to A_{n}
\end{equation}
given by $\mathrm{mult}_{\underline{n}}(a_1, \ldots, a_r) = \prod^n_{i=1} a_i$. There exists a partition $\underline{S}(\nbar) = \{S(\nbar)_k\}^{s}_{k=1}$ of $[r]$, maximal with respect to the order $<$, with the property that there exists $i(k) \in S(\nbar)_k$ such that $n_j=n_{i(k)}$ for all $j \in S(\nbar)_k$. In other words, we group together the $n_i$ which are equal. Then the multiplication $\mathrm{mult}_{\underline{n}}$ factors as follows
\begin{equation}\label{eq:normalizationSn}
    A_{\underline{n}} \xrightarrow{\eta_{\nbar}} A_{\underline{S}(\nbar)} \coloneqq\prod^t_{k=1}  \mathrm{Sym}^{|S(\nbar)_k|}A_{n_{i(k)}} \xrightarrow{\nu_{\nbar}} S_{\nbar}.
\end{equation}
The map $\nu_{\nbar}$ is the normalization of $S_{\underline{n}}$, since $\mathrm{mult}_{\underline{n}}$ is a finite map and $\nu_{\nbar}$ is generically 1:1 with normal domain. In particular, the composition is \'{e}tale over \[S^{\circ}_{\underline{n}} \coloneqq (S_{\nbar} \cap A^{\mathrm{red}})\setminus \bigcup_{\mbar < \nbar} S_{\mbar}\] with Galois group  the automorphism  $\mathrm{Stab}(\underline{n}) \coloneqq \prod^n_{i=1} \mathfrak{S}_{\alpha_{i}}$ of the partition $\underline{n} = 1^{\alpha_1} 2^{\alpha_2} \cdots n^{\alpha_n}$, i.e.\ the subgroup of the symmetric group $\mathfrak{S}_{r}$ stabilizing $\underline{n}$, or equivalently the automorphism group $\Aut(\Gamma[{\underline{n}}])$ of the general spectral curve in $S_{\underline{n}}$.

We define also the following spaces:
\begin{enumerate}[label=(\roman*)]
    \item $S^\times_{\underline{n}}\subseteq S^{\circ}_{\underline{n}}$ is the locus parametrising reducible nodal curves having smooth irreducible components of degree $n_i$ over $C$; 
    \item $\pi_{\underline{n}} \colon \mathcal{C}^{\times}_{\underline{n}} \to S^\times_{\underline{n}}$ is the universal spectral curve over $S^\times_{\underline{n}}$;
    \item $\mathrm{Pic}^0(\mathcal{C}^{\times}_{\underline{n}}) \to S^\times_{\underline{n}}$ is the (relative) Jacobian of the universal spectral curve over $S^\times_{\underline{n}}$;
    \item $g_{\underline{n}}: \mathscr{A}^{\times}_{\underline{n}} \to S^\times_{\underline{n}}$ is the maximal abelian proper quotient of $\mathrm{Pic}^0(\mathcal{C}^{\times}_{\underline{n}})$, or equivalently the Jacobian of the normalization of $\mathcal{C}^{\times}_{\underline{n}}$;
    \item $\Lambda^l_{\underline{n}} \coloneqq R^lg_{\underline{n}, *} \QQ_{\mathscr{A}^{\times}_{\underline{n}}}= \Lambda^l R^1g_{\underline{n}, *} \QQ_{\mathscr{A}^{\times}_{\underline{n}}}= \Lambda^l R^1\pi_{\underline{n}, *} \QQ_{\mathscr{A}^{\times}_{\underline{n}}}$; see \cite[Lemma 1.3.5]{deCataldoHauselMigliorini2012}.
\end{enumerate}

\begin{thm}[Ng\^{o} strings from Hitchin system] \label{thm:Ngostring} \cite[Thm 3.6, Thm 3.11]{MM2022} There exists an isomorphism in $D^bMHM_{\mathrm{alg}}(A^{\mathrm{red}}_n)$ (resp. $D^b(A^{\mathrm{red}}_n, \QQ)$) 
\[ R \chi(n,d)_* \IC(\MDol(n,d), \QQ)|_{A^{\mathrm{red}}_n} \simeq \bigoplus_{\underline{n}\vdash n} \mathscr{S}(\mathscr{L}_{\underline{n}}(d))|_{A^{\mathrm{red}}_n} ,
\]
where the Ng\^{o} string $\mathscr{S}(\mathscr{L}_{\underline{n}}(d))$ is the mixed Hodge module supported on $S_{ \underline{n}}$ given by
\[
\mathscr{S}(\mathscr{L}_{\underline{n}}(d)) \coloneqq \bigoplus^{2 \dim S_{ \underline{n}}}_{l=0} \IC(S_{\underline{n}}, \Lambda^l_{\underline{n}} 
\otimes \mathscr{L}_{ \underline{n}}(d))[-l-2\codim S_{\underline{n}}](-\codim S_{\underline{n}}), 
\]
where $\mathscr{L}_{ \underline{n}}(d)$ is a polarizable variation of pure Hodge structures of weight zero and of Hodge-Tate type supported on an open set of $S_{\underline{n}}$. Moreover, $R^{2c(n,g)} \chi(n,d)_*\QQ_{M(n,d)}$ admits the following direct summand
\[
\bigoplus_{\underline{n} \in I} i_{\underline{n}, *}\mathscr{L}_{ \underline{n}}(d),
\]
where $i_{\underline{n}} \colon S^{\circ}_{\underline{n}} \to A_{n}$ is the natural immersion.
\end{thm}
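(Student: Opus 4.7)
The plan is to apply the decomposition theorem of Beilinson--Bernstein--Deligne--Gabber to the proper map $\chi(n,d)$, and then match each summand with the required shape by combining the support theorem of \cite[Cor.~3.8]{MM2022} with the abelian symmetries inherited from the relative Picard scheme. First I would write
\[
R\chi(n,d)_* \IC(\MDol(n,d),\QQ)|_{A^{\mathrm{red}}_n} \simeq \bigoplus_{\underline{n} \vdash n} \bigoplus_{l} \IC(S_{\underline{n}}, \mathscr{F}_{\underline{n}, l})[-l]
\]
for some polarizable variations of Hodge structure $\mathscr{F}_{\underline{n}, l}$ on open subsets of $S_{\underline{n}}$. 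That all supports appear among the $S_{\underline{n}}$ is exactly \cite[Cor.~3.8]{MM2022}, which combines the usual Ng\^{o} support inequality with the Lagrangian/$\delta$-regular structure of the Hitchin base, using the stratification of $A^{\mathrm{red}}_n$ by the partition of the degrees of the irreducible factors of the characteristic polynomial.

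Next I would determine the shape of the summand above $S_{\underline{n}}$. Over $S^{\times}_{\underline{n}}$ the spectral curve is reducible and nodal with dual graph $\Gamma[\underline{n}]$, and by the BNR correspondence (\cref{prop:BNR}) the Hitchin fibration is the relative compactified Jacobian $\overline{J}_{\underline{q}}(\mathcal{C}^{\times}_{\underline{n}})\to S^{\times}_{\underline{n}}$. The generalized Picard of such a family fits into a short exact sequence
\[
1 \longrightarrow T_{\underline{n}} \longrightarrow \mathrm{Pic}^0(\mathcal{C}^{\times}_{\underline{n}}) \longrightarrow \mathscr{A}^{\times}_{\underline{n}} \longrightarrow 1,
\]
where the kernel $T_{\underline{n}}$ is an algebraic torus with character lattice governed by $H^1(\Gamma[\underline{n}],\ZZ)$. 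Translation by $\mathrm{Pic}^0(\mathcal{C}^{\times}_{\underline{n}})$ acts on $\overline{J}_{\underline{q}}(\mathcal{C}^{\times}_{\underline{n}})$, and the action of the abelian quotient $\mathscr{A}^{\times}_{\underline{n}}$ acts trivially on cohomology. A homotopy/Leray argument, in the style of Ng\^{o} and \cite[\S 7]{Ngo2010}, then splits off a factor $\Lambda^{\bullet}_{\underline{n}} = \Lambda^{\bullet} R^1 g_{\underline{n},*} \QQ$ from $R\chi_*\IC$ over $S^{\times}_{\underline{n}}$, producing $\Lambda^l_{\underline{n}} \otimes \mathscr{L}_{\underline{n}}(d)$ in cohomological degree $l$ with appropriate degree shift, where $\mathscr{L}_{\underline{n}}(d)$ is the local system recording the non-abelian/compact component part.

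I would then identify $\mathscr{L}_{\underline{n}}(d)$ with the local system of top-degree cohomology classes. Since $\overline{J}_{\underline{q}}(\mathcal{C}^{\times}_{\underline{n},a})$ is pure-dimensional, its top cohomology has a basis indexed by the irreducible components; by \cref{prop:compJacobiansI} and \cref{thm:geomvscombin} this sheaf is a local system on $S^{\times}_{\underline{n}}$ whose stalk at $a$ is the $\Aut(\Gamma[\underline{n}])$-module $\mathcal{C}(Z_{\Gamma[\underline{n}]}+\omega_{\underline{n}}(d))$. Taking the top exterior power $\Lambda^{\mathrm{top}}_{\underline{n}}$ inside $R\chi_*\IC$ then shows that $\mathscr{L}_{\underline{n}}(d)$ is the complement, and together with the extension by intermediate extension $\IC(S_{\underline{n}},-)$ this yields the announced string $\mathscr{S}(\mathscr{L}_{\underline{n}}(d))$, as well as the claim that $\bigoplus_{\underline{n}} i_{\underline{n},*}\mathscr{L}_{\underline{n}}(d)$ is a direct summand of $R^{2c(n,g)}\chi(n,d)_*\QQ$ (since IC and constant sheaf coincide at the top degree by the truncation bound for $\chi$). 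The purity and Hodge--Tate weight zero assertion for $\mathscr{L}_{\underline{n}}(d)$ will follow because the top cohomology of a projective nodal variety of pure dimension has weight equal to twice its dimension and Hodge type $(d,d)$, so after the shift in the string the corresponding variation is pure of weight zero of Hodge--Tate type.

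The main obstacle, and the delicate part of the argument, is to establish the Ng\^{o}-type splitting $R\chi_*\IC \simeq \Lambda^\bullet_{\underline{n}} \otimes \mathscr{L}_{\underline{n}}(d)$ \emph{over the singular} Dolbeault moduli space $\MDol(n,d)$: the original support theorem is formulated for smooth $M$ or coprime degree, and one must work with $\IC$ rather than $\QQ_M$ throughout, controlling both the action of the abelian scheme $\mathscr{A}^{\times}_{\underline{n}}$ on the intersection complex and the possible appearance of strictly smaller supports inside $S_{\underline{n}}$. This is exactly where \cite[\S 3]{MM2022} is used: it reduces the statement to properties of the smooth Hitchin system over $S^{\times}_{\underline{n}}$, via the relative dimension bound and the étale cover \eqref{eq:normalizationSn} together with $\mathrm{Stab}(\underline{n})$-invariants.
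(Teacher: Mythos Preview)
The paper does not give its own proof of this theorem: it is quoted verbatim from \cite[Thm~3.6, Thm~3.11]{MM2022} and used as a black box throughout \S\ref{sec:Ngo} and \S\ref{sec:mainresults_proof}. So there is no in-paper argument to compare your proposal against.

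That said, your outline is a faithful reconstruction of the strategy of \cite{MM2022}: decomposition theorem plus the Ng\^{o}-type support constraint \cite[Cor.~3.8]{MM2022} to pin down the supports among the $S_{\underline{n}}$, then the action of the semiabelian Picard scheme to split off the abelian factor $\Lambda^{\bullet}_{\underline{n}}$, leaving the residual local system $\mathscr{L}_{\underline{n}}(d)$ in top degree. Your identification of the obstacle --- extending Ng\^{o}'s splitting to the singular setting, working with $\IC$ rather than $\QQ_M$ --- is exactly the point addressed in \cite[\S 3]{MM2022}. One minor caveat: you invoke \cref{prop:compJacobiansI} and \cref{thm:geomvscombin} to identify $\mathscr{L}_{\underline{n}}(d)$, but these are results of the \emph{present} paper, established after \cref{thm:Ngostring} is already in hand; the existence and Hodge-theoretic properties of $\mathscr{L}_{\underline{n}}(d)$ in \cite{MM2022} are obtained without them, from the general shape of the decomposition theorem and the $\delta$-regularity of the Hitchin base. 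So while your sketch is correct in spirit, it slightly inverts the logical order of the two papers.
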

\begin{rmk}
If $\underline{n}=\{n\}$, then $\mathscr{L}_{\{n\}}(d)$ is the trivial local system, and $\mathscr{S}(\mathscr{L}_{\{n\}}(d))$ is independent of $d$. In this case we simply write $\mathscr{S}_{n}$ for $\mathscr{S}(\mathscr{L}_{\{n\}}(d))$.
\end{rmk}

\begin{rmk}\label{rmk:represent}
The pullback of $\mathscr{L}_{ \underline{n}}(d)$ to the preimage of $S^\times_{\underline{n}}$ under the map $A_{\underline{n}} \to S_{\underline{n}}$ has trivial monodromy. This is proved in \cite[Cor. 6.14]{deCataldoHeinlothMigliorini19}  for $\gcd(n,d)=1$, but the same proof of \cite[Cor. 6.14]{deCataldoHeinlothMigliorini19} works in arbitrary degree using \cref{prop:compJacobiansI}. This means that the monodromy of the local system $\mathscr{L}_{\underline{n}}(d)$ is a representation of $ \Aut(\Gamma[{\underline{n}}])\simeq \mathrm{Stab}(\underline{n})$.
\end{rmk}
From now on, let $\mbar$ be a partition of $n$, and $a$ be a general closed point of the stratum $S_{\underline{m}}$.
\begin{lem}\label{lem:localmodelforLn}
Let $r(\underline{n}, d)$ be the rank of $\mathscr{L}_{ \underline{n}}(d)$. For any $\nbar \geq \mbar$, then $(i_{\nbar, *} \mathscr{L}_{ \underline{n}}(d))_a \simeq \QQ_a^{r(\underline{n}, d)}$.
\end{lem}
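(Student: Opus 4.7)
The plan is to compute the stalk via nearby sections and to exploit the triviality of the pullback of $\mathscr{L}_{\underline{n}}(d)$ to $A_{\underline{n}}$ established in \cref{rmk:represent}.

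First, since $\nbar\geq\mbar$ implies $S_{\underline{m}}\subseteq\overline{S^\circ_{\underline{n}}}$, the point $a$ lies in the closure of $S^\circ_{\underline{n}}$, so by the definition of pushforward along a locally closed immersion
\[
(i_{\nbar,*}\mathscr{L}_{\underline{n}}(d))_a \;=\; \varinjlim_{U\ni a}\Gamma\bigl(U\cap S^\circ_{\underline{n}},\mathscr{L}_{\underline{n}}(d)\bigr),
\]
where $U$ ranges over contractible analytic neighborhoods of $a$ in $A_n$. For $a$ generic in $S_{\underline{m}}$, its spectral curve $C_a=\bigcup_{j=1}^{s}C_{a,j}$ has smooth irreducible components of degrees $\{m_j\}$. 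A nearby point of $S^\circ_{\underline{n}}$ corresponds to a spectral curve whose $r$ irreducible components of degrees $\{n_i\}$ are small perturbations of clusters $\bigcup_{j\in M_i}C_{a,j}$ for some ordered partition $(M_1,\dots,M_r)$ of $\{1,\dots,s\}$ with $\sum_{j\in M_i}m_j=n_i$. Via the factorisation $A_{\underline{n}}\xrightarrow{\eta_{\underline{n}}}A_{\underline{S}(\underline{n})}\xrightarrow{\nu_{\underline{n}}}S_{\underline{n}}$ of the multiplication map, the preimage $\mathrm{mult}_{\underline{n}}^{-1}(U\cap S^\circ_{\underline{n}})$ decomposes into smooth open pieces indexed by these ordered partitions, on each of which the quotient map to $S^\circ_{\underline{n}}$ is étale with Galois group $\Stab(\underline{n})$ acting by permutation of the blocks.

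Second, \cref{rmk:represent} guarantees that $(\mathrm{mult}_{\underline{n}})^{*}\mathscr{L}_{\underline{n}}(d)$ is the trivial local system of rank $r(\underline{n},d)$ on $A_{\underline{n}}$. Loops within a single connected piece preserve the discrete data of the partition $(M_1,\dots,M_r)$, so their image in $\Stab(\underline{n})$ is trivial, and the monodromy of $\mathscr{L}_{\underline{n}}(d)$ restricted to each piece is trivial too. Étale descent along the $\Stab(\underline{n})$-cover identifies $\Gamma(U\cap S^\circ_{\underline{n}},\mathscr{L}_{\underline{n}}(d))$ with the $\Stab(\underline{n})$-equivariant sections of the trivial local system on the preimage. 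On the orbit in $\pi_0$ of the preimage containing a chosen ordered partition, the invariants yield exactly one copy of $\QQ^{r(\underline{n},d)}$, producing the desired isomorphism.

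The main obstacle is the interplay between the orbit structure of $\Stab(\underline{n})$ on $\pi_0$ of the preimage and the representation-theoretic content of $\mathscr{L}_{\underline{n}}(d)$ as a $\Stab(\underline{n})$-representation via \cref{rmk:represent}. Specifically, one must verify that the $\Stab(\underline{n})$-action on the connected components is free and transitive within the relevant orbit, so that descent of the trivial rank-$r(\underline{n},d)$ system yields $\QQ^{r(\underline{n},d)}$ rather than a sum of multiple copies. This combinatorial bookkeeping parallels the analysis in \cref{thm:repr} and rests on the generic position of $a$ in $S_{\underline{m}}$ and the stratification hypothesis $\nbar\geq\mbar$.
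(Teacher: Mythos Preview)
Your approach and the paper's share the same input, \cref{rmk:represent}, but the paper bypasses the descent computation entirely. It simply observes that since the monodromy of $\mathscr{L}_{\underline{n}}(d)$ factors through the deck group $\Stab(\underline{n})$ of the cover $\mathrm{mult}_{\underline{n}}\colon A_{\underline{n}}\to S_{\underline{n}}$, nontrivial monodromy can occur only around the branch locus $B_{\underline{n}}$ of this cover, i.e.\ the locus of characteristic polynomials with a repeated irreducible factor. A general $a\in S_{\underline{m}}$ has distinct irreducible factors, so $a\notin B_{\underline{n}}$; hence $\mathscr{L}_{\underline{n}}(d)$ extends through $a$ as a local system and the stalk is $\QQ^{r(\underline{n},d)}$. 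No orbit or invariant bookkeeping is required.

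Your explicit descent route can be completed, but the ``main obstacle'' you flag is not one. Transitivity of $\Stab(\underline{n})$ on an orbit in $\pi_0$ is tautological, and freeness is immediate: a preimage of (a point near) $a$ in $A_{\underline{n}}$ is an ordered tuple $(p_1,\dots,p_r)$ of factors of the squarefree characteristic polynomial at $a$, so the $p_i$ are pairwise distinct and no nontrivial element of $\Stab(\underline{n})$ fixes the tuple. (Note also that failure of freeness would yield \emph{fewer} equivariant sections, namely the invariants $V^{\mathrm{stab}}$, not ``a sum of multiple copies''.) Once freeness is granted, $\Stab(\underline{n})$-equivariant sections of the trivial pullback over a single free orbit of components are determined by their value on one chosen component and hence recover the full fibre $\QQ^{r(\underline{n},d)}$; unwound, this is precisely the assertion that the local monodromy on the corresponding branch of $S_{\underline{n}}$ through $a$ is trivial---which is the paper's one-line formulation.
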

\begin{proof}
 By \cref{rmk:represent}, the monodromy of $\mathscr{L}_{ \underline{n}}(d)$ is non-trivial only along the loops winding around the branch locus $B_{\underline{n}}$ of $A_{\underline{n}} \to S_{\underline{n}}$, i.e.\ the locus of polynomials with coefficients in $H^0(C, K^{\otimes i}_C)$ with non-distinct irreducible factors. 
But the point $a \in S_{\nbar}$ does not lay on $B_{\underline{n}}$ 
for any $\nbar \geq \mbar$. So $\mathscr{L}_{ \underline{n}}(d)$ extends through $a$ as a local system, and $i_{\nbar, *}\mathscr{L}_{ \underline{n}}(d)_a \simeq \QQ_a^{r(\nbar, d)}$.
\end{proof}

\begin{prop}\label{prop:stalkstrings}
Let $N_{\nbar/\mbar}$ be a general slice transverse to $S_{\mbar}$ in $S_{\nbar}$ passing through $a$. Set $N^{\circ}_{\nbar/\mbar} \coloneqq N_{\nbar/\mbar} \cap S^{\circ}_{\nbar}$. Then we have
\begin{align*}
    (R^{2c} \chi(n, &1)_*  \QQ_{M(n,1)})_a  \simeq \bigoplus_{\nbar\geq \mbar} \bigoplus^{\codim_{S_{\nbar}} S_{\mbar}-1}_{l=0} \mathcal{H}^l_a(\IC(N_{\nbar/\mbar}, \Lambda^l_{\nbar}|_{N^{\circ}_{\nbar/\mbar}}))^{r(\underline{n}, d)}\\
    & = \bigoplus_{\nbar\geq \mbar} \bigg[ \QQ^{r(\underline{n}, d)}_a \oplus  \bigoplus^{\codim_{S_{\nbar}} S_{\mbar}-1}_{l=1} \mathcal{H}^l_a(\IC(N_{\nbar/\mbar}, \Lambda^l_{\nbar}|_{N^{\circ}_{\nbar/\mbar}}))^{r(\underline{n}, d)} \bigg].\\
\end{align*}
\end{prop}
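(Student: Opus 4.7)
The strategy is to extract the stalk $(R^{2c(n,g)} \chi_* \QQ_M)_a$ from the Ng\^{o} string decomposition (\cref{thm:Ngostring}). By the relative dimension bound, which identifies $R^{2c(n,g)} \chi_* \QQ_{M(n,d)}$ with $R^{2c(n,g)} \chi_* \IC(M(n,d), \QQ)$, we have on $A^{\mathrm{red}}_n$
\[R^{2c(n,g)} \chi_* \QQ_{M(n,d)} \simeq \bigoplus_{\underline{n} \vdash n} \mathcal{H}^{2c(n,g)} \mathscr{S}(\mathscr{L}_{\underline{n}}(d)).\]
Taking stalks at $a \in S_{\underline{m}}^\circ$, only summands with $\underline{n} \geq \underline{m}$ contribute, since $\mathscr{S}(\mathscr{L}_{\underline{n}}(d))$ is supported on $S_{\underline{n}}$ and we need $a \in S_{\underline{n}}$.

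For each such $\underline{n}$, unpack the Ng\^{o} string and use the identity $2c(n,g) = 2\dim S_{\underline{n}} + 2\codim S_{\underline{n}}$ to rewrite the degree-$2c(n,g)$ stalk as
\[\bigoplus_{l'=0}^{2\dim S_{\underline{n}}} \mathcal{H}^{2\dim S_{\underline{n}} - l'}_a \IC(S_{\underline{n}}, \Lambda^{l'}_{\underline{n}} \otimes \mathscr{L}_{\underline{n}}(d)),\]
dropping Tate twists throughout. The support condition for the IC sheaf at $a \in S_{\underline{m}}^\circ \subset S_{\underline{n}}$ forces the cohomological degree to lie in $[0, \codim_{S_{\underline{n}}} S_{\underline{m}} - 1]$. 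Reindexing by $l = 2\dim S_{\underline{n}} - l'$ restricts $l$ to this range, and applying Poincar\'{e} duality for the relative abelian scheme $g_{\underline{n}}$ (whose relative dimension equals $\dim S_{\underline{n}}$, as the genera of the components of the normalized spectral curve sum to $\dim S_{\underline{n}}$) identifies $\Lambda^{2\dim S_{\underline{n}} - l}_{\underline{n}} \cong \Lambda^l_{\underline{n}}$ as $\QQ$-local systems, yielding
\[\bigoplus_{l=0}^{\codim_{S_{\underline{n}}} S_{\underline{m}} - 1} \mathcal{H}^l_a \IC(S_{\underline{n}}, \Lambda^l_{\underline{n}} \otimes \mathscr{L}_{\underline{n}}(d)).\]

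Next, we descend from $\IC(S_{\underline{n}}, -)$ to $\IC(N_{\underline{n}/\underline{m}}, -)$ using the local product structure of $S_{\underline{n}}$ at the generic (hence smooth) point $a$ of $S_{\underline{m}}$: formally, or \'{e}tale-locally and branch-by-branch, $S_{\underline{n}}$ decomposes as $S_{\underline{m}} \times N_{\underline{n}/\underline{m}}$, so by the multiplicativity of IC under normally nonsingular inclusions the IC sheaf factors and stalks at $a$ collapse the $S_{\underline{m}}$-direction to a point. By \cref{lem:localmodelforLn}, $\mathscr{L}_{\underline{n}}(d)|_{N^\circ_{\underline{n}/\underline{m}}}$ is trivial of rank $r(\underline{n}, d)$, so this tensor factor becomes an overall multiplicity $r(\underline{n},d)$, giving the first formula. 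The second formula is immediate once we isolate the $l=0$ summand, which equals $\mathcal{H}^0_a(\IC(N_{\underline{n}/\underline{m}}, \QQ))^{r(\underline{n},d)} = \QQ_a^{r(\underline{n},d)}$.

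\textbf{Main obstacle.} The multi-branch structure of $S_{\underline{n}}$ near $a$ -- one branch for each way of coarsening $\underline{m}$ to a partition of type $\underline{n}$ -- complicates the local product decomposition, since a transverse slice $N_{\underline{n}/\underline{m}}$ in $S_{\underline{n}}$ is itself reducible at $a$. The cleanest resolution is to work on the normalization $\nu_{\underline{n}} \colon A_{\underline{S}(\underline{n})} \to S_{\underline{n}}$, where the preimages of $a$ correspond to the smooth branches, each with its own unambiguous transverse slice; descent for IC sheaves along the finite map $\nu_{\underline{n}}$, combined with the same branch bookkeeping already implicit in the statement of \cref{lem:localmodelforLn}, recovers the formula as stated in terms of the single (possibly reducible) slice $N_{\underline{n}/\underline{m}}$.
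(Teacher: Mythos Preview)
Your proposal is correct and follows essentially the same route as the paper: decompose via the Ng\^{o} strings from \cref{thm:Ngostring}, use the duality $\Lambda^l_{\underline{n}} \simeq \Lambda^{2\dim S_{\underline{n}}-l}_{\underline{n}}$ (the paper phrases this as relative Hard Lefschetz, you as Poincar\'{e} duality for the abelian scheme $g_{\underline{n}}$; either works), trivialize $\mathscr{L}_{\underline{n}}(d)$ near $a$ via \cref{lem:localmodelforLn}, pass to the transverse slice, and truncate the range of $l$ by the strong support condition. Two minor remarks: since the statement is for degree $1$, $M(n,1)$ is smooth and $\QQ_{M(n,1)}=\IC(M(n,1),\QQ)$, so the relative dimension bound you invoke is not needed here; and your careful discussion of the multi-branch structure of $S_{\underline{n}}$ is in fact deferred by the paper to the next proposition (\cref{prop:reductiontolowerrank}), where the normalization is used exactly as you describe.
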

\begin{proof}
By \cref{thm:Ngostring} we have
\begin{align*}
    \mathcal{H}^{2c}_a(\mathscr{S}(\mathscr{L}_{\underline{n}}(d))) = \bigoplus^{2\dim S_{\nbar}}_{l=0} \mathcal{H}^{2\dim(S_{\nbar})-l}_a(\IC(S_{\nbar}, \Lambda^l_{\nbar}\otimes  \mathscr{L}_{ \underline{n}}(d))).
\end{align*}
Relative Hard Lefschetz gives $\Lambda^l_{\nbar} \simeq \Lambda^{2 \dim S_{\nbar}-l}_{\nbar}$. By \cref{lem:localmodelforLn} we also have $\mathcal{H}^0_a(\IC(S_{\nbar}, \mathscr{L}_{ \underline{n}}(d)))= (i_{\nbar, *}L_{\nbar})_a \simeq \QQ^{r(\underline{n},d)}_a$. All together we obtain
\begin{align*}
    \mathcal{H}^{2c}_a(\mathscr{S}_{\nbar}) = \QQ^{r(\underline{n},d)}_a \oplus \bigoplus^{2\dim S_{\nbar}}_{l=1} \mathcal{H}^{l}_a(\IC(S_{\nbar}, \Lambda^l_{\nbar}))^{r(\underline{n},d)}.
\end{align*}
By stratification theory it is clear that 
\[\mathcal{H}^{l}_a(\IC(S_{\nbar}, \Lambda^l_{\nbar})) \simeq \mathcal{H}^{l}_a(\IC(N_{\nbar/\mbar}, \Lambda^l_{\nbar}|_{N^{\circ}_{\nbar/\mbar}})).\]
Further, by the strong support condition for intersection cohomology we obtain
\[ \mathcal{H}^{l}(\IC(N_{\nbar/\mbar}, \Lambda^l_{\nbar}|_{N^{\circ}_{\nbar/\mbar}}))=0 \quad \text{ for all }l>\dim N_{\nbar/\mbar} -1 = \codim_{S_{\nbar}} S_{\mbar}-1.\]
\end{proof}

\begin{cor}\label{cor:vanishingresult}\emph{\cite[Lemma 6.9]{deCataldoHeinlothMigliorini19}} $\mathcal{H}_a^l(\IC(S_{\nbar}, \Lambda^{k}_{\nbar}))=0$ for $l>k$.
\end{cor}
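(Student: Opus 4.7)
The strategy, patterned on \cite[Lemma 6.9]{deCataldoHeinlothMigliorini19}, combines the relative dimension bound for the Hitchin fibration with the Ng\^o-type decomposition \cref{thm:Ngostring} and the relative Hard Lefschetz theorem for the abelian scheme $g_{\nbar}$. The crucial point is that $\IC(S_{\nbar}, \Lambda^k_{\nbar})$ does not depend on the degree, so one is free to pick the degree that is most convenient; taking $d$ coprime to $n$ both smoothens $M(n,d)$ and guarantees that every non-trivial $\nbar$ contributes to the Ng\^o decomposition.

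First, I would invoke the relative dimension bound
\[\tau_{>2c(n,g)}\,R\chi(n,d)_*\IC(M(n,d),\QQ)=0,\]
which follows from the Lagrangian structure of the stratification of $M(n,d)$ by orbit type together with the strong support condition on $\IC(M(n,d),\QQ)$ (in the spirit of the commented-out ``Relative Dimension Bound'' Proposition in the preamble, or of \cite[\S 7]{Ngo2010}). For $d$ coprime to $n$, only the trivial partition is $d$-integral, so every non-trivial $\nbar$ is non-$d$-integral and $\mathscr{L}_{\nbar}(d)\neq 0$ on $S^{\times}_{\nbar}$ (as already proved in the coprime setting in \cite{deCataldoHeinlothMigliorini19}). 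Hence every summand $\IC(S_{\nbar}, \Lambda^l_{\nbar}\otimes\mathscr{L}_{\nbar}(d))[-l-2\codim S_{\nbar}]$ of the Ng\^o decomposition is genuinely present, and the bound forces each of them to have vanishing cohomology in degrees strictly greater than $2c(n,g)$. Using $2c(n,g)=2\dim S_{\nbar}+2\codim S_{\nbar}$, this translates into
\[\mathcal{H}^e\,\IC(S_{\nbar}, \Lambda^l_{\nbar}\otimes\mathscr{L}_{\nbar}(d))=0\quad\text{for every } e>2\dim S_{\nbar}-l.\]

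Next, I would apply relative Hard Lefschetz for the abelian scheme $g_{\nbar}\colon\mathscr{A}^{\times}_{\nbar}\to S^{\times}_{\nbar}$, which provides an isomorphism $\Lambda^l_{\nbar}\simeq\Lambda^{2\dim S_{\nbar}-l}_{\nbar}$ up to a Tate twist. Setting $k=2\dim S_{\nbar}-l$, the vanishing rewrites as
\[\mathcal{H}^e\,\IC(S_{\nbar}, \Lambda^k_{\nbar}\otimes\mathscr{L}_{\nbar}(d))=0\quad\text{for every } e>k.\]
Finally, to strip the factor $\mathscr{L}_{\nbar}(d)$, I would use \cref{lem:localmodelforLn}: since $a$ lies outside the branch locus $B_{\nbar}$, the local system $\mathscr{L}_{\nbar}(d)$ has trivial monodromy in a neighborhood of $a$ in $S_{\nbar}$ and extends there as a constant local system of rank $r(\nbar,d)\geq 1$. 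Consequently, on a small enough neighborhood of $a$, $\Lambda^k_{\nbar}\otimes\mathscr{L}_{\nbar}(d)\simeq (\Lambda^k_{\nbar})^{\oplus r(\nbar,d)}$, so by functoriality of the intermediate extension
\[\mathcal{H}^e_a\,\IC(S_{\nbar}, \Lambda^k_{\nbar}\otimes\mathscr{L}_{\nbar}(d))\simeq (\mathcal{H}^e_a\,\IC(S_{\nbar}, \Lambda^k_{\nbar}))^{\oplus r(\nbar,d)}.\]
Hence the vanishing of the left-hand side for $e>k$ forces $\mathcal{H}^e_a\,\IC(S_{\nbar}, \Lambda^k_{\nbar})=0$ for $e>k$, which is the claim.

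The main obstacle will be the first step: rigorously establishing the relative dimension bound for $R\chi(n,d)_*\IC(M(n,d),\QQ)$ when $M(n,d)$ is singular. The argument requires analyzing a Grothendieck-type spectral sequence for the Whitney stratification of $M(n,d)$ by orbit type and combining the Lagrangian property of the Hitchin map on each stratum with the strong support condition for $\IC(M(n,d))$, so that the dimensional contributions from the strata and from the IC support condition exactly balance at level $n = \dim_{\CC} M(n,d)$.
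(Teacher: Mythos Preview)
Your proposal is correct and follows essentially the same route as the paper: pass to a coprime degree so that the relevant Ng\^o string actually appears, use the relative dimension bound $R^{>2c(n,g)}\chi_*=0$, apply relative Hard Lefschetz $\Lambda^l_{\nbar}\simeq\Lambda^{2\dim S_{\nbar}-l}_{\nbar}$, and then strip the local system $\mathscr{L}_{\nbar}(d)$ via \cref{lem:localmodelforLn}. The paper phrases this as ``the same proof of \cref{prop:stalkstrings} gives that $\mathcal{H}^l_a(\IC(S_{\nbar},\Lambda^k_{\nbar}))$ is a direct summand of $(R^{2c+(l-k)}\chi(n,1)_*\QQ_{M(n,1)})_a=0$,'' which is precisely your chain of implications.

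One remark: the ``main obstacle'' you flag is not actually an obstacle. You have already chosen $d$ coprime to $n$, so $M(n,d)$ is smooth, $\IC(M(n,d),\QQ)=\QQ_{M(n,d)}$, and the relative dimension bound is immediate from the fact that the Hitchin fibers have complex dimension $c(n,g)$. There is no need to invoke a spectral sequence over the Whitney stratification or to control the IC sheaf on a singular moduli space; the paper simply takes $d=1$ and uses the pushforward of the constant sheaf.
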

\begin{proof}
The same proof of \cref{prop:stalkstrings} gives that $\mathcal{H}_a^l(\IC(S_{\nbar}, \Lambda^{k}_{\nbar}))$ is a direct summand of $(R^{2c + (l-k)} \chi(n, 1)_*  \QQ_{M(n,1)})_a=0$.
\end{proof}

Let $\mbar$ be a partition of $n$ of length $s$. Any partition $\underline{S}$ of the set $[s]$ of length $r$ corresponds to a partition $\nbar \geq \mbar$ of $n$ of length $r$. Indeed, given $\underline{S}$, we define \ $\nbar=\{\nbar_1, \ldots, \nbar_{r}\}$ with $\underline{n}_i = \sum_{j \in S_i} m_j$; see also \cref{defn:partition}.

The map $j_{\underline{S}} \colon A_{\mbar} \hookrightarrow A_{\nbar}$, given by
$
j_{\underline{S}}(a_1, \ldots, a_r)=(\prod_{j \in S_1} a_j, \ldots, \prod_{j \in S_r} a_j),
$
obviously lifts the inclusion $S_{\mbar} \hookrightarrow S_{\nbar}$. 

Let $\tilde{a}$ be a lift in $A_{\mbar}$ of a general point $a$ in $S_{\mbar}$.
Since $A_n$ and $A_m$ are smooth, a slice of $A_n$ transverse to $j_{\underline{S}}(A_m)$ passing through $\tilde{a}$ is smooth, and it can be identified with the product
\begin{equation}\label{eq:slice}
    N_{S_1} \times \ldots \times N_{S_r} \coloneqq N_{\{n_1\}/\nbar_1} \times \ldots \times N_{\{n_r\}/\nbar_r}.
\end{equation}
Since the map $\eta_{\nbar}$ in \eqref{eq:normalizationSn} is \'{e}tale at $\tilde{a}$, we can identify the normalization of a branch of $N_{\nbar/\mbar}$, denoted $N_{\underline{S}}$, with the product above. To summarize, there exists a bijection between the branches of $N_{\nbar/\mbar}$ and the partition of $[s]$.

For brevity we denote $\Lambda^l_{\Sbar} \coloneqq \Lambda^l_{\nbar}|_{N_{\Sbar} \cap S^{\circ}_{\nbar/\mbar}}$ and $\Lambda^l_{S_i} \coloneqq \Lambda^l_{\{n_i\}}|_{N_{S_i} \cap S^{\circ}_{\{n_i\}/\nbar_i}}$. 

\begin{prop}[Reduction to lower rank]\label{prop:reductiontolowerrank}
Under the identification $N_{\Sbar} \simeq N_{S_1} \times \ldots \times N_{S_r}$ above, we have
\begin{align}
\mathcal{H}^{2c(n,g)}_a(\mathscr{S}(\mathscr{L}_{\underline{n}}(d)) & \simeq \bigoplus_{N_{\Sbar} \subseteq N_{\nbar/\mbar}}\mathcal{H}^{2c(n,g)}_a(\mathscr{S}(\mathscr{L}_{\underline{n}}(d)|_{N_{\Sbar}}) \label{eq: decnormalization}\\
    \mathcal{H}^{2c(n,g)}_a(\mathscr{S}(\mathscr{L}_{\underline{n}}(d))|_{N_{\Sbar}}) & \simeq \bigg(\bigotimes^r_{i=1} \mathcal{H}_a^{2c(n_i, g)}(\mathscr{S}_{\{n_i\}}|_{N_{\{n_i\}/\nbar_i}}) \bigg) \otimes \QQ^{r(\underline{n}, d)}. \label{eq:Kunnethdecom}
\end{align}
\end{prop}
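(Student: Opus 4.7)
The plan is to reduce \eqref{eq: decnormalization} to a local decomposition of the IC stalk along the branches of $N_{\underline{n}/\underline{m}}$ through $a$, and \eqref{eq:Kunnethdecom} to a K\"unneth identification of the relative Jacobian of the universal spectral curve over $A_{\underline{n}}=\prod_i A_{n_i}$. Both start from the reformulation appearing in the proof of \cref{prop:stalkstrings}: since $\mathscr{L}_{\underline{n}}(d)$ extends as a trivial rank-$r(\underline{n},d)$ local system through $a$ (\cref{lem:localmodelforLn}), the tensor factor splits off and
\[\mathcal{H}^{2c(n,g)}_a(\mathscr{S}(\mathscr{L}_{\underline{n}}(d))) \;\simeq\; \bigoplus_{l=0}^{\codim_{S_{\underline{n}}} S_{\underline{m}}-1} \mathcal{H}^l_a\bigl(\IC(N_{\underline{n}/\underline{m}},\Lambda^l_{\underline{n}}|_{N^\circ_{\underline{n}/\underline{m}}})\bigr)\otimes \QQ^{r(\underline{n},d)},\]
with entirely parallel formulas on each branch $N_{\underline{S}}$ and on each factor $N_{\{n_i\}/\underline{n}_i}$.

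For \eqref{eq: decnormalization}, the \'etale local model \eqref{eq:normalizationSn} identifies the germ of $N_{\underline{n}/\underline{m}}$ at a general $a\in S_{\underline{m}}$ with the union of the smooth branches $N_{\underline{S}}\simeq \prod_i N_{S_i}$, one per partition $\underline{S}$ of $[s]$ giving $\underline{n}$, meeting pairwise only at $a$. The stalk of the intersection complex at such a transverse union splits along branches: $\mathcal{H}^l_a(\IC(N_{\underline{n}/\underline{m}}, L)) = \bigoplus_{\underline{S}} \mathcal{H}^l_a(\IC(N_{\underline{S}}, L|_{N_{\underline{S}}}))$ for every local system $L$ on $N^\circ_{\underline{n}/\underline{m}}$. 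Combining this with the reformulation above (applied to each summand) yields \eqref{eq: decnormalization}.

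For \eqref{eq:Kunnethdecom}, fix a branch and pull back to $A_{\underline{n}}$. There, the universal spectral curve is the disjoint union $\bigsqcup_i \mathrm{pr}_i^*\mathcal{C}_{n_i}$ of the pullbacks along the coordinate projections, so over the open locus lifting $S^\times_{\underline{n}}$ its relative Jacobian is the fibre product $\prod_i \mathrm{pr}_i^*\mathscr{A}^\times_{\{n_i\}}$, and the K\"unneth formula for the relative cohomology of a fibre product gives
\[\Lambda^l_{\underline{n}}|_{N_{\underline{S}}\cap S^\times_{\underline{n}}} \;\simeq\; \bigoplus_{l_1+\cdots+l_r=l}\; \boxtimes_{i=1}^r \Lambda^{l_i}_{S_i}.\]
Since $N_{\underline{S}}\simeq \prod_i N_{S_i}$ is smooth, the K\"unneth formula for intersection cohomology produces
\[\mathcal{H}^l_a\bigl(\IC(N_{\underline{S}},\Lambda^l_{\underline{n}}|_{N_{\underline{S}}\cap S^\times_{\underline{n}}})\bigr) \;\simeq\; \bigoplus_{k_1+\cdots+k_r=l}\;\bigotimes_i \mathcal{H}^{k_i}_a\bigl(\IC(N_{S_i},\Lambda^{k_i}_{S_i})\bigr).\]
Summing over $l$, tensoring with $\QQ^{r(\underline{n},d)}$, and applying \cref{prop:stalkstrings} to each trivial partition $\{n_i\}$ of $n_i$ (where $\mathscr{L}_{\{n_i\}}$ is trivial) identifies the right-hand side with $\bigotimes_i \mathcal{H}^{2c(n_i,g)}_a(\mathscr{S}_{\{n_i\}}|_{N_{\{n_i\}/\underline{n}_i}})\otimes\QQ^{r(\underline{n},d)}$, which is \eqref{eq:Kunnethdecom}.

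The main technical point is the K\"unneth decomposition of $\Lambda^l_{\underline{n}}$ on a branch; this hinges on identifying the relative Jacobian of the universal spectral curve over $A_{\underline{n}}$ as the fibre product of the relative Jacobians over the factors $A_{n_i}$, for which one uses that the spectral curve over $A_{\underline{n}}$ literally decomposes as a disjoint union of pullbacks. Once this geometric identification is in place, the rest is a formal combination of the local \'etale decomposition of $N_{\underline{n}/\underline{m}}$ into smooth branches with the K\"unneth formula for intersection cohomology on a smooth product.
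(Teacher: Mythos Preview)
Your overall strategy matches the paper's: split along branches via normalization for \eqref{eq: decnormalization}, then apply K\"unneth on each branch $N_{\underline{S}}\simeq\prod_i N_{S_i}$ for \eqref{eq:Kunnethdecom}. The first part is fine.

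There is, however, a genuine gap in your K\"unneth step. You write
\[
\mathcal{H}^l_a\bigl(\IC(N_{\underline{S}},\Lambda^l_{\underline{S}})\bigr)\;\simeq\;\bigoplus_{k_1+\cdots+k_r=l}\bigotimes_i \mathcal{H}^{k_i}_a\bigl(\IC(N_{S_i},\Lambda^{k_i}_{S_i})\bigr),
\]
but this is not what K\"unneth gives. After decomposing the coefficient system as $\Lambda^l_{\underline{S}}\simeq\bigoplus_{k_1+\cdots+k_r=l}\boxtimes_i\Lambda^{k_i}_{S_i}$, K\"unneth yields a \emph{double} sum
\[
\bigoplus_{k_1+\cdots+k_r=l}\;\bigoplus_{l_1+\cdots+l_r=l}\;\bigotimes_i \mathcal{H}^{l_i}_a\bigl(\IC(N_{S_i},\Lambda^{k_i}_{S_i})\bigr),
\]
with independent cohomological degrees $l_i$. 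Summing over $l$ then produces all pairs $(\underline{k},\underline{l})$ with $\sum k_i=\sum l_i$, not just the diagonal $\underline{k}=\underline{l}$, so you cannot directly identify the result with $\bigotimes_i\bigoplus_{l_i}\mathcal{H}^{l_i}_a(\IC(N_{S_i},\Lambda^{l_i}_{S_i}))$.

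The missing ingredient is \cref{cor:vanishingresult}: $\mathcal{H}^{l}_a(\IC(N_{S_i},\Lambda^{k}_{S_i}))=0$ for $l>k$. Given $\sum k_i=\sum l_i$, if $(l_i)\neq(k_i)$ then some $l_{i_0}>k_{i_0}$, and the corresponding tensor factor vanishes. This kills all off-diagonal terms and reduces the double sum to your displayed formula. The paper invokes exactly this vanishing at this point; you should too.
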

\begin{proof}
\cref{eq: decnormalization} follows from the proof of \cref{prop:stalkstrings} and from the behaviour of intersection cohomology under normalization
\[
\mathcal{H}^l_a(\IC(N_{\nbar/\mbar}, \Lambda^l_{\nbar}|_{N^{\circ}_{\nbar/\mbar}})) \simeq \bigoplus_{N_{\Sbar} \subseteq N_{\nbar/\mbar}} \mathcal{H}^l_a(\IC(N_{\Sbar}, \Lambda^l_{\Sbar})).
\]
\cref{eq:Kunnethdecom} boils down to the K\"{u}nneth decomposition for intersection cohomology. In order to prove this identity, consider the spectral curve $\pi_b: C_b= \bigcup^r_{i=1} C_i \to C \times b$ over the generic point $b$ of $N_{\underline{S}}$. The map $\pi_b$ restricts to a finite map $C_i \to C \times b$ of degree $n_i$ on the smooth irreducible component $C_i$ of genus $g_i$. In particular, over the general point $\bar{b}$ we have 
\[
\Jacob(C^{\circ, \nu}_{\bar{b}}) \simeq \Jacob(C^{\circ}_{1, \bar{b}}) \times \ldots \times \Jacob(C^{\circ}_{r, \bar{b}}). 
\]
Since the point $a$ does not lay on the branch locus of $A_{\underline{n}} \to S_{\underline{n}}$, the monodromy of $\pi_b$ over $N_{\underline{S}}$ preserves the irreducible components $C_i \times \bar{b}$, and this implies
\[
\Lambda^{l}_{\Sbar} \simeq (\Lambda^{\bullet}_{S_1} \boxtimes \ldots \boxtimes \Lambda^{\bullet}_{S_r})^{l} = \bigoplus_{\lbar \vdash l \colon \ell(\lbar)=r}\Lambda^{l_1}_{S_1} \boxtimes \ldots \boxtimes \Lambda^{l_r}_{S_r}.
\]
where the summation index runs over the partition $\lbar$ of $l$ of lenght $r$. Then the K\"{u}nneth decomposition for intersection cohomology gives
\begin{align*}
    \mathcal{H}^l_a (\IC(N_{\Sbar}, \Lambda^l_{\Sbar})) & \simeq   \bigoplus_{\kbar \vdash l \colon \ell(\kbar)=r} \mathcal{H}^l_a(\IC(N_{S_1} \times \ldots \times N_{S_r}, \Lambda^{k_1}_{S_1} \boxtimes \ldots \boxtimes \Lambda^{k_r}_{S_r}))\\
   & \simeq   \bigoplus_{\kbar \vdash l  \colon \ell(\kbar)=r} \:\: \bigoplus_{\lbar \vdash l  \colon \ell(\lbar)=r} \mathcal{H}^{l_1}_a(\IC(N_{S_1}, \Lambda^{k_1}_{S_1})) \otimes \ldots \otimes \mathcal{H}^{l_r}_a(\IC(N_{S_r}, \Lambda^{k_r}_{S_r})),
\end{align*}
with $l=k$ (i.e.\ the two summation indexes both running over the same set of partition of $l$ of length $r$). For each pair \{$\kbar$, $\lbar$\} of partitions of $l$ there are two options: either $l_i = k_i$ for all $i \in [r]$, or there exists $i_0 \in [r]$ such that $l_{i_0} > k_{i_0}$. In the latter case, $\mathcal{H}^{l_{i_0}}_a(\IC(N_{S_{i_0}}, \Lambda^{k_{i_0}}_{S_{i_0}}))=0$ by \cref{cor:vanishingresult}. Thus we obtain
\begin{align*}
    \mathcal{H}^l_a(\IC(N_{\Sbar}, \Lambda^l_{\Sbar})) \simeq  \bigoplus_{\lbar \vdash l \colon \ell(\lbar)=r} \mathcal{H}^{l_1}_a(\IC(N_{S_1}, \Lambda^{l_1}_{S_1})) \otimes \ldots \otimes \mathcal{H}^{l_r}_a(\IC(N_{S_r}, \Lambda^{l_r}_{S_r})).
\end{align*}
We conclude that
\begin{align*}
     \mathcal{H}^{2c(n,g)}_a  (\mathscr{S}(\mathscr{L}_{\underline{n}}(d)|_{N_{\Sbar}}) & \simeq \bigoplus_{l\geq 0}  \mathcal{H}^l_a(\IC(N_{\Sbar}, \Lambda^l_{\Sbar}))\otimes \QQ^{r(\nbar, d)}\\
     & \simeq \bigg( \bigotimes^r_{i=1}  \bigoplus_{l_i\geq 0}  \mathcal{H}^{l_i}_a(\IC(N_{S_i}, \Lambda^{l_i}_{S_i}))\bigg)\otimes \QQ^{r(\nbar, d)}\\
     & \simeq \bigg(\bigotimes^r_{i=1} \mathcal{H}_a^{2c(n_i, g)}(\mathscr{S}_{\{n_i\}}|_{N_{n_i/\nbar_i}})\bigg) \otimes \QQ^{r(\nbar, d)}.
\end{align*}
\end{proof}

\section{Proof of \texorpdfstring{\cref{thm:fullsupport}}{Theorem \ref{thm:fullsupport}} and \texorpdfstring{\cref{thm:combchar}}{Theorem \ref{thm:combchar}}}\label{sec:mainresults_proof} 

\begin{proof}[Proof of \cref{thm:fullsupport}]
By \cref{thm:Ngostring} it suffices to check that $(R^{2c(n,g)}\chi(n,0)_*\QQ_{\MDol(n,0)})_{a}$ has no proper support at the general point $a \in S_{\mbar}$ for any $\mbar$. We prove it by induction on the length of $\mbar$. If $\mbar$ has length $1$, then $a \in A^{\mathrm{reg}}$ avoids any potential proper support of the decomposition theorem, so there is nothing to prove. 

Suppose now that $\mbar$ is a permutation of length $s>1$, and no summands of $R^{2c(n,g)}\chi(n,0)_*\QQ_{\MDol(n,0)}|_{A^{\mathrm{red}}}$ is supported on $S_{\nbar} \cap A^{\mathrm{red}}$, for any $n \in \ZZ_{>0}$ and for any partition $\nbar$ of $n$ such that $\mbar< \nbar< \{n\}$. We show that $(R^{2c(n,g)}\chi(n,0)_*\QQ_{\MDol(n,0)})_{a}$ has no summands at $a \in S_{\mbar}$.
Otherwise, there exists a non-zero $\QQ$-vector space $\mathcal{F}_a$ such that 
\[(R^{2c(n,g)}\chi(n,0)_*\QQ_{\MDol(n,0)})_{a} \simeq \mathcal{H}^{2c(n,g)}_a(\mathscr{S}_{\{n\}}) \oplus \mathcal{F}_a.\]

Let $e$ be an integer with $\gcd(e,n)=1$. \cref{prop:reductiontolowerrank} gives
\begin{align*}
(R^{2c(n,g)}&\chi (n,e )_*  \QQ_{\MDol(n,e)})_{a} \simeq \bigoplus_{\nbar \geq \mbar}\mathcal{H}^{2c(n,g)}_a(\mathscr{S}(\mathscr{L}_{\nbar}(e))) \\
& \simeq \bigoplus_{\Sbar \vdash [s]} \bigg( \bigotimes^{\ell(\Sbar)}_{i=1} \mathcal{H}_a^{2c(\sigma_i, g)}(\mathscr{S}_{\{\sigma_i\}}|_{N_{S_i}}) \bigg) \otimes \QQ^{r(\ell(\Sbar), e)}\\
& \simeq \mathcal{H}^{2c(n,g)}_a(\mathscr{S}_{\{n\}}) \oplus
 \bigoplus_{\substack{ \Sbar \vdash [s] \\ \Sbar \neq \{[s]\}}} \bigg(\bigotimes^{\ell(\Sbar)}_{i=1} \mathcal{H}_a^{2c(\sigma_i, g)}(\mathscr{S}_{\{\sigma_i\}}|_{N_{S_i}})\bigg) \otimes \QQ^{r(\ell(\Sbar), e)},
\end{align*}
where we set $\sigma_i \coloneqq \sum_{j \in S_i} m_j$. By induction hypothesis we have that for any $\Sbar \vdash [s]$ 
\[
\mathcal{H}_a^{2c(\sigma_i, g)}(\mathscr{S}_{\{\sigma_i\}}|_{N_{S_i}}) \simeq \mathcal{H}_a^{2c(\sigma_i, g)}(\mathscr{S}_{\{\sigma_i\}}) \simeq 
(R^{2c(\sigma_i,g)}\chi (\sigma_i,0 )_*  \QQ_{\MDol(\sigma_i,0)})_{a}.
\]
Recall that the vertices of the dual graph $\Gamma[\underline{m}]$ are labelled by $[s]$. Let $\Gamma(S_i)$ be the maximal subgraph of $\Gamma[\underline{m}]$ on the vertex set $S_i$. \cref{thm:geomvscombin} gives 
\[
\dim \mathcal{H}_a^{2c(\sigma_i, g)}(\mathscr{S}_{\{\sigma_i\}}|_{N_{S_i}}) = C(Z_{\Gamma(S_i)}).
\]
Now set $\omega_{\underline{m}}(e) =(m_1e/n, \ldots, m_se/n) \in \QQ^{s}$ with $\underline{m}=\{m_1, \ldots, m_s\}$, and write
\begin{align*}
C(Z_{\Gamma[\underline{m}]}+ \omega_{\underline{m}}(e)) & = \dim (R^{2c(n,g)}\chi (n,e )_*  \QQ_{\MDol(n,e)})_{a}\\
& = \dim \mathcal{H}^{2c(n,g)}_a(\mathscr{S}_{\{n\}})  + \sum_{\substack{ \Sbar \vdash [s] \\ \Sbar \neq \{[s]\}}}  r(\ell(\Sbar), e) \cdot \prod^{|\Sbar|}_{i=1} \dim \mathcal{H}_a^{2c(\sigma_i, g)}(\mathscr{S}_{\{\sigma_i\}}|_{N_{S_i}})\\
& = \dim \mathcal{H}^{2c(n,g)}_a(\mathscr{S}_{\{n\}})  + \sum_{\substack{ \Sbar \vdash [s] \\ \Sbar \neq \{[s]\}}}  r(\ell(\Sbar), e) \cdot \prod^{|\Sbar|}_{i=1} C(Z_{\Gamma(S_i)}).
\end{align*}
Note that $r(\ell(\Sbar), e) \geq (\ell(\Sbar)-1)!$ by the Hodge-to-singular correspondence \cite[Thm 1.1 and Prop. 4.11.(8)]{MM2022}. Together with the combinatorial identity of \cref{rmk:commentformula}.(i)
\[C(Z_{\Gamma[\underline{m}]}+ \omega_{\underline{m}}(e)) = \sum_{\underline{S} \vdash [s]} (\lvert \underline{S} \rvert-1)! \prod_{j=1}^{\lvert \underline{S} \rvert} C(Z_{\Gamma(S_j)}),\]
we conclude that $r(\ell(\Sbar), 1)$ actually equals  $ (\ell(\Sbar)-1)!$,\footnote{Alternatively, the equality $r(\ell(\Sbar), 1) = (\ell(\Sbar)-1)!$ follows via the explicit computation of the Cattani--Kaplan--Schmid complex for a versal family of $C_a$ in \cite[Cor. 6.20]{deCataldoHeinlothMigliorini19}.} and so
\[\dim \mathcal{H}^{2c(n,g)}_a(\mathscr{S}_{\{n\}})=C(Z_{\Gamma[\underline{m}]})=\dim(R^{2c(n,g)}\chi(n,0)_*\QQ_{\MDol(n,0)})_{a},\]
which implies $\mathcal{F}_a =0$, i.e.\ $S_{\mbar}$ is not a support of  $R^{2c(n,g)}\chi(n,0)_*\QQ_{\MDol(n,0)}$.
\end{proof}



\begin{proof}[Proof of \cref{thm:combchar}] Let $a$ be a general point in $S_{\mbar}$ for any partition $\mbar =\{m_1, \ldots, m_s\}$. 
\cref{prop:reductiontolowerrank} gives
\begin{equation}\label{eq:decthm}
(R^{2c(n,g)}\chi (n,d )_*  \QQ_{\MDol(n,d)})_{a} \simeq \bigoplus_{\Sbar=\{S_i\} \vdash \underline{m}} \mathscr{L}_{ \{|S_i|\}}(d)_{a} \otimes \bigg( \bigotimes^{\ell(\Sbar)}_{i=1} \mathcal{H}_a^{2c(\sigma_i, g)}(\mathscr{S}_{\{\sigma_i\}}|_{N_{S_i}}) \bigg).
\end{equation}
The LHS of \eqref{eq:decthm} is an $\Aut(\underline{m})$-representation, while the summands of the RHS are $\Aut(\underline{m})$-representations induced by the injection $\Stab(\underline{S}) \hookrightarrow \Aut(\underline{m})$. For convenience, write $\omega$ for the vector $ \omega_{\underline{m}}(d)$ defined in \eqref{eq:vectoromega}. By \cref{thm:geomvscombin} and \cref{thm:repr}, the following $\Aut(\underline{m})$-representations are isomorphic
\begin{align*}
(R^{2c(n,g)} & \chi (n,d )_*  \QQ_{\MDol(n,d)})_{a}  \simeq \mathcal{C}(Z_{\Gamma[{\underline{m}}]} + \omega_{\underline{m}}(d)) \\ \simeq & \,\mathcal{C}(Z_{\Gamma[{\underline{m}}]}) \oplus \bigoplus_{\underline{S} \in \Pi_\omega(\underline{m}) /\Aut(\Gamma[\underline{m}])} \Ind_{\Stab (\underline{S})}^{\Aut (\Gamma[\underline{m}])} \big( \sgn \otimes \widetilde{H}_{\ell(\underline{S})-3}(\Delta(\overline{\Pi}_{\omega, \geq \underline{S}})) \otimes \mathcal{C}(Z_{\Gamma_{\underline{S}}})\big),\\
\simeq & \,(R^{2c(n,g)}\chi(n,0)_*\QQ_{\MDol(n,0)})_{a} \oplus\\ & \oplus \bigoplus_{\underline{S} \in \overline{\Pi}_\omega(\underline{m}) /\Aut(\Gamma[\underline{m}])} \Ind_{\Stab (\underline{S})}^{\Aut (\Gamma[\underline{m}])} \big( \sgn \otimes \widetilde{H}_{\ell(\underline{S})-3}(\Delta(\overline{\Pi}_{\omega_{\underline{S}}})) \otimes \bigg( \bigotimes^{\ell(\Sbar)}_{i=1} \mathcal{H}_a^{2c(\sigma_i, g)}(\mathscr{S}_{\{\sigma_i\}}|_{N_{S_i}}) \bigg) \big),\end{align*}
Comparing with \eqref{eq:decthm}, we conclude that the $\Aut(\Gamma[\underline{n}])$-representations
\[\mathscr{L}_{ \underline{n}}(d)_{a} \simeq \sgn \otimes \widetilde{H}_{\ell(\underline{n})-3}(\Delta(\overline{\Pi}_{\omega_{\underline{n}}(d)}))\] are isomorphic by induction on the length of $\mbar$. 
\end{proof}
\bibliographystyle{plain}
\bibliography{construction}
\end{document}